\documentclass[reqno]{amsart}

\usepackage{graphicx}
\usepackage{amscd}
\usepackage{amssymb}
\usepackage{amstext}
\usepackage{amsmath}
\usepackage{enumerate}


\input epsf

\newtheorem{theorem}{Theorem}[section] 
\newtheorem{lem}[theorem]{Lemma}     
\newtheorem{sublem}[theorem]{Sublemma}

\newtheorem{prop}[theorem]{Proposition}
\theoremstyle{definition}
\newtheorem{defn}{Definition}[section]
      
\newtheorem{assumption}[defn]{Assumption}
\newtheorem{rem}[defn]{Remark}

\newtheorem{exm}[defn]{Example}
\newtheorem{exdefn}[defn]{Example-Definition}
%
\def\R{{\mathbb R}}
\def\C{{\mathbb C}}
\def\Z{{\mathbb Z}}
\def\Q{{\mathbb Q}}

\def\e{{\epsilon}}

\def\MM{{\mathcal M}}

\def\JJ{{\mathcal J}}




\title[Lagrangian Floer theory over integers]
 {Lagrangian Floer theory over integers:\\
spherically positive symplectic manifolds} 

\author{Kenji FUKAYA, Yong-Geun Oh,
Hiroshi Ohta, Kaoru Ono}

\dedicatory{Dedicate to Professor Dennis
Sullivan on his seventieth birthday}

\thanks{KF is supported partially by JSPS Grant-in-Aid for Scientific Research
No.18104001 and Global COE program G08, YO by US NSF grant \# 0904197, HO by JSPS Grant-in-Aid
for Scientific Research No.23340015, and KO by JSPS Grant-in-Aid for
Scientific Research, No.21244002.}
\keywords{Floer cohomology, Lagrangian submanifolds, orbifold, stack, stratified space,
pseudo-holomorphic curve, spherically positive symplectic manifold}
\begin{document}
\begin{abstract}
In this paper 
we study the Lagrangian Floer theory over $\Z$ or
$\Z_2$. Under an appropriate assumption on ambient symplectic
manifold, we show that the whole story of Lagrangian Floer theory in
\cite{fooo-book1}, \cite{fooo-book2} can be developed over $\Z_2$ coefficients, and over
$\Z$ coefficients when Lagrangian submanifolds are relatively spin.
The main technical tools used for the construction are the notion of
the sheaf of groups, and stratification and compatibility of the
normal cones applied to the Kuranishi structure of the moduli space
of pseudo-holomorphic discs.
\end{abstract}
\maketitle

\renewcommand{\thefootnote}{\arabic{footnote}}

\tableofcontents
\section{Introduction}

\label{intro}
In this paper we establish Floer theory over $\Z_2$
(resp. $\Z$) for a Lagrangian submanifold (resp. relatively spin
Lagrangian submanifold) under the assumption that the ambient
symplectic manifold is spherically positive. Here we define:
\begin{defn}\label{34.1}
Let $J$ be an almost complex structure on $M$. We call $J$  {\it
spherically positive} if every $J$-holomorphic sphere $v: S^2 \to M$
with $c_1(M)[v] \leq 0$ is constant.
\par
For a symplectic manifold $(M,\omega)$, we denote by $\mathcal
J_{(M,\omega)}^{c_1 > 0}$ the set of spherically positive almost
complex structures which are compatible with $\omega$.
\par
We call a symplectic manifold $(M,\omega)$ {\it spherically positive}
if there exists a compatible spherically positive almost complex structure $J$.
Sometimes we say $(M,\omega,J)$ is spherically positive.
\end{defn}
Throughout this paper all symplectic manifolds are assumed to be compact or tame. We always assume that Lagrangian submanifolds are compact.
\begin{exm}
\begin{enumerate}[(1)]
\item
We recall that a symplectic manifold $(M,\omega)$ is (positively
spherically) {\it monotone}
if there exists $c>0$ such that $[\omega](\alpha) = cc_1(M) (\alpha)$
for any $\alpha \in \pi_2(M)$.
It is easy to see that any monotone symplectic manifolds
are spherically positive. For example, $\mathbb CP^n$, $\C^n$, $T^{2n}$
are spherically positive with respect to the standard complex structure.
\item
If  $M$ is a Fano manifold,
$(M,\omega)$ is spherically positive for any K\"ahler form
$\omega$.
\item
Any 4-dimensional symplectic manifold is spherically positive with
respect to generic almost complex structures.
\item
Any product of spherically positive symplectic manifolds is
spherically positive.
\end{enumerate}
\end{exm}
\begin{rem}
The set $\JJ_{(M,\omega)}^{c_1 > 0}$
may be neither path connected nor dense in the set of almost complex structures
compatible with $\omega$, except when $\dim M = 4$
or $M$ is monotone.
\par
If $\psi: (M,\omega) \to (M',\omega')$ is a symplectic
diffeomorphism, it induces a bijection
$\psi_*:\JJ_{(M,\omega)}^{c_1 > 0} \to \JJ_{(M',\omega')}^{c_1 >
0}$ in an obvious way.
\end{rem}

Our main result in this paper is as follows.
For a commutative ring $R$ with unit
we define the {\it universal Novikov ring}
$\Lambda_{0,{\rm nov}}^R$ over $R$ by
\begin{align}\label{eq:nov}
 \Lambda^{R}_{{\rm nov}} & = \left.\left\{ \sum_{i=0}^{\infty} a_i T^{\lambda_i}e^{\mu_i}  \
\right\vert \ a_i \in R,
\ \mu_i \in \Z, \ \lambda_i \in \R, \
\lim_{i \to \infty}\lambda_i = +\infty \right\},
\\
 \Lambda^{R}_{0,{\rm nov}} & =
\left.\left\{ \sum_{i=0}^{\infty} a_i T^{\lambda_i}e^{\mu_i} \in \Lambda_{{\rm nov}}
\ \right\vert \ \lambda_i \geq 0 \right\}.
\label{eq:nov0}
\end{align}
Here $T,e$ are indeterminates.
We define
a filtration $F^{\lambda}\Lambda_{{\rm nov}}^R=
\{ \sum_i a_iT^{\lambda_i}e^{\mu_i} ~\vert~ \lambda_i \ge \lambda \}$ and a degree
$\deg (aT^{\lambda}e^{\mu})=2\mu$
for $aT^{\lambda}e^{\mu} \in \Lambda_{{\rm nov}}^R$.
Then $\Lambda_{0,{\rm nov}}^R$ and $\Lambda_{{\rm nov}}^R$
become filtered graded commutative rings.
We denote by
$\Lambda_{0,{\rm nov}}^{+,R}$ the ideal of
$\Lambda_{0,{\rm nov}}^R$ that consists of elements
such that $\lambda_i >0$.
\begin{theorem}\label{theoremA}
Let $(M,\omega,J)$ be a spherically positive symplectic manifold
and  $L$ its Lagrangian submanifold.
\begin{enumerate}
\item We can associate a structure of
filtered $A_{\infty}$ algebra $\{\mathfrak m_{k}^J\}_{k=0}^{\infty}$ on $H^*(L;\Lambda_{0,{\rm nov}}^{\Z_2})$,
which depends only on the connected component of
$\mathcal J_{(M,\omega)}^{c_1 > 0}$ containing $J$ up to isomorphism.
\item
If $\psi: (M,L) \to (M',L')$ is a symplectic diffeomorphism, we
can associate to it an isomorphism $\psi_* :
(H^*(L;\Lambda_{0,{\rm nov}}^{\Z_2}),\{\mathfrak m_{k}^J\})
\to (H^*(L';\Lambda_{0,{\rm nov}}^{\Z_2}),\{\mathfrak m_{k}^{\psi_*J}\})$ of
filtered $A_{\infty}$ algebras  whose homotopy class depends only on
the isotopy class of symplectic diffeomorphism $\psi: (M,L) \to
(M',L')$.
Moreover $(\psi\circ \psi')_{\ast}=\psi_{\ast} \circ \psi'_{\ast}$.
\item
The Poincar\'e dual $PD([L]) \in H^0(L;\Lambda_{0,{\rm nov}}^{\Z_2})$ of the fundamental class $[L]$
is the unit of our filtered $A_{\infty}$ algebra.
The homomorphism $\psi_*$ is unital.
\item
If $L$ is relatively spin and the homology group $H(L;\Z)$ is a torsion free
$\Z$ module, the above holds for the coefficient ring
$\Lambda_{0,{\rm nov}}^{\Z}$ in place of $\Lambda_{0,{\rm nov}}^{\Z_2}$.
The tensor product of this filtered $A_{\infty}$ algebra with $\Q$ becomes
unitally homotopy equivalent
to the filtered $A_{\infty}$ algebra of Theorem A \cite{fooo-book1}.
\end{enumerate}
\end{theorem}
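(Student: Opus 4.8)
The plan is to follow the construction of the filtered $A_\infty$ algebra in \cite{fooo-book} step by step to establish Theorem~\ref{theoremA}, replacing the $\Q$-valued multisection perturbations used there by perturbations that are \emph{single valued on the part of each moduli space that actually carries the count}. The starting point is the system of moduli spaces $\mathcal{M}_{k+1}(\beta;J)$ of stable maps from genus-zero bordered Riemann surfaces with $k+1$ cyclically ordered boundary marked points, with boundary on $L$, representing $\beta\in\pi_2(M,L)$. Spherical positivity of $J$ enters in three ways. First, Gromov compactness together with the fact that non-constant $J$-holomorphic spheres have $c_1\ge 1$ yields a uniform lower bound on $\mu(\beta)$ for $\beta$ of bounded energy, so the energy filtration behaves as in \cite{fooo-book}. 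Second, and crucially, any stable map with a non-constant sphere component --- in particular any point whose isotropy group is non-trivial --- lies in a stratum of virtual codimension at least two: splitting off a single sphere bubble already costs virtual dimension $2$, a bubble \emph{tree} costs more, and the locus of non-trivial isotropy (which comes only from automorphisms of multiply covered non-constant sphere components) sits deeper still. Third, since $c_1\le 0$ spheres are excluded, these sphere-bubble strata carry no excess actual dimension, so the codimension statement holds for the \emph{actual} stratification, not merely the virtual one. I record the isotropy data as a \emph{sheaf of groups} over the stratified moduli space: the local isotropy group, upper semicontinuous along strata and trivial on the open stratum of discs without sphere bubbles.

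I would then build a system of Kuranishi structures on the $\mathcal{M}_{k+1}(\beta;J)$ compatible with the boundary decomposition $\partial\mathcal{M}_{k+1}(\beta)=\bigcup\,\mathcal{M}_{k_1+1}(\beta_1)\,{}_{\mathrm{ev}_0}\!\times_{\mathrm{ev}_i}\,\mathcal{M}_{k_2+1}(\beta_2)$ and with the forgetful maps, exactly as in \cite{fooo-book}, carrying along the sheaf of groups. The heart of the argument, and the step I expect to be the main obstacle, is the inductive construction --- by increasing energy of $\beta$, then increasing $k$ --- of perturbations (multisections, or continuous families thereof) of the Kuranishi maps that are simultaneously (a) transverse, (b) compatible with the boundary fiber-product identifications and with the forgetful maps, and (c) single valued, hence with integral zero-set counts, on a neighborhood of the open trivial-isotropy stratum, the multivalued behaviour being confined to the support of the sheaf of groups, a set of virtual codimension $\ge 2$. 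Granting (a) and (c): the perturbed zero sets defining the operations (virtual dimension $0$) and proving the $A_\infty$ relations (virtual dimension $1$, with boundary) have \emph{negative} virtual dimension on every stratum carrying non-trivial isotropy, hence are empty there, so they are honest $0$- and $1$-manifolds lying in the trivial-isotropy part and can be counted (with signs in the relatively spin case) over $\Z$ or $\Z_2$. The mechanism behind (c) is the compatibility of \emph{normal cones} along the stratification: near a stratum on which the sheaf of groups is a finite group $\Gamma$ one has a $\Gamma$-equivariant local model in which the normal cone carries the transverse directions, one perturbs $\Gamma$-equivariantly on the stratum and extends over the cone, and one uses that in virtual dimension $\le 1$ the residual equivariant transversality problem in the cone directions has no zeros; the normal-cone bookkeeping is precisely what makes these local choices patch into a global perturbation that is single valued off the codimension-$\ge 2$ locus while still restricting correctly to all the boundary pieces.

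Given such perturbations, one defines $\mathfrak{m}_k^J$ on a suitable cochain model of $L$ by pulling back along $(\mathrm{ev}_1,\dots,\mathrm{ev}_k)$ and pushing forward along $\mathrm{ev}_0$ over the perturbed zero sets, summed over $\beta$ with the energy-and-Maslov weight of \cite{fooo-book}; the filtered $A_\infty$ relations follow from the description of the codimension-one boundary of the virtual-dimension-$1$ spaces as in \cite{fooo-book}, and over $\Z_2$ no orientations intervene. Transferring to $H^*(L;\Lambda_{0,nov}^{\Z_2})$ by the filtered homological perturbation lemma gives (1). Independence of the connected component of $\mathcal{J}^{c_1>0}_{(M,\omega)}$ and statement (2) come from the standard pseudo-isotopy argument applied to a path of \emph{spherically positive} almost complex structures and, for a symplectic isotopy $\psi_t:(M,L)\to(M',L')$, to the path $(\psi_t)_*J$; by the Remark in the introduction each $(\psi_t)_*$ preserves spherical positivity, and $\psi_*$ identifies the moduli spaces, Kuranishi structures and perturbations of $(M,L,J)$ with those of $(M',L',\psi_*J)$ compatibly with all structure maps, which produces $\psi_*$ on the nose for pulled-back data, hence up to homotopy in general, together with $(\psi\circ\psi')_*=\psi_*\circ\psi'_*$. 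Statement (3) is the homotopy-unit construction of \cite{fooo-book} redone with our perturbations; the moduli entering it are governed by the forgetful map and lie in the trivial-isotropy part.

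Finally, for (4): when $L$ is relatively spin, the orientation theory of \cite{fooo-book} supplies a coherent system of orientations on all the $\mathcal{M}_{k+1}(\beta;J)$ compatible with the boundary and forgetful maps, so the counts acquire well-defined signs and the whole construction runs over $\Z$; torsion-freeness of $H(L;\Z)$ makes $H^*(L;\Lambda_{0,nov}^{\Z})$ a free filtered $\Lambda_{0,nov}^{\Z}$-module and lets the finite cochain model split off its homology, so the homological perturbation lemma applies integrally. For the concluding assertion, that $(\,\cdot\,)\otimes\Q$ is unitally homotopy equivalent to the $A_\infty$ algebra of Theorem~A of \cite{fooo-book}, I would observe that rationally the codimension-$\ge 2$ bad locus is invisible, so our single-valued-on-the-good-part perturbation and a multisection of the type used in \cite{fooo-book} can be joined by a one-parameter family of \emph{valid} perturbation data; the associated parametrized moduli spaces then yield, by the comparison techniques of \cite{fooo-book}, a filtered $A_\infty$ homotopy equivalence intertwining the units. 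This comparison, though conceptually routine, is the most delicate of the non-central steps, since it requires interpolating between the two perturbation schemes within a single coherent inductive construction.
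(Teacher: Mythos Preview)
Your overall strategy---construct single-valued perturbations inductively along the isotropy stratification using normal cones, then follow the chain-level construction of \cite{fooo-book}---matches the paper's, and the normal-cone bookkeeping you sketch is essentially the content of Sections~\ref{section35.2}--\ref{section35.4}. The pseudo-isotopy argument for independence, the homotopy-unit construction, and the $\otimes\Q$ comparison are also handled as in the paper (the last via an interpolating ``weakly normally conical'' multisection, Lemma~\ref{tensorQheq}).

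There is, however, a genuine gap at the heart of your argument. You assert that the non-trivial-isotropy locus is empty in the perturbed zero set for virtual dimension $\le 1$ because ``splitting off a sphere bubble costs virtual dimension~$2$'' and hence the bad stratum has negative virtual dimension. That inference is valid only for a perturbation that is \emph{transversal in the full obstruction bundle $E_p$} on the $\Gamma$-stratum---and a single-valued $\Gamma$-equivariant section cannot be: at a point of $V_p^\Gamma$ its value is forced into $E_p^\Gamma$. The best a single-valued section achieves is transversality in $E_p^\Gamma$, giving a zero set on the stratum of dimension $d(X;\Gamma;i)=\dim V_p^\Gamma-\dim E_p^\Gamma$, which is in general strictly larger than $\dim V_p^\Gamma-\dim E_p$. (Your attempt to have the section single-valued on the free locus but genuinely multivalued on the bad locus is not realizable: a continuous multisection whose branches coincide on the open dense free set coincides everywhere, since the diagonal in the symmetric product is closed.) The estimate you actually need is $d(X;\Gamma;i)\le\dim\mathcal M-2$, which is Proposition~\ref{dimprop}. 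Its proof is the substance of Section~\ref{dimensionsec}: one computes the $\Gamma$-invariant index via the Atiyah--Bott fixed-point formula (Lemma~\ref{35.55}), identifies it with the index of the \emph{reduced model} (Proposition~\ref{35.63}, Lemmas~\ref{35.72}--\ref{35.74}), and then bounds the dimension of the moduli of marked reduced models by the ambient virtual dimension minus~$2$ (Lemma~\ref{35.75}). Spherical positivity enters precisely in this last step---see Remark~\ref{35.76} and the explicit $c_1=0$ counterexample in Remark~\ref{35.85}---and not merely as ``no excess actual dimension.'' None of the three roles you assign to spherical positivity supplies this equivariant index estimate.
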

\begin{rem}
We note that in Theorem \ref{theoremA} we put some assumption on
the symplectic manifold $(M,\omega)$ but not on its Lagrangian
submanifold $L$. For example, Theorem \ref{theoremA} applies to {\it any}
compact Lagrangian submanifold in $\C^n$.
\end{rem}
\begin{rem}\label{Zfreeremark}
In case $L$ is relatively spin but the homology group $H(L;\Z)$ is not free,
we can prove a slightly different but a similar result. See Section \ref{Zcoefficient}.
The same remark applies to Theorem \ref{theoremF}.
\end{rem}
Let $L_1,L_0$ be Lagrangian submanifolds of $(M,\omega)$.
We assume that they have clean intersection
in the following sense.
\begin{defn}
A pair of submanifolds $L_1,L_0$ of a manifold $M$ is said to have {\it
clean intersection} if $L_1 \cap L_0$ is a smooth submanifold and
$$
T_xL_1 \cap T_x L_0 = T_x(L_1\cap L_0)
$$
for each $x \in L_1 \cap L_0$.
\end{defn}
We decompose
$$
L_1 \cap L_0 = \bigcup_h R_h
$$
to the connected components.
Let $\mu_L(R_h)$ be the Bott-Morse version of Maslov index which is
defined
in Proposition 3.7.59 (3.7.60.1), also p.146,
\cite{fooo-book1}.
Actually we need to fix an extra data, (denoted by
$w$ there) to define the Bott-Morse version of Maslov index. $[
\cdot ]$ denotes the degree shift.
We put:
$$
C(L_1,L_0;\Lambda_{0,{\rm nov}}^{\Z_2}) = \bigoplus_h H(R_h;{\Z_2})[\mu_L(R_h)] \otimes \Lambda_{0,{\rm nov}}^{\Z_2}.
$$
In case $L_1$ and $L_0$ are oriented,
we defined in \cite{fooo-book2} Subsection 8.8 a local system $\Theta_{R_h}^-$ on each of the
connected component
$R_h$ of $L_1 \cap L_0$ by a homomorphism
$: \pi_1(R_h) \to \{\pm 1\} = \operatorname{Aut}(\Z)$.
We then put
$$
C(L_1,L_0;\Lambda_{0,{\rm nov}}^\Z) = \bigoplus_h H(R_h;\Theta_{R_h}^-)[\mu_L(R_h)] \otimes \Lambda_{0,{\rm nov}}^\Z.
$$
\par
\begin{theorem}\label{theoremF}
Let $L_1,L_0$ be a pair of Lagrangian submanifolds of $M$.
\begin{enumerate}
\item
$C(L_1,L_0;\Lambda_{0,{\rm nov}}^{\Z_2})$ has a
structure of unital filtered $A_{\infty}$ bimodule over the pair
$$
\left((H(L_1;\Lambda_{0,{\rm nov}}^{\Z_2}),\{\mathfrak m_k^J\}_{k=0}^{\infty}),
(H(L_0;\Lambda_{0,{\rm nov}}^{\Z_2}),\{\mathfrak m_k^J\}_{k=0}^{\infty})\right).
$$
\item
This depends only on the connected component of $J\in
\JJ_{(M,\omega)}^{c_1 > 0}$ up to an isomorphism of unital filtered
$A_{\infty}$ bimodule.
\item
A similar functoriality as Theorem \ref{theoremA} (2) holds.
\end{enumerate}
\par
If $(L_0,L_1)$ is a relatively spin pair and $H(L_1 \cap L_0;\Theta_{R_h}^-)$ is
a free $\Z$ module, we can take $\Lambda_{0,{\rm nov}}^{\Z}$ as a coefficient ring in place
of $\Lambda_{0,{\rm nov}}^{\Z_2}$.
This bimodule is homotopy equivalent to the ones in \cite{fooo-book1} Theorem F after taking $\otimes \Q$.
\end{theorem}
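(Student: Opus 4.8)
The plan is to run, over $\Z_2$ (resp.\ over $\Z$ in the relatively spin case), the construction of the filtered $A_\infty$ bimodule on the Floer complex of a pair from \cite{fooo-book} Chapter~3, replacing the rational virtual fundamental chains obtained there by multisection perturbations with the integral (resp.\ $\Z_2$) virtual chains produced by the sheaf-of-groups and normal-cone stratification technology developed earlier in this paper, and taking the $A_\infty$ algebra structures on $H(L_1)$ and $H(L_0)$ to be those of Theorem \ref{theoremA} for the given $J$. First I would introduce the relevant moduli spaces: for connected components $R_{h_-},R_{h_+}$ of $L_1\cap L_0$ and integers $k_1,k_0\ge 0$, let $\MM_{k_1,k_0}(B;R_{h_-},R_{h_+})$ be the Gromov-compactified moduli space of pseudo-holomorphic maps from a genus-zero bordered stable curve with two boundary punctures carrying strip-like ends asymptotic to $R_{h_-},R_{h_+}$, with $k_1$ ordered marked points on the boundary arc mapped to $L_1$ and $k_0$ on the arc mapped to $L_0$, in homotopy class $B$; it comes with evaluation maps to $R_{h_-}\times R_{h_+}\times L_1^{k_1}\times L_0^{k_0}$ and has virtual dimension governed by $\mu_L(R_h)$ and $\mu(B)$. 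Gromov compactness and the construction of Kuranishi structures compatible with the decomposition of the codimension-one boundary into fiber products of lower $\MM_{k_1',k_0'}(B')$ with the disc moduli $\MM_{k+1}(\beta)$ of $L_1$ and of $L_0$ is carried out exactly as in \cite{fooo-book}.

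The spherical positivity of $(M,\omega,J)$ enters exactly as in the proof of Theorem \ref{theoremA}: since every nonconstant $J$-holomorphic sphere has $c_1>0$, a sphere bubble strictly lowers virtual dimension, so the strata of $\MM_{k_1,k_0}(B;R_{h_-},R_{h_+})$ carrying sphere components have codimension $\ge 2$, do not appear in the codimension-one boundary, and play no role in the bimodule identities. Hence only the boundary strata enumerated above occur, and I can construct the perturbed virtual fundamental chains by induction on the partial order given first by $\omega(B)$ and then by $k_1+k_0$, using the normal-cone/sheaf-of-groups scheme to obtain $\Z_2$-valued (resp., in the relatively spin case and using the orientations of \cite{fooo-book} Chapter~8 twisted by the local systems $\Theta^-_{R_h}$, $\Z$-valued) virtual chains, requiring at each stage that the restriction to each codimension-one face agree with the perturbation already fixed there — the one determined by the $A_\infty$ algebras of Theorem \ref{theoremA} on the $\MM_{k+1}(\beta)$ factors and by the already constructed lower bimodule data on the $\MM_{k_1',k_0'}(B')$ factor. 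Pushing these chains forward along the evaluation maps and assembling them over $B$ and over the components $R_h$ defines the operations $\mathfrak n_{k_1,k_0}$ on $C(L_1,L_0;\Lambda_{0,nov}^{\Z_2})$ (resp.\ on $C(L_1,L_0;\Lambda_{0,nov}^{\Z})$).

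The filtered $A_\infty$ bimodule relations then follow from the boundary description: the two-fold composites on the left side of each relation enumerate precisely the codimension-one faces of $\MM_{k_1,k_0}(B)$, so the identity holds on the chain level because the signed (resp.\ $\Z_2$) count of boundary points of a compact perturbed $1$-manifold-with-corners vanishes; this is literally the argument of \cite{fooo-book} Chapter~3 with coefficients changed. Unitality — that $PD([L_1])$ and $PD([L_0])$ act as the identity via $\mathfrak n_{1,0}$ and $\mathfrak n_{0,1}$, and that higher insertions of these units vanish — comes from the same forgetful-map and degeneration argument of \cite{fooo-book}, which concerns only the combinatorics of the marked points and is insensitive to the coefficient ring. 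Assertion (2) and the functoriality of (3) are proved, as for Theorem \ref{theoremA}(2), by repeating the construction over the parameter $[0,1]$: a path in $\JJ_{(M,\omega)}^{c_1>0}$, or a symplectic isotopy $\psi_t:(M,L_1,L_0)\to(M',L_1',L_0')$, yields parametrized moduli spaces whose integral virtual chains furnish a homotopy equivalence of filtered $A_\infty$ bimodules, with composition of homotopy classes handled as there. For the last sentence, after $\otimes\Q$ the integral virtual chains become rational virtual chains on the same Kuranishi structures; any two compatible systems of rational virtual chains are cobordant, so the bimodule is homotopy equivalent over $\Q$ to the one of \cite{fooo-book} Theorem~F. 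The torsion-freeness of $H(L_1\cap L_0;\Theta^-_{R_h})$ is used, in the $\Z$-coefficient case, so that the chain complexes and the homological-algebra manipulations (the homotopy equivalences, and the passage to the cohomology of the $R_h$) introduce no spurious torsion; compare Remark \ref{Zfreeremark}.

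The step I expect to be the main obstacle is the inductive compatibility of the integral virtual fundamental chains with the boundary fiber-product structure. The normal-cone/sheaf-of-groups construction of $\Z$- or $\Z_2$-valued virtual chains is far more rigid than the multisection construction over $\Q$ — one cannot average to enforce agreement on a boundary face — so one must check that the stratification of each Bott--Morse strip moduli space by isotropy type is compatible with its stratification by boundary and corner strata (that near a boundary or corner point the normal cone splits as the product of the normal cones of the corresponding lower bimodule piece and the relevant $\MM_{k+1}(\beta)$ pieces) and that the perturbations already fixed on the $\MM_{k+1}(\beta)$ factors by Theorem \ref{theoremA} admit extensions over the interior within this $\Z$-linear framework. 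Controlling the corners of these strip moduli spaces, where several simultaneous breakings occur, is the delicate technical point.
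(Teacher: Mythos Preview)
Your overall plan matches the paper's, which simply says the proof is ``similar and parallel to \cite{fooo-book}'' using the single-valued normally conical sections of Theorem~\ref{maintechnicalresult} in place of multisections. The inductive scheme you describe is exactly Proposition~\ref{mkconstruct}/\ref{fkconstruct} adapted to the strip moduli, and the boundary-compatibility concern you flag at the end is precisely what the relative version, Theorem~\ref{reltechnicalresult}, is set up to handle.

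There is, however, one genuine conceptual slip in your account of how spherical positivity enters. You write that $c_1>0$ forces sphere bubbles to have codimension $\ge 2$, so they ``do not appear in the codimension-one boundary, and play no role in the bimodule identities.'' That is not the point: sphere bubbling is \emph{always} a codimension-$2$ phenomenon in the corner stratification of the Kuranishi structure, with or without spherical positivity, and the description of $\partial\MM_{k_1,k_0}(B)$ as disc/strip breakings holds unconditionally. Spherical positivity is used for a different stratification --- the one by isotropy group $\Gamma$. The obstruction to single-valued transversal sections lives on the loci $X^{\cong}(\Gamma)$ with $\Gamma\ne\{1\}$, which arise from multiply-covered sphere components. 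What you need is the analog of Proposition~\ref{dimprop} (and Lemma~\ref{dimdropf}) for the strip moduli: under spherical positivity, $d(\MM_{k_1,k_0}(B;\ldots);\Gamma;i)\le \dim\MM_{k_1,k_0}(B;\ldots)-2$ for every nontrivial $\Gamma$. Combined with Theorem~\ref{maintechnicalresult}~(iii), this is what guarantees that the top-dimensional simplices of the perturbed zero set miss the nontrivial-isotropy locus (cf.\ Lemma~\ref{dimensionrest}), so the virtual fundamental chain is defined over $\Z_2$ or $\Z$. The proof of this dimension estimate for strips is the same reduced-model computation as in Section~\ref{dimensionsec}, since the automorphisms act trivially on the strip and disc components and the analysis localizes to the sphere trees. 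Once you state and use the correct analog of Proposition~\ref{dimprop}, the rest of your outline goes through.
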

We can generalize various other results of \cite{fooo-book1}, \cite{fooo-book2} to
ones over $\Lambda_{0,{\rm nov}}^{\Z_2}$ or $\Lambda_{0,{\rm nov}}^{\Z}$ in a
similar way. See Section \ref{whichandhow} for the precise statement.
\par
In \cite{fooo-book1}, \cite{fooo-book2} we worked over $\Lambda_{0,{\rm nov}}^{\Q}$ because we
used multivalued perturbations (multisections) to define
(virtual) fundamental chains of various moduli spaces involved in
the construction. We need a multisection to perturb the moduli space
to achieve transversality since in general there is a nontrivial
automorphism of elements of the moduli spaces involved. More precisely, the moduli
space we use is the stable map compactification $\mathcal
M_{k+1}^{\text{\rm main}}(\beta)$ of the space of pseudo-holomorphic
discs with $k+1$ boundary marked points.
\par
In order to prove Theorems \ref{theoremA} and \ref{theoremF} we need to
use single valued sections in place of multisections that
satisfy some transversality properties.
The space $\mathcal M_{k+1}^{\text{\rm main}}(\beta)$ is regarded locally as
the zero set of a section of an orbi-bundle over an orbifold.
Transversality of the zero set in general fails even for a generic
section of an orbi-bundle over an orbifold.
Our main technical result to prove Theorems \ref{theoremA} and \ref{theoremF}
is stated in Section \ref{technicalresult}. (Theorem \ref{maintechnicalresult}.)
\par
We next mention some of applications of Theorems \ref{theoremA} and \ref{theoremF}.
Since many of them are straightforward generalization of the results
proved over $\Q$ coefficients in \cite{fooo-book1},\cite{fooo-book2}
we mention only a few of them below.
\begin{theorem}\label{theoremKs}
Let $L$ be a compact Lagrangian submanifold of $\C^n$ that satisfies
$H^2(L;\Z_2) = 0$.
Then its Maslov class $\mu_L \in H^1(L;\Z)$ is nonzero.
\end{theorem}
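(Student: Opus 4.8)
The plan is to argue by contradiction, exploiting the $\Z_2$-coefficient Floer package of Theorems \ref{theoremA} and \ref{theoremF}. Equip $\C^n$ with the standard complex structure $J_0$ and symplectic form $\omega_0$; then $(\C^n,\omega_0,J_0)$ is tame, and since $\pi_2(\C^n)=0$ it is trivially spherically positive, so $J_0\in\JJ_{(\C^n,\omega_0)}^{c_1>0}$ and Theorems \ref{theoremA}, \ref{theoremF} apply to any compact $L\subset\C^n$ over $\Lambda_{0,nov}^{\Z_2}$. Suppose, contrary to the assertion, that $\mu_L=0\in H^1(L;\Z)$. Since $\pi_1(\C^n)=\pi_2(\C^n)=0$, the boundary map $\partial\colon\pi_2(\C^n,L)\to\pi_1(L)$ is an isomorphism, and the Maslov index $\mu\colon\pi_2(\C^n,L)\to\Z$ is, under $\partial$, the homomorphism induced by $\mu_L$; hence $\mu(\beta)=0$ for every class $\beta$ carrying a $J_0$-holomorphic disc with boundary on $L$.

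The first step is to show $L$ is unobstructed over $\Lambda_{0,nov}^{\Z_2}$ with the trivial bounding cochain. The class $\mathfrak m_0^{J_0}(1)\in H^*(L;\Lambda_{0,nov}^{+,\Z_2})$ receives from each disc class $\beta$ a contribution carried by $T^{\omega(\beta)}e^{\mu(\beta)/2}$ which, by the virtual-dimension formula for $\mathcal M_1^{\mathrm{main}}(\beta)$ and the degree convention $\deg(aT^\lambda e^\mu)=2\mu$, lies in $H^{2-\mu(\beta)}(L;\Z_2)$. As $\mu(\beta)=0$ throughout, $\mathfrak m_0^{J_0}(1)\in H^2(L;\Z_2)\otimes\Lambda_{0,nov}^{+,\Z_2}=0$ by the hypothesis $H^2(L;\Z_2)=0$. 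Thus $b=0$ satisfies the Maurer--Cartan equation, and applying Theorem \ref{theoremF} to the (clean) pair $L_1=L_0=L$ with $b_1=b_0=0$ we obtain a well-defined Floer cohomology $HF((L,0),(L,0);\Lambda_{0,nov}^{\Z_2})$, computed from the differential $\delta$ on $C(L,L;\Lambda_{0,nov}^{\Z_2})$, which in this case agrees with $\mathfrak m_1^{J_0}$ on $H^*(L;\Lambda_{0,nov}^{\Z_2})$ up to a uniform degree shift.

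The second step compares this with the Floer cohomology after displacing $L$. As $L$ is compact it is Hamiltonian displaceable in $\C^n$: a linear Hamiltonian, cut off outside a large ball, generates a diffeomorphism $\psi$ with $\psi(L)\cap L=\emptyset$, and $\psi(L)$ is again unobstructed (being Hamiltonian isotopic to $L$ it has $\mu_{\psi(L)}=0$ and $H^2(\psi(L);\Z_2)=0$, so Step 1 applies). By the Hamiltonian-isotopy invariance of Floer cohomology established alongside Theorem \ref{theoremF} (see Section \ref{whichandhow}), $HF((L,0),(L,0);\Lambda_{0,nov}^{\Z_2})\cong HF((\psi(L),0),(L,0);\Lambda_{0,nov}^{\Z_2})$, and the latter is $0$ because $\psi(L)\cap L=\emptyset$ makes the bimodule zero. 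On the other hand, since every disc class has $\mu(\beta)=0$, the classical part $\mathfrak m_{1,0}^{J_0}$ is the (vanishing) singular differential on $H^*(L;\Z_2)$ and every positive-energy term $\mathfrak m_{1,\beta}^{J_0}$ raises the $H^*(L;\Z_2)$-degree by exactly $1$; hence the image of $\delta$ lies in $\bigoplus_{k\ge 1}H^k(L;\Z_2)\otimes\Lambda_{0,nov}^{\Z_2}$. The unit $PD([L])\in H^0(L;\Z_2)$ is $\delta$-closed by unitality (Theorem \ref{theoremA}(3)), and by the previous sentence it is not $\delta$-exact, so $[PD([L])]\ne 0$ in $HF((L,0),(L,0);\Lambda_{0,nov}^{\Z_2})$ --- contradicting its vanishing. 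Therefore $\mu_L\ne 0$.

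The genuinely delicate point is the second step: one must be sure that the invariance of the bimodule under Hamiltonian isotopy, and the vanishing of the Floer complex for a disjoint pair, remain valid for the \emph{non-compact} (tame) target $\C^n$ and over $\Z_2$ --- that is, that the $C^0$- and energy-estimates confining the relevant pseudo-holomorphic curves, hence the Kuranishi structures and the single-valued perturbations of Theorem \ref{maintechnicalresult}, are in force in this setting. Granting that, everything else is degree bookkeeping together with the formal properties of Theorems \ref{theoremA} and \ref{theoremF}.
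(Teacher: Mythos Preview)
Your argument is correct and follows the same overall contradiction scheme as the paper's proof (unobstructedness from $H^2(L;\Z_2)=0$, non-vanishing of self-Floer cohomology, Hamiltonian displaceability in $\C^n$), but you streamline two of the three steps. For unobstructedness the paper appeals to the obstruction theory of Theorem \ref{theoremC} to produce \emph{some} bounding cochain $b$, whereas you observe directly that $b=0$ works since $\mathfrak m_{0,\beta}(1)\in H^{2-\mu(\beta)}(L;\Z_2)=H^2(L;\Z_2)=0$. For non-vanishing the paper invokes the spectral sequence of Theorem \ref{theoremD} together with Theorem \ref{theoremE}, the latter resting on the operator $\frak p'$ built from Hochschild homology in Subsection \ref{Hochschild homology}; you bypass all of this with the elementary observation that when every $\mu(\beta)=0$ the deformed differential $\mathfrak m_1=\sum_{\beta\ne 0}T^{\omega(\beta)}\mathfrak m_{1,\beta}$ strictly raises the internal $H^*(L;\Z_2)$-grading by one, so the unit $PD[L]\in H^0$ can never be exact. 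Your route is more economical; the paper's route, via Theorems \ref{theoremD} and \ref{theoremE}, yields the sharper statements about $PD[pt]$ and about the spectral-sequence differentials that are needed elsewhere (e.g.\ Theorem \ref{theoremI}).

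One imprecision: the Hamiltonian-isotopy invariance you invoke is Theorem \ref{theoremG}\,(4), which is stated over $\Lambda_{nov}^{\Z_2}$, not $\Lambda_{0,nov}^{\Z_2}$; over $\Lambda_{0,nov}^{\Z_2}$ there can be torsion. This is harmless for you, since your grading argument shows equally well that $PD[L]$ is not $\mathfrak m_1$-exact over $\Lambda_{nov}^{\Z_2}$ (the image of $\mathfrak m_1$ still lies in $\bigoplus_{k\ge 1}H^k(L;\Z_2)\otimes\Lambda_{nov}^{\Z_2}$), so the contradiction with $HF((L,0),(L,0);\Lambda_{nov}^{\Z_2})=0$ goes through; just adjust the coefficient ring in that sentence.
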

Theorem \ref{theoremKs} is proved in \cite{fooo-book1} under a
similar but different assumption that $H^2(L;\Q) = 0$ and $L$ is
relatively spin (Theorem K).
\begin{theorem}\label{theoremL} Let $L \subset (M,\omega)$ be a Lagrangian submanifold
such that the Maslov index homomorphism $ \mu_L:\pi_2(M,L) \to \Z $
is trivial. Assume  $H^2(L;\Z_2) = 0$ and $M$ is spherically
positive. Then for any Hamiltonian diffeomorphism $\phi: M \to M$,
we have
$$
L \cap \phi(L) \neq \emptyset.
$$
Moreover if $L$ is transversal to $\phi(L)$, there exists
$p \in L\cap \phi(L)$ whose Maslov index is $0$.
\end{theorem}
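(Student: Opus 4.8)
The plan is to run the standard Floer-theoretic non-displaceability argument of \cite{fooo-book} (Theorem J there), now available with $\Z_2$-coefficients thanks to Theorems \ref{theoremA} and \ref{theoremF} and the extensions announced in Section \ref{whichandhow}. First I would apply Theorem \ref{theoremA} to obtain the filtered $A_\infty$ algebra $(H^*(L;\Lambda_{0,nov}^{\Z_2}),\{\mathfrak m_k^J\})$ and observe that, since $\mu_L:\pi_2(M,L)\to\Z$ is trivial, every disc class $\beta$ occurring in the operations has $\mu_L(\beta)=0$, so the operations respect a genuine $\Z$-grading. In particular $\mathfrak m_0^J(1)=\sum_{\beta\ne 0}\mathfrak m_{0,\beta}^J(1)\,T^{\omega(\beta)}$ has cohomological degree $2$ and positive energy, i.e. $\mathfrak m_0^J(1)\in H^2(L;\Z_2)\otimes\Lambda_{0,nov}^{+,\Z_2}$. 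As $H^2(L;\Z_2)=0$, this forces $\mathfrak m_0^J(1)=0$, so $L$ is (weakly) unobstructed over $\Z_2$ with bounding cochain $b=0$.

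Next I would invoke Theorem \ref{theoremF}(1) for the pair $(L,L)$: the filtered $A_\infty$ bimodule $C(L,L;\Lambda_{0,nov}^{\Z_2})$ with the deformed differential $\mathfrak d^{0,0}$ defines $HF((L,0),(L,0);\Lambda_{0,nov}^{\Z_2})$. A Hamiltonian diffeomorphism $\phi$ carries $b=0$ to a bounding cochain of $\phi(L)$ and induces, by the Hamiltonian-isotopy invariance of this Floer cohomology (the $\Z_2$-analogue of the corresponding statement in \cite{fooo-book}; see Section \ref{whichandhow}), an isomorphism $HF((\phi(L),0),(L,0);\Lambda_{0,nov}^{\Z_2})\cong HF((L,0),(L,0);\Lambda_{0,nov}^{\Z_2})$. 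If $L\cap\phi(L)=\emptyset$ the left-hand side vanishes, so it suffices to prove that $HF((L,0),(L,0);\Lambda_{0,nov}^{\Z_2})\ne 0$.

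For the non-vanishing I would argue in the canonical model, where the classical part $\mathfrak m_{1,0}$ of $\mathfrak m_1^J$ vanishes, so $\mathfrak d^{0,0}=\mathfrak m_1^J$ strictly raises the energy filtration; the energy spectral sequence of \cite{fooo-book} (a purely algebraic consequence of the filtered $A_\infty$ structure, hence valid verbatim over $\Z_2$) then has $E_1$-page $H^*(L;\Z_2)\otimes\Lambda_{0,nov}^{\Z_2}$ and converges to $HF((L,0),(L,0);\Lambda_{0,nov}^{\Z_2})$. The unit $PD([L])\in H^0(L;\Z_2)$ is a permanent cocycle by Theorem \ref{theoremA}(3), and it cannot be a boundary at any page since the differentials raise cohomological degree by $1$ while $H^{-1}(L;\Z_2)=0$; hence it survives to $E_\infty$ and $HF((L,0),(L,0);\Lambda_{0,nov}^{\Z_2})\ne 0$, proving $L\cap\phi(L)\ne\emptyset$. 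When moreover $L\pitchfork\phi(L)$, the complex $C(\phi(L),L;\Lambda_{0,nov}^{\Z_2})$ is freely generated over $\Lambda_{0,nov}^{\Z_2}$ by the points of $\phi(L)\cap L$, and triviality of $\mu_L$ upgrades its grading to a genuine $\Z$-grading in which a generator $p$ has degree $\mu_L(p)$. The isomorphism above sends the degree-$0$ class $PD([L])$ to a nonzero element of $HF^0((\phi(L),0),(L,0);\Lambda_{0,nov}^{\Z_2})$, so the degree-$0$ part of the chain complex is nonzero; thus some $p\in\phi(L)\cap L$ has Maslov index $0$.

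The main obstacle is not any of these steps individually — each is a transcription of the $\Q$-coefficient argument of \cite{fooo-book} — but the legitimacy of the $\Z_2$-coefficient foundations on which they rest: that $\mathfrak m_0^J$, the bimodule $C(L_1,L_0;\Lambda_{0,nov}^{\Z_2})$, the energy spectral sequence, Hamiltonian invariance and unitality can all be constructed over $\Z_2$ using single-valued perturbations. This is exactly what Theorems \ref{theoremA}, \ref{theoremF} and Section \ref{whichandhow} supply, the only genuinely new analytic ingredient having been isolated as Theorem \ref{maintechnicalresult}; granting these, the proof of Theorem \ref{theoremL} is routine.
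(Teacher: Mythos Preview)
Your argument is correct and complete, but it takes a somewhat more direct route than the paper's. The paper derives Theorem \ref{theoremL} by repeating verbatim the intermediate steps in its proof of Theorem \ref{theoremKs}: it invokes Theorem \ref{theoremC} (the obstruction classes $[o_k^{2m_k}(L;\operatorname{weak})]$ all lie in $H^2(L;\Z_2)$ because $2-2m_k$ is a sum of Maslov indices, hence zero) to produce some bounding cochain $b$, and then appeals to Theorem \ref{theoremE} (1),(2) --- whose proof uses the operator $\frak p'$ and Hochschild homology --- to show $PD[L]'$ survives the spectral sequence, before finishing with Theorem \ref{theoremG}. You instead exploit the Maslov-zero hypothesis more aggressively: it forces $\mathfrak m_0^J(1)\in H^2(L;\Z_2)\otimes\Lambda_{0,nov}^{+,\Z_2}=0$ on the nose, so $b=0$ works without any obstruction theory; and it gives a genuine $\Z$-grading in which $\mathfrak m_1^b$ has degree $+1$, so $PD[L]\in H^0$ survives by the elementary observation that $H^{-1}=0$, bypassing Theorem \ref{theoremE} and the $\frak p'$ machinery entirely. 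Your approach is more elementary and self-contained under the stated hypotheses; the paper's approach illustrates how the result slots into the general framework of Theorems \ref{theoremC}--\ref{theoremG} and would generalize more readily (e.g.\ to cases where only non-positivity of Maslov indices is known).
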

Theorem \ref{theoremL} is a $\Z_2$ coefficients  version of  Theorem L \cite{fooo-book1}.
\par
The main new part of this paper which consists of  Sections
\ref{technicalresult}-\ref{section35.4} is written in a way
independent of \cite{fooo-book1},\cite{fooo-book2}. Actually the  main result, Theorem
\ref{maintechnicalresult}, holds for an arbitrary space with
Kuranishi structure with tangent bundle, which may or may not be
related to the moduli space of pseudo-holomorphic curve. (We would
like to recall the readers that the theory of Kuranishi structures
itself that appeared in 1996 is not restricted to but independent of
the study of pseudo-holomorphic curves, although its main
application so far is aimed to the study of moduli spaces of
pseudo-holomorphic curves. )
\par
Sections \ref{moduli} and after are devoted to the
generalizations of the results of \cite{fooo-book1}, \cite{fooo-book2} to the $\Z$ or
$\Z_2$ coefficients. So they necessarily use  various results from
\cite{fooo-book1}, \cite{fooo-book2}. We, however, try to make them separately readable
without reading \cite{fooo-book1}, \cite{fooo-book2} modulo the details of the proofs, as much
as possible.
Especially Section \ref{AinftyconstrZ2}, which constructs a filtered
$A_{\infty}$ algebra over $\Z_2$, we repeat the construction of
compatible sections carried out in the transversality part (Section
7.2) of \cite{fooo-book2}. There we use the results on analysis and
homological algebra (construction of Kuranishi structure on moduli
space of holomorphic discs), which we refer readers to
\cite{fooo-book2}.
\par
This paper is one third of \cite{fooo-chap8}, that is  `Chapter 8' of the
2006 preprint version of \cite{fooo-book1}, \cite{fooo-book2}.
(`Chapter 8' is one of the two chapters which were removed
from \cite{fooo-book1}, \cite{fooo-book2} when it was finally published.
We removed them to keep the size of \cite{fooo-book1}, \cite{fooo-book2} within the
requirement of the publisher.)
During 5 years there are some progress. For example, the readers can see such progress in Subsections \ref{MClambda0}, \ref{Hochschild homology}.
\par
Another one third of  \cite{fooo-chap8} becomes \cite{fooo:inv}.
The paper containing the rest of  \cite{fooo-chap8}, which proves Arnold-Givental conjecture
in spherically positive case, is in preparation.

\section{Kuranishi structure: review}\label{kuranishisection}
In this section, we review the definition of Kuranishi structure.
There is nothing new in this section except Definitions
\ref{A1.21} and \ref{perturbed0kuranishi}, which we use in later
sections.
The main purpose of this section is a review and fixing notations. We refer \cite{FuOn99II} and
Section A1 of \cite{fooo-book2}.
Let $X$ be a compact metrizable space and $p \in X$.
\begin{defn}\label{A1.1}
A {\it Kuranishi neighborhood} of $p$ in $X$ is a quintet
$(V_p, E_p, \Gamma_p, \psi_p, s_p)$ such that:
\begin{enumerate}[{\rm (i)}]
\item $V_p$ is a
smooth manifold of finite dimension, which may or may
not have boundary or corner.
\item $E_p$ is a real vector space of finite dimension. \par
\item $\Gamma_p$ is a finite group acting
smoothly and effectively on $V_p$ and has a linear representation
on $E_p$.
\item
$s_p$ is a $\Gamma_p$ equivariant smooth map $V_p \to E_p$. \par
\item $\psi_p$ is a homeomorphism from
$s_p^{-1}(0)/\Gamma_p$ to a neighborhood of $p$ in $X$.
\end{enumerate}
We put $U_p = V_p/\Gamma_p$ and say that
$U_p$ is a {\it Kuranishi neighborhood}. We sometimes say that $V_p$ is a
Kuranishi neighborhood by an abuse of notation.
\par
We call $E_p \times V_p \to V_p$ the {\it obstruction bundle} and
$s_p$ the {\it Kuranishi map}.
For $x \in V_p$, denote by $I_x$ the isotropy subgroup at $x$, i.e.,
$$I_x = \{ \gamma \in \Gamma_p \vert \gamma x = x \}.$$
\end{defn}
Let us take a point $o_p \in V_p$ with $s_p(o_p) = 0$
and $\psi([o_p]) = p$. We may and will assume that $o_p$ is fixed by all elements of
$
\Gamma_p
$.
\begin{defn}\label{A1.3}
Let $(V_p, E_p, \Gamma_p, \psi_p, s_p)$,
$(V_q, E_q, \Gamma_q, \psi_q, s_q)$ be Kuranishi neighborhoods of
$p \in X$ and $q \in \psi_p(s_p^{-1}(0)/\Gamma_p)$, respectively.
We say a triple
$(\hat\phi_{pq},\phi_{pq},h_{pq})$ a {\it coordinate change}
if
\begin{enumerate}[{\rm (i)}]
\item
$h_{pq}$ is an injective homomorphism $\Gamma_q \to \Gamma_p$.
\par
\item $\phi_{pq}: V_{pq} \to V_p$ is an
$h_{pq}$ equivariant smooth embedding
from a $\Gamma_q$ invariant open neighborhood $V_{pq}$  of $o_q$ to $V_p$,
such that the induced map
$\underline{\phi}_{pq}:
V_{pq}/\Gamma_q \to V_p/\Gamma_p$ is injective.
\par
\item $(\hat\phi_{pq},\phi_{pq})$ is an $h_{pq}$
equivariant embedding of vector bundles $E_q  \times V_{pq} \to E_p \times V_p$.
\par
\item
$\hat\phi_{pq}\circ s_q = s_p\circ\phi_{pq}$. Here and hereafter we
sometimes regard $s_p$ as a section $s_p:  V_p \to E_p\times V_p$ of
trivial bundle $E_p \times V_p \to V_p$.
\par
\item
 $\psi_q =
\psi_p\circ \underline{\phi}_{pq}$ on $(s_q^{-1}(0) \cap V_{pq})/\Gamma_q$.
Here $\underline{\phi}_{pq}$ is as in (ii).
\item
$h_{pq}$ restricts to an isomorphism
$(\Gamma_q)_x \to (\Gamma_p)_{\phi_{pq}(x)}$ for any $x \in
V_{pq}$.
\end{enumerate}
\end{defn}
\begin{defn}\label{A1.5}
A {\it Kuranishi structure} on $X$ assigns a
Kuranishi neighborhood $(V_p, E_p, \Gamma_p, \psi_p, s_p)$
for each $p \in X$ and a coordinate change
$(\hat\phi_{pq},\phi_{pq},h_{pq})$ for each
$q \in \psi_p(s_p^{-1}(0)/\Gamma_p)$ such that the following holds.
\begin{enumerate}[{\rm (i)}]
\item $\dim V_p - \operatorname{rank} E_p$
is independent of $p$.
\par
\item If $r \in \psi_q((s_q^{-1}(0)\cap V_{pq})/\Gamma_q)$,
$q \in \psi_p(s_p^{-1}(0)/\Gamma_p)$, then
there exists $\gamma^{\alpha}_{pqr} \in
\Gamma_p$ for each connected component
$\alpha$ of $\phi_{qr}^{-1}(V_{pq}) \cap V_{qr} \cap V_{pr}$ such that
$$
h_{pq} \circ h_{qr} = \gamma^{\alpha}_{pqr}
\cdot h_{pr} \cdot (\gamma^{\alpha}_{pqr})^{-1}
, \quad
\phi_{pq} \circ \phi_{qr} = \gamma^{\alpha}_{pqr}
\cdot \phi_{pr}, \quad
\hat\phi_{pq} \circ \hat\phi_{qr} = \gamma^{\alpha}_{pqr}\cdot
\hat\phi_{pr}.
$$
Here the second equality holds on the
connected component $\alpha$ of $\phi_{qr}^{-1}(V_{pq}) \cap V_{qr} \cap V_{pr}$
and the third equality holds on the restriction of $E_r \times (\phi_{qr}^{-1}(V_{pq}) \cap V_{qr} \cap V_{pr})$ to $\alpha$.
In case $V_p$ has boundary or corners, we say that $(V_p, E_p, \Gamma_p, \psi_p,
s_p)$ is a {\it Kuranishi structure with boundary or corner}.
\end{enumerate}
We remark that (ii) is equivalent to the condition that
$$
\underline{\phi}_{pq} \circ \underline{\phi}_{qr} = \underline{\phi}_{pr}.
$$
(We can prove this equivalence by using the effectivity of the $\Gamma_p$
action.)
\par
We call $\dim V_p - \operatorname{rank} E_p$ the {\it virtual dimension}
(or dimension)
of the Kuranishi structure.
\par
In case $K \subset X$ we say $U$ is a {\it Kuranishi neighborhood} of $K$
if $U = \{V_{p_i}/\Gamma_{p_i}\}$,  $K \subset \bigcup_i
\psi_{p_i}(s_{p_i}^{-1}(0)/\Gamma_{p_i})$.
\par
An {\it orbifold} structure on $X$ is, by definition, a Kuranishi structure
on $X$ such that $E_p = 0$ for all $p$.
\end{defn}
We recall:
\begin{lem}\label{A1.11}{\rm([\cite{FuOn99II}, Lemma 6.3])}
Let $X$ be a space with Kuranishi structure.
Then there exists a finite set $P \subset X$, a partial order $<$ on $P$,
and a Kuranishi neighborhood $(V_p, E_p, \Gamma_p, \psi_p, s_p)$
of $p$ for each $p \in P$, with the following properties.
\par\smallskip
\begin{enumerate}[{\rm (i)}]
\noindent
\item
If $q < p$, $\psi_p(s_p^{-1}(0)/\Gamma_p) \cap
\psi_q(s_q^{-1}(0)/\Gamma_q) \ne \emptyset$, then there exists
$(V_{pq},\hat\phi_{pq},\phi_{pq},h_{pq})$ where:
\smallskip
\begin{enumerate}[{\rm(a)}]
\item
$V_{pq}$ is a $\Gamma_q$ invariant open subset of $V_q$ such that
$V_{pq}/\Gamma_q$ contains
$$\psi_q^{-1}(\psi_p(s_p^{-1}(0)/\Gamma_p) \cap \psi_q(s_q^{-1}(0)/\Gamma_q)),
$$
\item
$h_{pq}$ is an injective homomorphism $\Gamma_q \to \Gamma_p$ with
its image $(\Gamma_p)_{\phi_{pq}(q)}$,
\par
\item
$\phi_{pq}: V_{pq} \to V_p$ is an $h_{pq}$ equivariant smooth
embedding such that the induced map $V_{pq}/\Gamma_{q} \to
V_p/\Gamma_p$ is injective,
\par
\item
$(\hat\phi_{pq},\phi_{pq})$ is an $h_{pq}$
equivariant embedding of vector bundles $E_q\times{V_{pq}} \to E_p\times V_{p}$,
\par
\item
$\hat\phi_{pq}\circ s_q =
s_p\circ\phi_{pq}, \qquad \psi_q = \psi_p\circ\underline{\phi}_{pq}$.
\end{enumerate}
\smallskip
\item
If $r < q < p$,
$\psi_p(s_p^{-1}(0)/\Gamma_p) \cap \psi_q(s_q^{-1}(0)/\Gamma_q)
\cap \psi_r(s_r^{-1}(0)/\Gamma_r) \ne \emptyset$, then there exists $\gamma^{\alpha}_{pqr} \in \Gamma_p$
for each connected component $\alpha$ of
$\phi_{qr}^{-1}(V_{pq}) \cap V_{qr}
\cap V_{pr}$
such that
$$
h_{pq} \circ h_{qr} =
\gamma^{\alpha}_{pqr} \cdot
h_{pr}  \cdot
(\gamma^{\alpha}_{pqr})^{-1}
, \quad
\phi_{pq} \circ \phi_{qr} = \gamma^{\alpha}_{pqr} \cdot
\phi_{pr}, \quad
\hat\phi_{pq} \circ \hat\phi_{qr} = \gamma^{\alpha}_{pqr} \cdot
\hat\phi_{pr}.
$$
Here the second equality holds on the connected component $\alpha$ of $\phi_{qr}^{-1}(V_{pq}) \cap V_{qr}
\cap V_{pr}$, and the
third equality holds on the restriction to $\alpha$ of
$E_r\times({\phi_{qr}^{-1}(V_{pq}) \cap V_{qr}}\cap V_{pr})$.
\par\item
We have
$\displaystyle
\bigcup_{p\in P} \psi_p(s_p^{-1}(0)/\Gamma_p) = X.
$
\par\item
If
$\psi_p(s_p^{-1}(0)/\Gamma_p) \cap \psi_q(s_q^{-1}(0)/\Gamma_q) \ne \emptyset$,
then either $p<q$ or $q<p$.
\end{enumerate}
\end{lem}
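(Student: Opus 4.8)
The plan is to build the finite set $P$, the partial order $<$, and the shrunk Kuranishi neighbourhoods by a finite downward induction on the local dimension function $\delta(p):=\dim V_p$ (equivalently on $\operatorname{rank}E_p$, the virtual dimension being constant by Definition \ref{A1.5}(i)). The key preliminary observation is that $\delta$ is upper semicontinuous: for every $q$ in the footprint $\psi_p(s_p^{-1}(0)/\Gamma_p)$ of a Kuranishi neighbourhood of $p$, the coordinate change provides an embedding $\phi_{pq}\colon V_{pq}\to V_p$ with $V_{pq}\subset V_q$ open, whence $\dim V_q\le\dim V_p$; thus $\delta\le\dim V_p$ on the whole footprint, so each sublevel set $\{\delta\le d\}$ is open and each superlevel set $\{\delta\ge d\}$ is compact. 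Let $d_1<\cdots<d_L$ be the finitely many values of $\delta$.

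First I would treat the top level. Since $\{\delta\ge d_L\}=\{\delta=d_L\}$ is compact, finitely many footprints of points with $\delta=d_L$ cover it, and after shrinking the chosen Kuranishi neighbourhoods so that the smaller footprints still cover we obtain the maximal elements of $P$. Then one descends: having covered an open neighbourhood $W$ of $\{\delta\ge d_j\}$ by centres of level $\ge d_j$ with coordinate changes already arranged among them, one chooses $W'$ with $\{\delta\ge d_j\}\subset W'\Subset W$ and replaces $X$ by the compact set $X\setminus W'$, all of whose points have $\delta\le d_{j-1}$, then covers the compact set $\{\delta\ge d_{j-1}\}\setminus W'$ by footprints of new level-$d_{j-1}$ centres, inserted in the order below everything of strictly higher level. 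At each stage, for a newly inserted centre $q$ and a previously chosen centre $p$ whose (shrunk) footprints overlap, one must produce a coordinate change from the $q$-data into the $p$-data. When $q$ itself lies in $\psi_p(s_p^{-1}(0)/\Gamma_p)$ this is already part of the given Kuranishi structure; in general one covers the relatively compact footprint-overlap by finitely many footprints of points $r$ of the original space (automatically $r\in\psi_p(\cdots)\cap\psi_q(\cdots)$, so $\delta(r)\le\delta(p),\delta(q)$), forms the composed coordinate changes $\phi_{pr}\circ\phi_{qr}^{-1}$ through each $r$, and patches. Here one uses that Lemma \ref{A1.11}(i) — unlike Definition \ref{A1.3} — only requires the domain $V_{pq}$ to be an open set lying over the footprint-overlap, which leaves room for the patching; between two centres of the same level the same construction, with the $\phi$'s now open embeddings, gives an identification in one direction, and one breaks the symmetry arbitrarily to keep $<$ a genuine partial order.

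The step I expect to be the main obstacle is exactly this patching, together with the triple-overlap compatibility (ii). The local coordinate changes through different intermediate points $r$ agree only up to the ambiguity $\gamma_{pqr}\in\Gamma_p$ of Definition \ref{A1.5}(ii), so to glue them into single well-defined maps $\phi_{pq},\hat\phi_{pq}$ satisfying the cocycle relation of Lemma \ref{A1.11}(ii) one must normalise these $\Gamma$-ambiguities coherently and then carry out one further round of shrinking of all the domains — performing the finitely many shrinkings in increasing order of the poset, so that shrinking a lower neighbourhood never disturbs a relation already fixed higher up. Once this is done, the cocycle identities of the original Kuranishi structure propagate along chains of intermediate points to yield (ii), while (iii) and (iv) hold by construction, since the shrunk footprints were chosen to cover $X$ and every overlapping pair of centres was made comparable. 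This is the content of Lemma 6.3 of \cite{FuOn99II} (see also Section A1 of \cite{fooo-book}), to which we refer for the detailed verification.
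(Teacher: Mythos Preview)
The paper does not prove this lemma at all: it is merely recalled, with an explicit citation to \cite{FuOn99II}, Lemma 6.3, and the text immediately moves on to defining the terminology ``good coordinate system.'' Your sketch follows the standard argument of that reference---downward induction on the upper-semicontinuous dimension function $\delta(p)=\dim V_p$, finite covers of the compact superlevel sets, and patching of coordinate changes through intermediate points---and you correctly identify the coherent normalisation of the $\Gamma$-ambiguities and the resulting cocycle compatibility (ii) as the genuine technical work. Since the paper itself defers entirely to the citation, and your proposal likewise concludes by referring to \cite{FuOn99II} and \cite{fooo-book} for the detailed verification, your treatment is consistent with the paper's own handling of this statement.
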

We call the system of Kuranishi neighborhood and
coordinate change
$$
(\{(V_p, E_p, \Gamma_p, \psi_p, s_p) \mid p \in P\},
~
\{(V_{pq},\hat\phi_{pq},\phi_{pq},h_{pq})\mid p,q \in P, p<q\})
$$
 in Lemma \ref{A1.11}
{\it the good coordinate system}.
(Note we require {\it existence} of $\gamma_{pqr}^{\alpha}$ but do not
include it as a part of the structure. Compare Remark \ref{A1.81} (iv).)
\begin{defn}\label{A1.13}
Consider the situation of Lemma \ref{A1.11}. Let $Y$ be a
topological space. A family $\{f_p\}$ of $\Gamma_p$-equivariant
continuous maps $f_p: V_p \to Y$ is said to be a {\it strongly
continuous map} if
$$
f_p \circ \phi_{pq} = f_q
$$
on $V_{pq}$.  A strongly continuous map induces a continuous map $f: X \to
Y$.
We will ambiguously denote $f = \{f_p\}$ when the meaning is clear.
\par
When $Y$ is a smooth manifold, a strongly continuous map
$f: X \to Y$ is defined to be {\it smooth} if all $f_p:V_p \to Y$ are smooth.
We say that it is {\it weakly submersive} if each of $f_p$ is a submersion.
\par
When $Y$ is a simplicial complex, we say $f = \{f_p\}$ is  {\it strongly piecewise smooth}
if each of $f_p$ is  piecewise smooth.
\end{defn}
Consider the situation of Lemma \ref{A1.11}. We identify a neighborhood of
$\phi_{pq}(V_{pq})$ in $V_p$ with a neighborhood of the zero section
of the normal bundle $N_{V_{pq}}V_p \to V_{pq}$, using an
exponential map of an appropriate Riemannian metric. We take the
fiber derivative of the Kuranishi map $s_p$ along the fiber
direction and obtain a homomorphism
\begin{equation}\label{fiberdifferential}
d_{\text{fiber}}s_p: N_{V_{pq}}V_p \to E_p \times V_{pq}
\end{equation}
which is an $h_{pq}$-equivariant bundle homomorphism.
Note we need to take and fix a connection on the bundle $E_p$
to define (\ref{fiberdifferential}).
However (\ref{fiberdifferential}) is independent of the choice of the
connection on the zero set of $s_p$.
\begin{defn}\label{A1.14}
We say that the space with Kuranishi structure
$X$ {\it has a tangent bundle} if $d_{\text{fiber}}s_p$
induces a bundle isomorphism
\begin{equation}
N_{V_{pq}}V_p \cong \frac{E_p \times V_{pq}}{\hat{\phi}_{pq}(E_q\times {V_{pq})}}
\label{A1.15}\end{equation}
as $\Gamma_q$-equivariant bundles on $V_{pq} \cap s_p^{-1}(0)$.
\end{defn}
By definition, the following diagram commutes for
each $x \in {\phi}_{qr}^{-1}(V_{pq}) \cap V_{qr} \cap V_{pr}$.
\begin{equation}
\CD
(N_{V_{qr}}V_q)_x
@>{\gamma_{pqr}\cdot d\phi_{pq}}>> (N_{V_{pr}}V_p)_x
@> >> (N_{V_{pq}}V_p)_{\phi_{qr}(x)} \\
 @V{}VV  @V VV @V VV\\
\frac{E_q}{\hat{\phi}_{qr,x}(E_r)}
@>{\gamma_{pqr} \cdot \hat\phi_{pq}}>> \frac{E_p}{\hat{\phi}_{pr,x}(E_r)}
@> >> \frac{E_p}{\hat{\phi}_{pq,\phi_{qr}(x)}(E_q)}
\endCD
\label{A1.16}\end{equation} Here and hereafter $\hat{\phi}_{qp,x}:
E_q \to E_p$ is the restriction of the bundle map $\hat{\phi}_{pq}$
to the fiber of $x$.
And we take and fix a connection on the bundle $E_p$ so that the image of
$\hat\phi_{pq}$ is invariant under the parallel transport.
Then (\ref{fiberdifferential}) is well defined and (5) commutes.
\par
We may shrink $V_{pq}$ to a smaller subset if necessary so that
(\ref{A1.15}) is an isomorphism not only on $V_{pq} \cap s_p^{-1}(0)$
but also everywhere on $V_{pq}$.
So hereafter we always assume so.

\begin{defn}\label{A1.17}
Let $X$ be a space with Kuranishi structure which has a tangent bundle.
We say that the Kuranishi structure on $X$ is {\it oriented} if
we have a trivialization of
$$
\Lambda^{\text{\rm top}}E^*_p \otimes \Lambda^{\text{\rm top}}TV_p
$$
which is compatible with isomorphism (\ref{A1.15}).
\end{defn}
We next define the compatibility of sections of the obstruction bundles
over various Kuranishi charts as follows.
\begin{defn}\label{A1.21}
Let us consider the situation of Lemma \ref{A1.11}. We assume that our
Kuranishi structure has a tangent bundle. Suppose $p,q \in P$, $q<p$
and  suppose we have sections $s'_p$, $s'_q$ of $E_p\times
V_p$,  $E_q\times V_q$ respectively.
We consider the embedding $\phi_{pq}: V_{pq} \to V_p$ in Lemma
\ref{A1.11} (i) (c). We identify its normal bundle $N_{V_{pq}}V_p$
with a tubular neighborhood of  $\phi_{pq}(V_{pq})$. For each $x \in
V_{pq}$ we fix a splitting
\begin{equation}
E_p\cong \hat{\phi}_{pq,x}(E_q) \oplus \frac{E_p}{\hat{\phi}_{pq,x}(E_q)}.
\label{A1.22}\end{equation}
For each $y \in N_{V_{pq}}V_p$, we obtain an element $1(y)$ of
$\frac{E_p}{\hat{\phi}_{pq,\pi(y)}(E_q)}$ by using the isomorphism (\ref{A1.15}).
Then, on $N_{V_{pq}}V_p$, we define a section $s'_q \oplus 1$
by
$$\aligned
(s'_q \oplus 1)(y) = s'_{q}(\pi(y))\oplus
1(y) \in \hat{\phi}_{pq,x}(E_q) \oplus \frac{E_p}{\hat{\phi}_{pq,\pi(y)}(E_q)}
\cong E_p^n .
\endaligned$$
Here we use the splitting (\ref{A1.22}). Now we say that $s'_p$ is {\it compatible} with $s'_q$ if the restriction of
$s'_p$ to $N_{V_{pq}}V_p$ coincides with $s'_q \oplus 1$.
\par
We note that the section $s_p$ (that is a
Kuranishi map) is compatible with $s_q$ in the sense defined above.
This follows from the fact that (\ref{A1.15}) is induced by the fiber
derivative of the Kuranishi map.
\par
A {\it global section} $\mathfrak s' = \{s'_p\}_{p\in P}$ of
the obstruction bundle of a Kuranishi structure is a compatible
system of sections $(s'_p: V_p \to E_p)$.
A section $\mathfrak s' = \{s'_p\}_{p\in P}$ of
the obstruction bundle of a Kuranishi structure is also called
a {\it global section} if it is a compatible system of sections
$(s'_p: V'_p \to E_p)$ after shrinking Kuranishi neighborhood
$V_p$ to a relatively compact subspace $V'_p \subset V_p$ appropriately. 
Actually in the following definition we shrink Kuranishi neighborhoods and 
consider the zero set of $s'_p$ in the closure $\overline{V}'_p$ to construct a topological space $X'$.
\end{defn}
\begin{defn}\label{perturbed0kuranishi}
In case $\mathfrak s' = \{s'_p\}_{p\in P}$ is a global
section we define a topological space
$$
X' = (\mathfrak s')^{-1}(0)
$$
as follows.
We consider the disjoint union
\begin{equation}\label{inveresunion}
\bigcup_{p\in P} \frac{(s'_p)^{-1}(0)}{\Gamma_p} \times \{p\}.
\end{equation}
Let $\tilde x \in (s'_p)^{-1}(0) \subset \overline{V}'_p$ and
$\tilde y \in (s'_q)^{-1}(0) \subset \overline{V}'_q$.
We define $(x,p) \sim' (y,q) $ if
$q<p$, $x = [\tilde x]$,  $y= [\tilde y]$, $\tilde y \in V_{pq}$ and
$$
\phi_{pq}(\tilde y) = \tilde x.
$$
Let $\sim$ be the equivalence relation on the set
(\ref{inveresunion}) which is generated by $\sim'$. We define $X'$ as the set of
equivalence classes
of this equivalence relation.
We put quotient topology on it.
We have a Kuranishi structure on $X'$ whose Kuranishi map is $s'_p$
in an obvious way.
\par
If $f$ is a strongly continuous map from $X$ to $Y$,
it induces a strongly continuous map from $X'$ to $Y$.
The case of strongly smooth map or strongly piecewise smooth map
is similar.
\end{defn}
\section{Statement of the main technical result}\label{technicalresult}
Let $X$ be a space with Kuranishi structure. We fix its good coordinate system
$(\{(V_p, E_p, \Gamma_p, \psi_p, s_p) \mid p \in P\},\{(V_{pq},\hat\phi_{pq},\phi_{pq},h_{pq})\mid p,q \in P, p<q\})$.
\begin{defn}\label{defisotropy}
Let $x \in X$. We take $p \in P$ and $\tilde x \in V_p$ such that
$s_p(\tilde x) = 0$ and $\psi_p(\tilde x) = x$. We put
\begin{equation}\label{eqisotropy}
I_x = \{ \gamma \in \Gamma_p \mid \gamma \tilde x = \tilde x\},
\end{equation}
and call it the {\it isotropy group}.
It is easy to see that $I_x$ is independent of the choice of $p$ and $\tilde x$ and
depends only on $x$.
\par
For a finite group $\Gamma$,
we define
\begin{equation}\label{Gammasratum}
X^{\cong}(\Gamma) = \{ x \in X \mid I_x \cong \Gamma\}.
\end{equation}
\end{defn}
We decompse $X^{\cong}(\Gamma)$ into connected components
\begin{equation}
X^{\cong}(\Gamma) = \bigcup_i X^{\cong}(\Gamma;i).
\end{equation}
\begin{defn}\label{def:dXgamma}
We define the integers $d(X;\Gamma;i)$ as follows.
Let $x \in X^{\cong}(\Gamma;i)$.
We take $p$, $\tilde x$ as in Definition \ref{defisotropy}. The group
$I_x$ acts on $(E_p)_{\tilde x}$, the fiber of the obstruction bundle at $\tilde x$.
We then put
\begin{equation}\label{35.18pre}
E_{\tilde x}^{\Gamma} = \{v \in E_{\tilde x} \mid \forall \gamma \in \Gamma \,\,
\gamma v = v\}.
\end{equation}
Its dimension depends only on
$\Gamma,i$ but independent of $p$, $\tilde x$, $x$.
The group $\Gamma$ acts also on the tangent bundle
$T_{\tilde x}V_p$.
We put
$$
T_{\tilde x}V_p^{\Gamma} = \{T_{\tilde x}V_p \mid \forall \gamma \in \Gamma \,\,
\gamma v = v\}.
$$
Its dimension depends only on
$\Gamma,i$ but independent of $p$, $\tilde x$, $x$.
We now define
\begin{equation}
d(X;\Gamma;i) = \dim T_{\tilde x}V_p^{\Gamma} - \dim E_{\tilde{x}}^{\Gamma}.
\label{35.19}\end{equation}
\end{defn}
Now the main technical result used in the proof of Theorems
\ref{theoremA} and \ref{theoremF} is the following:
\begin{theorem}\label{maintechnicalresult}
Let $X$ be a space with Kuranishi structure that has a tangent bundle.
We take a good coordinate system.
Then there exists a strongly piecewise smooth global section $\mathfrak s'$, which is
arbitrarily close to the original Kuranishi map in $C^0$ sense, such that the following holds for
$X' = (\mathfrak s')^{-1}(0)$
defined in Definition \ref{perturbed0kuranishi} and its Kuranishi structure:
\begin{enumerate}[{\rm (i)}]
\item $X'$ has a triangulation.
\item Each of $X^{\prime \cong}(\Gamma)$ is a simplicial subcomplex of $X'$.
\item We have
$$
\dim X^{\prime \cong}(\Gamma) \le \max_i d(X;\Gamma;i).
$$
\end{enumerate}
Moreover if $f: X \to Y$ is a strongly smooth map from $X$ to a
manifold $Y$, we may choose the triangulation of $X'$ so that
$f$ induces a piecewise smooth map $X' \to Y$.
\end{theorem}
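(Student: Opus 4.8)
The plan is to build $\mathfrak s'$ as a section that is \emph{piecewise affine} with respect to suitably adapted triangulations of the Kuranishi charts, so that its zero set is automatically a subcomplex and the three assertions become elementary dimension counts on each chart, glued together. The point that makes a single-valued section (rather than a multisection) workable is the identity $\dim V_p^H-\operatorname{rank}E_p^H=d(X;H;i)$: on a fixed-point locus a $\Gamma_p$-equivariant section is forced to take values in the invariant subbundle, so the ``expected'' codimension of its zero set there is exactly $\operatorname{rank}E_p^H$, which is precisely the number produced by Definition \ref{def:dXgamma}. For each $p\in P$ and each subgroup $H\le\Gamma_p$, recall that the fixed locus $V_p^H\subset V_p$ is a smooth submanifold (with corners), that $E_p|_{V_p^H}$ splits $H$-equivariantly, using an invariant fibre metric, as $E_p^H\oplus(E_p^H)^\perp$, and that the restriction of any $\Gamma_p$-equivariant section of $E_p$ to $V_p^H$ lies in $E_p^H$.

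First I would choose, for every $p$, a triangulation $\mathcal T_p$ of $V_p$ which is $\Gamma_p$-invariant, in which every fixed locus $V_p^H$ and every corner stratum of $V_p$ is a subcomplex, and which is compatible with the good coordinate system in the sense that for $q<p$ the embedding $\phi_{pq}$ carries the triangulation $\mathcal T_q$ over $V_{pq}$ to a subcomplex of $\mathcal T_p$, compatibly with a chosen tubular neighbourhood of $\phi_{pq}(V_{pq})$ and its identification with $N_{V_{pq}}V_p$. Such a compatible family can be constructed by induction over the partial order of Lemma \ref{A1.11}, triangulating the minimal charts first and extending; the cocycle relation \ref{A1.11}(ii) guarantees that the prescriptions coming from different $q<p$ are mutually consistent, and one uses the existence of $\Gamma_p$-invariant triangulations adapted to a finite family of invariant subcomplexes (the various $V_p^H$ and the corner faces).

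Next I would construct $\mathfrak s'=\{s'_p\}$, again by induction over the partial order. A piecewise affine $\Gamma_p$-equivariant section of $E_p$ over $\mathcal T_p$ is determined by a $\Gamma_p$-invariant choice of values at the vertices, the value at a vertex with stabiliser $H$ being forced into $E_p^H$. When $s'_q$ has been defined for all $q<p$, the compatibility condition of Definition \ref{A1.21} prescribes $s'_p$ on the tubular neighbourhood of each $\phi_{pq}(V_{pq})$ to equal $s'_q\oplus 1$; this fixes the relevant vertex values, the prescriptions are mutually consistent by \ref{A1.11}(ii), and --- crucially --- the ``$1$'' summand is, via the isomorphism (\ref{A1.15}), the tautological section of $N_{V_{pq}}V_p$, so on each $V_p^H$ meeting this region the prescribed section already vanishes in exactly the codimension $\operatorname{rank}E_p^H$ (the diagram (\ref{A1.16}) makes this compatible across the various $q$). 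I then choose the remaining vertex values $\Gamma_p$-invariantly and $C^0$-close to those of $s_p$, generically among $\Gamma_p$-invariant choices. For generic vertex values a piecewise affine section has zero set of the expected dimension in each simplex; applying this to the simplices of $V_p$ and of each subcomplex $V_p^H$, and to the corner strata, we get that $(s'_p)^{-1}(0)$ is a subcomplex of a subdivision of $\mathcal T_p$, and that $(s'_p)^{-1}(0)\cap V_p^H$ is a subcomplex of dimension at most $\dim V_p^H-\operatorname{rank}E_p^H$. By the compatibility of the triangulations and because $\mathfrak s'$ is a global section, these local zero sets glue, via the $\underline\phi_{pq}$, to a triangulated space $X'=(\mathfrak s')^{-1}(0)$ carrying its Kuranishi structure; the isotropy strata $X'(\Gamma)$ are subcomplexes, which gives (i) and (ii), and since $X'(\Gamma;i)$ is contained in $(s'_p)^{-1}(0)\cap V_p^H$ for a subgroup $H\cong\Gamma$ we obtain $\dim X'(\Gamma)\le\max_i d(X;\Gamma;i)$, which is (iii). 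Finally, since each $f_p$ is smooth and each simplex of $X'$ is affinely embedded in some $V_p$, the restrictions of $f_p$ to the simplices are piecewise smooth, so after a further subdivision if necessary $f$ induces a piecewise smooth map $X'\to Y$.

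I expect the main obstacle to be the third paragraph: reconciling the rigid compatibility constraint across overlapping charts with the genericity needed for the dimension bounds. Concretely, one must verify that the vertex values prescribed by $s'_q\oplus 1$ already produce a zero set of the correct dimension on \emph{every} fixed locus $V_p^H$ that meets the tubular neighbourhoods $N_{V_{pq}}V_p$ --- which is exactly where the tautological nature of the ``$1$'' summand, the equivariance of (\ref{A1.15}), and the commuting square (\ref{A1.16}) are indispensable --- and that the remaining, freely chosen vertex values still have enough range (namely $E_p^H$ at each $H$-fixed vertex outside the prescribed region) to achieve genericity there; this is a relative transversality argument in the piecewise linear, equivariant category. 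The accompanying bookkeeping --- producing $\Gamma_p$-invariant triangulations simultaneously adapted to all fixed loci, the corner strata, and the images of the lower charts, and matching the normal cones $N_{V_{pq}}V_p$ and $N_{V_p^H}V_p$ under the coordinate changes --- is routine but is where most of the work lies.
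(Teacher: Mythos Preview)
Your route differs substantially from the paper's (which builds a Mather--Goresky system of tubular neighbourhoods and \emph{normally conical} sections, formula (\ref{35.51}), rather than triangulating the $V_p$), and it contains a real gap --- not in the cross-chart compatibility you flag, but already in the single-chart dimension count. The claim ``for generic vertex values a piecewise affine section has zero set of the expected dimension in each simplex'' fails when every vertex of a simplex $\sigma$ lies on a stratum of strictly higher isotropy than $\mathrm{int}\,\sigma$: at such a vertex $v$ the value is forced into $E_p^{I_v}\subsetneq E_p^H$, and $\sum_i E_p^{I_{v_i}}$ need not equal $E_p^H$ even when $\bigcap_i I_{v_i}=H$. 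For instance, let $\Gamma_p=\Z_2\times\Z_2=\langle a,b\rangle$ act on $V_p=\R^2$ by $a=\mathrm{diag}(-1,1)$, $b=\mathrm{diag}(1,-1)$, and let $E_p$ be two copies of the one-dimensional character on which both $a$ and $b$ act by $-1$, so that $E_p^{\langle a\rangle}=E_p^{\langle b\rangle}=0$. An edge from the $y$-axis to the $x$-axis has trivial interior isotropy, but both endpoint values are forced to $0$; your piecewise affine section vanishes identically on that edge, producing a one-dimensional piece of $X'(\{1\})$ although the bound in (iii) is $\dim V_p-\dim E_p=0$. Merely asking that each $V_p^H$ be a subcomplex does not exclude such simplices.

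The fix is to require that every simplex have at least one vertex whose isotropy equals that of its interior; one barycentric subdivision of an adapted triangulation achieves this (the barycenter of $\sigma$ lies in the open stratum containing $\mathrm{int}\,\sigma$), and with that hypothesis your inductive dimension count does go through, face by face. But you should state this condition explicitly and verify that such triangulations can be produced coherently across all charts --- respecting the $\Gamma_p$-actions, all the fixed loci, the embeddings $\phi_{pq}$, and the tubular neighbourhoods simultaneously. That coherence problem is essentially what the paper's system of tubular neighbourhoods and family of lines (Propositions \ref{35.27}, \ref{35.32}) is designed to solve, so the ``routine bookkeeping'' you anticipate in fact absorbs the technical core of the paper's argument.
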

We note that if
$s'_p$ is a (single valued) section on $V_p/\Gamma_p$, then
at $\tilde x\in V_p$ with $I_{\tilde x} =\Gamma$, the
value $s'_p(\tilde x)$ is necessarily in the $\Gamma = \Gamma_{\tilde x}$
fixed point set of $(E_{p})_{\tilde x}$.
This implies that the dimension appearing in the right hand side of
(iii) above is optimal.
\par
In the situation where the space $X$ with Kuranishi structure is a moduli space of geometric origin,
the number $d(X;\Gamma;i)$ can be identified with an appropriate
equivariant index.
Thus we can estimate this number under certain
geometric assumption.
In case
$d(X;\Gamma;i)$ is strictly smaller than the virtual dimension of $X$ minus 1,
Theorem \ref{maintechnicalresult} implies that $X$ has a
virtual fundamental chain over $\Z$ or over $\Z_{2}$.
(To define virtual fundamental chain over $\Z$
we need to include orientation.)
This is the way, we prove Theorems \ref{theoremA}, \ref{theoremF}.
See Section \ref{AinftyconstrZ2}  for more detail.
\par
The proof of Theorem  \ref{maintechnicalresult} occupies
Sections \ref{sectionA1.6.1}-\ref{section35.4}.
A brief outline of the proof is in order.
\par
We use the theory of stratified sets for our  proof.
The collection of
$X(\Gamma;i)$'s for various $\Gamma$, $i$
defines a
natural stratification of $X$ and
the stratification extends to the Kuranishi neighborhood.
We construct our global section by an induction on the Kuranishi neighborhood.
We apply induction twice. For the first step we work on one
Kuranishi neighborhood and then we use the partial order $<$ of $P$
to inductively construct $s'_p$.
\par
When we restrict ourselves to a single Kuranishi neighborhood,
we deal with an orbifold and an orbi-bundle.
We use the stratification of an orbifold by its isotropy group
to construct $s'_p$ on each of the Kuranishi chart.
We use the normal bundle of the stratum and a conical
extension of the section on one stratum to its neighborhood.
In Section \ref{normalstack}, we study normal bundle of a stratum.
As we will observe in Example \ref{A1.68} Section \ref{sectionA1.6.1}, the stratum
(the set of $x$ with given $I_x$) does not have the normal bundle
in a usual sense. Actually it has some twisted stack structure and
the normal bundle is well-defined as a bundle over this stack.
We will explain this point in Sections \ref{sectionA1.6.1}-\ref{normalstack}.
\par
To construct the section $s'_p$ inductively we need to use
compatibility between the normal bundles of various components. The
story of compatible systems of normal bundles is classical and was
used in the study of triangulation of algebraic sets. Especially J.
Mather \cite{Math73} gave a nice definition of compatibility which
we use with minor modification. (See Section \ref{section35.2}.) We
use this in our construction of $s'_p$ and triangulation of its zero
set applied to each Kuranishi neighborhood in Section
\ref{section35.3}.
\par
The inductive construction over various Kuranishi charts is actually
standard using a good coordinate system.
We perform this construction in Section \ref{section35.4}, where we complete the proof of
Theorem \ref{maintechnicalresult}.

\section{Sheaves of group-category}
\label{sectionA1.6.1}

Let $X$ be an orbifold and $\Gamma$ a finite group. We define
$$
X^{\cong}(\Gamma) = \{ x \in X \mid I_x \cong \Gamma\}.
$$
In this paper, we need to consider a
normal bundle $N_{X^{\cong}(\Gamma)}X$ of $X^{\cong}(\Gamma)$ in $X$.
At first sight, one might expect that there
exists a vector bundle $N_{X^{\cong}(\Gamma)}X$ over the topological space
$X^{\cong}(\Gamma)$ together with a $\Gamma$ action on $N_{X^{\cong}(\Gamma)}X$
such that $N_{X^{\cong}(\Gamma)}X/\Gamma$ is
diffeomorphic to a neighborhood of $X^{\cong}(\Gamma)$ in $X$.
However such a vector bundle $N_{X^{\cong}(\Gamma)}X$
does not exist in general. In fact, we have the following counter example.

\begin{exm}\label{A1.68}
We consider $\C^n \times S^1$ with $\mathbb Z_p$ action defined by
\begin{equation}
[k] \cdot (z,[t]) = (\exp(2\pi\sqrt{-1}k/p)z,[t]).
\label{A1.69}\end{equation}
Here $[k] \in \Z \mod p$, $[t] \in \R/\Z = S^1$.
\par
We define an isomorphism
$$
F: (\C^n \times S^1)/\Z_p \to (\C^n \times S^1)/\Z_p
$$
by
$$
F([z,[t]]) = [\exp(2\pi\sqrt{-1}t/p)z,[t]].
$$
We take two copies $(\C^n \times D_{\pm}^2)/\Z_p$ of
$(\C^n \times  D^2)/\Z_p$ where $\Z_p$ action is similar to
(\ref{A1.69}). We identify
$$
(\C^n \times S^1)/\Z_p = (\C^n \times \partial D_{+}^2)/\Z_p
$$
with
$$
(\C^n \times S^1)/\Z_p = (\C^n \times \partial D_{-}^2)/\Z_p
$$
by $F$ and obtain an orbifold $X$.
\par
In this example $X^{\cong}(\mathbb Z_p) = S^2$.
The normal bundle $N_{X^{\cong}(\Z_p)}X$
does not exist since $F$ does not lift to a bundle isomorphism
$: \C^n \times  S^1 \to \C^n \times  S^1$.
\end{exm}

As Example \ref{A1.68} shows, the normal bundle of the singular
locus $X^{\cong}(\Gamma)$ does not exist in general as a global
quotient of a vector bundle with a $\Gamma$ action.  On the other
hand, in this paper we need to use the normal bundle of
$X^{\cong}(\Gamma)$ to define normally conical perturbations. For
this purpose we define the notion of a {\it normal bundle in the sense of
stack}. We restrict our discussion of the stack to the case we use
for this purpose. Related material is discussed in various
references such as \cite{Bry93}, \cite{Gir70}. The discussion here
is related to the phenomenon that occurs when we remove the
effectivity of the $\Gamma_p$ action from the definition of
orbifold. We feel that the results of Sections
\ref{sectionA1.6.1}-\ref{normalstack} are not really new. However it
is hard to find a reference that contains the results written in a
way we want to use. Also most of the references on the stack are
written in a very abstract way. To minimize our usage of many
abstract languages entering in the definition of stack-like objects,
we prefer to use more down-to-earth approach by explicitly writing
down all the formulas that we really need. This is the reason why we
include the materials in this paper.
\par
Let $G$ be a group. We consider the category $\underline G$ which
has only one object $*$ and morphism $\underline G(*,*) = G$. Let
$M$ be a topological space and $\mathcal U = \{U_i \mid i \in I\}$ be an
open covering of $M$. We assume that $U_{i_1} \cap \cdots \cap
U_{i_k}$ ($i_1,\dots,i_k \in I$) are either empty or contractible.
Namely we take a good covering.
\begin{defn}\label{A1.70}
\begin{enumerate}[{\rm (i)}]
\item A {\it sheaf of category of $\underline G$ on $(M,\mathcal U)$} consists of
pair
$(\{h_{ij}\}, \{\gamma_{ijk}\})$
of an isomorphism
$$
h_{ij} \in {\rm Aut}(G) \qquad \text{for each $U_i \cap U_j \ne \emptyset$}
$$
and an element
$$
\gamma_{ijk} \in G \qquad \text{for each $U_i \cap U_j \cap U_k \ne \emptyset$ }
$$
such that the following compatibility conditions (\ref{A1.71.1}) and
(\ref{A1.71.2}) hold for every $U_i \cap U_j \cap U_k \cap U_l \ne
\emptyset$:
\begin{subequations}\label{A1.71}
\begin{equation}
h_{ij} \circ h_{jk} = \gamma_{ijk} \cdot h_{ik} \cdot \gamma_{ijk}^{-1},
\label{A1.71.1}\end{equation}
\begin{equation}
\gamma_{ijk} \cdot \gamma_{ikl} = h_{ij}(\gamma_{jkl}) \cdot \gamma_{ijl}.
\label{A1.71.2}\end{equation}
\end{subequations}
Here the right hand side of
\eqref{A1.71.1} means
$\operatorname{ad} (\gamma_{ijk})\circ h_{ik}$.
(We will explain how (\ref{A1.71}) follows from
the definition of a stack in the categorical context in Remark \ref{A1.81} (v).)
\item
$(\{h_{ij}\},\{\gamma_{ijk}\})$ is said to be {\it isomorphic} to
$(\{h'_{ij}\},\{\gamma'_{ijk}\})$ if there exist
$$
\psi_i \in \text{Aut}(G), \qquad \mu_{ij} \in G
$$
for each $i$ and $U_i \cap U_j \ne \emptyset$ respectively, and
$$
h_{ij}^{\prime\prime} \in \text{Aut}(G), \qquad \gamma^{\prime\prime}_{ijk} \in G
$$
for each $i$, $U_i \cap U_j \ne \emptyset$, and  $U_i \cap U_j \cap U_k \ne \emptyset$
respectively such that
\begin{subequations}\label{A1.72}
\begin{eqnarray}
h_{ij}^{\prime\prime} & = &\psi_i\circ h_{ij} \circ \psi_j^{-1},
\label{A1.72.1}
\\
\gamma_{ijk}^{\prime\prime} &=& \psi_i(\gamma_{ijk}),
\label{A1.72.2}
\\
\mu_{ij}  \cdot h_{ij}^{\prime\prime} \cdot \mu_{ij}^{-1}&=& h'_{ij},
\label{A1.72.3}\\
\mu_{ij}  \cdot h_{ij}^{\prime\prime}(\mu_{jk})
\cdot \gamma^{\prime\prime}_{ijk} &=& \gamma'_{ijk}\cdot \mu_{ik}.
\label{A1.72.4}
\end{eqnarray}
\end{subequations}
We call a pair $(\{\mu_{ij}\},\{\psi_i\})$ an {\it isomorphism}
$: (\{h_{ij}\},\{\gamma_{ijk}\}) \to (\{h'_{ij}\},\{\gamma'_{ijk}\})$.
We will prove in Lemma \ref{A1.75} that we can compose isomorphism and `isomorphic'
defines an equivalence relation.
\item We denote by $Sh((M,\mathcal U);\underline G)$ the set of all isomorphism
classes of  sheaves of categories of $\underline G$ on $(M,\mathcal U)$.
\end{enumerate}\end{defn}
\par\medskip
To illustrate the meaning of (\ref{A1.71}) we show the following:
\begin{lem}\label{A1.73}
Let $\{Y_i\}_i$ be a collection of sets such that a group $G$ acts
effectively on each of $Y_i$.  Suppose that there is a point with trivial isotropy group
on each of $Y_i$. Let
$h_{ij}: G \to G$ be group isomorphisms, $\gamma_{ijk}$ elements of
$G$, and let $\phi_{ij}: Y_j \to Y_i$ be maps that are injective and
$h_{ij}$-equivariant. We assume
\begin{equation}
\phi_{ij} \circ \phi_{jk} = \gamma_{ijk}\cdot \phi_{ik}.
\label{A1.74}\end{equation}
Then $\gamma_{ijk}$ satisfies {\rm (\ref{A1.71})}.
\end{lem}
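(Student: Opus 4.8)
The plan is to derive \eqref{A1.71.1} and \eqref{A1.71.2} by applying the hypothesis \eqref{A1.74} in the obvious associativity patterns and then using the effectivity of the $G$-action together with the existence of a free point to cancel. First I would address \eqref{A1.71.2}: on $Y_l$ we compute the composite $\phi_{ij}\circ\phi_{jk}\circ\phi_{kl}$ in two ways. Grouping the first two factors and using \eqref{A1.74} gives $(\gamma_{ijk}\cdot\phi_{ik})\circ\phi_{kl}=\gamma_{ijk}\cdot(\phi_{ik}\circ\phi_{kl})=\gamma_{ijk}\gamma_{ikl}\cdot\phi_{il}$. Grouping the last two factors instead gives $\phi_{ij}\circ(\gamma_{jkl}\cdot\phi_{jl})$; here one must move the scalar $\gamma_{jkl}\in G$ past $\phi_{ij}$, and since $\phi_{ij}$ is $h_{ij}$-equivariant this produces $h_{ij}(\gamma_{jkl})\cdot(\phi_{ij}\circ\phi_{jl})=h_{ij}(\gamma_{jkl})\gamma_{ijl}\cdot\phi_{il}$. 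Equating the two expressions yields $\gamma_{ijk}\gamma_{ikl}\cdot\phi_{il}=h_{ij}(\gamma_{jkl})\gamma_{ijl}\cdot\phi_{il}$ as maps $Y_l\to Y_i$. Evaluating at a point of $Y_l$ with trivial isotropy, its image under $\phi_{il}$ has trivial isotropy in $Y_i$ as well (because $\phi_{il}$ is injective and equivariant), so the two group elements $\gamma_{ijk}\gamma_{ikl}$ and $h_{ij}(\gamma_{jkl})\gamma_{ijl}$ act the same way on a point with trivial isotropy; effectivity (applied at that orbit, or globally) forces them to be equal, which is \eqref{A1.71.2}.

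For \eqref{A1.71.1} I would similarly analyze the two ways of writing the equivariance of the triple composite, or more directly compare $h_{ij}\circ h_{jk}$ with $h_{ik}$ using naturality of \eqref{A1.74}. Concretely, for $g\in G$ and $y\in Y_k$ one has $\phi_{ij}(\phi_{jk}(gy))=\phi_{ij}(h_{jk}(g)\phi_{jk}(y))=h_{ij}(h_{jk}(g))\,\phi_{ij}(\phi_{jk}(y))=h_{ij}h_{jk}(g)\cdot\gamma_{ijk}\phi_{ik}(y)$, while also $\phi_{ij}(\phi_{jk}(gy))=\gamma_{ijk}\phi_{ik}(gy)=\gamma_{ijk}h_{ik}(g)\phi_{ik}(y)$. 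Hence $h_{ij}h_{jk}(g)\gamma_{ijk}\cdot z=\gamma_{ijk}h_{ik}(g)\cdot z$ for $z=\phi_{ik}(y)$; choosing $y$ free in $Y_k$ makes $z$ free in $Y_i$, so effectivity gives $h_{ij}h_{jk}(g)\,\gamma_{ijk}=\gamma_{ijk}\,h_{ik}(g)$ in $G$, i.e. $h_{ij}\circ h_{jk}=\operatorname{ad}(\gamma_{ijk})\circ h_{ik}=\gamma_{ijk}\cdot h_{ik}\cdot\gamma_{ijk}^{-1}$, which is \eqref{A1.71.1}.

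The only genuinely non-routine point is the cancellation step: one needs that an equality of the form $a\cdot w = b\cdot w$ for a single point $w$ with trivial isotropy (here $w=\phi_{ik}(y)$ or $\phi_{il}(y)$) already implies $a=b$ in $G$. This is where the hypotheses \emph{effective action} and \emph{existence of a point with trivial isotropy} are both used — and where one must check that $\phi_{ik}$ being injective and $h_{ik}$-equivariant indeed carries a free point of $Y_k$ to a free point of $Y_i$, since otherwise the cancellation would only hold modulo an isotropy subgroup. I expect this to be the main obstacle to write carefully, but it is short: if $g\cdot\phi_{ik}(y)=\phi_{ik}(y)$ then $\phi_{ik}(h_{ik}^{-1}(g)\cdot y)=\phi_{ik}(y)$, so by injectivity $h_{ik}^{-1}(g)\cdot y=y$, so $h_{ik}^{-1}(g)=e$ by the freeness of $y$, hence $g=e$. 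Everything else is a direct unwinding of the two associativity identities, and no further structure on the $Y_i$ is needed.
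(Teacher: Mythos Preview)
Your proposal is correct and follows essentially the same approach as the paper: both arguments compute $\phi_{ij}\circ\phi_{jk}$ applied to $g\cdot y$ (for \eqref{A1.71.1}) and the triple composite $\phi_{ij}\circ\phi_{jk}\circ\phi_{kl}$ (for \eqref{A1.71.2}) in two ways, then cancel using that the image of a free point under an injective equivariant map is again free. The paper treats \eqref{A1.71.1} first and merely asserts the free-point preservation, whereas you spell out that step explicitly; otherwise the proofs are the same.
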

\begin{proof}
Let $g \in G$, and $y\in Y_k$ with trivial isotropy group. Then
$\phi_{ik}(y)$ also has a trivial isotropy group because $\phi_{ik}$ are
assumed to be injective and $h_{ik}$-equivariant.
\par
Again by the $h_{ij}$-equivariance of the map $\phi_{ik}$, we obtain
$$\aligned
\gamma_{ijk} \cdot h_{ik}(g) \cdot \phi_{ik}(y) &= \gamma_{ijk} \cdot \phi_{ik}(g\cdot y)
= \phi_{ij} (\phi_{jk}(g\cdot y)) \\
&= h_{ij}(h_{jk}(g)) \cdot \phi_{ij} (\phi_{jk}(y)) =
h_{ij}(h_{jk}(g)) \cdot \gamma_{ijk}\cdot \phi_{ik}(y).
\endaligned
$$
Because $\phi_{ik}(y)$ has a trivial isotropy group, (\ref{A1.71.1}) follows.
\par
Similarly for $y \in Y_l$ we calculate
$$\aligned
h_{ij}(\gamma_{jkl})\cdot \gamma_{ijl}\cdot \phi_{il}(y)
&= h_{ij}(\gamma_{jkl})\cdot\phi_{ij} (\phi_{jl}(y))
= \phi_{ij}(\gamma_{jkl}\cdot \phi_{jl}(y)) \\
&= \phi_{ij}(\phi_{jk}(\phi_{kl}(y)))
= \gamma_{ijk}\cdot \phi_{ik}(\phi_{kl}(y))
= \gamma_{ijk}\cdot\gamma_{ikl}\cdot\phi_{il}(y).
\endaligned$$
This implies (\ref{A1.71.2}).
\end{proof}
In the definition of Kuranishi structure the group $\Gamma_p$ at each point $p$
is assumed to be a finite group and the space $V_p$ is a smooth manifold.
One can show that effectivity of the action of $\Gamma_p$ automatically implies
existence of a point with trivial isotropy group. And we also assume that the
map $\phi_{pq}$ is an $h_{pq}$-equivariant embedding and in particular injective.
Therefore the same argument used in the proof of Lemma \ref{A1.73},
implies that $\gamma_{pqr}$ in Definition \ref{A1.5} satisfies (\ref{A1.71.2}).
\begin{lem}\label{A1.75}
The relation `isomorphism' in Definition {\rm \ref{A1.70}} is an equivalence
relation.
\end{lem}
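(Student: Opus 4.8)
The plan is to establish reflexivity, symmetry and transitivity directly, in each case exhibiting an explicit isomorphism datum. The guiding observation is that an isomorphism $(\{\mu_{ij}\},\{\psi_i\})$ of Definition \ref{A1.70} factors into two elementary moves. First a \emph{reframing} by $\{\psi_i\}$: by \eqref{A1.72.1}--\eqref{A1.72.2} this replaces $(\{h_{ij}\},\{\gamma_{ijk}\})$ by the uniquely determined pair with $h''_{ij}=\psi_i\circ h_{ij}\circ\psi_j^{-1}$, $\gamma''_{ijk}=\psi_i(\gamma_{ijk})$. Then a \emph{gauge transformation} by $\{\mu_{ij}\}$: by \eqref{A1.72.3}--\eqref{A1.72.4} this carries $(\{h''_{ij}\},\{\gamma''_{ijk}\})$ to $(\{h'_{ij}\},\{\gamma'_{ijk}\})$ via $h'_{ij}=\operatorname{ad}(\mu_{ij})\circ h''_{ij}$ and $\gamma'_{ijk}\mu_{ik}=\mu_{ij}\,h''_{ij}(\mu_{jk})\,\gamma''_{ijk}$. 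Since the auxiliary data $(\{h''_{ij}\},\{\gamma''_{ijk}\})$ is determined by $\{\psi_i\}$ alone, an isomorphism is really just the pair $(\{\mu_{ij}\},\{\psi_i\})$, and the whole lemma is a matter of manipulating these formulas.

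For reflexivity I would take $\psi_i=\mathrm{id}_G$ and $\mu_{ij}=e$, so that the auxiliary data is $(\{h_{ij}\},\{\gamma_{ijk}\})$ and \eqref{A1.72.3}--\eqref{A1.72.4} hold trivially. For symmetry, given $(\{\mu_{ij}\},\{\psi_i\})$ from $(\{h_{ij}\},\{\gamma_{ijk}\})$ to $(\{h'_{ij}\},\{\gamma'_{ijk}\})$, I would set $\tilde\psi_i=\psi_i^{-1}$ and $\tilde\mu_{ij}=\psi_i^{-1}(\mu_{ij}^{-1})$. Using $h'_{ij}=\operatorname{ad}(\mu_{ij})\circ h''_{ij}$ together with the identity $\psi_i^{-1}\circ\operatorname{ad}(g)=\operatorname{ad}(\psi_i^{-1}(g))\circ\psi_i^{-1}$, one checks that the reframing of $\{h'_{ij}\}$ by $\{\tilde\psi_i\}$ equals $\operatorname{ad}(\psi_i^{-1}(\mu_{ij}))\circ h_{ij}$, whence \eqref{A1.72.3} for the inverse datum holds because $\tilde\mu_{ij}\,\psi_i^{-1}(\mu_{ij})=e$. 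For \eqref{A1.72.4} I would first rewrite the original \eqref{A1.72.4} (again via $h'_{ij}=\operatorname{ad}(\mu_{ij})\circ h''_{ij}$) in the form $\mu_{ij}^{-1}h'_{ij}(\mu_{jk})^{-1}\gamma'_{ijk}=\gamma''_{ijk}\mu_{ik}^{-1}$, and then apply $\psi_i^{-1}$; since $\psi_i^{-1}(\gamma''_{ijk})=\gamma_{ijk}$ this is exactly \eqref{A1.72.4} for $(\{\tilde\mu_{ij}\},\{\tilde\psi_i\})$.

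For transitivity, given $(\{\mu_{ij}\},\{\psi_i\})\colon A\to A'$ and $(\{\nu_{ij}\},\{\chi_i\})\colon A'\to A''$, I would propose the composite $(\{\rho_{ij}\},\{\xi_i\})$ with $\xi_i=\chi_i\circ\psi_i$ and $\rho_{ij}=\nu_{ij}\cdot\chi_i(\mu_{ij})$, its witnessing auxiliary data being $\bar h_{ij}=\xi_i\circ h_{ij}\circ\xi_j^{-1}$, $\bar\gamma_{ijk}=\xi_i(\gamma_{ijk})$. The crux is a \emph{swap identity}: a gauge transformation by $\{\mu_{ij}\}$ followed by a reframing by $\{\chi_i\}$ coincides with the reframing by $\{\chi_i\}$ followed by the gauge transformation by $\{\chi_i(\mu_{ij})\}$; on the $h$-level this is immediate from $\chi_i\circ\operatorname{ad}(g)=\operatorname{ad}(\chi_i(g))\circ\chi_i$. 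Granting it, \eqref{A1.72.3} for the composite reduces to $\operatorname{ad}(\nu_{ij}\,\chi_i(\mu_{ij}))=\operatorname{ad}(\nu_{ij})\circ\operatorname{ad}(\chi_i(\mu_{ij}))$, and \eqref{A1.72.4} for the composite follows by substituting the two given instances of \eqref{A1.72.4} and collapsing the result using the swap identity and the homomorphism property of $\chi_i$. Reflexivity and this composition rule already make `isomorphic' reflexive and transitive; adding symmetry gives an equivalence relation, and the construction just described is the promised composition of isomorphisms.

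The main obstacle is the bookkeeping in the two verifications of \eqref{A1.72.4} (in the symmetry step and the transitivity step): one must keep track of how the automorphisms $\psi_i,\chi_i$ conjugate past the inner automorphisms $\operatorname{ad}(\mu_{ij})$ and how they act on the twisting elements $\gamma_{ijk}$, and in the transitivity step one must be careful to use the auxiliary family built from $\xi_i=\chi_i\circ\psi_i$ rather than a naive concatenation of the two auxiliary families. Once the swap identity $\chi_i\circ\operatorname{ad}(g)=\operatorname{ad}(\chi_i(g))\circ\chi_i$ and the rewriting $\mu_{ij}^{-1}h'_{ij}(\mu_{jk})^{-1}\gamma'_{ijk}=\gamma''_{ijk}\mu_{ik}^{-1}$ are in hand, the rest is formal. (Alternatively, one could observe that the pairs $(\{\mu_{ij}\},\{\psi_i\})$ form a group --- a semidirect product of the gauge group of families $\{\mu_{ij}\in G\}$ under pointwise multiplication by the reframing group of families $\{\psi_i\in\operatorname{Aut}(G)\}$ --- acting on $Sh((M,\mathcal U);\underline G)$, so that `isomorphic' is the orbit equivalence relation; but the explicit formulas above are what is needed in the sequel.)
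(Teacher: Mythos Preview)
Your proposal is correct and follows essentially the same route as the paper's proof: both factor an isomorphism into a reframing $(1,\{\psi_i\})$ followed by a gauge move $(\{\mu_{ij}\},1)$, and both hinge on the swap identity $(1,\{\chi_i\})\circ(\{\mu_{ij}\},1)=(\{\chi_i(\mu_{ij})\},1)\circ(1,\{\chi_i\})$ (the paper's (\ref{A1.78})). The only difference is presentational --- the paper first isolates the three composition rules (\ref{A1.76}), (\ref{A1.77}), (\ref{A1.78}) and then deduces closure under composition and inverses, whereas you write down the composite $(\{\nu_{ij}\cdot\chi_i(\mu_{ij})\},\{\chi_i\circ\psi_i\})$ and the inverse $(\{\psi_i^{-1}(\mu_{ij}^{-1})\},\{\psi_i^{-1}\})$ directly and verify them; your semidirect-product remark is the conceptual summary of the paper's three rules.
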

\begin{proof}
We use notation of (\ref{A1.72}) and put
$$
(h^{\prime\prime}_{ij},\gamma^{\prime\prime}_{ijk}) = (1,\psi_i)_* (h_{ij},\gamma_{ijk}),
\quad
(h^{\prime}_{ij},\gamma^{\prime}_{ijk}) = (\mu_{ij},1)_*
(h^{\prime\prime}_{ij},\gamma^{\prime\prime}_{ijk}).
$$
We also put
$(\mu_{ij},1)_*\circ (1,\psi_i)_*=(\mu_{ij},\psi_i)_*$.
We note that
\begin{equation}
(1,\psi_i)_* \circ (1,\psi'_i)_* = (1,\psi_i\circ \psi'_i)_*.
\label{A1.76}\end{equation}
We next claim
\begin{equation}
(\mu_{ij},1)_*\circ  (\mu'_{ij},1)_* = (\mu_{ij}\cdot \mu'_{ij},1)_*.
\label{A1.77}\end{equation}
Let us prove (\ref{A1.77}). We put
$$
(\mu'_{ij},1)_*(h_{ij}^1,\gamma_{ijk}^1) = (h_{ij}^2,\gamma_{ijk}^2),
\quad
(\mu_{ij},1)_*(h_{ij}^2,\gamma_{ijk}^2) = (h_{ij}^3,\gamma_{ijk}^3).
$$
Then
$$\aligned
&h_{ij}^2 = \mu'_{ij} \cdot h_{ij}^1 \cdot (\mu'_{ij})^{-1}, \\
&\gamma^2_{ijk} = \mu'_{ij} \cdot h_{ij}^1(\mu'_{jk}) \cdot \gamma^1_{ijk} \cdot  (\mu'_{ik})^{-1}.
\endaligned$$
Therefore we have
$$
h_{ij}^3 = \mu_{ij} \cdot \mu'_{ij} \cdot h_{ij}^1 \cdot (\mu'_{ij})^{-1} \cdot \mu_{ij}^{-1}
$$
and
$$\aligned
\gamma^3_{ijk} &=  \mu_{ij} \cdot h_{ij}^2(\mu_{jk}) \cdot \gamma^2_{ijk} \cdot  \mu_{ik}^{-1} \\
&= \mu_{ij} \cdot  \mu'_{ij} \cdot  h_{ij}^1(\mu_{jk}) \cdot (\mu'_{ij})^{-1} \cdot \mu'_{ij}
\cdot h_{ij}^1(\mu'_{jk}) \cdot \gamma^1_{ijk} \cdot (\mu'_{ik})^{-1} \cdot \mu_{ik}^{-1} \\
&= \mu_{ij} \cdot  \mu'_{ij} \cdot h_{ij}^1(\mu_{jk} \cdot \mu'_{jk}) \cdot \gamma^1_{ijk} \cdot
( \mu_{ik} \cdot \mu'_{ik})^{-1}.
\endaligned$$
(\ref{A1.77}) is proved.
\par
We next claim
\begin{equation}
(1,\psi_i)_* \circ (\mu_{ij},1)_* = (\psi_i(\mu_{ij}),\psi_i)_* .
\label{A1.78}\end{equation}
Let us prove (\ref{A1.78}). We put
$$
(\mu_{ij},1)_*(h_{ij}^1,\gamma_{ijk}^1) = (h_{ij}^2,\gamma_{ijk}^2),
\quad
(1,\psi_i)_*(h_{ij}^2,\gamma_{ijk}^2) = (h_{ij}^3,\gamma_{ijk}^3).
$$
Then
$$\aligned
&h_{ij}^2 = \mu_{ij} \cdot h_{ij}^1 \cdot \mu_{ij}^{-1}, \\
&\gamma^2_{ijk} = \mu_{ij} \cdot h_{ij}^1(\mu_{jk}) \cdot \gamma^1_{ijk} \cdot  (\mu_{ik})^{-1}.
\endaligned$$
Therefore we have
$$
h_{ij}^3 = \psi_{i} \circ (\mu_{ij} \cdot h_{ij}^1 \cdot \mu_{ij}^{-1}) \circ \psi_{j}^{-1}
= \psi_i(\mu_{ij}) \cdot (\psi_{i} \circ h_{ij}^1  \circ \psi_{j}^{-1}) \cdot \psi_i(\mu_{ij})^{-1}
$$
and
$$
\gamma^3_{ijk} = \psi_i(\mu_{ij}) \cdot \psi_i(h_{ij}^1(\mu_{jk}))
\cdot \psi_i(\gamma^1_{ijk}) \cdot \psi_i(\mu_{ik})^{-1}.
$$
We next put
$$
(1,\psi_i)_*(h_{ij}^1,\gamma_{ijk}^1) = (h_{ij}^4,\gamma_{ijk}^4),
\quad
(\psi_i(\mu_{ij}),1)_*(h_{ij}^4,\gamma_{ijk}^4) = (h_{ij}^5,\gamma_{ijk}^5).
$$
Then
$$
h^4_{ij} = \psi_i \circ h^1_{ij} \circ \psi_j^{-1}, \quad
\gamma^4_{ijk} = \psi_i(\gamma^1_{ijk}).
$$
Therefore
$h_{ij}^5 = h_{ij}^3$ and
$$\aligned
\gamma_{ijk}^5 &= \psi_i(\mu_{ij}) \cdot h_{ij}^4(\psi_j(\mu_{jk})) \cdot \gamma^4_{ijk}
\cdot \psi_i(\mu_{ik})^{-1} \\
&= \psi_i(\mu_{ij}) \cdot \psi_i(h_{ij}^1(\mu_{jk})) \cdot  \psi_i(\gamma^1_{ijk})
\cdot \psi_i(\mu_{ik})^{-1}
= \gamma_{ijk}^3.
\endaligned$$
(\ref{A1.78}) is proved.
\par
(\ref{A1.76}), (\ref{A1.77}) and (\ref{A1.78}) imply that we can compose isomorphisms.
Hence the relation `isomorphic' is transitive.
\par
On the other hand, (\ref{A1.76}) and (\ref{A1.77}) imply that each
isomorphism has an inverse. Hence the relation `isomorphic' is symmetric.
\end{proof}
\begin{rem}\label{1.79}
Suppose $(h_{ij},\gamma_{ijk})$ satisfies (\ref{A1.71}). If we define
$(h'_{ij},\gamma'_{ijk})$ by (\ref{A1.72}), we can check
that $(h'_{ij},\gamma'_{ijk})$ satisfies (\ref{A1.71}), in a
similar way as the above calculation.
For example we consider the case $\psi_i =1$ and check
(\ref{A1.71.2}) as follows.
We have
$$\aligned
&\gamma'_{ijk} \cdot \gamma'_{ikl} \\
&= \mu_{ij} \cdot h_{ij}(\mu_{jk}) \cdot
\gamma_{ijk} \cdot  h_{ik}(\mu_{kl})
\cdot \gamma_{ikl} \cdot \mu_{il}^{-1} \\
&= \mu_{ij} \cdot h_{ij}(\mu_{jk}) \cdot
\gamma_{ijk} \cdot  h_{ik}(\mu_{kl}) \cdot
\gamma_{ijk}^{-1} \cdot h_{ij}(\gamma_{jkl}) \cdot \gamma_{ijl} \cdot \mu_{il}^{-1}.
\endaligned$$
On the other hand, we have
$$
\aligned
&h'_{ij}(\gamma'_{jkl}) \cdot \gamma'_{ijl} \\
&= \mu_{ij} \cdot h_{ij}(\mu_{jk} \cdot
h_{jk}(\mu_{kl}) \cdot \gamma_{jkl} \cdot \mu_{jl}^{-1})
\cdot \mu_{ij}^{-1}  \cdot
\mu_{ij} \cdot h_{ij}(\mu_{jl}) \cdot \gamma_{ijl}
\cdot \mu_{il}^{-1} \\
&= \mu_{ij} \cdot h_{ij}(\mu_{jk}) \cdot \gamma_{ijk}
\cdot h_{ik}(\mu_{kl}) \cdot \gamma_{ijk}^{-1}
\cdot h_{ij}(\gamma_{jkl})\cdot h_{ij}(\mu_{jl}^{-1})
\cdot\mu_{ij}^{-1} \\
&\quad \cdot \mu_{ij} \cdot h_{ij}(\mu_{jl})
\cdot \gamma_{ijl} \cdot \mu_{il}^{-1}.
\endaligned$$
Hence follows (\ref{A1.71.2}).
\end{rem}
\begin{defn}\label{A1.80}
Let $\mathcal U' = \{U'_j \mid j\in J\}$ be another covering of $M$
and let $i(\cdot): j \mapsto i(j)$ be a map $J \to I$ such that
$U'_j \subseteq U_{i(j)}$. We define a map:
$$
i(\cdot)^*: Sh((M,\mathcal U);\underline G) \to Sh((M,\mathcal
U');\underline G)
$$
by
$$
i(\cdot)^*([\{h_{i_1i_2}\},\{\gamma_{i_1i_2i_3}\}]) =
[\{h'_{j_1j_2}\},\{\gamma'_{j_1j_2j_3}\}]
$$
where
$$
h'_{j_1j_2} = h_{i(j_1)i(j_2)}, \quad \gamma'_{j_1j_2j_3} =
\gamma_{i(j_1)i(j_2)i(j_3)}.
$$
We thus obtain an inductive system $\mathcal U \mapsto
Sh((M,\mathcal U);\underline G)$. We take the inductive limit with
respect to this inductive system and define
$$
Sh(M,\underline G) = \underset{\longrightarrow}{\lim}\,\,Sh((M,\mathcal U);\underline G).
$$
An element of $Sh(M,\underline G)$ is said to be a {\it sheaf of category $\underline G$}
on $M$.
\end{defn}
\begin{rem}\label{A1.81}
\begin{enumerate}[{\rm (i)}]
\item
There is a more general notion, that is, a stack in the literature.
It was defined by Grothendieck (\cite{Grot62}, \cite{Grot71}). See
also \cite{Bry93}, \cite{Gir70}. We only consider the case we use for our
purpose where the stalk of the sheaf is the category $\underline G$
which is independent of the point.
\item
In case when $G$ is commutative, (\ref{A1.71.1}) implies
$$
h_{ij} \circ h_{jk} =  h_{ik}.
$$
Therefore it defines a $G$ local system $\mathfrak G$. Then (\ref{A1.71.2}) becomes
$$
\gamma_{ijk} + \gamma_{ikl} = h_{ij}(\gamma_{jkl}) + \gamma_{ijl}.
$$
Namely $\{\gamma_{ijk}\}$ defines a \v Cech cocycle in
$\text{\it \v C}\,^2(\mathcal U,\mathfrak G)$.
\par
Next we assume that $(\{h_{ij}\},\{\gamma_{ijk}\})$ is
isomorphic to $(\{h'_{ij}\},\{\gamma'_{ijk}\})$.
Then (\ref{A1.72.1}) and (\ref{A1.72.2}) imply that
the induced local system is isomorphic and $\{\gamma^{\prime\prime}_{ijk}\}$
is the same \v Cech cocycle as $\{\gamma_{ijk}\}$ under this isomorphism.
(\ref{A1.72.3}) and (\ref{A1.72.4}) imply that
$$
\gamma'_{ijk} - \gamma^{\prime\prime}_{ijk}
= \mu_{ij} + h^{\prime\prime}_{ij}(\mu_{jk}) - \mu_{ik}.
$$
Namely $\{\gamma^{\prime}_{ijk}\}$
is cohomologous to $\{\gamma^{\prime\prime}_{ijk}\}$.
Thus
$$
Sh(M,\underline G) \cong \bigcup_{\mathfrak G: \text{$G$ local
systems}}\text{\it \v H}\,^2(M;\mathfrak G)
$$
in the abelian case.
\par
\item
Usually (but not always) the effectivity of the (finite) group
$\Gamma_p$ action on $V_p$ is assumed when one defines the
notion of a
chart $(V_p,\Gamma_p,\psi_p)$ of an orbifold.
On the other hand, there is no such assumptions for stacks.
\par
Note (\ref{A1.71.1}) is the same formula as the first formula of Definition \ref{A1.5} (ii)
in the definition of Kuranishi structure.
In Definition \ref{A1.5} (ii) we assumed only the {\it existence} of $\gamma_{pqr}$.
Namely it is not a part of the structure. Also the
formula corresponding to (\ref{A1.71.2}) is not in Definition \ref{A1.5}.
On the other hand, in Definition \ref{A1.70} we include $\gamma_{ijk}$
as a part of the structure.
(Note in Definition \ref{A1.70}, $\gamma^{\alpha}_{ijk}$ may
depend on the connected component $\alpha$. Here $V_p \cap V_q \cap V_r$ is
connected since
we assume our cover is a good cover.)
\par
Actually, in the situation of Definition \ref{A1.5} where the $\Gamma_p$
action is assumed to be effective,
the element $\gamma_{pqr}$ satisfying Definition \ref{A1.5} (ii) is unique if it exists.
Moreover a formula corresponding to (\ref{A1.71.2}) can be proved.
(Lemma \ref{A1.73}.)
\par
In our situation where the $G$ action on $M$ is trivial,
$\gamma_{ijk}$ is not determined from the other data and
so we include it as a part of the structure. Also (\ref{A1.71.2}) is put
as a part of conditions.
\par
When the notion of orbifold was discovered by Satake \cite{Sat56},
he assumed the effectivity of the action of $\Gamma_p$. Later when
Thurston renamed Satake's V-manifold as an  orbifold, he did not
change its mathematical content and still assumed the effectivity of
$\Gamma_p$. Because of this, we include the effectivity of
$\Gamma_p$ as a part of the definition of orbifold in this paper.
\item Consider the situation of Definition \ref{A1.5}. Then, in Lemma \ref{A1.73}, we proved
the equality
\begin{equation}
\gamma_{pqr} \cdot \gamma_{prs} = h_{pq}(\gamma_{qrs}) \cdot \gamma_{pqs}
\label{A1.82}\end{equation}
where
$q \in \psi_p(s_p^{-1}(0)/\Gamma_p)$, $r \in \psi_q(s_q^{-1}(0)/\Gamma_q)$,
$s \in \psi_r(s_r^{-1}(0)/\Gamma_r)$.
Since (\ref{A1.82}) is automatic, we did not put it as a part of assumptions in
Definition \ref{A1.5}. In the situation where effectivity of the
$\Gamma_p$ action is not assumed, (\ref{A1.82}) will not be automatic.
\item Using the language of category theory, we can rewrite the definition
of $Sh(M,\underline G)$, as follows.
(Our discussion below is informal since we do {\it not}
use it in this paper.)
\par
We first define a category $\mathcal O(M)$.
Its objects are open sets of $M$.
There is no morphism from $U$ to $V$
if $U$ is not a subset of $V$. If $U \subset V$, there exists exactly
one morphism from $U$ to $V$.
\par
We next consider the 2-category $\underline{\underline G}$ as follows.
There is only one object  in it. The
category of morphism from this object to itself is $\underline G$.
\par
Then an element of $Sh(M,\underline G)$ is regarded as a
pseudo-functor from $\mathcal O(M)$ to $\underline{\underline G}$,
in the sense of \cite{Grot71} Expos\'e VI 8.
\par
Let us explain how a pseudo-functor $\mathcal O(M) \to
\underline{\underline G}$ is related to an element of
$Sh(X,\underline G)$. A pseudo-functor $\mathcal O(M) \to
\underline{\underline G}$ first assigns a functor $F_{UV}:
\underline G \to \underline G$ for each $U \subset V$. Such a
functor is nothing but a homomorphism $\phi_{UV}: G \to G$.
\par
If $U_3 \subset U_2 \subset U_1$, then the pseudo-functor associate
a natural transformation
$$
T_{U_3U_2U_1}: F_{U_3U_1} \to F_{U_3U_2}\circ F_{U_2U_1}
$$
which is (in our situation) automatically an equivalence.
By definition of the category $\underline G$, such a natural
transformation is given by an element $\gamma_{U_3U_2U_1} \in G$
such that
$$
\gamma_{U_3U_2U_1} \cdot \phi_{U_3U_1} = \phi_{U_3U_2} \circ \phi_{U_2U_1}.
$$
This formula corresponds to (\ref{A1.71.1}).
\par
For the pair $(F_{UV},\gamma_{U_3U_2U_1})$ to be a pseudo-functor we need
to assume a compatibility condition between them, that is
the commutativity of the following diagram for each
$U_4 \subset U_3 \subset U_2 \subset U_1$.
$$
\CD
F_{U_4U_3}\circ F_{U_3U_1} @<{T_{U_4U_3U_1}}<< F_{U_4U_1} @>{T_{U_4U_2U_1}}>> F_{U_4U_2}\circ F_{U_2U_1}\\
@VV{(F_{U_4U_3})_*(T_{U_3U_2U_1})}V  &&  @V{(F_{U_2U_1})^*(T_{U_4U_3U_2})}VV \\
F_{U_4U_3}\circ F_{U_3U_2}\circ F_{U_2U_1} & @= & F_{U_4U_3}\circ F_{U_3U_2}\circ F_{U_2U_1}.
\endCD
$$
(See Definition 3.10 (iv)(b) \cite{FGIKNV05} or \cite{Grot71}
Expos\'e VI Proposition 7.4.) The commutativity of this diagram is
equivalent to
$$
\gamma_{U_4U_3U_2} \cdot \gamma_{U_4U_2U_1} = \phi_{U_4U_3}(\gamma_{U_3U_2U_1})
\cdot \gamma_{U_4U_3U_1}.
$$
This formula is the same as (\ref{A1.71.2}).
(\ref{A1.71.1}) is a consequence of the fact that $T_{U_3U_2U_1}$ is a natural
transformation.
\par
We note that, in the definition of pseudo-functor in Definition
3.10 \cite{FGIKNV05}, there are other conditions (ii), (iv)(a). In
our situation, it will become
$$
h_{ii}(g) = \gamma_{iii}\cdot g\cdot \gamma_{iii}^{-1},
\quad
\gamma_{iij} = \gamma_{iii}, \quad \gamma_{ijj} = h_{ij}(\gamma_{jjj}).
$$
They follow from (\ref{A1.71}). In fact, the first formula follows from
(\ref{A1.71.1}) by putting $i=j=k$. The second formula
follows from (\ref{A1.71.2}) by putting $i=j=k$.
The third formula follows from (\ref{A1.71.2}) by putting $j=k=l$.
\par
We can continue and rewrite (\ref{A1.72}) using category theory.
We do not try to do it here.
In fact, the theory of stack which is well established is based on
category theory, and in this subsection we try to give a self-contained
account of the part thereof which we need,
{\it without} using category theory.
\item We did {\it not} assume
$$
h_{ii} = id, \qquad h_{ij} \circ h_{ji} = id,
$$
in Definition \ref{A1.70}.
There seems to be the version that assumes the above identities together with
$$
\gamma_{ijk} = \gamma_{ikj}^{-1}, \qquad \gamma_{ijk} = h_{ij}(\gamma_{jki}).
$$
In the abelian case, these conditions will become the condition
that $h_{ij\cdots i_k k}$ is anti-symmetric with respect to the change
of indices. It is well known that we have the same \v Cech cohomology,
whether or not we assume the anti-symmetricity.
\end{enumerate}
\end{rem}
\section{The stack structure of the singular locus
}
\label{subsecA1.6.2}
Now we apply the above discussion of stacks to the circumstance that arises in
later sections in relation to the study of normal bundles of the
singular locus $X^{\cong}(\Gamma)$.
\begin{exdefn}\label{A1.83}
Let $G$ be a finite group acting effectively and smoothly on a
smooth manifold $\widetilde X$. Consider the orbifold $X =
\widetilde X/G$.
Such an orbifold is said to be a {\it global quotient}.
Let $\Gamma$ be an abstract group. We put
$$
X^{\cong}(\Gamma) = \{ x \in \widetilde X \mid I_x \cong \Gamma
\}/G,
$$
where
$$
I_x = \{ g \in G \mid gx = x\}.
$$
It follows from (\ref{A1.84}) below that $X^{\cong}(\Gamma)$ is a smooth manifold.
\par
We now give a construction of an element of
$Sh(X^{\cong}(\Gamma),\underline{G})$.
We decompose
$X^{\cong}(\Gamma)$ as in \eqref{A1.84} and study each of them
separately.
Namely, for each subgroup $\Gamma_0 \subset G$ with
$\Gamma_0\cong\Gamma$, we consider
$$
\widetilde X^=(\Gamma_0) = \{x \in \widetilde X \mid I_x = \Gamma_0
\}.
$$
Denote the normalizer of $\Gamma_0$ by
$$
N(\Gamma_0) = \{ g\in G \mid g\Gamma_0g^{-1} = \Gamma_0\}.
$$
Then $H(\Gamma_0) = N(\Gamma_0)/\Gamma_0$ acts freely on $\widetilde
X^=(\Gamma_0)$ and by definition we have
\begin{equation}
X^{\cong}(\Gamma) = \bigcup_{\Gamma_0}\widetilde
X^=(\Gamma_0)/H(\Gamma_0),
\label{A1.84}\end{equation}
where the union is taken
over a complete system of representatives
of the conjugacy classes of the subgroups of $G$ isomorphic to
$\Gamma$. We have an exact sequence
\begin{equation}
1 \longrightarrow \Gamma_0 \longrightarrow N(\Gamma_0)
 \longrightarrow H(\Gamma_0) \longrightarrow 1.
\label{A1.85}\end{equation}
\par
We choose a sufficiently fine good
covering $\mathcal U = \{U_i \mid i
\in I\}$ of $X^{\cong}(\Gamma)$ and a lift $\widetilde U_i \subset
\widetilde X^=(\Gamma_0)$ of $U_i$ so that the projection
$\widetilde{X}\to X$ restricts to a homeomorphism from
$\widetilde{U}_i$ to $U_i$.
\par
For each $i,j$ with $U_i \cap U_j \ne \emptyset$ there exists a
unique $\underline h_{ij} \in H(\Gamma_0)$ such that $\widetilde U_i
\cap (\underline h_{ij} \widetilde U_j) \ne \emptyset$. We remark
that $\underline h_{ij} \cdot \underline h_{jk} = \underline h_{ik}$
in $H(\Gamma_0)$, if $U_i \cap U_j \cap U_k \ne \emptyset$.
\par
We choose lifts $\widetilde{\underline h}_{ij} \in N(\Gamma_0)$ of
$\underline h_{ij}$.
\par
We define an automorphism $h_{ij}: \Gamma_0 \to \Gamma_0$ by
$$
h_{ij}(g) = \widetilde{\underline h}_{ij} \cdot g \cdot
\widetilde{\underline h}_{ij}^{-1}.
$$
We define $\gamma_{ijk} \in \Gamma_0$ by
$$
\gamma_{ijk} \cdot \widetilde{\underline h}_{ik} =
\widetilde{\underline h}_{ij}\cdot \widetilde{\underline h}_{jk}.
$$
Then it is easy to check that $\{ \gamma_{ijk}\}$ satisfies (\ref{A1.71}).
\par
We can generalize the above construction and include the case of an orbifold
that is not necessarily a global quotient of a  manifold.
We do not discuss it here since we do not use it in the
main application (the proof of Theorem \ref{maintechnicalresult}).
\end{exdefn}
We note that we can choose $\gamma_{ijk} = 1$ if the exact sequence (\ref{A1.85})
splits. But this is not always the case.
\begin{exm}\label{A1.86}
Let us consider the orbifold $X$ given in Example \ref{A1.68}.
Then $X^{\cong}(\Gamma) = S^2$. The $\Z_p$ local system
is necessarily trivial on $S^2$. So
$$
Sh(S^2;\Z_p) \cong \text{\it \v H}\,^2(S^2;\Z_p) \cong \Z_p.
$$
The element thereof defined in Example-Definition \ref{A1.83} is the
generator of $\Z_p$ and hence is nonzero.
\end{exm}
\begin{lem}\label{A1.87}
The element of $Sh(X^{\cong}(\Gamma),\underline G)$
represented by $(\{h_{ij}\},\{\gamma_{ijk}\})$ is independent of various
choices involved in the construction.
\end{lem}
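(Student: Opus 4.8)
The plan is to check that the data $(\{h_{ij}\},\{\gamma_{ijk}\})$ produced in Example-Definition \ref{A1.83} changes, under each of the choices involved, only by an isomorphism in the sense of Definition \ref{A1.70} (ii) when the covering is kept fixed, and is replaced by $i(\cdot)^{*}$ of itself (Definition \ref{A1.80}) when the covering is refined. Since $Sh(X^{\cong}(\Gamma),\underline G)$ is the inductive limit over coverings and, by Lemma \ref{A1.75}, `isomorphic' is an equivalence relation, this yields the assertion. The choices entering the construction are: the representative $\Gamma_{0}$ of a conjugacy class of isotropy groups, the good covering $\mathcal U=\{U_{i}\}$, the lifts $\widetilde U_{i}\subset\widetilde X^{=}(\Gamma_{0})$, and the lifts $\widetilde{\underline h}_{ij}\in N(\Gamma_{0})$ of $\underline h_{ij}$. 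I would treat them one at a time.

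First, fix $\Gamma_{0}$, $\mathcal U$ and the $\widetilde U_{i}$, and vary $\widetilde{\underline h}_{ij}$. Since $\Gamma_{0}$ is normal in $N(\Gamma_{0})$, any other lift is $\widetilde{\underline h}'_{ij}=\mu_{ij}\cdot\widetilde{\underline h}_{ij}$ with $\mu_{ij}\in\Gamma_{0}$. From $h_{ij}(g)=\widetilde{\underline h}_{ij}\,g\,\widetilde{\underline h}_{ij}^{-1}$ one gets $h'_{ij}(g)=\mu_{ij}\,h_{ij}(g)\,\mu_{ij}^{-1}$, and from $\gamma_{ijk}\,\widetilde{\underline h}_{ik}=\widetilde{\underline h}_{ij}\,\widetilde{\underline h}_{jk}$ together with $\widetilde{\underline h}_{ij}\,\mu_{jk}\,\widetilde{\underline h}_{ij}^{-1}=h_{ij}(\mu_{jk})$ one gets $\gamma'_{ijk}\,\mu_{ik}=\mu_{ij}\,h_{ij}(\mu_{jk})\,\gamma_{ijk}$. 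These are exactly (\ref{A1.72.3}) and (\ref{A1.72.4}) with $\psi_{i}=\mathrm{id}$ (so $h''_{ij}=h_{ij}$, $\gamma''_{ijk}=\gamma_{ijk}$ by (\ref{A1.72.1}), (\ref{A1.72.2})); hence $(\{\mu_{ij}\},\{\mathrm{id}\})$ is an isomorphism from the old to the new data, the computation being of the same kind as in Remark \ref{1.79}.

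Second, fix $\Gamma_{0}$ and $\mathcal U$ and replace $\widetilde U_{i}$ by $\widetilde U'_{i}=\underline g_{i}\widetilde U_{i}$ with $\underline g_{i}\in H(\Gamma_{0})$, choosing lifts $\widetilde{\underline g}_{i}\in N(\Gamma_{0})$. Then the new transition element is $\underline h'_{ij}=\underline g_{i}\,\underline h_{ij}\,\underline g_{j}^{-1}$, for which $\widetilde{\underline h}'_{ij}:=\widetilde{\underline g}_{i}\,\widetilde{\underline h}_{ij}\,\widetilde{\underline g}_{j}^{-1}$ is a legitimate lift; with this choice $h'_{ij}=\psi_{i}\circ h_{ij}\circ\psi_{j}^{-1}$ and $\gamma'_{ijk}=\psi_{i}(\gamma_{ijk})$, where $\psi_{i}:=\operatorname{ad}(\widetilde{\underline g}_{i})|_{\Gamma_{0}}\in\operatorname{Aut}(\Gamma_{0})$, so $(\{1\},\{\psi_{i}\})$ is an isomorphism. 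Composing with the first step and invoking Lemma \ref{A1.75}, any two choices of $(\{\widetilde U_{i}\},\{\widetilde{\underline h}_{ij}\})$ for a fixed $\Gamma_{0}$ and $\mathcal U$ give isomorphic data. Changing the representative to $a\Gamma_{0}a^{-1}$ ($a\in G$) is of the same nature: $x\mapsto ax$ is a diffeomorphism $\widetilde X^{=}(\Gamma_{0})\to\widetilde X^{=}(a\Gamma_{0}a^{-1})$, equivariant via conjugation by $a$, inducing the identity on the corresponding piece of $X^{\cong}(\Gamma)$, and conjugation by $a$ carries one construction onto the other, so one again stays in the same isomorphism class.

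Finally, for two good coverings $\mathcal U$, $\mathcal U'$, pass to a common good refinement $\mathcal U''$ with a refinement map $i(\cdot)$ as in Definition \ref{A1.80}. Choosing $\widetilde U''_{j}\subset\widetilde U_{i(j)}$ by restriction and $\widetilde{\underline h}''_{j_{1}j_{2}}:=\widetilde{\underline h}_{i(j_{1})i(j_{2})}$, the corresponding transition element $\underline h''_{j_{1}j_{2}}$ is forced to equal $\underline h_{i(j_{1})i(j_{2})}$, so the construction for $\mathcal U''$ with these choices is literally $i(\cdot)^{*}$ of the one for $\mathcal U$; the same for $\mathcal U'$, and by the previous paragraphs any other choice of lifts on $\mathcal U''$ yields an isomorphic element. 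Hence the classes coming from $\mathcal U$ and from $\mathcal U'$ have the same image in $Sh(X^{\cong}(\Gamma),\underline G)$, which proves the lemma. The only genuine work is the two cocycle identities verified in the second and third paragraphs; there is no conceptual obstacle, since the pattern of computation and the transitivity of `isomorphic' are already supplied by Lemmas \ref{A1.73} and \ref{A1.75} and Remark \ref{1.79} --- the point that merely requires care is to observe that every re-choice is a composite of the two elementary moves above together with the common-refinement step.
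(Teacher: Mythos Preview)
Your proof is correct and follows essentially the same approach as the paper: decompose the choices into the change of lifts $\widetilde{\underline h}_{ij}\mapsto\mu_{ij}\widetilde{\underline h}_{ij}$ (handled by $(\{\mu_{ij}\},\{\mathrm{id}\})$) and the change of lifts $\widetilde U_i\mapsto\alpha_i\widetilde U_i$ (handled by $(\{1\},\{\operatorname{ad}(\alpha_i)\})$), then dismiss refinement as routine. The paper treats these two moves in the opposite order and uses $\alpha_i\in N(\Gamma_0)$ directly rather than your $\underline g_i\in H(\Gamma_0)$ together with a chosen lift $\widetilde{\underline g}_i$, but since $\Gamma_0$ fixes $\widetilde U_i$ pointwise these formulations are equivalent. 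You add two small items the paper omits: the change of representative $\Gamma_0\mapsto a\Gamma_0 a^{-1}$ and an explicit description of how refinement interacts with the inductive limit; both are correct and harmless additions.
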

\begin{proof} We first fix $\mathcal U = \{U_i \mid i \in I\}$. We change
$\widetilde U_i$ to $\alpha_i \widetilde U_i$ where $\alpha_i \in
N(\Gamma_0)$. We also change $\widetilde{\underline h}_{ij}$  to
$$
\widetilde{\underline h}^{\prime\prime}_{ij} = \alpha_i \cdot
\widetilde{\underline h}_{ij} \cdot \alpha_j^{-1}.
$$
Then $h_{ij} \in \text{Aut}(G)$ is transformed to
$$
h_{ij}^{\prime\prime} = \text{ad}(\alpha_i) \circ
h_{ij} \circ \text{ad}(\alpha_j)^{-1},
$$
where $\text{ad}: N(\Gamma_0) \to \text{Aut}(\Gamma_0)$ is defined
by $\text{ad}(g)(g') = g \cdot g' \cdot g^{-1}$. We put $\psi_i =
\text{ad}(\alpha_i)$ and $\mu_{ij} = 1$. Then (\ref{A1.72.1}),
(\ref{A1.72.3}) are satisfied and hence
$(\{\mu_{ij}\},\{\psi_i\})$
defines an isomorphism.
\par
We also have
$$
\gamma_{ijk}^{\prime\prime} =
\alpha_i \cdot
\gamma_{ijk}
\cdot \alpha_i^{-1}
= \psi_i(\gamma_{ijk}).
$$
Therefore the isomorphism class is independent of the choice of
$\widetilde U_i$.
\par
We next fix $\widetilde U_i$ and change the lift
$\widetilde{\underline h}_{ij} \in N(\Gamma_0)$ of $\underline
h_{ij}$. We put
$$
\widetilde{\underline h}'_{ij} = \mu_{ij} \cdot
\widetilde{\underline h}_{ij}.
$$
Then we have
$$
\gamma'_{ijk} = \widetilde{\underline h}'_{ij} \cdot
\widetilde{\underline h}'_{jk} \cdot (\widetilde{\underline
h}'_{ik})^{-1} = \mu_{ij} \cdot h_{ij}(\mu_{jk}) \cdot \gamma_{ijk}
\cdot\mu_{ik}^{-1}
$$
as required. The invariance under the refinement of the covering is
easy to prove. \end{proof}
\begin{defn}\label{A1.88}
We call the structure defined by $(\{h_{ij}\},\{\gamma_{ijk}\}) \in Sh(X^{\cong}(\Gamma),\underline{G})$
in Example-Definition \ref{A1.83}, the {\it standard stack structure
on $X^{\cong}(\Gamma)$}.
\end{defn}

\section{The normal bundle of the singular locus
and $C^{\infty}$ local triviality}\label{normalstack}
Going back to the general topological space $M$, we next
define a vector bundle on the stack defined by an element of
$Sh(M,\underline G)$. Let $(\{ h_{ij}\},\{\gamma_{ijk}\}) \in
Sh((M,\mathcal U),\underline G)$.
\begin{defn}\label{A1.89}
A {\it vector bundle} on $(\{h_{ij}\},\{\gamma_{ijk}\})\in
Sh((M,\mathcal U),\underline G)$ is a pair $(\{F_i\},\{g_{ij}\})$
such that $F_i$ is a vector bundle on $U_i$ with $G$ action and
$g_{ij}$ is an $h_{ij}$-equivariant bundle isomorphism $ g_{ij}:
\left. F_j\right\vert_{U_i \cap U_j} \to \left. F_i\right\vert_{U_i
\cap U_j} $ such that:
\begin{equation}
g_{ij} \circ g_{jk}= \gamma_{ijk} \cdot g_{ik}.
\label{A1.90}\end{equation}
\par
We assume that $(\{\mu_{ij}\},\{\psi_i\})$ is an isomorphism
$(\{h_{ij}\},\{\gamma_{ijk}\}) \to (\{h'_{ij}\},\{\gamma'_{ijk}\})$.
An {\it isomorphism} from a vector bundle $\mathcal F =
(\{F_i\},\{g_{ij}\})$ on $(\{h_{ij}\},\{\gamma_{ijk}\})$ to a vector
bundle $\mathcal F' = (\{F'_i\},\{g'_{ij}\})$ on
$(\{h'_{ij}\},\{\gamma'_{ijk}\})$ is a family $\{\phi_i\}_{i\in I}$
of $\psi_i$-equivariant  isomorphisms of vector bundles $ \phi_i:
F_i \to F'_i $ such that
\begin{equation}
g'_{ij} \circ \phi_j = \mu_{ij} \cdot (\phi_i \circ g_{ij}).
\label{A1.91}\end{equation}
We say $\{\phi_i\}_{i\in I}$ is an {\it isomorphism $: \mathcal F \to \mathcal F'$ over
$(\{\mu_{ij}\},\{\psi_i\})$}.
\end{defn}
\begin{lem}\label{A1.92}
The relation `isomorphic' in Definition {\rm \ref{A1.89}} is an equivalence relation.
\end{lem}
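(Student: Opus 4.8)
The plan is to follow the proof of Lemma \ref{A1.75} closely, regarding a vector bundle on a stack as one more layer of data sitting over the stack data $(\{h_{ij}\},\{\gamma_{ijk}\})$: every isomorphism $\{\phi_i\}$ of vector bundles in the sense of Definition \ref{A1.89} carries with it an underlying isomorphism $(\{\mu_{ij}\},\{\psi_i\})$ of base stacks, whose composition, inversion and identity are already understood via Lemma \ref{A1.75}. So the only thing to verify is that the obvious candidate families of bundle isomorphisms -- the identity $\{\mathrm{id}_{F_i}\}$, the fibrewise inverse $\{\phi_i^{-1}\}$, and the fibrewise composite $\{\phi'_i\circ\phi_i\}$ -- again satisfy the two conditions in Definition \ref{A1.89}, namely that $\phi_i$ is $\psi_i$-equivariant and that $g'_{ij}\circ\phi_j=\mu_{ij}\cdot(\phi_i\circ g_{ij})$.

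For reflexivity I would take the identity isomorphism $(\{1\},\{1\})$ of $(\{h_{ij}\},\{\gamma_{ijk}\})$ and $\phi_i=\mathrm{id}_{F_i}$; both conditions are then trivially satisfied. For symmetry, given $\{\phi_i\}\colon\mathcal F\to\mathcal F'$ over $(\{\mu_{ij}\},\{\psi_i\})$, I would use the inverse base isomorphism, which by (\ref{A1.76})--(\ref{A1.78}) is $(\{\psi_i^{-1}(\mu_{ij}^{-1})\},\{\psi_i^{-1}\})$, and take the new family to be $\{\phi_i^{-1}\}$; each $\phi_i^{-1}$ is $\psi_i^{-1}$-equivariant, and composing the identity $g'_{ij}\circ\phi_j=\mu_{ij}\cdot(\phi_i\circ g_{ij})$ with $\phi_i^{-1}$ on the left and $\phi_j^{-1}$ on the right, using $\psi_i^{-1}$-equivariance of $\phi_i^{-1}$ to move $\mu_{ij}$ through it, yields $g_{ij}\circ\phi_j^{-1}=\psi_i^{-1}(\mu_{ij}^{-1})\cdot(\phi_i^{-1}\circ g'_{ij})$, which is exactly the required condition for the inverse data. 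For transitivity, if $\{\phi_i\}\colon\mathcal F\to\mathcal F'$ is over $(\{\mu_{ij}\},\{\psi_i\})$ and $\{\phi'_i\}\colon\mathcal F'\to\mathcal F''$ is over $(\{\mu'_{ij}\},\{\psi'_i\})$, then $\{\phi'_i\circ\phi_i\}$ is $(\psi'_i\circ\psi_i)$-equivariant, and substituting $g'_{ij}\circ\phi_j=\mu_{ij}\cdot(\phi_i\circ g_{ij})$ into $g''_{ij}\circ\phi'_j=\mu'_{ij}\cdot(\phi'_i\circ g'_{ij})$ and pulling the factor $\mu_{ij}$ out through $\phi'_i$ as $\psi'_i(\mu_{ij})$ gives $g''_{ij}\circ(\phi'_j\circ\phi_j)=\mu'_{ij}\,\psi'_i(\mu_{ij})\cdot(\phi'_i\circ\phi_i\circ g_{ij})$; thus the composite is an isomorphism over precisely the composite base isomorphism $(\{\mu'_{ij}\,\psi'_i(\mu_{ij})\},\{\psi'_i\circ\psi_i\})$ produced in the proof of Lemma \ref{A1.75} through (\ref{A1.76})--(\ref{A1.78}).

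There is no real obstacle here: the argument is entirely parallel to Lemma \ref{A1.75} and reduces to the equivariance identities already established there. The only point requiring care is the bookkeeping of which automorphism acts on which group element -- keeping the factors $\psi_i(\mu_{jk})$, $\psi'_i(\mu_{ij})$, $\psi_i^{-1}(\mu_{ij}^{-1})$ in the correct positions -- exactly as in the verification of (\ref{A1.76})--(\ref{A1.78}).
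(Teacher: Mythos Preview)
Your proof is correct and follows essentially the same approach as the paper. The paper's own proof is terser: it spells out only the transitivity case, showing that $\{\phi'_i\circ\phi_i\}$ is an isomorphism over the composite $(\{\mu'_{ij}\cdot\psi'_i(\mu_{ij})\},\{\psi'_i\circ\psi_i\})$ exactly as you do, and leaves reflexivity and symmetry implicit.
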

\begin{proof}
Let $(\mu_{ij},\psi_i)_* (h^1_{ij},\gamma^1_{ijk}) =
(h^2_{ij},\gamma^2_{ijk})$ and $(\mu'_{ij},\psi'_i)_*
(h^2_{ij},\gamma^2_{ijk}) = (h^3_{ij},\gamma^3_{ijk})$. Let
$\mathcal F^c = (\{F_i^c\},\{g_{ij}^c\})$ be a vector bundle on
$(\{h^c_{ij}\},\{\gamma^c_{ijk}\})$ and $\{\phi_i\}_{i\in I}:
\mathcal F^1 \to \mathcal F^2$ and $\{\phi'_i\}_{i\in I}: \mathcal
F^2 \to \mathcal F^3$ be  isomorphisms over
$(\{\mu_{ij}\},\{\psi_i\})$, $(\{\mu'_{ij}\},\{\psi'_i\})$,
respectively. Then we can check easily by calculation that
$\{\phi'_i\circ \phi_i\}_{i\in I}$ is an isomorphism $: \mathcal F^1
\to \mathcal F^3$ over $(\{\mu'_{ij}\},\{\psi'_i\}) \circ
(\{\mu_{ij}\},\{\psi_i\}) = (\{\mu'_{ij} \cdot
\psi'_i(\mu_{ij})\},\{\psi'_i\circ \psi_i\})$.
\end{proof}
\begin{lem}\label{A1.93}
Let $\mathcal F = (\{F_i\},\{g_{ij}\})$ be a vector bundle on
$(\{h_{ij}\},\{\gamma_{ijk}\})$ and let $(\{\mu_{ij}\},\{\psi_i\})$
be an isomorphism $(\{h_{ij}\},\{\gamma_{ijk}\}) \to
(\{h'_{ij}\},\{\gamma'_{ijk}\})$. Let $F'_i$ be a $G$-equivariant
vector bundle on $U_i$ and let $\phi_i: F_i \to F'_i$ be a
$\psi_i$-equivariant bundle isomorphism. We {\bf define} $g'_{ij}$
by {\rm (\ref{A1.91})}.
Then $(\{F'_i\},\{g'_{ij}\})$ is a vector bundle on
$(\{h'_{ij}\},\{\gamma'_{ijk}\})$.
\end{lem}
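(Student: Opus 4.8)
The plan is a direct, if bookkeeping-heavy, verification. Since each $\phi_i$ is a bundle isomorphism, \eqref{A1.91} determines $g'_{ij}$ uniquely, namely $g'_{ij}=\mu_{ij}\cdot(\phi_i\circ g_{ij}\circ\phi_j^{-1})$ on $U_i\cap U_j$, where $\mu_{ij}\cdot$ denotes the $G$-action on $F'_i$. This is manifestly a bundle isomorphism $F'_j|_{U_i\cap U_j}\to F'_i|_{U_i\cap U_j}$, being a composition of bundle isomorphisms (the $G$-action acting through bundle automorphisms). So the content of the lemma is to check the two conditions of Definition \ref{A1.89} for the pair $(\{F'_i\},\{g'_{ij}\})$: that $g'_{ij}$ is $h'_{ij}$-equivariant, and that the cocycle identity \eqref{A1.90} holds with $\{\gamma'_{ijk}\}$ in place of $\{\gamma_{ijk}\}$. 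Both are obtained by pushing a test vector through the defining factors of $g'_{ij}$ one at a time and reading off the group elements that are produced, always working silently over the relevant overlaps $U_i\cap U_j$, $U_i\cap U_j\cap U_k$.

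For equivariance I would take $g\in G$ and $v\in F'_j$ and compute $g'_{ij}(g\cdot v)$. Applying $\phi_j^{-1}$ (which is $\psi_j^{-1}$-equivariant, being the inverse of a $\psi_j$-equivariant map), then $g_{ij}$ ($h_{ij}$-equivariant), then $\phi_i$ ($\psi_i$-equivariant), and finally multiplying by $\mu_{ij}$, transports the action of $g$ to the action of $\mu_{ij}\,(\psi_i\circ h_{ij}\circ\psi_j^{-1})(g)\,\mu_{ij}^{-1}$ on $g'_{ij}(v)$. By \eqref{A1.72.1} and \eqref{A1.72.3} this group element is exactly $h'_{ij}(g)$, i.e. $h'_{ij}=\operatorname{ad}(\mu_{ij})\circ\psi_i\circ h_{ij}\circ\psi_j^{-1}$, so $g'_{ij}$ is $h'_{ij}$-equivariant.

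For the cocycle identity I would evaluate $g'_{ij}\circ g'_{jk}$ on $u\in F'_k$ by the same device. Expanding $g'_{jk}(u)=\mu_{jk}\cdot\phi_j(g_{jk}(\phi_k^{-1}(u)))$ and applying $\phi_j^{-1}$ turns the leading $\mu_{jk}$ into $\psi_j^{-1}(\mu_{jk})$ and cancels $\phi_j^{-1}\circ\phi_j$; then $g_{ij}$, via its $h_{ij}$-equivariance together with the cocycle identity $g_{ij}\circ g_{jk}=\gamma_{ijk}\cdot g_{ik}$ for $\mathcal F$, brings out the factor $h_{ij}(\psi_j^{-1}(\mu_{jk}))\,\gamma_{ijk}$ in front of $g_{ik}(\phi_k^{-1}(u))$; applying $\phi_i$ (and using that $\psi_i$ is a homomorphism) turns this into $\psi_i(h_{ij}(\psi_j^{-1}(\mu_{jk})))\,\psi_i(\gamma_{ijk})$, which by \eqref{A1.72.1} and \eqref{A1.72.2} equals $h''_{ij}(\mu_{jk})\,\gamma''_{ijk}$ in the notation of \eqref{A1.72}; finally multiplying by $\mu_{ij}$ gives the group element $\mu_{ij}\,h''_{ij}(\mu_{jk})\,\gamma''_{ijk}$ acting on $\phi_i(g_{ik}(\phi_k^{-1}(u)))=\mu_{ik}^{-1}\cdot g'_{ik}(u)$. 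By \eqref{A1.72.4} this group element equals $\gamma'_{ijk}\,\mu_{ik}$, whence $g'_{ij}\circ g'_{jk}=\gamma'_{ijk}\cdot g'_{ik}$, as required.

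The computation is of the same routine nature as those in the proofs of Lemmas \ref{A1.75} and \ref{A1.92}; the only place where care is genuinely needed — and the one I would double-check — is keeping the left/right placement of group elements correct as $G$-actions are commuted past the maps $\phi_i$ and $g_{ij}$ (which are only $\psi$- resp.\ $h$-equivariant, not $G$-equivariant), and matching the resulting identities precisely against the four relations \eqref{A1.72.1}--\eqref{A1.72.4} that constitute an isomorphism of sheaves of categories. Once the two properties above are verified, $(\{F'_i\},\{g'_{ij}\})$ satisfies Definition \ref{A1.89} and is therefore a vector bundle on $(\{h'_{ij}\},\{\gamma'_{ijk}\})$.
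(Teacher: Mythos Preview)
Your proof is correct and follows the same direct-verification approach as the paper. The only organizational difference is that the paper factors the isomorphism $(\{\mu_{ij}\},\{\psi_i\})$ as $(\{\mu_{ij}\},\{1\})\circ(\{1\},\{\psi_i\})$ (using the composition formulas established in the proof of Lemma~\ref{A1.75}) and checks the cocycle identity separately in each of these two special cases, whereas you handle the general case in a single computation invoking all four relations \eqref{A1.72.1}--\eqref{A1.72.4} at once. Your route is slightly more self-contained; the paper's is slightly shorter in each individual step but relies implicitly on that factorization.
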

\begin{proof}
It is easy to see that $g'_{ij}$ is $h'_{ij}$  equivariant.
So it suffices to check (\ref{A1.90}) for $g'_{ij}$ and $\gamma'_{ijk}$.
We may divide the cases into $(\mu_{ij},\psi_i) = (1,\psi_i)$ and
$(\mu_{ij},\psi_i) = (\mu_{ij},1)$.
\par
In case $(\mu_{ij},\psi_i) = (1,\psi_i)$ we have
$$\aligned
g'_{ij}\circ g'_{jk}\circ \phi_k &=
\phi_i \circ g_{ij} \circ g_{jk}
= \phi_i (\gamma_{ijk} \cdot g_{ik})\\
&= \psi_i(\gamma_{ijk})\cdot \phi_i  \circ g_{ik}
= \gamma'_{ijk} \cdot g'_{ik}\circ \phi_k
\endaligned$$
as required.
\par
In case $(\mu_{ij},\psi_i) = (\mu_{ij},1)$ we have
$$\aligned
g'_{ij} \circ g'_{jk}\circ \phi_k
&= g'_{ij} \circ (\mu_{jk}\cdot (\phi_j \circ g_{jk})) \\
&= h'_{ij}(\mu_{jk}) \cdot \mu_{ij} \cdot \phi_i\circ (g_{ij}
\circ g_{jk}) \\
&= \mu_{ij} \cdot h_{ij}(\mu_{jk}) \cdot \gamma_{ijk} \cdot \mu_{ik}^{-1}
\cdot (g'_{ik} \circ \phi_k) \\
&= \gamma'_{ijk} \cdot  (g'_{ik} \circ \phi_k),
\endaligned$$
as required.
The proof of the lemma is now complete.
\end{proof}
We next discuss how a vector bundle behaves under the refinement of
the covering. Let
$$
i(\cdot)^*([\{h_{i_1i_2}\},\{\gamma_{i_1i_2i_3}\}]) =
[\{h'_{j_1j_2}\},\{\gamma'_{j_1j_2j_3}\}].
$$
See Definition \ref{A1.80}.
Then a vector bundle $(\{F_i\},\{g_{i_1i_2}\})$ on
$(\{h_{i_1i_2}\},\{\gamma_{i_1i_2i_3}\})$ induces $(\{F'_j\},\{g'_{j_1j_2}\})$
on
$(\{h'_{j_1j_2}\},\{\gamma'_{j_1j_2j_3}\})$ by
$$
g'_{j_1j_2} = g_{i(j_1)i(j_2)}\vert_{U'_{j_1j_2}}.
$$
\par
Therefore we can define the notion of a vector bundle on a pair
$(M,[\{h_{ij}\},\{\gamma_{ijk}\}])$ where
$[\{h_{ij}\},\{\gamma_{ijk}\}] \in Sh(M,\underline G)$.
\par
We consider the situation of Example-Definition \ref{A1.83} and use the
notations there.
\begin{defn}\label{A1.94}
We define the normal bundle $N_{X^{\cong}(\Gamma)}X$ over $X^{\cong}(\Gamma)$ with the standard stack structure as follows:
We identify $U_i \subseteq X^{\cong}(\Gamma)$ with $\widetilde U_i
\subset \widetilde X^=(\Gamma_0)$ by the projection and put $F_i =
N_{\widetilde U_i}\widetilde{X}$, the normal bundle of $\widetilde
U_i$ in $\widetilde X$.
The $\Gamma_0$ ($\subset G$) action on
$\widetilde{X}$ induces one on $F_i$.
\par
We next define $g_{ij}$. We have
$$
(\widetilde{\underline h}_{ij}^{-1} \widetilde U_i) \cap \widetilde
U_j \subset \widetilde U_j \cong U_j.
$$
We identify $(\widetilde{\underline h}_{ij}^{-1}) \widetilde U_i
\cap \widetilde{U}_j$ with $U_i \cap U_j$. Then an open embedding
$$
\widetilde{\underline h}_{ij} \cdot: U_i \cap U_j \to \widetilde U_i
\cong U_i
$$
is induced. It extends to
a map $\widetilde X \to \widetilde X$. Then we have
$$
g_{ij}: = (\widetilde{\underline h}_{ij} \cdot)_*: F_j\vert_{U_i
\cap U_j} = N_{\widetilde{\underline h}_{ij}^{-1} \widetilde
U_i}\widetilde X \to F_i = N_{\widetilde U_i}\widetilde X.
$$
Since
$
\widetilde{\underline h}_{ij} \cdot \widetilde{\underline h}_{jk} =
\gamma_{ijk} \cdot \widetilde{\underline h}_{ik},
$
we obtain (\ref{A1.90}). Therefore
$(\{F_i\}, \{ g_{ij}\})$ is a vector bundle
on $(\{h_{ij}\},\{\gamma_{ijk}\}) \in Sh(X^{\cong}(\Gamma), \underline{G})$
in the sense of Definition \ref{A1.89}.
We put
$$N_{X^{\cong}(\Gamma)}X = (\{F_i\},\{ g_{ij}\})
$$
which we call
the {\it normal bundle} of $X^{\cong}(\Gamma)$.
\par
We can generalize this construction to
the case when $X$ is an orbifold,
not necessarily a global quotient of a manifold.
We do not discuss this here since it is not needed in our main
application (the proof of Theorem \ref{maintechnicalresult}).
\end{defn}
\begin{lem}\label{A1.95}
The vector bundle
$(\{F_i\},\{g_{ij}\})$ in Definition {\rm \ref{A1.94}} is independent of the
choices involved in the construction up to isomorphism.
\end{lem}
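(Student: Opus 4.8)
The plan is to run through the proof of Lemma~\ref{A1.87} once more, this time carrying along the normal-bundle data. Recall that in that proof an arbitrary change of the auxiliary choices was decomposed into three elementary moves: (a) replacing each lift $\widetilde U_i\subset\widetilde X^=(\Gamma_0)$ by $\alpha_i\widetilde U_i$ with $\alpha_i\in N(\Gamma_0)$ (and transporting $\widetilde{\underline h}_{ij}$ accordingly to $\alpha_i\widetilde{\underline h}_{ij}\alpha_j^{-1}$); (b) replacing the lifts $\widetilde{\underline h}_{ij}\in N(\Gamma_0)$ of $\underline h_{ij}$ by $\mu_{ij}\widetilde{\underline h}_{ij}$ with $\mu_{ij}\in\Gamma_0$; and (c) refining the good covering $\mathcal U$. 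For each of these I will produce, on top of the isomorphism $(\{\mu_{ij}\},\{\psi_i\})$ of sheaf-of-category data already constructed there, a family $\{\phi_i\}$ of $\psi_i$-equivariant bundle isomorphisms $\phi_i\colon F_i\to F'_i$ satisfying $g'_{ij}\circ\phi_j=\mu_{ij}\cdot(\phi_i\circ g_{ij})$, that is, an isomorphism of vector bundles in the sense of Definition~\ref{A1.89}. Lemma~\ref{A1.92} then lets me compose these to treat an arbitrary pair of choices, after joining any two good coverings through a common refinement.

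For move~(a) we have $\psi_i=\text{ad}(\alpha_i)$, $\mu_{ij}=1$, and $F'_i=N_{\alpha_i\widetilde U_i}\widetilde X$. The diffeomorphism $\alpha_i\cdot\colon\widetilde X\to\widetilde X$ carries $\widetilde U_i$ onto $\alpha_i\widetilde U_i$, so it induces an isomorphism of normal bundles $\phi_i:=(\alpha_i\cdot)_*\colon F_i\to F'_i$. Since the $\Gamma_0$-action on these normal bundles is by push-forward, $\phi_i(g\cdot v)=(\alpha_i g\cdot)_*v=(\alpha_i g\alpha_i^{-1}\cdot)_*(\alpha_i\cdot)_*v=\text{ad}(\alpha_i)(g)\cdot\phi_i(v)$, so $\phi_i$ is $\psi_i$-equivariant; and since $\widetilde{\underline h}'_{ij}=\alpha_i\widetilde{\underline h}_{ij}\alpha_j^{-1}$, we get $g'_{ij}\circ\phi_j=(\alpha_i\widetilde{\underline h}_{ij}\alpha_j^{-1}\cdot)_*\circ(\alpha_j\cdot)_*=(\alpha_i\cdot)_*\circ(\widetilde{\underline h}_{ij}\cdot)_*=\phi_i\circ g_{ij}$, which is the required identity with $\mu_{ij}=1$. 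For move~(b) the bundles $F_i=N_{\widetilde U_i}\widetilde X$ are literally unchanged and $\psi_i=1$, while $g'_{ij}=(\mu_{ij}\widetilde{\underline h}_{ij}\cdot)_*=\mu_{ij}\cdot g_{ij}$, with $\mu_{ij}\cdot$ denoting the $\Gamma_0$-action on $F_i$; hence $\phi_i:=\text{id}_{F_i}$ works tautologically. For move~(c), after first applying move~(a) over the refined covering so that the new lifts are restrictions of the old ones, we have $F'_j=F_{i(j)}\vert_{U'_j}$ and $g'_{j_1j_2}=g_{i(j_1)i(j_2)}\vert_{U'_{j_1j_2}}$, and $\phi_j:=\text{id}$ (restriction of sections) does the job.

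Putting the three moves together and invoking Lemma~\ref{A1.92} to compose the resulting vector-bundle isomorphisms over the composed sheaf isomorphisms completes the argument; the one further point, that replacing $\Gamma_0$ by a conjugate subgroup $g\Gamma_0 g^{-1}$ only conjugates all the data by $g$, is handled in the same way as move~(a). I expect the only genuinely nontrivial step to be move~(a): there the bundle isomorphism $\phi_i$ is not the identity but is induced by the action of $N(\Gamma_0)\subset G$ on $\widetilde X$, and the content is precisely the verification that this $\phi_i$ intertwines the two $\Gamma_0$-actions through $\text{ad}(\alpha_i)$ and is compatible with the transition maps $g_{ij}$ — everything else is straightforward bookkeeping mirroring the proof of Lemma~\ref{A1.87}.
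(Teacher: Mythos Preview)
Your proof is correct and follows essentially the same approach as the paper: decompose an arbitrary change of auxiliary data into the moves (a) change of lifts $\widetilde U_i\mapsto\alpha_i\widetilde U_i$, (b) change of lifts $\widetilde{\underline h}_{ij}\mapsto\mu_{ij}\widetilde{\underline h}_{ij}$, and (c) refinement of the covering, and in each case exhibit the bundle isomorphism $\phi_i$ over the sheaf-of-category isomorphism already constructed in Lemma~\ref{A1.87}. Your treatment of each move, including the choice $\phi_i=(\alpha_i\cdot)_*$ for~(a) and $\phi_i=\mathrm{id}$ for~(b), matches the paper exactly; you supply somewhat more detail (the explicit $\psi_i$-equivariance check, the appeal to Lemma~\ref{A1.92} for composition, and the remark on conjugate $\Gamma_0$), but there is no substantive difference.
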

\begin{proof}
We use the notation of the proof of Lemma \ref{A1.87}.
Let $\alpha_i \in N(\Gamma_0)$.
\par
We first change $\widetilde U_i$ to $\alpha_i\cdot \widetilde U_i$.
Then $\alpha_i$ induces a map
$$
\alpha_i \cdot: N_{\widetilde U_i}\widetilde X \to
N_{\alpha_i\cdot\widetilde U_i}\widetilde X.
$$
It induces
$$
\phi_i = (\alpha_i \cdot)_*: F_i \to F'_i.
$$
Since $\psi_i = {\rm {ad}} (\alpha_i )$, it follows that
$\phi_i$ is $\psi_i$ equivariant.
The equality
$$
\widetilde{\underline h}^{\prime}_{ij} = \alpha_i \cdot
\widetilde{\underline h}_{ij} \cdot \alpha_j^{-1}
$$
implies that
$$
g'_{ij} \circ \phi_j = \phi_i \circ g_{ij}.
$$
Since $\mu_{ij} = 1$ in this case, we obtain the required
isomorphism.
\par
We next fix $\{\widetilde U_i\}$ and change the lift
$\{\widetilde{\underline h}_{ij}\}$ to
\begin{equation}
\widetilde{\underline h}'_{ij} = \mu_{ij}\cdot \widetilde{\underline
h}_{ij}. \label{A1.96}\end{equation}
In this case we have $F_i = F'_i$ and $\phi_i
=$ identity. (\ref{A1.96}) implies
$$
g'_{ij} = (\widetilde{\underline h}'_{ij})_* = \mu_{ij}\cdot
(\widetilde{\underline h}_{ij})_* = \mu_{ij} \cdot g_{ij}.
$$
The invariance under the refinement of the covering is easy to prove.
\end{proof}
\begin{exm}\label{A1.97}
In the situation of Example \ref{A1.68}, the normal
bundle does not exist as a (usual) vector bundle
on $S^2$. But it exists over
$S^2$ with nontrivial stack structure, which
corresponds to the generator of $\text{\it \v H}\,^2(S^2;\Z_p)$. See Example \ref{A1.86}.
\end{exm}
\begin{defn}\label{A1.98}
We consider the situation of Example-Definition \ref{A1.83} and let
$\widetilde E$ be a $G$ equivariant vector bundle on $\widetilde X$.
It induces an orbi-bundle $E$ on $X = \widetilde X/G$.
We define a vector bundle $E\vert_{X^{\cong}(\Gamma)}$ on $X^{\cong}(\Gamma)$ as follows:
We use the notation in Example-Definition \ref{A1.83}
and Definition \ref{A1.94}.
\par
We put
$$
E_i = \widetilde E\vert_{\widetilde U_i}
$$
and regard it as a vector bundle on $U_i$.
As in Definition \ref{A1.94}, we have
an open embedding
$$
\widetilde{\underline h}_{ij} \cdot: U_{i}\cap U_j \to \widetilde
U_i \cong U_i.
$$
Since $\widetilde{\underline h}_{ij}\in G$, it induces a bundle map
$$
g_{ij}: E_j\vert_{U_{i}\cap U_j} \to E_i.
$$
It follows that $g_{ij}$ satisfies the required relation in the same
way as in Definition \ref{A1.94}. We define
$$
E\vert_{X^{\cong}(\Gamma)} = (\{E_i\},\{g_{ij}\})
$$
and call it the {\it restriction} of $E$ to $X^{\cong}(\Gamma)$.
\par
We can also generalize this construction to the case of an orbifold
which is not necessarily a global quotient.
\par
In the same way as Lemma \ref{A1.95}, we can prove that
$(\{E_i\},\{g_{ij}\})$ is independent of the choices
up to isomorphism.
\end{defn}

Before proceeding further, we review the definitions of diffeomorphisms
between orbifolds and of locally trivial $C^{\infty}$ fiber bundles with orbifolds as fibers.
Those notions will be used in Sections \ref{good orbifold}, \ref{section35.2} and \ref{section35.3}.
They are of course well established. However there are several delicate
points in its definition. Since those points are related to the arguments of later sections,
we state a precise definition that we use in this paper.
\par
Let $\widetilde{X}$ be a smooth manifold and $G$ a finite group acting
effectively on it. The quotient $X=\widetilde{X}/G$ defines an orbifold.
Let $X' = \widetilde{X}'/G'$ be another global quotient and
$W \subset X$, $W' \subset X'$ open subsets.
\begin{defn}\label{diffeo}
A homeomorphism $F : W \to W'$ is said to be a
{\it diffeomorphism} if the following holds
for every $p\in W$ and $p' = F(p)$.
\par
Let $\tilde p \in \widetilde{X}$, $\tilde p' \in \widetilde{X}'$ be lifts of $p$ and $p'$, respectively.
Then there exist open neighborhoods $\widetilde U$, $\widetilde U'$ of them,
an isomorphism $\phi : I_{\tilde p} \to I_{\tilde p'}$ and
a $\phi$-equivariant  diffeomorphism $\tilde F_p :  \widetilde U \to \widetilde U'$
such that $\widetilde U/ I_{\tilde p}$, $\widetilde U'/I_{\tilde p'}$ are open subsets
of $X$, $X'$ and that $\widetilde F$ induces the restriction of the map $F$ to $\widetilde U/ I_{\tilde p}$.
\end{defn}
\begin{defn}\label{fiber}
Let $N$ be a smooth manifold, $X=\widetilde X/G$, $X' = \widetilde X'/G'$ global quotients,
and $W \subset X$, $F \subset X'$ open subsets. A continuous map
$\pi : W \to N$ is said to be a {\it locally trivial fiber bundle of $C^{\infty}$-class
with fiber $F$} if the following holds.
\par
For each $p \in N$ there exist an open neighborhood $U_p$ of it
and a diffeomorphism $\phi_p : F \times U_p \to \pi^{-1}(U_p)$  in the sense of
Definition \ref{diffeo}, such that the following diagram commutes.
\begin{equation}\label{fiberdiagram}
\begin{CD}
F \times U_p @>>{\phi_p}> \pi^{-1}(U_p) \\
@VVV  @VV{\pi}V \\
U_p &=&  U_p
\end{CD}
\end{equation}
\end{defn}
\begin{rem}
We require the commutativity of the Diagram (\ref{fiberdiagram}) set
theoretically. In general, the notion of morphisms between orbifolds
must be carefully defined. In fact, the systematic study of it
requires to use the notion of 2 category. In this paper, we  assume effectivity of the finite group
action in the definition of orbifold and in Definition \ref{fiber} we assumed the base space $N$ is a
manifold. So it suffices to require the commutativity of
Diagram (\ref{fiberdiagram}) set
theoretically, in our situation.
\end{rem}

Now let $\mathcal F = (\{F_i\},\{g_{ij}\})$ be a vector bundle over
$(\{h_{ij}\},\{\gamma_{ijk}\}) \in Sh(M,\underline G)$. Assume that
the $G$ action on the fibers of $F_i$ is effective. We are going to
define an orbifold structure on $\mathcal F/G$. We define an equivalence
relation $\sim$ on $ \bigcup_i F_i $ as follows. (Here $F_i$ also
denotes the total space of the vector bundle $F_i$ over $U_i$.) Let $x \in F_i$
and $y \in F_j$. Then $x\sim y$ if and only if one of the following
holds.
\begin{enumerate}
\item  $i=j$, $x = \gamma y$ for some $\gamma \in G$.
\par
\item  $\pi(x) \in U_{i} \cap U_j$, $y = \gamma \cdot g_{ji}(x)$ for some $\gamma \in G$.
Here $\pi: F_i \to U_i$, $\pi: F_j \to U_j$ are the projections.
\end{enumerate}
It is easy to see that $\sim$ is an equivalence relation.
We put
$$
\vert \mathcal F /G\vert = \bigcup_i F_i/\sim
$$
and define a quotient topology on it.
The projection $\pi: F_i \to U_i \subset M$
induces a map
$$
\pi: \vert \mathcal F /G\vert \to M.
$$
Let $F$ be the fiber of the vector bundle $F_i$. $F$ is a vector space on
which $G$ acts effectively.  We assume that $M$ is a smooth manifold.
\begin{lem} \label{A1.99}
$\vert\mathcal F /G\vert$ has a structure of an orbifold.
We denote it by $\mathcal F /G$.
If $\mathcal F$ is isomorphic to $\mathcal F'$, then $\mathcal F /G$
is diffeomorphic to $\mathcal F'/G$ as an orbifold.
\par
Moreover,
$\pi: \mathcal F/G \to M$ is a locally trivial fiber bundle of $C^{\infty}$-class, whose
fiber is $F/G$.
\end{lem}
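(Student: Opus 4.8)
The plan is to build the orbifold structure and the fiber bundle structure on $\vert\mathcal F/G\vert$ at the same time, by first writing down an explicit local model of the form $(F\times V)/G$ and then gluing these models by the maps induced from the bundle isomorphisms $g_{ij}$. First I would fix $i$ and a point $x\in F_i$, set $v=\pi(x)\in U_i$, and trivialize the $G$-equivariant bundle $F_i$ over a small coordinate ball $V\subset M$ around $v$: decomposing $F_i$ into isotypic summands and using local triviality of ordinary vector bundles, one gets a $G$-equivariant isomorphism $F_i\vert_V\cong F\times V$ with $G$ acting linearly on $F$ and trivially on $V$. The corresponding open piece of $\vert\mathcal F/G\vert$ is then $(F\times V)/G$, which (as $G$ acts effectively and linearly on $F$) is a model orbifold, and in which $\pi$ is the projection $(F\times V)/G\to V$; this is by construction a locally trivial fiber bundle of $C^{\infty}$-class with fiber $F/G$ in the sense of Definition \ref{fiber}. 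Passing from such a global quotient to an $I_x$-invariant slice at $x$ gives a Kuranishi neighborhood of the corresponding point with zero obstruction space, the isotropy groups being exactly the subgroups $I_x\subset G$. Thus both structures are manifest on each $W_i$, the image of $F_i$ in $\vert\mathcal F/G\vert$, and it remains to check compatibility.

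Next I would examine $W_i\cap W_j$, which lies over $U_i\cap U_j$ and carries the two chart descriptions $F_i\vert_{U_i\cap U_j}/G$ and $F_j\vert_{U_i\cap U_j}/G$, identified by the map induced from $g_{ij}\colon F_j\vert_{U_i\cap U_j}\to F_i\vert_{U_i\cap U_j}$. I would check this induced map is a diffeomorphism of orbifolds as in Definition \ref{diffeo}: its lift is $g_{ij}$ itself, which is a diffeomorphism of total spaces and is $h_{ij}$-equivariant with $h_{ij}\in\mathrm{Aut}(G)$; and since $\zeta\cdot g_{ij}(z)=g_{ij}(z)$ if and only if $h_{ij}^{-1}(\zeta)\cdot z=z$, the isotropy groups satisfy $I_{g_{ij}(z)}=h_{ij}(I_z)$, so $h_{ij}$ restricts to the required isomorphism of isotropy groups. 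Since $g_{ij}$ intertwines the two projections to $U_i\cap U_j$, this change of chart is compatible with $\pi$, so the local bundle trivializations of the previous step agree over overlaps up to a diffeomorphism of the model fiber $F/G$.

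Finally I would settle consistency and the isomorphism statement. On a triple overlap the bundle isomorphisms satisfy the twisted cocycle relation $g_{ij}\circ g_{jk}=\gamma_{ijk}\cdot g_{ik}$ with $\gamma_{ijk}\in G$ (see \eqref{A1.90}); after passing to the quotient by $G$ the factor $\gamma_{ijk}\cdot$ becomes the identity, so the induced transition diffeomorphisms of $\vert\mathcal F/G\vert$ satisfy the strict cocycle relation. In the Kuranishi-structure formulation of orbifold the elements $\gamma_{ijk}$ supply precisely the elements $\gamma_{pqr}$ required in Definition \ref{A1.5}(ii), all other conditions there being immediate; hence the charts glue to an orbifold $\mathcal F/G$ and the local trivializations glue to a locally trivial $C^{\infty}$ fiber bundle $\pi\colon\mathcal F/G\to M$ with fiber $F/G$. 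For the last sentence of the lemma I would take an isomorphism $(\{\mu_{ij}\},\{\psi_i\})$ realized by $\psi_i$-equivariant bundle isomorphisms $\phi_i\colon F_i\to F'_i$ with $g'_{ij}\circ\phi_j=\mu_{ij}\cdot(\phi_i\circ g_{ij})$, observe that each $\phi_i$ descends to a diffeomorphism $W_i\to W'_i$ (its effect on isotropy groups being governed by $\psi_i\in\mathrm{Aut}(G)$), and note that the displayed relation, read modulo $G$, makes these descended maps agree on overlaps and hence glue to a global diffeomorphism $\mathcal F/G\to\mathcal F'/G$.

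The step I expect to be the main obstacle is the consistent gluing of the separate quotients $F_i/G$: the $g_{ij}$ satisfy only a $\gamma_{ijk}$-twisted cocycle identity, and an honest orbifold structure exists precisely because that twist dies in the quotient by $G$, in contrast to $\mathcal F$ itself, whose ``quotient'' need not exist as an ordinary vector bundle over the topological space (Examples \ref{A1.68}, \ref{A1.97}). A subsidiary point to get right is the $G$-equivariant local triviality of the $F_i$, which is what lets the local model be genuinely of the product form $(F\times V)/G$.
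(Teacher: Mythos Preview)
Your proposal is correct and follows essentially the same approach as the paper's proof: build charts $F_i/G$ (after a $G$-equivariant trivialization $F_i\cong U_i\times F$), glue by the maps induced from $g_{ij}$, and observe that the twisted cocycle relation $g_{ij}\circ g_{jk}=\gamma_{ijk}\cdot g_{ik}$ becomes a strict cocycle after dividing by $G$; the isomorphism statement is handled identically via the descended $\phi_i$. The paper's write-up is terser, but your extra care about isotropy groups and the equivariant local triviality step is exactly what is implicit there.
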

\begin{proof}
Let $U_{i_1 \dots i_k}=\bigcap_{j=1}^k U_{i_j}$.
The bundle isomorphism $g_{ij}$ induces an $h_{ij}$-equivariant embedding $F_j\vert_{U_{ij}} \to F_i$,
which we also denote by $g_{ij}$.
By the definition of a vector bundle over $(\{h_{ij}\},\{\gamma_{ijk}\})$, we have
$$g_{ij} \circ g_{jk}=\gamma_{ijk} g_{ik} \quad \text{on } F_k\vert_{U_{ijk}}.$$
Therefore we find that $F_i/G$'s are glued to an orbifold.
Hence $\vert {\mathcal F}/G \vert$ has a structure of an orbifold.
\par
Let ${\mathcal F}$ and ${\mathcal F}'$ be vector bundles over $(\{h_{ij}\},\{\gamma_{ijk}\})$ and
$(\{h'_{ij}\},\{\gamma'_{ijk}\})$, respectively.
Suppose that $(\{\mu_{ij}\}, \{\psi_i\}):(\{h_{ij}\},\{\gamma_{ijk}\}) \to (\{h'_{ij}\},\{\gamma'_{ijk}\})$ is
an isomorphism and $\{\phi_i\}:{\mathcal F} \to {\mathcal F}'$ is an isomorphism over $(\{\mu_{ij}\},\{\psi_i\})$.
Then $\phi_i$ is a diffeomorphism between global quotient orbifolds $F_i/G$ and $F'_i/G$.
Clearly they are glued to a diffeomorphism between orbifolds ${\mathcal F}/G$ and ${\mathcal F}'/G$.

If necessary, we take a refinement of $\{U_i\}$ such that $F_i$ is $G$-equivariantly isomorphic to
$U_i \times F$.  Note that $G$ acts trivially on $U_i$ and effectively on $F$.
The open subset $\pi^{-1}(U_i) \subset \vert{\mathcal F}/G\vert$ is nothing but
$F_i/G \cong U_i \times F/G$.
Therefore $\pi: \mathcal F/G \to M$ is a locally trivial fiber bundle whose
fiber is $F/G$.
\end{proof}

\begin{exm}\label{normalbundleasloctrivial}
Let $\mathcal F = (\{F_i\},\{g_{ij}\})$ be a vector bundle over
$(\{h_{ij}\},\{\gamma_{ijk}\}) \in Sh(M,\underline G)$.
By Lemma \ref{A1.99}
$\pi : \mathcal F/G \to M$
is a locally trivial fiber bundle of $C^{\infty}$ class with fiber $\R^k/G$, where $g_{ij} : \R^k \to \R^k$.
\par
We remark that here we regard $M$ as a manifold and not a stack.
\par
We assume that $g_{ij} \in O(k)$. Namely we assume that it preserves a metric
of the fiber. We consider the set of the equivalence classes of
all $x \in F_i$ with $\Vert x\Vert \le r$ and denote it by
$\mathcal F/G(r)$. Then the restriction
$\pi : \mathcal F/G (r)\to M$ is also  a locally trivial fiber bundle of $C^{\infty}$ class,
that is a `ball bundle'.
Similarly when we denote the set of all the equivalence classes of $x \in F_i$ with $\Vert x\Vert = r$
by $S(\mathcal F/G)$,  it is a locally trivial fiber bundle of $C^{\infty}$ class over $M$
with fiber $S^{n-1}/G$. This is  a `sphere bundle'.
\par
In particular, if $X = \widetilde{X}/G$ is a global quotient, then
the map $N_{X^{\cong}(\Gamma)}X/\Gamma \to X^{\cong}(\Gamma)$
defines a  locally trivial fiber bundle of $C^{\infty}$ class.
We can define `ball bundle' and `sphere bundle', that are
locally trivial fiber bundles of $C^{\infty}$ class, also in this case.
\end{exm}
Example \ref{normalbundleasloctrivial} says the normal bundle of $X^{\cong}(\Gamma)$ is locally trivial in $C^{\infty}$ sense.
We note that in the theory of stratification
of analytic set or of Whitney stratification
one important point (observed by Whitney) is that
the normal cone to the stratum is locally trivial
only in $C^0$ sense.
\begin{exm}\label{35.3}
Let
$
C_a = \{ (tx,ty,t) \in \R^3 \mid -1 \le x \le 1, 0 \le y \le
1+a-a\vert x\vert, t\ge 0\}
$
and we put
$$
X = \{(x,y,z,w) \in \R^4 \mid (x,y,z) \in C_w, 0 < w < 1\}.
$$
Using the fact that $C_a$ is not affine isomorphic to $C_b$  for
$a \ne b$, we can prove that a neighborhood of $w$ axis in $X$ is
not diffeomorphic to the product $\R \times Z$ for any $Z \subset
\R^3$.
(Here we say $X$ is diffeomorphic to $\R \times Z$ if there exists a
homeomorphism which extends to a diffeomorphism to its sufficiently small open neighborhoods.)
\end{exm}

For the stratification $\{X^{\cong}(\Gamma)\}$ of the orbifold $X=\widetilde X/G$, we have the following:

\begin{lem}\label{A1.100}{\rm (Tubular neighborhood theorem)}
Consider the situation of Example-Definition {\rm \ref{A1.83}}.
Denote by $B(N_{X^{\cong}(\Gamma)}X)$ the unit ball bundle of $N_{X^{\cong}(\Gamma)}X$.
Then
$(B(N_{X^{\cong}(\Gamma)}X))/\Gamma$ is diffeomorphic to a neighborhood of
$X^{\cong}(\Gamma)$ in $X$ as an orbifold.
\end{lem}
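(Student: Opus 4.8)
The plan is to produce the diffeomorphism from the Riemannian exponential map of a $G$-invariant metric on $\widetilde X$, exploiting the fact that fixed-point loci of subgroups are totally geodesic, and then to recognize the resulting concrete tubular neighborhood as the normal bundle of Definition \ref{A1.94}. First, average an arbitrary Riemannian metric over the finite group $G$ to obtain a $G$-invariant one. Treating one conjugacy class of subgroups at a time, fix $\Gamma_0\subset G$ with $\Gamma_0\cong\Gamma$; recall from Example-Definition \ref{A1.83} that the corresponding piece of $X^{\cong}(\Gamma)$ is $\widetilde X^=(\Gamma_0)/H(\Gamma_0)$, with $H(\Gamma_0)=N(\Gamma_0)/\Gamma_0$ acting freely. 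Each $\gamma\in\Gamma_0$ is an isometry, so for $p$ fixed by $\Gamma_0$ and $v\in (T_p\widetilde X)^{\Gamma_0}$ one has $\gamma(\exp_p(tv))=\exp_p(t\,d\gamma_p v)=\exp_p(tv)$; hence $\widetilde X^{\Gamma_0}$ is a totally geodesic submanifold with $T_p\widetilde X^{\Gamma_0}=(T_p\widetilde X)^{\Gamma_0}$, its normal bundle being the $\Gamma_0$-orthogonal complement. The open subset $\widetilde X^=(\Gamma_0)$ and its normal bundle are preserved by $N(\Gamma_0)$ (if $I_x=\Gamma_0$ and $g\in N(\Gamma_0)$ then $I_{gx}=g\Gamma_0 g^{-1}=\Gamma_0$), and $N(\Gamma_0)$ acts by isometries, so the normal exponential map $\exp^{\perp}(v)=\exp_{\pi(v)}(v)$ of $N_{\widetilde X^=(\Gamma_0)}\widetilde X$ is $N(\Gamma_0)$-equivariant.

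By the tubular neighborhood theorem there is a positive function $r$ on $\widetilde X^=(\Gamma_0)$ such that $\exp^{\perp}$ is a diffeomorphism of $B_r:=\{v:\Vert v\Vert<r(\pi(v))\}$ onto an open set $\widetilde W\supset\widetilde X^=(\Gamma_0)$ in $\widetilde X$; averaging $r$ over $H(\Gamma_0)$ we may take it $N(\Gamma_0)$-invariant. Since $G$ is finite and $gp\ne p$ for $p\in\widetilde X^=(\Gamma_0)$, $g\notin N(\Gamma_0)$ (otherwise $g\in I_p\subset N(\Gamma_0)$), after shrinking $r$ we may also assume $g\widetilde W\cap\widetilde W=\emptyset$ for all $g\in G\setminus N(\Gamma_0)$, and that $\widetilde W$ meets no stratum $\widetilde X^=(\Gamma_0')$, $\Gamma_0'\cong\Gamma$, other than $\widetilde X^=(\Gamma_0)$ (a point $\exp^{\perp}_p(v)$ near $p$ has isotropy inside $I_p=\Gamma_0$, which forces equality once the orders agree). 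Composing $\exp^{\perp}$ with $\widetilde X\to X$ gives an $N(\Gamma_0)$-invariant map $B_r\to X$, hence a map $\overline{q}\colon B_r/N(\Gamma_0)\to X$; it is injective, because $g\cdot\exp^{\perp}(v)=\exp^{\perp}(v')$ with $v,v'\in B_r$ forces $y:=\exp^{\perp}(v')$ and $g^{-1}y=\exp^{\perp}(v)$ to lie in $\widetilde W$, hence $g\in N(\Gamma_0)$, hence (equivariance and injectivity of $\exp^{\perp}$ on $B_r$) $v'=g\cdot v$. As $\Gamma_0$ acts effectively on each normal fiber (if $\gamma\in\Gamma_0$ fixes the fiber at $p$ and acts trivially on $T_p\widetilde X^{\Gamma_0}$ then $d\gamma_p=\mathrm{id}$, so $\gamma=1$) and $H(\Gamma_0)$ acts freely, $B_r/N(\Gamma_0)$ is an orbifold, and $\overline{q}$ is a local diffeomorphism, hence an orbifold-diffeomorphism onto an open neighborhood of $\widetilde X^=(\Gamma_0)/H(\Gamma_0)$ in $X$. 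Running over a complete set of conjugacy-class representatives and using \eqref{A1.84} together with the disjointness just arranged, these fit together into a neighborhood of all of $X^{\cong}(\Gamma)$.

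It remains to identify the domain with $B(N_{X^{\cong}(\Gamma)}X)/\Gamma$. Fixing the good covering $\mathcal U=\{U_i\}$ and the lifts $\widetilde U_i\subset\widetilde X^=(\Gamma_0)$, $\widetilde{\underline h}_{ij}\in N(\Gamma_0)$ of Example-Definition \ref{A1.83}, Definition \ref{A1.94} describes $N_{X^{\cong}(\Gamma)}X$ as the bundle $(\{F_i\},\{g_{ij}\})$ with $F_i=N_{\widetilde U_i}\widetilde X$ and $g_{ij}=(\widetilde{\underline h}_{ij}\,\cdot)_*$; since $\widetilde{\underline h}_{ij}$ acts on $\widetilde X$ by an isometry, $\exp^{\perp}\circ(\widetilde{\underline h}_{ij}\,\cdot)_*=\widetilde{\underline h}_{ij}\cdot\exp^{\perp}$, so after passing to $X$ the local maps $B(F_i)/\Gamma_0\to X$ agree over $U_i\cap U_j$. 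Hence they assemble to a single map out of $|N_{X^{\cong}(\Gamma)}X/\Gamma|$ (in the sense of Lemma \ref{A1.99}), which by the previous paragraph restricts to an orbifold-diffeomorphism of the radius-$r$ ball bundle mod $\Gamma$ onto a neighborhood of $X^{\cong}(\Gamma)$ in $X$. Finally, since over $X^{\cong}(\Gamma)$ the $C^{\infty}$ locally trivial ball bundles of $N_{X^{\cong}(\Gamma)}X/\Gamma$ for different positive radius functions are all mutually diffeomorphic by fiberwise rescaling (cf. Example \ref{normalbundleasloctrivial}), we may replace the radius-$r$ ball bundle by the unit ball bundle $B(N_{X^{\cong}(\Gamma)}X)$, which completes the proof.

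I expect the main obstacle to be the content of the last paragraph: matching the geometrically produced tubular neighborhood against the stack-theoretic normal bundle of Definition \ref{A1.94}, i.e. checking that the transition maps $g_{ij}=(\widetilde{\underline h}_{ij}\,\cdot)_*$ and the cocycle $\{\gamma_{ijk}\}$ really are what the exponential-map construction produces, together with the bookkeeping needed to choose the radius function $N(\Gamma_0)$-invariantly and small enough that the tubes over different conjugacy classes do not interfere. By contrast, the Riemannian ingredients — total geodesy of $\widetilde X^{\Gamma_0}$ and $N(\Gamma_0)$-equivariance of $\exp^{\perp}$ — are entirely standard.
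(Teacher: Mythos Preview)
Your proof is correct and follows essentially the same approach as the paper's: a $G$-invariant Riemannian metric together with the normal exponential map, then an identification with the stack-theoretic normal bundle of Definition \ref{A1.94} via the equivariance $\exp^{\perp}\circ g_{ij}=\widetilde{\underline h}_{ij}\cdot\exp^{\perp}$. The only organizational difference is that the paper works chart-by-chart from the outset and is more explicit about how the radius must shrink near the higher strata $\bigcup_{\Gamma'\supsetneq\Gamma}X^{\cong}(\Gamma')$ (handled via an exhaustion $W_1\supset W_2\supset\cdots$ and a function $\chi$ with $\chi<\delta_{k+1}$ on the $k$-th collar), whereas you absorb this into the single sentence ``after shrinking $r$''; both arrive at the same rescaling $\exp(\chi\cdot\bullet)$ (your ``fiberwise rescaling'') to pass to the unit ball bundle.
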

\begin{proof}
We use the notation of Definition \ref{A1.94}.
We recall that
$$N_{X^{\cong}(\Gamma)}X =  (\{E_i\},\{g_{ij}\})$$
where $E_i = N_{\widetilde U_i}\widetilde X$.
Note that $E_i$ is the restriction of $N_{\widetilde{X}^=(\Gamma)}\widetilde{X}$ to
$\widetilde{U}_i \subset \widetilde{X}$.
\par
We pick open neighborhoods $W_1 \supset W_2 \supset \cdots$ of
$\bigcup_{\Gamma' \supset \Gamma}X^{\cong}(\Gamma')$ in $X$
such that $\bigcap_k W_k \cap X^{\cong}(\Gamma)=\emptyset$.
We may assume that $X^{\cong}(\Gamma)_k=X^{\cong}(\Gamma) \setminus W_k$ is
a closed submanifold in $X \setminus W_k$.
Denote by $\widetilde{X}^=(\Gamma)_k=\widetilde{X}^=(\Gamma) \setminus \pi^{-1}(W_k)$,
where $\pi:\widetilde{X} \to X$ is the projection.
\par
We take a $G$ invariant Riemannian metric on $\widetilde X$ and use
the exponential map to identify $E_i=N_{\widetilde
U_i}\widetilde{X}$ with a neighborhood of $\widetilde U_i$ in
$\widetilde X$ as follows.

Denote by $B_{\delta}(N_{\widetilde{X}^=(\Gamma)}\widetilde{X})$
the ball bundle of $N_{\widetilde{X}^=(\Gamma)}\widetilde{X}$ of radius $\delta$.
For $k=1,2, \dots$, we take $\delta_k>0$ such that the exponential map
$$
\exp: v \in B_{\delta_k} (N_{\widetilde{X}^=(\Gamma)}\widetilde{X} \vert_{\widetilde{X}^{=}(\Gamma)_k} )
\mapsto \exp (v) \in \widetilde{X},
$$
which is a diffeomorphism to an open neighborhood of $\widetilde{X}^{=}(\Gamma)_k$.

We pick an $H(\Gamma)$-invariant smooth positive function $\chi:\widetilde{X}^{=}(\Gamma) \to \R$ such that $\chi < \delta_{k+1}$ on
$\widetilde{X}^{=}(\Gamma) \cap \pi^{-1}(W_k)$.
Denote by $p_i:E_i \to \widetilde{U}_i$ the projection and
set $B_{\chi}(E_i)=\{v \in E_i \vert \| v \| < \chi(p_i(v)) \}$.
Then the exponential map $\exp$ restricted to
$B_{\chi}(E_i)$ induces a diffeomorphism from
$B_{\chi}(E_i)/G$ to an open neighborhood of $U_i$ in $X$ as orbifolds.
Clearly, we find that
$\underline{h}_{ij} \circ \exp\vert_{B_{\chi}(E_j) }= \exp\vert_{B_{\chi}(E_i)} \circ g_{ij}$
on $E_j\vert{\underline{h}_{ij}^{-1}(\widetilde{U}_i) \cap \widetilde{U}_j}$.
Hence they are glued together and we obtain a diffeomorphism
$\exp (\chi \cdot \bullet)$ from $(B(N_{X^{\cong}(\Gamma)}X))/\Gamma$ to an open neighborhood of
$X^{\cong}(\Gamma)$.

\end{proof}

Finally we remark that
various operations on vector bundles such as
Whitney sum, tensor product, Hom bundle, symmetric tensor product, etc.
can be generalized to the case of vector bundles  on
$[\{h_{ij}\},\{\gamma_{ijk}\}]
\in Sh(M;\underline G)$.
For example, if $\mathcal F = (\{F_i\},\{g^F_{ij}\})$ and
$\mathcal E  = (\{E_i\},\{g^E_{ij}\})$ are vector bundles on
$(\{h_{ij}\},\{\gamma_{ijk}\})$  then
$\operatorname{Hom} (\mathcal F,\mathcal E) =
(\{\operatorname{Hom} (F_i,E_i)\},\{g_{ij}\})$ where
$$
g_{ij}(u_j) = g_{ij}^E \circ u_j \circ (g_{ij}^{F})^{-1}.
$$
\par
We use the next lemma in Section \ref{section35.3}.
\begin{lem}\label{A1.101}
Let $\mathcal E  = (\{E_i\},\{g_{ij}\})$ be a vector bundle on
$(\{h_{ij}\},\{\gamma_{ijk}\})
\in  Sh(M,\underline G)$.
We put
$$
E_i^G = \{ v \in E_i \mid \forall \gamma\in G\,\,\gamma v = v\}.
$$
Then they are glued by $g_{ij}$ to define a
vector bundle on the topological space $M$ $($in the usual sense$)$.
\end{lem}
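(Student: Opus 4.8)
The plan is to obtain the bundle on $M$ by replacing each $E_i$ with its $G$-fixed subbundle and observing that the cocycle twist $\gamma_{ijk}$ in \eqref{A1.90} acts trivially there, so that the stack-twisted gluing collapses to an honest one. First I would note that, in the situation of Definition \ref{A1.89}, $G$ acts on $E_i$ by fiberwise linear isomorphisms covering the identity of $U_i$ (the $G$-action on the base being trivial). Hence the averaging operator $\Pi_i = \vert G\vert^{-1}\sum_{\gamma\in G}\gamma \in \operatorname{End}(E_i)$ is a (smooth, or merely continuous) bundle endomorphism with $\Pi_i^2=\Pi_i$; since its pointwise rank equals its trace and is therefore locally constant, $E_i^G = \operatorname{im}\Pi_i = \ker(\operatorname{id}-\Pi_i)$ is a genuine vector subbundle of $E_i$ over $U_i$. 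This is the bundle over each chart.

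Next I would check that the transition maps restrict. Since $g_{ij}$ is $h_{ij}$-equivariant, $g_{ij}\circ\gamma = h_{ij}(\gamma)\circ g_{ij}$ for every $\gamma\in G$; summing over $\gamma$ and using that the automorphism $h_{ij}$ permutes $G$ gives $g_{ij}\circ\Pi_j = \Pi_i\circ g_{ij}$ on $U_i\cap U_j$, and applying the same reasoning to the inverse $g_{ij}^{-1}$ (which is $h_{ij}^{-1}$-equivariant) shows $g_{ij}$ carries $E_j^G$ isomorphically onto $E_i^G$ over $U_i\cap U_j$. Write $g_{ij}^G$ for this restriction. Now every $\gamma_{ijk}\in G$ acts as the identity on $E_i^G$, and $g_{ik}(E_k^G)=E_i^G$, so restricting \eqref{A1.90} to $E_k^G\vert_{U_{ijk}}$ yields $g_{ij}^G\circ g_{jk}^G = (\gamma_{ijk}\cdot g_{ik})\vert_{E_k^G} = g_{ik}^G$. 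Thus $\{g_{ij}^G\}$ is an ordinary \v Cech $1$-cocycle of bundle isomorphisms on $(M,\mathcal U)$, and gluing the $E_i^G$ along the $g_{ij}^G$ produces a vector bundle $\mathcal E^G$ on $M$ in the usual sense, as asserted.

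Finally I would record independence of the choices: given an isomorphism $(\{\mu_{ij}\},\{\psi_i\})$ of sheaves of $\underline G$ and an isomorphism $\{\phi_i\}:\mathcal E\to\mathcal E'$ over it, each $\phi_i$ is $\psi_i$-equivariant, hence commutes with the averaging projectors and restricts to $\phi_i^G: E_i^G\to (E_i')^G$; the $\mu_{ij}$ act trivially on the ($G$-fixed) images, so \eqref{A1.91} restricted to the fixed parts shows $\{\phi_i^G\}$ glues to an isomorphism $\mathcal E^G\cong(\mathcal E')^G$, and compatibility with refinements is immediate from $g'_{j_1j_2}=g_{i(j_1)i(j_2)}\vert_{U'_{j_1j_2}}$. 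There is essentially no real obstacle; the only two points needing care are that $E_i^G$ is a bona fide subbundle (handled by the idempotent $\Pi_i$) and the key simplification that the correction term $\gamma_{ijk}$ acts as the identity on the fixed subbundle, which is precisely what reduces \eqref{A1.90} to an honest cocycle over the space $M$.
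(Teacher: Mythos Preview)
Your proof is correct and follows the same essential idea as the paper: the key observation is that $\gamma_{ijk}$ acts trivially on $E_i^G$, so \eqref{A1.90} reduces to the ordinary cocycle condition $g_{ij}^G\circ g_{jk}^G = g_{ik}^G$. The paper's own proof is a terse three lines recording exactly this reduction; you have simply filled in the details the paper leaves implicit (that $E_i^G$ is a genuine subbundle via the averaging idempotent, that $g_{ij}$ restricts to the fixed parts via $h_{ij}$-equivariance, and independence of choices), so there is no substantive difference in approach.
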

\begin{proof}
Let $g^G_{ij}: E_i^G\vert_{U_{ij}} \to E^G_{j}$ be the restriction
of $g_{ij}$. Since $G$ action on $E^G_i$ is trivial, it
follows from \eqref{A1.90} that
$$
g^G_{ij} \circ g^G_{jk} = g^G_{ik}.
$$
The lemma follows.
\end{proof}
\begin{rem}\label{A1.102}
In some situation we need to consider subspaces $Y_p \subset V_p$ of
a space $X$ with Kuranishi structure $(V_p,E_p,
\Gamma_p,\psi_p,s_p)$ and put a Kuranishi structure on the subspace.
When the action of $\Gamma_p$ is not effective, we need to add  some
$\Gamma_p$ vector space $F_p$ to both $Y_p$ and obstruction bundle
$E_p$ of each Kuranishi neighborhood, in order to define a Kuranishi
structure on the subspace $\cup_p Y_p$. We need some care to carry
this out. Namely we should glue those  vector spaces by a family of
linear isomorphisms $g_{pq}:  U_{pq} \to
\operatorname{Hom} (F_q,F_p)$ so that
$$
g_{pq}\circ g_{qr} = \gamma_{pqr} \cdot g_{pr}
$$
where $\gamma_{pqr}$ is the one appearing in Definition \ref{A1.5}
(ii). For example, in the case of Kuranishi structure with corners,
we define Kuranishi structure on codimension $k$ corner $S_kX$ by
taking the normal bundle of $S_kX$ and adding the normal bundle
to
both $S_kV_p$ and the obstruction bundle in order to make the
$\Gamma_p$ action effective.
\end{rem}

\section{Single valued piecewise smooth section of orbi-bundle: statement}\label{good orbifold}

Let $X = \widetilde{X}/G$ be a global quotient.
We consider the situation of Example-Definition
\ref{A1.83}.
We decompose $X^{\cong}(\Gamma)$ into the
connected components
\begin{equation}
X^{\cong}(\Gamma) = \bigcup_i X^{\cong}(\Gamma;i). \label{35.15}\end{equation}
Let $[p] \in X^{\cong}(\Gamma;i)$ where
$p \in \widetilde{X}$.
Let $E \to \widetilde{X}$
be a $G$ equivariant vector bundle. It is, by
definition, an orbi-bundle $E/G$ on $X$. A {\it single valued
section} of this orbi-bundle is, by definition, a $G$-equivariant
section of $E \to \widetilde{X}$.
\par
We have a $\Gamma$ action on the fiber $E_p$ of our
vector bundle $E$.
In this situation we recall from
Definition \ref{def:dXgamma} :
\begin{eqnarray}
E_p^{\Gamma} & = & \{v \in E_p \mid \forall \gamma \in \Gamma \,\,
\gamma v = v\}. \label{35.18} \\
d(X;\Gamma;i) & = & \dim X^{\cong}(\Gamma;i) - \dim E_p^{\Gamma}.
\label{35.19b}
\end{eqnarray}
In Sections
\ref{section35.2}-\ref{section35.3} we will prove the following:
\begin{prop}\label{35.20} For each $C^0$-section $s$ of the
orbi-bundle $E/G \to X$, there exists a sequence of single valued
piecewise smooth sections $s_{\epsilon}$ converging to $s$ in
$C^0$-sense such that the following holds:
\begin{enumerate}[{\rm (i)}]
\item
$s_{\epsilon}^{-1}(0)$ has a smooth triangulation.
Namely for each simplex the embedding $\Delta \to X$ locally lifts to a map
to $\widetilde{X}$ which is a smooth embedding.
\par
\item $s_{\epsilon}^{-1}(0) \cap
X^{\cong}(\Gamma)$ is a PL manifold such that each simplex is
smoothly embedded into $X^{\cong}(\Gamma)$.
\par
\item If $\Delta$ is a simplex of
$(i)$ whose interior intersects with $X^{\cong}(\Gamma)$,
the intersection of $\Delta$ with $X^{\cong}(\Gamma)$ is
$\Delta$ minus some faces and is smoothly embedded in
$X^{\cong}(\Gamma)$.
\par
\item
$$
\dim s_{\epsilon}^{-1}(0) \cap X^{\cong}(\Gamma;i) = \dim X^{\cong}(\Gamma;i) - \dim E_p^{\Gamma}.
$$
\end{enumerate}
\end{prop}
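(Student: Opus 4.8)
The plan is to proceed by induction on the (partially ordered) stratification of $X$ by the conjugacy type of the isotropy groups, constructing $s_\epsilon$ stratum by stratum and then extending conically into a tubular neighborhood of each stratum via the normal bundle described in Definition \ref{A1.94}. First I would order the strata $X^{\cong}(\Gamma;i)$ so that a stratum with larger isotropy is "earlier"; note that the closure of each stratum meets only earlier strata, and on the lift $\widetilde X^=(\Gamma_0)$ the group $H(\Gamma_0)=N(\Gamma_0)/\Gamma_0$ acts freely, so $X^{\cong}(\Gamma;i)$ is a genuine manifold. On each such stratum, a single valued section is a $\Gamma$-equivariant section of $E\to\widetilde X$, so its value at a point $p$ with $I_p=\Gamma_0$ necessarily lies in $E_p^{\Gamma_0}$; hence on the stratum we are really perturbing a section of the honest vector bundle $E^\Gamma\to X^{\cong}(\Gamma;i)$ (Lemma \ref{A1.101}). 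A generic $C^0$-small piecewise-smooth perturbation there has zero set a PL manifold of dimension $\dim X^{\cong}(\Gamma;i)-\dim E_p^\Gamma = d(X;\Gamma;i)$, with a smooth triangulation — this gives (ii), (iv) and, on the interior parts, (iii).

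The second main step is the conical extension. Having fixed $s_\epsilon$ on the (closed part of the) stratum and on all earlier strata, I would use the tubular neighborhood theorem, Lemma \ref{A1.100}: a neighborhood of $X^{\cong}(\Gamma)$ in $X$ is diffeomorphic to $B(N_{X^{\cong}(\Gamma)}X)/\Gamma$, and $N/\Gamma\to X^{\cong}(\Gamma)$ is a locally trivial $C^\infty$ fiber bundle (Example \ref{normalbundleasloctrivial}). Over this normal ball bundle, decompose $E$ along the stratum into its $\Gamma_0$-fixed part and a complement; the section on the zero section (the stratum) already lies in the fixed part, and one extends it to the ball bundle by keeping the fixed-part component constant in the radial direction while adding, in the complementary (non-fixed) directions of $E$, a fiberwise linear injection — precisely the "$s'_q\oplus 1$"-type radial/conical formula of Definition \ref{A1.21}. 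Because this added term is radially homogeneous of degree one and injective on the complement of the zero section, the extended section vanishes only where the fixed-part component vanishes, i.e. exactly over the zero set already produced on the stratum; so no new strata of zeros are created, the dimension count of (iv) is preserved, and the zero set over the tubular neighborhood is (the stratum zero set) $\times$ (a cone point), giving the "$\Delta$ minus some faces" description in (iii). The $C^\infty$ local triviality of the normal bundle (as opposed to mere $C^0$ triviality of normal cones in Whitney stratification — cf. Examples \ref{35.3}, \ref{normalbundleasloctrivial}) is what makes this extension smooth and lets the triangulations on the stratum and on its normal directions be glued compatibly.

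The third step is to patch across strata: having built $s_\epsilon$ on each stratum and extended it conically into a tubular neighborhood, one interpolates (via a partition of unity in the base, subordinate to the open sets where the successive tubular neighborhoods overlap) between the conical extension coming from an earlier stratum and the freshly chosen generic perturbation on the next stratum, being careful that the interpolation is again $C^0$-small and, on the overlap region, still has transverse (PL-manifold) zero set of the right dimension — this is where a second, outer induction over the partial order $<$ and the good coordinate system enters. Finally, if $f:X\to Y$ is strongly smooth, one arranges at each stage that the chosen triangulations on strata are compatible with $f$ (choose the generic perturbations so that $f$ restricted to each zero simplex is smooth, using that $f$ pulled back to $\widetilde X$ is smooth and $\Gamma$-invariant), and the conical extension automatically respects this since it is smooth in the normal directions; gluing yields a piecewise smooth $f:s_\epsilon^{-1}(0)\to Y$.

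\textbf{Expected main obstacle.} The genuinely delicate point is the conical extension across the boundary of a stratum, i.e. where $X^{\cong}(\Gamma;i)$ accumulates onto a deeper stratum $X^{\cong}(\Gamma';j)$ with $\Gamma'\supsetneq\Gamma$: there the two tubular-neighborhood identifications and the two stack-normal-bundle structures must be made compatible (the obstruction living in $Sh(\cdot,\underline G)$ of Remark \ref{A1.81}, which is genuinely nontrivial by Example \ref{A1.86}), and one must ensure the inductively chosen section near the deeper stratum restricts correctly to serve as the boundary value for the shallower one — so that the triangulations match and condition (iii) ("$\Delta$ minus some faces") holds uniformly. Handling this compatibility of the (stack) normal bundles of nested strata — the analogue of Mather's compatible systems of tubular neighborhoods — is the technical heart, and is exactly what Sections \ref{section35.2}--\ref{section35.3} are set up to do.
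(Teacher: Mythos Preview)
Your conical extension in step 2 misapplies Definition~\ref{A1.21}. That definition relies on the Kuranishi tangent-bundle isomorphism (\ref{A1.15}), which identifies the normal bundle $N_{V_{pq}}V_p$ with the obstruction quotient $E_p/\hat\phi_{pq}(E_q)$; the ``$\oplus 1$'' term \emph{is} this identification. In the setting of Proposition~\ref{35.20} there is no such relation between $N_{X^{\cong}(\Gamma)}X$ and the complement $E^\perp$ of the $\Gamma$-fixed part: these are unrelated $\Gamma$-representations, and a $\Gamma$-equivariant linear injection $N\to E^\perp$ need not exist (e.g.\ $\Gamma=\Z_3$, normal fiber the weight-$1$ representation, $E^\perp$ the weight-$2$ representation). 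Even when an injection $\iota$ exists, your extended section $s_\epsilon^\Gamma(\pi_\Gamma(p))\oplus\iota(v)$ vanishes only at $v=0$, so its zero set in the tube is exactly the stratum zero set and does not meet the sphere bundle $S^\Gamma(d)$ at all---whereas the generic section on the free locus has its zero set reaching $S^\Gamma(d)$ in a set of dimension $\dim X-\dim E$. Your step-3 partition-of-unity interpolation between these two regimes gives no control on the zero set of the interpolated section; this is not a matter of ``being careful'' but the central difficulty.

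The paper's construction is organized differently precisely to avoid this. The $E^\perp$ component is set to \emph{zero} everywhere (formula (\ref{35.45})); instead the fixed part is refined via the decomposition $E^\Gamma=\bigoplus_i E(\Gamma_i)\oplus E(\Gamma)$ of (\ref{35.41-2}) along chains $\Gamma_1\supset\cdots\supset\Gamma_k\supset\Gamma$. One first builds $s_\epsilon$ on all the manifolds-with-corners $\text{\rm Int}\,X^d(\Gamma)$ of (\ref{35.37}) by downward induction on $\#\Gamma$, using Lemma~\ref{35.42} at each corner $X(\Gamma;\Gamma_1,\dots,\Gamma_k)$ so that the zero-set triangulation is compatible with the projection $\pi_{\Gamma_k}$ (see (\ref{35.49})). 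Only then does one extend into the tubes, via the cone formula (\ref{35.51}): this interpolates between the already-constructed value on the sphere bundle $S^\Gamma(d)$ (pulled back by the family-of-lines retraction $r_\Gamma(d)$) and the value on the stratum (pulled back by $\pi_\Gamma$). The zero set in the tube is then literally the mapping cone of $\pi_\Gamma\colon s_\epsilon^{-1}(0)\cap S^\Gamma(d)\to s_\epsilon^{-1}(0)\cap X^{\cong}(\Gamma)$, which inherits a smooth triangulation by Definition~\ref{35.30}~(vii). No partition-of-unity gluing is used; the compatibility of the family of lines (Definition~\ref{35.30} (i)--(vi)) is what makes the pieces fit.
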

\begin{rem}\label{35.22}
We note that if $s$ is a single-valued section and if $[p] \in
X^{\cong}(\Gamma)$ then $s(p) \in E_p^{\Gamma}$. So the dimension
given in (iv) is optimal.
\end{rem}
We will use Proposition \ref{35.22} in the proof of Theorem \ref{maintechnicalresult} in Section \ref{section35.4}.

\section{System of tubular neighborhoods}\label{section35.2}
The proof of Proposition \ref{35.20} is closely related to the proof
of existence of a triangulation of the space with Whitney
stratification. (See \cite{Gor78}). Especially, we use the notion of
\emph{a system of tubular neighborhoods} introduced by Mather
\cite{Math73}. Mather used this notion to prove the famous
first isotopy lemma (see 8 in Section II \cite{Math73}). The first isotopy lemma implies that Whitney
stratification has a $C^0$ locally trivial normal cone. (See Theorem
8.3 \cite{Math73}). Note that existence of smooth triangulation of
an orbifold is well-known which is not what we intend to prove. In
order to show that the zero set of our section has a smooth
triangulation, we use various constructions appearing in the proof
of existence of a $C^0$-triangulation on the Whitney stratified
space.
\par
As we explained in Example \ref{normalbundleasloctrivial}, we have a $C^{\infty}$
locally trivial tubular neighborhood in our situation. This property
makes the system of tubular neighborhoods (or the system of normal
cones) in this case carry some properties better than that of
\cite{Math73}.
\par
We follow Section II in \cite{Math73} in the next definition.
\begin{defn}\label{35.23}
A tubular neighborhood of the stratum $X^{\cong}(\Gamma)$ in $X$ is
a quadruple $(\pi_{\Gamma},N_{X^{\cong}(\Gamma)}X,\sigma,\phi)$
satisfying
\begin{enumerate}[{\rm (i)}]
\item
$\pi_{\Gamma}:N_{X^{\cong}(\Gamma)}X \to X^{\cong}(\Gamma)$ is a
vector bundle in the sense of stack. (See Definition \ref{A1.89}.)
\par
\item $\sigma: X^{\cong}(\Gamma) \to \R_+$
is a smooth positive function.
\par
\item $\phi: B_{\sigma}(\Gamma)/\Gamma
\to U^{\cong}(\Gamma)$ is a diffeomorphism
(in the sense of Definition \ref{diffeo}) onto a neighborhood
$U^{\cong}(\Gamma)$ of $X^{\cong}(\Gamma)$ in $X$. Here
$
B_{\sigma}(\Gamma) =
\{ v \in N_{X^{\cong}(\Gamma)}X \mid \Vert v\Vert <
\sigma(\pi_{\Gamma}(v))\}.
$
\end{enumerate}
\end{defn}
We define $ \pi_{\Gamma}: U^{\cong}(\Gamma) \to X^{\cong}(\Gamma) $
as the composition $\pi_{\Gamma} \circ \phi^{-1}$. We also define $
\rho'_{\Gamma}: U^{\cong}(\Gamma) \to  \R $ by
$$
\rho'_{\Gamma}(\phi(v)) = \Vert v\Vert^2.
$$
We call $\rho'_{\Gamma}$ the {\it tubular distance function}.
We note that both  maps are smooth and $\pi_{\Gamma}$ is a
submersion. Moreover the pair
$$
(\pi_{\Gamma},\rho'_{\Gamma}): U^{\cong}(\Gamma) \setminus
X^{\cong}(\Gamma) \to X^{\cong}(\Gamma) \times \R_{>0}
$$
defines
a locally trivial fiber bundle of $C^{\infty}$ class.
\par
We need to adjust them so that they become compatible for
different $\Gamma$'s.
\begin{rem}\label{35.24}
In the situation of \cite{Math73}, the maps
$\pi_{\Gamma}$, $\rho'_{\Gamma}$ are smooth only in the interior of
the stratum.
\end{rem}
\begin{defn}\label{35.25}
{\it A system of tubular neighborhoods} of our stratification
$\{X^{\cong}(\Gamma) \mid \Gamma\}$ is a family
$(\pi_{\Gamma},\rho'_{\Gamma})$ such that
\begin{subequations}\label{35.26}
\begin{eqnarray}
\pi_{\Gamma^{\prime}} \circ \pi_{\Gamma} &=& \pi_{\Gamma^{\prime}}
\label{35.26.1}
\\
\rho'_{\Gamma^{\prime}} \circ \pi_{\Gamma} &=& \rho'_{\Gamma^{\prime}}
\label{35.26.2}
\end{eqnarray}
\end{subequations}
holds for $\Gamma^{\prime} \supset \Gamma$. Here we assume the
equalities $(\ref{35.26.1})$, $(\ref{35.26.2})$ whenever both sides are
defined.
\end{defn}
\begin{prop}\label{35.27} There exists a system of tubular
neighborhoods.
\end{prop}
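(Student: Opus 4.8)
The plan is to construct the family $(\pi_{\Gamma},\rho'_{\Gamma})$ by a finite downward induction on strata, in the spirit of Mather's construction of control data (\cite{Math73}, Section~II), but exploiting that in our situation the maps $\pi_{\Gamma}$ and $\rho'_{\Gamma}$ are locally trivial of $C^{\infty}$-class (Example~\ref{normalbundleasloctrivial}) rather than merely of $C^{0}$-class. Since $G$ is finite there are only finitely many strata $X^{\cong}(\Gamma)$, and I order them by declaring $X^{\cong}(\Gamma')$ to \emph{precede} $X^{\cong}(\Gamma)$ when $\Gamma'\supsetneq\Gamma$; this is equivalent to $X^{\cong}(\Gamma')\subseteq\overline{X^{\cong}(\Gamma)}$ and refines the ordering by dimension. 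I will produce, for each $\Gamma$ in increasing order of dimension, a tubular neighborhood in the sense of Definition~\ref{35.23} together with the associated maps $\pi_{\Gamma}$, $\rho'_{\Gamma}$, so that $(\ref{35.26.1})$ and $(\ref{35.26.2})$ hold for every pair $\Gamma'\supsetneq\Gamma$ already treated; shrinking the radius functions $\sigma_{\Gamma'}$ of the earlier strata whenever convenient is harmless and will be done freely.

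For a stratum of minimal dimension there is nothing to check: it is closed in $X$, and Lemma~\ref{A1.100} (with a $G$-invariant metric on $\widetilde X$ and a small radius function) provides a tubular neighborhood, from which $\pi_{\Gamma}$ and $\rho'_{\Gamma}:=\Vert\cdot\Vert^{2}$ are read off. For the inductive step, fix $\Gamma$, write $S=X^{\cong}(\Gamma)$, and assume the data $(\pi_{\Gamma'},\rho'_{\Gamma'})$ on tubes $U^{\cong}(\Gamma')$ are already constructed and mutually compatible for all $\Gamma'\supsetneq\Gamma$. The set $W=\bigcup_{\Gamma'\supsetneq\Gamma}U^{\cong}(\Gamma')$ is an open neighborhood of the frontier $\overline S\setminus S\subseteq\bigcup_{\Gamma'\supsetneq\Gamma}X^{\cong}(\Gamma')$, and, as in the proof of Lemma~\ref{A1.100} (using a nested exhaustion), $S\setminus W$ is a closed submanifold in $X\setminus W$. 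The first task is to build a \emph{partial} tubular projection $\pi_{\Gamma}$, defined on an open neighborhood of $\overline S\setminus S$ in $X$, that commutes with the $C^{\infty}$ locally trivial fiber bundle $(\pi_{\Gamma'},\rho'_{\Gamma'})\colon U^{\cong}(\Gamma')\setminus X^{\cong}(\Gamma')\to X^{\cong}(\Gamma')\times\R_{>0}$ for each $\Gamma'\supsetneq\Gamma$, i.e.\ that preserves its fibers and level sets. Over a local trivialization of that bundle $S$ meets each fiber $L$ in a smooth submanifold varying smoothly in the base parameters (again by $C^{\infty}$ local triviality); choosing, equivariantly and smoothly in those parameters, a tubular neighborhood of $S\cap L$ inside $L$, and running a secondary induction over the strata $\Gamma'\supsetneq\Gamma$ in increasing order of dimension — where compatibility on overlaps $U^{\cong}(\Gamma'_1)\cap U^{\cong}(\Gamma'_2)$ is automatic because the already established control conditions make one of $(\pi_{\Gamma'_a},\rho'_{\Gamma'_a})$ factor through the other — these fiberwise tubes patch to the desired $\pi_{\Gamma}$ on $W$ near $S$, with $\pi_{\Gamma'}\circ\pi_{\Gamma}=\pi_{\Gamma'}$ and $\rho'_{\Gamma'}\circ\pi_{\Gamma}=\rho'_{\Gamma'}$ there.

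It remains to extend this partial projection to a genuine tubular neighborhood of $S$ on a full neighborhood of $\overline S$ in $X$, and to fix $\rho'_{\Gamma}$. This is the relative existence statement for tubular neighborhoods: a tubular-neighborhood germ of a submanifold prescribed near a closed subset extends over the whole submanifold; in the present orbifold/stack framework one carries it out $G$-equivariantly by lifting to $\widetilde X$, using $G$-invariant metrics and the normal bundle in the sense of stack $N_{X^{\cong}(\Gamma)}X$ of Definition~\ref{A1.94}, and then descending. Setting $\rho'_{\Gamma}=\Vert\cdot\Vert^{2}$ in the resulting tube (and reconciling it with the $\rho'_{\Gamma}$ already fixed on $W$ by a further equivariant interpolation near the frontier) completes the construction for $\Gamma$, hence for all strata. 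I expect the main obstacle to be exactly the smooth, $G$-equivariant patching of the fiberwise tubes over the region $W$ where several deeper-stratum tubes overlap, followed by the equivariant extension past the frontier; this is where the passage from $C^{0}$- to $C^{\infty}$-local triviality in Example~\ref{normalbundleasloctrivial} is essential, since it removes the need for Mather's delicate controlled-vector-field arguments. (Alternatively, one could deduce the statement from the known existence of $G$-equivariant control data for the orbit-type stratification of the $G$-manifold $\widetilde X$, pushed down to $X$ via the stack normal bundles; the argument above is a self-contained version of this.)
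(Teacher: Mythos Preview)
Your proposal is correct and follows the classical Mather construction of control data: an upward induction on the dimension of strata, at each step building $\pi_{\Gamma}$ fiberwise inside the already-constructed tubes $U^{\cong}(\Gamma')$ (for $\Gamma'\supsetneq\Gamma$) near the frontier, and then invoking a relative, $G$-equivariant tubular neighborhood theorem to extend over the rest of $S$. The paper itself remarks that this route works (``The proof is actually the same as that of Corollary~6.5 of~\cite{Math73}''), and your emphasis on $C^{\infty}$ local triviality (Example~\ref{normalbundleasloctrivial}) is exactly the simplification available here.

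The paper, however, does something structurally different. Rather than proving Proposition~\ref{35.27} in isolation, it proves Propositions~\ref{35.27} and~\ref{35.32} (the family of lines) simultaneously, via the stronger relative statement Proposition~\ref{35.34}. The key device is an auxiliary locally trivial $C^{\infty}$ fibration $pr_{N}$ that all data must respect. The induction is on the maximal codimension~$D$ of strata not yet covered: for the current deepest stratum $X^{\cong}(\Gamma,i)$ one passes to its normal \emph{sphere} bundle $\partial N_{X_0^{\cong}(\Gamma,i)}X/\Gamma\to X_0^{\cong}(\Gamma,i)$, which is itself a stratified orbifold with strictly smaller~$D$, and applies the induction hypothesis with $\pi_{\Gamma}$ playing the role of the new $pr_{N}$. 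The resulting system on the sphere bundle is coned to the ball bundle and glued with the data already given on $U_{1}$. What this buys is that the compatibility $(\ref{35.26})$ with the new $\pi_{\Gamma}$ is packaged into the $pr_{N}$-condition rather than arranged by a separate fiberwise secondary induction, and the family of lines $r_{\Gamma}$ comes out of the same recursion for free. Your approach is more direct for Proposition~\ref{35.27} alone; the paper's is tailored to get Proposition~\ref{35.32} at the same time.
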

The proof is actually the same as that of Corollary 6.5 of
\cite{Math73}. Mather proved the existence of a system of tubular
neighborhoods for the space with Whitney stratification. In his case
the situation is less tame than our case since the normal cone
exists only in $C^0$ sense.
(See example \ref{35.3}.)  In our case, the proof is easier since
the normal cone we produce by Lemma \ref{A1.100} is already smooth.
For the sake of completeness, we give the proof of Proposition
\ref{35.27} later in this subsection (Proposition \ref{35.33}).
\par
We next define the notion of a family of lines following Goresky
\cite{Gor78}. For $\epsilon>0$ we put:
\begin{eqnarray}
S^{\Gamma}(\epsilon) &=& \{ p \in U^{\cong}(\Gamma)
\mid \rho_{\Gamma}(p) = \epsilon^2\} \label{35.28}\\
U^{\cong}(\Gamma)(\epsilon) &=&
\{ p \in U^{\cong}(\Gamma)
\mid \rho_{\Gamma}(p) < \epsilon^2\} .
\label{35.29}
\end{eqnarray}
Here we modify $\rho'_{\Gamma}$ to $\rho_{\Gamma}$ in the
following way. (See the lines 2-5 from the bottom of \cite{Gor78} p
193. We warn that our notation $\rho_{\Gamma}$ corresponds to
Goresky's $\rho'_{X}$ and $\rho'_{\Gamma}$ to Goresky's
$\rho_{X}$.) We take $\rho'_{\Gamma}(p) = \Vert
v\Vert^2$ for $p = \phi(v)$ where $\Vert\cdot\Vert$ is an
appropriate norm induced by an inner product. We take a function $f_{\Gamma}: X^{\cong}(\Gamma)
\to \R_+$ that goes to infinity on the boundary. Then we define
$\rho_{\Gamma}(x)=(f_{\Gamma}\circ
\pi_{\Gamma}(x))\rho'_{\Gamma}(x)$.
(Note $f$ is related to $\sigma$ in Definition \ref{35.23} by $\sigma = \epsilon^2/f^2$.)
\par\medskip
\hskip2cm
\epsfbox{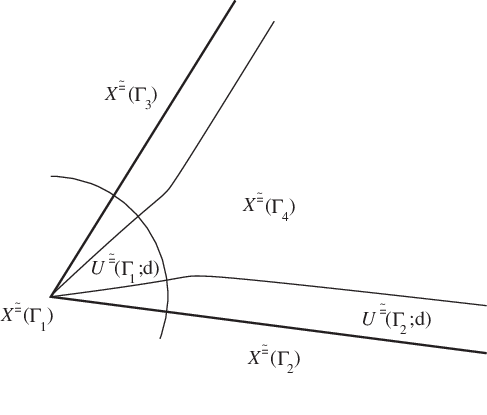}
\par
\centerline{\bf Figure 1}
\bigskip
\begin{defn}\label{35.30}
A family of smooth maps
$$
r_{\Gamma}(\epsilon): U^{\cong}(\Gamma) \setminus X^{\cong}(\Gamma)
\to S^{\Gamma}(\epsilon)
$$
is said to be a {\it family of lines} if the following holds for
$\Gamma' \supset \Gamma$:
\begin{enumerate}[{\rm (i)}]
\item
$r_{\Gamma'}(\epsilon') \circ r_{\Gamma}(\epsilon)
= r_{\Gamma}(\epsilon) \circ r_{\Gamma'}(\epsilon') \in S^{\Gamma}(\epsilon) \cap
S^{\Gamma'}(\epsilon')$ for all $\epsilon',\epsilon > 0$.
\par
\item
$\rho_{\Gamma'} \circ r_{\Gamma}(\epsilon) = \rho_{\Gamma'}$.
\par
\item
$\rho_{\Gamma} \circ r_{\Gamma'}(\epsilon) = \rho_{\Gamma}$.
\par
\item
$\pi_{\Gamma'} \circ r_{\Gamma}(\epsilon) = \pi_{\Gamma'}$.
\par
\item
If $0 < \epsilon < \epsilon' < \delta$ then
$r_{\Gamma}(\epsilon') \circ r_{\Gamma}(\epsilon) = r_{\Gamma}(\epsilon')$.
\par
\item
$
\pi_{\Gamma} \circ r_{\Gamma}(\epsilon) = \pi_{\Gamma}.
$
\par
\item
We define
$$
h_{\Gamma}:  U^{\cong}(\Gamma)(\epsilon) \setminus  X^{\cong}(\Gamma)
\to S^{\Gamma}(\epsilon) \times (0,\epsilon)
$$
by
$$
h_{\Gamma}(p) = (r_{\Gamma}(\epsilon)(p),\sqrt{\rho_{\Gamma}(p)})
$$
and extend it to a map
from  $U^{\cong}(\Gamma;\epsilon)$ to the
mapping cone of
$$
\pi_{\Gamma}\vert_{S^{\Gamma}(\epsilon)}: S^{\Gamma}(\epsilon)  \to
X^{\cong}(\Gamma),
$$
by setting
$h_\Gamma(p) = (p, 0)$ on $X^{\cong}(\Gamma)$. Then $h_{\Gamma}$
becomes a diffeomorphism.
\end{enumerate}
\end{defn}
Note the
mapping cone of
$
\pi_{\Gamma}\vert_{S^{\Gamma}(\epsilon)}: S^{\Gamma}(\epsilon)  \to
X^{\cong}(\Gamma)
$
is identified with
$
\{ p \in U^{\cong}(\Gamma)
\mid \rho_{\Gamma}(p) \le \epsilon^2\}.
$
So it is an orbifold with boundary. (It is a total space of the ball bundle discussed in
Example  \ref{normalbundleasloctrivial}.)
We remark that the diffeomorphism in (vii) is one in the
sense of Definition \ref{diffeo}.
\par\medskip
\hskip1cm
\epsfbox{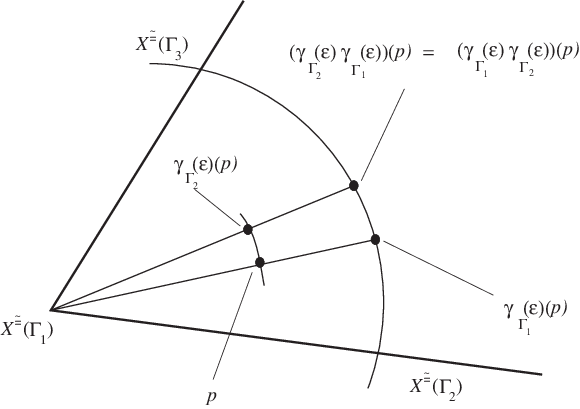}
\par
\centerline{\bf Figure 2}
\par\bigskip
The above definition except (vii) exactly coincides
with that of Goresky \cite{Gor78}. The condition (vii) is stronger
than the corresponding one from \cite{Gor78}. This is because in our
situation the normal cone is smooth and diffeomorphic to a
neighborhood $U^{\cong}(\Gamma)$ of $X^{\cong}(\Gamma)$.
\begin{prop}\label{35.32} There exists a family of lines.
\end{prop}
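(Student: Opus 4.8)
The plan is to follow Goresky's construction of a family of lines \cite{Gor78}, exploiting the fact that in our situation every normal cone is genuinely $C^\infty$-locally trivial (Lemma \ref{A1.100}, Example \ref{normalbundleasloctrivial}) rather than merely $C^0$-trivial as in the Whitney-stratified case. First I would fix once and for all the system of tubular neighborhoods $(\pi_\Gamma,\rho'_\Gamma)$ produced by Proposition \ref{35.27}, together with the diffeomorphisms $\phi_\Gamma : B_\sigma(\Gamma)/\Gamma \to U^{\cong}(\Gamma)$ of Definition \ref{35.23} and the modified tubular distance functions $\rho_\Gamma = (f_\Gamma\circ\pi_\Gamma)\rho'_\Gamma$; these are the analogue of Mather's control data. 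The maps $r_\Gamma(\epsilon)$ will be built by induction on the depth of the stratification $\{X^{\cong}(\Gamma)\}$, i.e.\ along the partial order in which $\Gamma<\Gamma'$ exactly when $X^{\cong}(\Gamma')\subset\overline{X^{\cong}(\Gamma)}$ (equivalently $\Gamma\subsetneq\Gamma'$), treating the deepest strata first.

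On a single tubular neighborhood the first guess is the radial rescaling: for $p=\phi_\Gamma([v])$ with $v\ne 0$ set $r_\Gamma(\epsilon)(p)=\phi_\Gamma([t\,v])$, where $t=t(v)>0$ is the unique scalar with $\rho_\Gamma(\phi_\Gamma([t\,v]))=\epsilon^2$; in the fibre coordinate $\rho_\Gamma$ equals $f_\Gamma(\pi_\Gamma(v))\Vert v\Vert^2$, so $t=\epsilon/(\sqrt{f_\Gamma(\pi_\Gamma(v))}\,\Vert v\Vert)$ and $r_\Gamma(\epsilon)$ is smooth on $U^{\cong}(\Gamma)\setminus X^{\cong}(\Gamma)$ and takes values in $S^\Gamma(\epsilon)$. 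This already yields (v) and (vi), and, since via $\phi_\Gamma$ the tubular neighborhood is literally the $C^\infty$ ball bundle of $N_{X^{\cong}(\Gamma)}X$, the map $h_\Gamma$ of (vii) is the standard polar-coordinate identification of a $C^\infty$ ball bundle with the mapping cone of its sphere bundle, hence a diffeomorphism; this is the precise point where the smoothness, rather than mere $C^0$-triviality, of the normal cone enters. For the deepest strata there is nothing deeper to be compatible with, so this naive $r_\Gamma(\epsilon)$ already satisfies all of (i)--(vii) and starts the induction.

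For the inductive step I would assume $r_{\Gamma'}$ constructed for all $\Gamma'\supsetneq\Gamma$. The tubular-system relations $\pi_{\Gamma'}\circ\pi_\Gamma=\pi_{\Gamma'}$ and $\rho'_{\Gamma'}\circ\pi_\Gamma=\rho'_{\Gamma'}$ (equations $(\ref{35.26.1})$, $(\ref{35.26.2})$) show that moving a point along the fibres of $\pi_\Gamma$ automatically preserves $\pi_{\Gamma'}$ and $\rho_{\Gamma'}$, so away from $\bigcup_{\Gamma'\supsetneq\Gamma}X^{\cong}(\Gamma')$ the naive $r_\Gamma(\epsilon)$ already satisfies (ii) and (iv). To get (i) and (iii) as well I would modify $r_\Gamma(\epsilon)$ inside each $U^{\cong}(\Gamma')$ by conjugating the fibrewise rescaling with the already built retraction $r_{\Gamma'}$, and interpolate between the modified and the naive map via a partition of unity subordinate to the level sets of $\rho_{\Gamma'}$; the compatibility of the control data $(\pi_{\Gamma'},\rho_{\Gamma'})$ for the various $\Gamma'\supsetneq\Gamma$ guarantees that these local modifications agree on overlaps and so assemble to a single smooth $r_\Gamma(\epsilon):U^{\cong}(\Gamma)\setminus X^{\cong}(\Gamma)\to S^\Gamma(\epsilon)$, still a diffeomorphism onto its image, giving (vii). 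Conditions (i) and (iii) then follow from the inductive hypotheses on the $r_{\Gamma'}$ together with the commutation of the two families of fibrewise rescalings.

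The hard part will be exactly this simultaneous compatibility: arranging that the modification of $r_\Gamma(\epsilon)$ forced near one deep stratum $X^{\cong}(\Gamma')$ neither conflicts with the one forced near another nor spoils the diffeomorphism property (vii), and keeping track of all the control-data identities $(\ref{35.26})$ through the interpolations. In Goresky's setting this is the delicate part because the normal cones are only $C^0$; in ours Lemma \ref{A1.100} and Example \ref{normalbundleasloctrivial} make every normal cone $C^\infty$-locally trivial, so the interpolations can be carried out smoothly and the argument reduces to the bookkeeping, which proceeds as in \cite{Gor78}.
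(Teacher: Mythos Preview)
Your proposal is essentially correct, and indeed the paper explicitly remarks that one can prove Proposition \ref{35.32} ``in the same way as in \cite{Gor78} except that we need some extra argument to check (vii).'' So your Goresky-style construction --- radial rescaling, then inductive modification near deeper strata, with the $C^\infty$ local triviality (Lemma \ref{A1.100}) replacing the $C^0$ arguments --- is a valid route.

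However, the paper takes a genuinely different path. Rather than first fixing the tubular system from Proposition \ref{35.27} and then building the lines on top of it, the paper proves both Propositions \ref{35.27} and \ref{35.32} simultaneously via the relative statement Proposition \ref{35.34}. The induction there is on the maximal codimension $D$ of strata not yet covered: one passes to the normal sphere bundle $\partial N_{X_0^{\cong}(\Gamma,i)}X/\Gamma$ of a deepest remaining stratum, applies the induction hypothesis to that sphere bundle (which has strictly smaller $D$), and then cones the resulting $(\pi,\rho,r)$ to obtain the data on the whole tubular neighborhood. The gluing with what was already built is done by an isotopy argument using minimal geodesics. This approach avoids your separate ``modification and interpolation'' step entirely: because $(\pi,\rho,r)$ are produced together by coning, the compatibility conditions (i)--(vii) of Definition \ref{35.30} hold automatically, and in particular (vii) comes for free from the cone structure rather than needing to be preserved through a partition-of-unity interpolation. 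Your approach stays closer to the stratification literature and cleanly separates the two propositions; the paper's approach is more self-contained and sidesteps the bookkeeping you correctly flag as the delicate point.
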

We can prove Proposition \ref{35.32} in the same way as in \cite{Gor78}
except that we need some extra argument to check (vii). Instead of
working this out, we give a slightly different self-contained proof
of Propositions \ref{35.27} and \ref{35.32} below, which exploits
the special case of orbifolds
(or the space with a stratification whose strata
have locally trivial normal bundles of
$C^{\infty}$-class).
The proof below is simpler than
those by Mather or Goresky. This is because we have already
proved that there exists a normal cone which is $C^\infty$ locally
trivial. For the cases studied by Mather or Goresky, proving
existence of $C^0$ trivial normal cone is one of the main goals of
their study. So our proof here rather goes in the direction opposite
to their study.
\par
Now we will prove the relative versions of Propositions \ref{35.27} and
\ref{35.32} below which include the propositions themselves.
Hereafter we write $(\pi,\rho,r)$ in place of
$\{(\pi_{\Gamma},\rho_{\Gamma},r_{\Gamma}) \mid \Gamma \subset
G\}$ for simplicity. We also write $\pi \circ r = \pi$ etc. in
place of Definition \ref{35.30} (vi) etc. by an abuse of notations.
\par
\begin{prop}\label{35.33} Let $X$ be a global quotient and $K$
a compact subset of $X$. Assume that there exist a system of tubular
neighborhoods $(\pi,\rho)$ and a family of lines $r$ in a
neighborhood $U$ of $K$. Then there exist $\pi$, $\rho$ and $r$ on
$X$ that coincide with the given ones in a neighborhood of $K$
respectively.
\end{prop}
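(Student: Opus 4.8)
The plan is to reduce Proposition~\ref{35.33} to a $G$-equivariant construction on the manifold $\widetilde X$ and then to run the inductive scheme of Mather \cite{Math73}, taking advantage of the fact, established in Lemma~\ref{A1.100} and Example~\ref{normalbundleasloctrivial}, that here the normal cones to the strata are honest $C^{\infty}$ locally trivial bundles, not merely $C^{0}$ ones. Recall from Example-Definition~\ref{A1.83} that $X=\widetilde X/G$ and that, upstairs, $X^{\cong}(\Gamma)$ is covered by the submanifolds $\widetilde{X}^{=}(\Gamma_0)$ as $\Gamma_0$ runs over a set of representatives of the conjugacy classes of subgroups of $G$ isomorphic to $\Gamma$. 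A tubular neighborhood datum $(\pi_\Gamma,\rho_\Gamma)$ together with a family of lines $r_\Gamma$, in the sense of Definitions~\ref{35.23}, \ref{35.25} and~\ref{35.30}, amounts, after passing to the quotient along the normal bundle and regrouping conjugacy classes, to a $G$-equivariant such datum for the $G$-invariant submanifolds $G\cdot\widetilde{X}^{=}(\Gamma_0)$ of $\widetilde X$; it therefore suffices to construct the latter. Since $G$ is finite there are only finitely many strata, and they are partially ordered by the frontier relation: $G\cdot\widetilde{X}^{=}(\Gamma_0')$ can lie in the closure of $G\cdot\widetilde{X}^{=}(\Gamma_0)$ only when $\Gamma_0$ is subconjugate to $\Gamma_0'$, so the deepest strata are those with largest isotropy, and those are closed submanifolds.

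I would argue by descending induction on the isotropy order, so that when treating a stratum $S$ all strictly deeper strata already carry mutually compatible $(\pi,\rho,r)$, and everything produced so far already agrees with the given data in a neighborhood of $K$. Fix once and for all a $G$-invariant Riemannian metric on $\widetilde X$. For the first step $S$ is closed with nothing deeper below it: take the equivariant $\exp$-tubular neighborhood, let $\rho$ be the square of the normal length, and let $r(\epsilon)$ be the radial rescaling onto the $\epsilon$-sphere bundle; all of this is smooth because the normal sphere and ball bundles of $S$ are $C^{\infty}$ bundles (Example~\ref{normalbundleasloctrivial}, Lemma~\ref{A1.100}), and conditions (i)--(vii) of Definition~\ref{35.30} hold trivially. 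To make this agree with the given data near $K$, choose relatively compact open sets with $K\subset U_1$, $\overline{U_1}\subset U_2$, $\overline{U_2}\subset U$, and interpolate on $U_2\setminus\overline{U_1}$ between the given structure and the freshly constructed one: a tubular neighborhood structure is a germ along the zero section of an embedding of the normal bundle into $\widetilde X$ with prescribed $1$-jet along the zero section, and two such agreeing near a closed subset of the stratum can be joined through such germs by a partition-of-unity argument in tubular charts; then $\rho$ is interpolated using convexity of positive functions and $r(\epsilon)$ by interpolating the normalized fibre direction along each ray.

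At the inductive step, to build $(\pi_\Gamma,\rho_\Gamma,r_\Gamma)$ for $S$: first define it on a neighborhood $W$ of $\bigcup_{\Gamma'\supsetneq\Gamma}X^{\cong}(\Gamma')$ so that the compatibility relations \eqref{35.26.1}, \eqref{35.26.2} and Definition~\ref{35.30}(ii)--(vii) hold there \emph{by construction} --- concretely, use the product decompositions $h_{\Gamma'}$ of the already-built deeper tubular neighborhoods to force $\pi_\Gamma$ to carry each $\pi_{\Gamma'}$-fibre into itself and to preserve each $\rho_{\Gamma'}$, and to force $r_\Gamma(\epsilon)$ to preserve the $\pi_{\Gamma'}$'s and $\rho_{\Gamma'}$'s --- and then extend over the remainder of $S$, on which no further compatibility is imposed, using the equivariant tubular neighborhood theorem and the same interpolation on the overlap, now carried out \emph{fibrewise} inside the fibres of every deeper $\pi_{\Gamma'}$ and along the level sets of every deeper $\rho_{\Gamma'}$. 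This controlled interpolation is legitimate precisely because those deeper projections and distance functions already commute with one another by the inductive hypothesis. The same fibered interpolation is used once more to patch with the given data in a neighborhood of $K$.

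The main obstacle is exactly this controlled interpolation: one must interpolate tubular neighborhoods (and families of lines) while respecting, simultaneously, all the compatibility constraints coming from every deeper stratum and from the given data near $K$. This is the technical core of Mather's treatment (\cite{Math73}, Section~II, around Corollary~6.5 and in the proof of the first isotopy lemma) and of Goresky's refinement \cite{Gor78}. In our setting it is strictly easier than there: because the normal cones are genuinely $C^{\infty}$ locally trivial (Lemma~\ref{A1.100}), there is no preliminary $C^{0}$-triviality to establish, the retractions $r_\Gamma(\epsilon)$ are manifestly smooth, and the diffeomorphism onto the mapping cone demanded in Definition~\ref{35.30}(vii) --- the one point beyond \cite{Gor78} --- is automatic. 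Once the construction is complete $G$-equivariantly on all of $\widetilde X$, passing to the quotient and regrouping conjugacy classes yields $\pi$, $\rho$, $r$ on $X$ satisfying Definitions~\ref{35.25} and~\ref{35.30} and agreeing with the given data on a neighborhood of $K$; Propositions~\ref{35.27} and~\ref{35.32} are the special case $K=\emptyset$.
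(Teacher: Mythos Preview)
Your proof is correct in outline and follows the classical Mather--Goresky route: lift to $\widetilde X$, work $G$-equivariantly, and build $(\pi_\Gamma,\rho_\Gamma,r_\Gamma)$ stratum by stratum from the deepest up, interpolating on overlaps while preserving all previously imposed compatibility conditions. The paper takes a genuinely different path. Rather than manage the simultaneous compatibility with several deeper strata directly, it first strengthens the statement to Proposition~\ref{35.34}, which adds an auxiliary locally trivial $C^\infty$ fibration $pr_N : X\setminus U_4 \to N$ together with the requirement $pr_N\circ\pi = pr_N$, $pr_N\circ r = pr_N$. The induction is then on the maximal codimension $D$ of strata not yet covered: for a deepest stratum $X^{\cong}(\Gamma,i)$ one passes to the sphere bundle $\partial N_{X_0^{\cong}(\Gamma,i)}X/\Gamma \to X_0^{\cong}(\Gamma,i)$ and applies the induction hypothesis \emph{to this sphere bundle}, with $\pi_\Gamma$ itself playing the role of the new $pr_N$; the result is then coned off to fill the ball bundle.

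The trade-off is this. Your approach keeps the statement simple but pushes all the difficulty into the ``controlled interpolation'' step, where one must simultaneously respect every deeper $\pi_{\Gamma'}$, $\rho_{\Gamma'}$, $r_{\Gamma'}$; you correctly identify this as the technical core and defer to \cite{Math73,Gor78}. The paper's device of the auxiliary fibration $pr_N$ collapses all of those constraints into a single submersion condition, so that at each stage only one compatibility has to be enforced; this makes the inductive step cleaner and more self-contained, at the cost of carrying the extra datum $pr_N$ through the argument. Both approaches exploit the same key simplification over the general Whitney case --- that the normal cones here are $C^\infty$ locally trivial (Lemma~\ref{A1.100}), which makes condition~(vii) of Definition~\ref{35.30} automatic and allows gluing via geodesic isotopies rather than Thom's first isotopy lemma.
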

For the proof of Proposition \ref{35.33}, we generalize it to the
following relative version. (Compare this with 6 in Section II \cite{Math73} where a similar procedure of the proof is applied.)
\begin{prop}\label{35.34} Let $X$ be a global quotient and $K$
a compact subset of $X$. Let $U_1, U_4$ be open subsets of $X$ such that
$U_4 \supset K$ and $U_1 \supset \overline{U}_4$. Assume that there
exist a system of tubular neighborhoods $(\pi,\rho)$ and a family of lines $r$ on $U_1$.
We also assume that there exists a locally
trivial fiber bundle of $C^{\infty}$ class
$pr_N: X \setminus U_4 \to N$ where $N$ is a
manifold. We assume   $pr_N \circ \pi = pr_N$, and $pr_N \circ r =
pr_N$ on $U_1 \setminus U_4$.
\par
Then there exist open sets $U_2$, $U_3$ with $U_j \supset
\overline{U}_{j+1}$ for $j=1,2,3$ and there exist $\pi$, $\rho$
and $r$ on $X$ which coincide with the given ones on a neighborhood of $K$ in
$\overline U_4$. In
addition, $pr_N \circ r = pr_N$ and $pr_N \circ \pi = pr_N$ hold on
$U_2 \setminus U_3$.
\end{prop}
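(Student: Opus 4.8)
The plan is to build the triple $(\pi,\rho,r)$ on all of $X$ by a downward induction over the stratification $\{X^{\cong}(\Gamma)\}$, treating a stratum $X^{\cong}(\Gamma')$ with $\Gamma'\supsetneq\Gamma$ (which lies in the frontier of $X^{\cong}(\Gamma)$) \emph{before} the stratum $X^{\cong}(\Gamma)$ itself. When the induction reaches $\Gamma$, all of $\pi_{\Gamma'},\rho_{\Gamma'},r_{\Gamma'}$ for $\Gamma'\supsetneq\Gamma$ are already defined on $X$, while $\pi_\Gamma,\rho_\Gamma,r_\Gamma$ are prescribed on a neighbourhood of $K$ inside $U_1$, and on $X\setminus U_4$ they are required to respect $pr_N$. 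The task at this stage is to extend $\pi_\Gamma,\rho_\Gamma,r_\Gamma$ over all of $X^{\cong}(\Gamma)$ so that the control identities \eqref{35.26} with the deeper strata hold, the prescribed data near $K$ is left untouched, and $pr_N\circ\pi_\Gamma=pr_N$, $pr_N\circ r_\Gamma=pr_N$ hold on the part of $X\setminus U_4$ where they are defined. Shrinking the tube radii at each stage, coherently over the finitely many strata within $U_1$, then produces the nested open sets $U_1\supset\overline U_2$, $U_2\supset\overline U_3$, $U_3\supset\overline U_4$.

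\emph{Extending the tube.} By Lemma \ref{A1.100} and Example \ref{normalbundleasloctrivial}, $X^{\cong}(\Gamma)$ carries a \emph{global} $C^\infty$ locally trivial tubular neighbourhood: $B(N_{X^{\cong}(\Gamma)}X)/\Gamma$ is diffeomorphic, in the sense of Definition \ref{diffeo}, to a neighbourhood of $X^{\cong}(\Gamma)$ in $X$, with honest $C^\infty$ data $(\pi_\Gamma,\rho_\Gamma)$ as in Definition \ref{35.23} — not merely $C^0$ as in \cite{Math73}. Let $Z\subset X^{\cong}(\Gamma)$ be the closed subset over which $(\pi_\Gamma,\rho_\Gamma)$ is already pinned down: the part near $K$; the part near each deeper stratum $X^{\cong}(\Gamma')$ where \eqref{35.26.1} and \eqref{35.26.2} force $\pi_\Gamma$ and $\rho_\Gamma$ to be the pull-backs of the already-fixed data along the $\Gamma'$-tube; and the $pr_N$-controlled region in $X\setminus U_4$. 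Over $Z$ the data is defined and consistent, and one extends it over the rest of $X^{\cong}(\Gamma)$ by a $C^\infty$ partition of unity subordinate to local trivializations of the (stack) normal bundle $N_{X^{\cong}(\Gamma)}X$ — available by Example \ref{normalbundleasloctrivial} — interpolating the local exponential-map tubes; this is exactly the argument in the proof of Corollary 6.5 of \cite{Math73}, and it is formally easier here since the normal cone is already smooth. The control equations with the $\Gamma'$'s survive because, by construction, the extension has the pulled-back form near $X^{\cong}(\Gamma')$ and the defining relations $f\circ\pi_{\Gamma'}=f$ are linear in the tube data (here $\pi_{\Gamma'},\rho_{\Gamma'}$ are already fixed), hence preserved under convex averaging; over $X\setminus U_4$ one performs the whole interpolation inside $pr_N$-trivializing charts $pr_N^{-1}(O)\cong F\times O$ — legitimate since $pr_N$ is a $C^\infty$ locally trivial bundle — which makes $pr_N\circ\pi_\Gamma=pr_N$ and $pr_N\circ\rho_\Gamma=pr_N$ automatic there.

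\emph{The family of lines.} Given $(\pi_\Gamma,\rho_\Gamma)$, define $r_\Gamma(\epsilon)$ through the tube diffeomorphism $\phi$ of Definition \ref{35.23} as the radial normalization $v\mapsto\epsilon\,v/\sqrt{\rho_\Gamma(v)}$ of the normal vector; then conditions (i)--(vi) of Definition \ref{35.30}, and agreement with the prescribed $r$ near $K$, follow from \eqref{35.26} by the formal manipulations of \cite{Gor78}, carried out once more inside the $pr_N$-charts over $X\setminus U_4$ so that $pr_N\circ r_\Gamma=pr_N$. Condition (vii) — that $h_\Gamma$ is a \emph{diffeomorphism} onto the mapping cone of $\pi_{\Gamma}|_{S^{\Gamma}(\epsilon)}$, not merely a homeomorphism — is immediate in our setting precisely because the tube produced by Lemma \ref{A1.100} is $C^\infty$ and $C^\infty$-locally trivial (Example \ref{normalbundleasloctrivial}); this is the one place where the orbifold situation is genuinely better than the Whitney-stratified one treated by Mather and Goresky. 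Shrinking $\sigma$ at each inductive stage and taking the resulting shrunk neighbourhoods then yields open sets with $U_j\supset\overline U_{j+1}$ for $j=1,2,3$ together with $pr_N\circ\pi=pr_N=pr_N\circ r$ on $U_2\setminus U_3$.

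\emph{The main difficulty} is the bookkeeping inside the extension step: making the three requirements — untouched data near $K$, the inter-stratum control equations \eqref{35.26}, and $pr_N$-compatibility on $X\setminus U_4$ — coexist within a single partition-of-unity extension. The resolution is Mather's device of ordering the strata so that the deeper tubes are built first and then pushed slightly outward; via their mapping-cylinder structure they supply the control data \eqref{35.26} \emph{for free} on a shrunk neighbourhood of $X^{\cong}(\Gamma)$, so that the actual interpolation is needed only over the compact portion of $X^{\cong}(\Gamma)$ away from the deeper strata, where only the $K$-condition and (on $X\setminus U_4$) the $pr_N$-condition are active; and the $pr_N$-condition reduces to carrying out the interpolation fiberwise in $pr_N$-charts. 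One checks that convex averaging of tube projections preserves the control relations because $\pi_{\Gamma'},\rho_{\Gamma'}$ are already fixed, and the induction terminates harmlessly at the principal stratum $X^{\cong}(\{1\})$, which is open in $X$ and needs no tube.
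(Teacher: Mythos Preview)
Your overall strategy---build $(\pi_\Gamma,\rho_\Gamma,r_\Gamma)$ stratum by stratum, deepest first, via partition-of-unity gluing---is essentially Mather's original argument, which the paper itself acknowledges works. But the paper's proof takes a genuinely different and more self-contained route, and your write-up is loose at exactly the point where Mather's argument requires care.

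The paper inducts not over the strata but over the integer
\[
D=\sup\{\operatorname{codim} X^{\cong}(\Gamma,i)\ :\ X^{\cong}(\Gamma,i)\not\subset U_1\}.
\]
At the inductive step it picks a stratum $X^{\cong}(\Gamma,i)$ of codimension $D$ (hence of minimal dimension outside $U_4$, so a closed smooth submanifold there), forms its normal \emph{sphere} bundle $\partial N_{X_0^{\cong}(\Gamma,i)}X/\Gamma\to X_0^{\cong}(\Gamma,i)$, and then applies \emph{Proposition~\ref{35.34} itself} to this sphere bundle, with $X_0^{\cong}(\Gamma,i)$ as the new $N$ and $\pi_\Gamma$ as the new $pr_N$. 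The induced stratification on the sphere bundle has strictly smaller $D$, so the induction hypothesis furnishes $(\pi,\rho,r)$ there; one then cones this off to the full tube and glues to the data on $U_1$ by geodesic isotopy. The point of the otherwise mysterious $pr_N$ in the statement is exactly this: the compatibility relations \eqref{35.26} and Definition~\ref{35.30}(iv),(vi) for the deeper stratum become the \emph{hypothesis} $pr_N\circ\pi=pr_N$, $pr_N\circ r=pr_N$ in the recursive call, so they come for free.

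Your route is valid in outline but the phrase ``convex averaging of tube projections'' is not a well-defined operation, and the claim that the control relations are ``linear in the tube data hence preserved under convex averaging'' is not correct as stated: $\pi_{\Gamma'}\circ\pi_\Gamma=\pi_{\Gamma'}$ is a nonlinear constraint on $\pi_\Gamma$, and an affine combination of two projections satisfying it need not satisfy it. What Mather actually does is first produce a tube that \emph{already} satisfies the control relations (via a metric making the $(\pi_{\Gamma'},\rho_{\Gamma'})$-level sets totally geodesic), and only afterwards isotope it to match the prescribed tube near $K$; the gluing happens at the level of the tube embeddings $\phi$, not by averaging the projections. Similarly, defining $r_\Gamma$ as naive radial normalization does not by itself yield the commutation $r_{\Gamma'}(\epsilon')\circ r_\Gamma(\epsilon)=r_\Gamma(\epsilon)\circ r_{\Gamma'}(\epsilon')$ of Definition~\ref{35.30}(i); Goresky's construction requires the tubes themselves to have been built compatibly first. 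The paper's recursive trick packages all of this bookkeeping into a single self-application of the proposition, which is why it can honestly claim to be simpler than Mather--Goresky in the orbifold setting.
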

Again we write $pr_N \circ \pi = pr_N$ etc. in place of $pr_N
\circ \pi_{\Gamma} = pr_N$ etc. by an abuse of notations.
\begin{proof}
By shrinking $U_1$ if necessary,
we may assume that $X^{\cong}(\Gamma;i) \subset U_4$
if and only if $X^{\cong}(\Gamma;i) \subset U_1$ for each
$\Gamma$. We put
\begin{eqnarray}
D & = & \dim X - \inf\{\dim X^{\cong}(\Gamma;i) \mid
\text{$X^{\cong}(\Gamma;i)$ is not contained in $U_1$}\}\nonumber
\\ & = & \sup\{\operatorname{codim} X^{\cong}(\Gamma;i) \mid
\text{$X^{\cong}(\Gamma;i)$ is not contained in $U_1$}\} .
\label{35.35}
\end{eqnarray}
The proof is given by an induction over $D$. If $D$
is $0$, there is nothing to prove.
\par
We assume that Proposition \ref{35.34} is proved when
$(\ref{35.35})$ is $D-1$ or smaller and prove the case of $D$. Let
$X^{\cong}(\Gamma;i)$ be a stratum of codimension $D$. Since this is
the stratum of smallest dimension in $X\setminus U_4$, it follows
that it is a smooth manifold outside $U_4$. We put
$X_0^{\cong}(\Gamma;i) = X^{\cong}(\Gamma;i)\setminus U_{3.5}$. Here
$U_{3.5}$ is a neighborhood  of $K$, which is slightly bigger than
$U_4$.
\par
By Example \ref{normalbundleasloctrivial},
we have the locally
trivial fiber bundle of $C^{\infty}$ class
\begin{equation}
\partial N_{X_0^{\cong}(\Gamma;i)}X/\Gamma
\longrightarrow X_0^{\cong}(\Gamma;i).
\label{35.36}
\end{equation}
By Lemma \ref{A1.100} (see also
Mather's version of tubular neighborhood theorem
in p.213 \cite{Math73})
we may take the projection $\pi_{\Gamma}:
N_{X_0^{\cong}(\Gamma;i)}X/\Gamma \to X_0^{\cong}(\Gamma;i)$ so that
${pr}_N \circ \pi_\Gamma = pr_N$, by choosing the Riemannian
metric we use to prove Lemma \ref{A1.100} so that each of the fibers
of ${pr}_N$ is totally geodesic.
\par
We will now modify the tubular neighborhood and the family of
lines on $(U_1 \setminus U_{3.5}) \,\cap\, \partial
N_{X_0^{\cong}(\Gamma;i)}X/\Gamma$ (which is given by assumption)
so that we can apply the induction hypothesis to $\partial
N_{X_0^{\cong}(\Gamma;i)}X/\Gamma$ and the fibration
\begin{equation}\label{pigamma}
\pi_{\Gamma} :
\partial N_{X_0^{\cong}(\Gamma;i)}X/\Gamma
\longrightarrow X_0^{\cong}(\Gamma;i).
\end{equation}
Note when we apply the induction hypothesis
$X_0^{\cong}(\Gamma;i)$ plays the role
of $N$ and
$\pi_{\Gamma}$ plays the role of
$pr_{N}$.
Moreover $U_1 \cap
\partial N_{X_0^{\cong}(\Gamma;i)}X/\Gamma$
plays the role of $U_1$.
\par
To apply the induction hypothesis to
(\ref{pigamma}), we need to check
that the assumption is satisfied.
In particular we need to find a tubular neighborhood and a
family of lines on
 $U_1 \cap
\partial N_{X_0^{\cong}(\Gamma;i)}X/\Gamma$
that is compatible with $\pi_{\Gamma}$.
\par
By the assumption of Proposition \ref{35.34},
there exist a tubular neighborhood and a
family of lines $(\pi',\rho',r')$
on the set $U_1$ that is compatible with $pr_N$.
It induces   a tubular neighborhood and a
family of lines $(\pi',\rho',r')$ on the set $\partial
N_{X_0^{\cong}(\Gamma;i)}X/\Gamma \cap U_1$.
This is compatible with $pr_N$
but not necessarily compatible with $\pi_{\Gamma}$.
We claim that we can modify  $\pi_{\Gamma}$
so that it is compatible with $(\pi',\rho',r')$ on the set $\partial
N_{X_0^{\cong}(\Gamma;i)}X/\Gamma \cap U_1$.
\par
Since there exists $(\pi',\rho',r')$
on the set $U_1$, there
exists
$$\pi'_{\Gamma}: \partial N_{X_0^{\cong}(\Gamma;i)}X/\Gamma
\to X_0^{\cong}(\Gamma;i)
$$
such that if $\Gamma \supset
\Gamma'$, the map $\pi'_{\Gamma}$ is consistent with
$\pi'_{\Gamma'}$, $\rho'_{\Gamma'}$ and
$r'_{\Gamma'}$ in the sense of (\ref{35.26}) and Definition \ref{35.30} (iv).
\par
We note that $\pi'_{\Gamma}$ may not coincide with $\pi_{\Gamma}$
given by Lemma \ref{A1.100}. But we can modify and glue them as
follows: The difference between two projections ($\pi'_{\Gamma}$
above and $\pi_{\Gamma}$) can be chosen to be arbitrarily small (in
$C^1$ sense), by taking the tubular neighborhood small. Then we can
use the minimal geodesic of a Riemannian metric on
$X_0^{\cong}(\Gamma;i)$, to find an isotopy between them. Hence by a
standard argument we can glue them.
We thus proved the claim.
\par
Thus we can apply our induction hypothesis to
(\ref{pigamma})
and obtain the system $(\pi, r, \rho)$ on $\partial
N_{X_0^{\cong}(\Gamma;i)}X/\Gamma$. Since
$N_{X_0^{\cong}(\Gamma;i)}X/\Gamma \cong U(X_0^{\cong}(\Gamma)) $
is a cone of $
\partial N_{X_0^{\cong}(\Gamma;i)}X/\Gamma$, the
system $(\pi, r, \rho)$ on $\partial
N_{X_0^{\cong}(\Gamma;i)}X/\Gamma$ induces one on
$N_{X_0^{\cong}(\Gamma;i)}X/\Gamma$ in an obvious way. It commutes
with the projection $U(X_0^{\cong}(\Gamma)) \to N$, since
$U(X_0^{\cong}(\Gamma)) \to N$ factors through $\pi_{\Gamma}$.
\par
Recall that we have $(\pi, r, \rho)$ on $U_1$ by assumption. On
$U_1 \cap U(X_0^{\cong}(\Gamma))$, this system may not coincide
with the one we have just constructed above. We now explain how we adjust
this system to carry out the gluing process.
\par
We first note that the projection $\pi_{\Gamma}: \partial
N_{X_0^{\cong}(\Gamma;i)}X/\Gamma \to X_0^{\cong}(\Gamma;i)$
coincides for the two systems on $U_1$ since they are already
arranged so when we apply the induction hypothesis above. Since both
systems on $U_1$ are the cone of the same system $(\pi,\rho,r)$ on
$\partial N_{X_0^{\cong}(\Gamma;i)}X/\Gamma$ by the same map, it
follows that they are the same as an abstract structure.
\par
However, the diffeomorphism from the cone of $\partial
N_{X_0^{\cong}(\Gamma;i)}X/\Gamma$ to
$U(X_0^{\cong}(\Gamma;i))$
(which exists by (vii)) may
not coincide. (Note this map is defined by $r$, the family of
lines of each of the structures.)
But we can show that they are isotopic by the same method as
before. Namely, we go to a branched covering of the cone of
$\partial
N_{X_0^{\cong}(\Gamma;i)}X/\Gamma$ and join the two
diffeomorphisms by the minimal geodesic of a $G$-equivariant
Riemannian metric that is totally geodesic along the fiber of
$pr_N$. Therefore we can glue them by the standard method.
\par
Now we have extended the systems $(\pi,\rho,r)$ to a neighborhood
of $X^{\cong}(\Gamma;i)$. We repeat the same construction for each
$X^{\cong}(\Gamma;i)$ with $\dim X^{\cong}(\Gamma;i) = \dim X -D$.
\par
We thus reduce the problem to the case when $D$ is strictly
smaller. The proof of Proposition \ref{35.34} is now finished by
induction. \end{proof}
\section {Single valued piecewise smooth section of
orbi-bundle: proof}\label{section35.3}
\begin{proof}[Proof of Proposition \ref{35.20}]
Let us fix a sufficiently small $d>0$ and put
\begin{equation}
\text{\rm Int} X^{d} (\Gamma)
= X^{\cong} (\Gamma)  \setminus \bigcup_{\Gamma' \supset \Gamma}
\text{\rm Int}\,\,U^{\cong}(\Gamma')(d).
\label{35.37}
\end{equation}
By definition of  system of tubular neighborhoods
we observe the following:
\begin{equation}
U^{\cong}(\Gamma_1) \cap U^{\cong}(\Gamma_2) \ne \emptyset \quad
\Rightarrow \quad \Gamma_1 \subset \Gamma_2 \,\,\text{or} \,\,
\Gamma_2 \subset \Gamma_1. \label{35.38}
\end{equation}
It follows from (\ref{35.38})
that $\text{\rm Int} X^{d} (\Gamma)$ is a smooth manifold with
corners. The codimension $k$ corner of $\text{\rm Int} X^{d}
(\Gamma)$ is a union of
\begin{equation}
X(\Gamma;\Gamma_1, \Gamma_2 ,\dots , \Gamma_k) = X^{\cong}(\Gamma) \cap \bigcap_{i=1}^k S^{\Gamma_i}(d)
\label{35.39}\end{equation}
where
\begin{equation}
\Gamma_1 \supset \Gamma_2 \supset \dots \supset \Gamma_k \supset \Gamma.
\label{35.40}\end{equation}
\par\medskip
\hskip2.5cm
\epsfbox{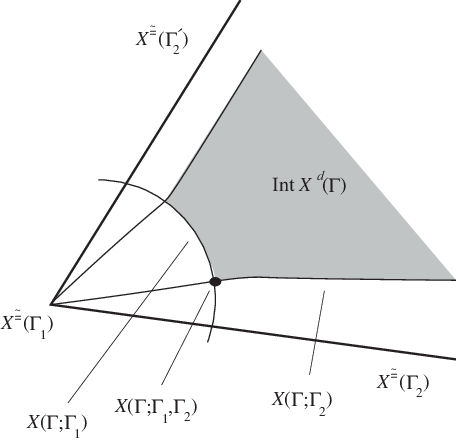}
\par
\centerline{\bf Figure 3}
\par\bigskip
Later in this section, we will first define our section
$s_{\epsilon}$ on $\bigcup_{\Gamma}\text{\rm Int} X^{d}
(\Gamma)$ and then extend it so that its zero set is a cone with
respect to the family of lines. Thus our proof is an analog to the
proof given in \S3-5 \cite{Gor78}.
\par
Let $E_p^\Gamma$ be as in (\ref{35.18pre}).
We put:
\begin{equation}
E_p = E_p^\Gamma \oplus E_p^{\bot} \label{35.41}
\end{equation}
where $E_p^{\bot}$ is the complement of $E_p^{\Gamma}$ in $E_p$.
\par
On $X(\Gamma;\Gamma_1, \Gamma_2 ,\dots , \Gamma_k)$
we define $E_p(\Gamma)$ inductively on $\# \Gamma$ and decompose $E_p^\Gamma$ into
\begin{equation}\label{35.41-2}
E_p^{\Gamma} = \bigoplus_{i=1}^k E_p(\Gamma_i) \oplus E_p(\Gamma)
\end{equation}
as follows.
\par
If $\Gamma$ is maximal, we set $E_p(\Gamma)=E_{p}^{\Gamma}$
on $X^{\cong}(\Gamma)$.
Using local triviality of $E(\Gamma)$, we
extend our subbundle $E(\Gamma)$ to the neighborhood
$U^{\cong}(\Gamma)(d)$ for a sufficiently small $d$  so that
$$
E_p(\Gamma)\subset E_p^{\Gamma'}
$$
is satisfied for $p \in U^{\cong}(\Gamma)(d) \cap X^{\cong}(\Gamma')$,
and
$\Gamma \supset \Gamma'$
as follows:
\par
We take a $\Gamma$-invariant
connection $\nabla$ of $E$ on $U^{\cong}(\Gamma)(d)$ so that each of
$E^{\Gamma}$ is a totally geodesic subbundle and that the curvature
of $\nabla$ is zero on each fiber of $U^{\cong}(\Gamma)(d) \to
X^{\cong}(\Gamma)$. Then we can use the parallel transport with
respect to $\nabla$ along the path contained in the fiber of
$U^{\cong}(\Gamma)(d) \to X^{\cong}(\Gamma)$ to extend $E(\Gamma)$ to
$U^{\cong}(\Gamma)(d)$.
\par
We next consider $p \in X(\Gamma;\Gamma_1)$. We may assume that
$E_p(\Gamma_1)$ is defined. Then we define $E_p(\Gamma)$ as the
orthonormal complement of $E_p(\Gamma_1)$ in $E_p^{\Gamma}$. We
extend them to its neighborhood. We thus obtain
$$
E_p \cong E_p(\Gamma_1)\oplus E_p(\Gamma) \oplus E_p^{\bot}.
$$
We can continue by a downward induction on $\# \Gamma$, $k$ and
obtain the decomposition (\ref{35.41-2}).
\par
We use (\ref{35.41-2})
to perform our construction of $s_{\epsilon}$. We also need the
following lemma.
\par
\begin{lem}\label{35.42}
Let $f: M \to N$ be a locally
trivial fiber bundle of $C^{\infty}$ class  between smooth manifolds and
$F$ a vector bundle on $M$. We fix a smooth triangulation of $N$.
Let $s$ be a section of $F$. Then there exists a family
$s^{\epsilon}$ of piecewise smooth sections of $F$ such that
\begin{enumerate}[{\rm (i)}]
\item $s^{\e}$ converges to $s$ in $C^0$
topology.
\par
\item $s^{\e}$ is of general position to $0$.
\par
\item
$f: (s^{\e})^{-1}(0) \to N$ is piecewise linear with respect to some
smooth triangulation of $(s^{\e})^{-1}(0)$ and a subdivision of
the given triangulation of $N$.
\end{enumerate}
\end{lem}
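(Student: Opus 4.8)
The plan is to argue by induction over the skeleta of the given triangulation of $N$, trivialising $f$ over each simplex and perturbing the section one simplex at a time. Write $N^{(0)} \subset N^{(1)} \subset \cdots \subset N^{(n)} = N$ for the skeleta. At stage $k$ I will have a piecewise smooth section $s^\epsilon$ over $f^{-1}(N^{(k)})$ which is $C^0$-close to $s$, is in general position to the zero section (so that $Z_k := (s^\epsilon)^{-1}(0) \cap f^{-1}(N^{(k)})$ is a smooth manifold with corners), together with a smooth triangulation of $Z_k$ and a subdivision $\mathcal S_k$ of $N^{(k)}$ such that $f|_{Z_k}\colon Z_k \to N^{(k)}$ is simplicial, all of this agreeing with the stage-$(k-1)$ data. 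When $k=n$ this is exactly the conclusion, and letting the total size of the perturbations tend to $0$ produces the family $\{s^\epsilon\}$.

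For the inductive step, fix a $k$-simplex $\sigma$ of $N$ and use local triviality of $f$ to identify $f^{-1}(\sigma)$ with $\sigma \times F'$, $F'$ the typical fibre, so that $f|_{f^{-1}(\sigma)}$ is literally the projection to $\sigma$; the bundle $F$ over this product is handled in local trivialisations. The section $s^\epsilon$ is already given on $(\partial\sigma)\times F'$; I first extend it over $\sigma \times F'$ to a piecewise smooth section keeping it $C^0$-close to $s$ (using a thin collar of $\partial\sigma$ in $\sigma$ and a bump function, so that it is unchanged near $\partial\sigma$ and equals $s$ away from the collar). Then, by the usual parametric transversality argument, a further $C^0$-small perturbation supported in the interior of $\sigma\times F'$ makes $s^\epsilon$ transverse to the zero section over $\operatorname{Int}\sigma$ while keeping it transverse along all faces. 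Now $Z_\sigma := (s^\epsilon)^{-1}(0)\cap(\sigma\times F')$ is a compact smooth manifold with corners (compactness being automatic in the intended applications, where $F'$ is compact; otherwise one exhausts by compacta), whose boundary over $N^{(k-1)}$ is already triangulated with $f$ simplicial there.

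It then remains to triangulate $Z_\sigma$ smoothly, extending the prescribed triangulation on the part of $\partial Z_\sigma$ over $N^{(k-1)}$, so that $f|_{Z_\sigma}\colon Z_\sigma \to \sigma$ becomes simplicial onto a subdivision of $\sigma$ restricting to $\mathcal S_{k-1}|_{\partial\sigma}$ over $\partial\sigma$. This is the relative triangulation theorem for a smooth map from a compact manifold with corners to a simplex; it holds after one more $C^0$-small perturbation of $s^\epsilon$ relative to $\partial(\sigma\times F')$, by the triangulation theory of smooth maps (Whitehead, Munkres) --- or, following Goresky, by first putting $f|_{Z_\sigma}$ into general position with respect to the fixed data and then triangulating directly. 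It is precisely this last perturbation, which is a piecewise linear modification near the skeleta of the constructed triangulation, that leaves $s^\epsilon$ only piecewise smooth rather than smooth, accounting for the wording of the lemma. One then glues over all $k$-simplices of $N$: distinct $k$-simplices meet only along $N^{(k-1)}$, where everything has already been fixed, so the pieces assemble to the stage-$k$ data, completing the induction.

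The hard part will be this last step, the relative triangulation of the smooth map $f|_{Z_\sigma}$ so that it simultaneously (a) extends the already chosen triangulation on part of $\partial Z_\sigma$, (b) is simplicial onto a subdivision of the affine simplex $\sigma$, and (c) is compatible across the various same-dimensional simplices of $N$. Compatibility (c) is taken care of by the induction scheme, but securing (a) and (b) together requires the relative and parametric forms of the smooth-map triangulation theorems, and one must check that the perturbations needed there can be made without disturbing $s^\epsilon$ and its zero set over the lower skeleton; this is where the bulk of the care in the argument lies.
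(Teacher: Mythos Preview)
Your inductive scheme over the skeleta of $N$ is workable, but it takes a considerably harder route than the paper does, and the step you correctly flag as ``the hard part'' is precisely what the paper's argument avoids.

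The paper proceeds entirely within the piecewise linear category from the outset. Since $f\colon M\to N$ is a smooth fiber bundle, one first invokes the classical results of Whitehead and Munkres to choose simplicial complexes $K_M$, $K_N$ and homeomorphisms $|K_M|\to M$, $|K_N|\to N$ (the latter a subdivision of the given triangulation) so that $f$ itself is simplicial. One then triangulates the total space of $F$ so that the projection $F\to M$ is also piecewise linear. Now $s$ is approximated by a genuinely piecewise linear section $s^\epsilon\colon M\to F$ in general position to the zero section; this is standard PL topology (Hudson). The point is that once $s^\epsilon$ is affine on each simplex, the intersection $(s^\epsilon)^{-1}(0)\cap\Delta$ with each simplex $\Delta$ of $K_M$ is itself an affine subspace, so a single further subdivision of $K_M$ (and correspondingly of $K_N$) makes $(s^\epsilon)^{-1}(0)$ a subcomplex. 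The restriction of the already-simplicial map $f$ to a subcomplex is automatically simplicial, giving (iii) for free.

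By contrast, you work with smooth transversality over each $\sigma$, obtain $Z_\sigma$ as a smooth manifold with corners, and then have to triangulate the smooth map $f|_{Z_\sigma}\colon Z_\sigma\to\sigma$ relative to prescribed data on the part of $\partial Z_\sigma$ lying over $\partial\sigma$. That relative triangulation theorem for smooth maps is true but is itself nontrivial, and your remark that it ``holds after one more $C^0$-small perturbation of $s^\epsilon$'' conflates perturbing the section (which moves $Z_\sigma$) with triangulating the fixed map. The paper's PL approach sidesteps this entirely: by arranging that $f$, the bundle projection, and the section are all simplicial before looking at the zero set, the compatibility of the triangulation of $(s^\epsilon)^{-1}(0)$ with $f$ is automatic rather than something to be engineered simplex by simplex.
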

\begin{proof} Since $f$ is a locally
trivial fiber bundle of $C^{\infty}$ class,  we may choose a
triangulation of $M$ and a subdivision of the given one on $N$ with
respect to which $f$ is piecewise linear. (See \cite{Mun66},
\cite{Whi40}.) In other words, there exist simplicial complexes
$K_M$, $K_N$ and homeomorphisms $i_M: \vert K_M\vert \to M$, $i_N:
\vert K_N\vert \to N$ with the following properties:
\begin{enumerate}[{\rm (i)}]
\item The restrictions of the homeomorphisms $i_M$ and
$i_N$ to each simplex are diffeomorphisms onto their images.
\par
\item
$i_N^{-1} \circ f \circ i_M$ is induced
by a simplicial map.
Namely it sends a simplex of  $K_M$ to a simplex
of $K_N$ and is affine on each simplex.
\end{enumerate}
We next take a smooth triangulation of the total space $F$ of our
vector bundle so that the projection $F \to M$ is piecewise linear.
By taking an appropriate subdivision of the simplicial
decomposition, we may approximate our section $s$ by a section
$s^{\e}: M \to F$ which is piecewise linear, $C^0$ close to $s$, and
of general position to the zero section. (Existence of such $s^{\e}$
is a standard result of piecewise linear topology. See, for example,
\cite{Hud69}.) Then (i) and (ii) are satisfied. Since $s^{\e}$ is
piecewise linear, which is affine on each simplex, it follows that
the intersection of $(s^{\e})^{-1}(0)$ with each simplex is affine.
Hence we can find a subdivision of $K_M$ and $K_N$ such that
$(s^{\e})^{-1}(0)$ is a subcomplex and the restriction of $f$ to
$(s^{\e})^{-1}(0)$ is piecewise linear.
\end{proof}
We note that Lemma \ref{35.42} (iii) implies that the mapping cone
of $f: (s^{\e})^{-1}(0) \to N$ has a smooth triangulation.
\par
Now we start the construction of our section $s_{\epsilon}$.
We will put
\begin{equation}
s_{\epsilon} = \bigoplus_i s_{\epsilon}^{\Gamma_i} \oplus
s_{\epsilon}^{\Gamma} \oplus 0
\label{35.45}\end{equation}
according to our
decomposition \eqref{35.41}, \eqref{35.41-2}. Note the $E_p^{\bot}$-component is
necessarily zero because of the $G$-invariance. (In other words it
is zero since $s_{\epsilon}$ is single-valued.)
\par
We will construct $s_{\epsilon}^{\Gamma}$ by the downward
induction over the order of $\Gamma$.
\par\medskip
Let $\Gamma$ be maximal. We consider the vector bundle $E^{\Gamma}
\to X^{\cong}(\Gamma)$. We note that this is a vector bundle on
a manifold and is not only an orbi-bundle.
So for each given section $s$
we can take a smooth
section $s_{\epsilon}^{\Gamma}$ which is transversal to $0$ and $C^0$ close to $s$.
We extend $s_{\epsilon}^{\Gamma}$ to a section of $E(\Gamma)$ on $U^{\cong}(\Gamma)$ so
that it is covariantly constant along the fiber of $\pi_{\Gamma}$.
\par
We next
consider the case of $\Gamma$ that is not necessarily maximal but
has a property that there is no $\Gamma_1 \supset \Gamma_2 \supset \Gamma$ with
$X(\Gamma;\Gamma_1,\Gamma_2) \ne \emptyset$.
We consider $\text{\rm Int} X^{d}(\Gamma) \cap S^{\Gamma_1}(d)$. By assumption
\begin{equation}
\partial\,\, \text{\rm Int}\, X^d(\Gamma) =
\bigcup_{\Gamma_1 \supset \Gamma} X(\Gamma;\Gamma_1),
\label{35.47}
\end{equation}
where the right hand side are disjoint each other and $\Gamma_1$ are maximal.
By induction hypothesis we have defined
$s^{\Gamma_1}_{\epsilon}$ already.
We now apply Lemma \ref{35.42} to
$$
\pi_{\Gamma}: (s_{\epsilon}^{\Gamma_1})^{-1}(0) \cap X(\Gamma;\Gamma_1) \to
(s_{\epsilon}^{\Gamma_1})^{-1}(0) \cap X(\Gamma_1)
$$
and the bundle $E(\Gamma) \to (s_{\epsilon}^{\Gamma_1})^{-1}(0) \cap
X(\Gamma;\Gamma_1) $.
We then obtain $s_{\epsilon}^{\Gamma}$ on
$(s_{\epsilon}^{\Gamma_1})^{-1}(0) \cap X(\Gamma;\Gamma_1)$. We extend it to
$X(\Gamma;\Gamma_1) \setminus (s_{\epsilon}^{\Gamma_1})^{-1}(0)$ in an
arbitrary way. (It does not matter how we extend since it will not
change the zero set.)
\par
We have thus defined
$s_{\epsilon} = s_{\epsilon}^{\Gamma_1} \oplus
s_{\epsilon}^{\Gamma}$ on (\ref{35.47}).
Note that on
$$
\text{\rm Int}\, X^d(\Gamma) \setminus \partial\,\,
\text{\rm Int}\, X^d(\Gamma)
$$
$E$ is decomposed to $E^{\Gamma} = E(\Gamma)$ and $E^{\bot}$. (We
remark that we decompose $E^{\Gamma}$ to $E(\Gamma_1) \oplus
E(\Gamma)$ only at their boundaries.) On the boundary we defined
the section of $E^{\Gamma} \cong E(\Gamma) \oplus E(\Gamma_1)$
already which is of general position relative to zero. We can then
extend it to $\text{\rm Int}\, X^d(\Gamma)$ so that it is of
general position to zero. (Note $E^{\bot}$ component is
necessarily zero again.) We then extend this to its neighborhood
so that it is covariantly constant in each of $\pi_{\Gamma}$ fibers.
\par
Now the main induction step goes as follows: Assuming $s_{\epsilon}^{\Gamma'}$
is defined for $\#\Gamma' > \# \Gamma$, we consider $\Gamma$. Take
the decomposition
\begin{equation}
\partial\, \text{\rm Int}\, X^d(\Gamma)
= \bigcup X(\Gamma;\Gamma_1, \Gamma_2 ,\dots , \Gamma_{k}).
\label{35.48}\end{equation}
We will define $s_{\epsilon}^{\Gamma}$ on $X(\Gamma;\Gamma_1,
\Gamma_2 ,\dots , \Gamma_{k})$ by a downward induction on $k$.
\par
We consider a chain of isotropy groups $\Gamma_1,\dots,\Gamma_k$
given as in (\ref{35.40}) for
which $X(\Gamma;\Gamma_1, \Gamma_2 ,\dots
, \Gamma_k)$ is nonempty. Let $k$ be maximal among such choices.
We now apply Lemma \ref{35.42} to
\begin{equation}\aligned
\pi_{\Gamma_k} &: (s_{\epsilon}^{\Gamma_1} \oplus \cdots \oplus s_{\epsilon}^{\Gamma_k})^{-1}(0) \cap X(\Gamma;\Gamma_1,
\Gamma_2 ,\dots ,
\Gamma_{k})\\ &\to (s_{\epsilon}^{\Gamma_1} \oplus \cdots \oplus
s_{\epsilon}^{\Gamma_k})^{-1}(0)
\cap X(\Gamma_k;\Gamma_1, \Gamma_2 ,\dots , \Gamma_{k-1}),
\endaligned
\label{35.49}\end{equation}
and $E(\Gamma) \to X(\Gamma;\Gamma_1, \Gamma_2
,\dots , \Gamma_{k})$. Here we note that the well-definedness
of (\ref{35.49}) is a consequence of compatibilities of $\pi$ and $r$
stated in Definitions \ref{35.25} and  \ref{35.30}.
\par
We thus obtain $s_{\epsilon}^{\Gamma}$ on
$(s_{\epsilon}^{\Gamma_1} \oplus \cdots \oplus
s_{\epsilon}^{\Gamma_k})^{-1}(0) \cap X(\Gamma;\Gamma_1, \Gamma_2 ,\dots ,
\Gamma_{k})$ which we extend to $X(\Gamma;\Gamma_1, \Gamma_2
,\dots , \Gamma_{k})$ in an arbitrary way.
\par
Now we can extend $s_{\epsilon}^{\Gamma}$ to various  $X(\Gamma;\Gamma_1,
\Gamma_2 ,\dots , \Gamma_{\ell})$ by a downward induction on
$\ell$ using an appropriate relative version of  Lemma \ref{35.42}.
Namely we assume $s_{\epsilon}^{\Gamma}$ is defined on $X(\Gamma;\Gamma_1,
\Gamma_2 ,\dots , \Gamma_{k})$ for $k > \ell$ then $s_{\epsilon}^{\Gamma}$
is defined on $\partial X(\Gamma;\Gamma_1, \Gamma_2 ,\dots ,
\Gamma_{\ell})$. Then we extend it to $X(\Gamma;\Gamma_1, \Gamma_2
,\dots , \Gamma_{\ell})$ by applying a relative version of Lemma
\ref{35.42} to
$$\aligned
&(s_{\epsilon}^{\Gamma_1} \oplus \cdots \oplus
s_{\epsilon}^{\Gamma_{\ell}})^{-1}(0)
\cap X(\Gamma;\Gamma_1, \Gamma_2 ,\dots ,
\Gamma_{\ell}) \\
&\to
(s_{\epsilon}^{\Gamma_1} \oplus \cdots \oplus
s_{\epsilon}^{\Gamma_{\ell}})^{-1}(0)
\cap X(\Gamma_{\ell};\Gamma_1, \Gamma_2
,\dots , \Gamma_{\ell-1})
\endaligned$$
and a bundle $E(\Gamma)$.
\par
Thus we have constructed $s_{\epsilon}^{\Gamma}$ on (\ref{35.48}). Again we extend
$s_{\epsilon}^{\Gamma}$ to $\text{\rm Int}\, X^d(\Gamma)$ so that it is of
general position to $0$.
\par\medskip
Therefore we have constructed $s_{\epsilon}$ on the union
\begin{equation}
\bigcup_{\Gamma}\text{\rm Int}\, X^d(\Gamma).
\label{35.50}\end{equation} We will extend this to $X$ as follows:
We first observe
$$
X \setminus \bigcup_{\Gamma}\text{\rm Int}\, X^d(\Gamma) =
\bigcup_{\Gamma} \left((U^{\cong}(\Gamma)(d) \cap
\pi_{\Gamma}^{-1}(\text{\rm Int}\, X^d(\Gamma) )) \setminus
X^{\cong}(\Gamma) \right).
$$
We will define $s_{\epsilon}$ on
$$
\left((U^{\cong}(\Gamma)(d) \cap
\pi_{\Gamma}^{-1}(\text{\rm Int}\, X^d(\Gamma) )) \setminus
X^{\cong}(\Gamma) \right)
$$
by an upward induction on $\#\Gamma$.
Let
$$
p \in (U^{\cong}(\Gamma)(d) \cap \pi_{\Gamma}^{-1}(\text{\rm Int}\,
X^d(\Gamma) )) \setminus X^{\cong}(\Gamma)
$$
and consider
\begin{equation}\label{imageofr}
r_{\Gamma}(d)(p) \in S^{\Gamma}(d) \subseteq \bigcup_{\Gamma' \subsetneq
 \Gamma}
U^{\cong}(\Gamma')(d) \cap \pi_{\Gamma'}^{-1}(\text{\rm Int}\, X^d(\Gamma'))
\end{equation}
and
$$
\pi_{\Gamma}(p) \in \text{\rm Int}\, X^d(\Gamma).
$$
We note that the right hand side of (\ref{imageofr}) contains
the case $\Gamma' =\{1\}$. In that case
$U^{\cong}(\Gamma')(d) \cap \pi_{\Gamma'}^{-1}(\text{\rm Int}\, X^d(\Gamma'))
= \text{\rm Int}\, X^d(\Gamma')$.
\par\medskip
\hskip2cm
\epsfbox{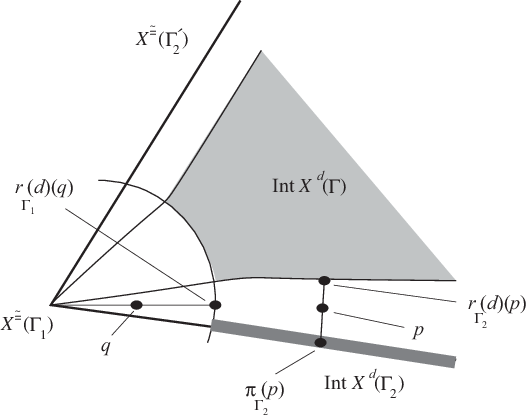}
\par
\centerline{\bf Figure 4}
\par\bigskip
By construction, $s_{\epsilon}^{\Gamma}(\pi_{\Gamma}(p))$ coincides with
$s_{\epsilon}^{\Gamma}(r_{\Gamma}(d)(p))$ under the identification
$$
E_{\pi_{\Gamma}(p)}(\Gamma) \cong E_{r_{\Gamma}(d)(p)}(\Gamma).
$$
We now put
\begin{equation}
s_{\epsilon}(p) = s_{\epsilon}^{\Gamma}(\pi_{\Gamma}(p))
+
\exp\left(\frac{1}{d^2} - \frac{1}{\rho_{\Gamma}(p)}\right)\sum_{\Gamma' \subsetneq \Gamma}
s_{\epsilon}^{\Gamma'}(r_{\Gamma}(d)(p)).
\label{35.51}
\end{equation}
This section coincides with previously defined one when
$\sqrt{\rho_{\Gamma}(p)} = 0$ or $d$.
(We set $\exp(1/d^2 - 1/0) = 0$ as definition.)
Hence
it defines a piecewise smooth section on $X$.
\par
We remark that by definition
$$
s_{\epsilon}^{-1}(0) \cap (U^{\cong}(\Gamma)(d) \cap \pi_{\Gamma}^{-1}(\text{\rm Int}\, X^d(\Gamma) ))
$$
is the cone of the map
$$
\pi_{\Gamma}: s_{\epsilon}^{-1}(0) \cap S^{\Gamma}(d) \to
X^{\cong}(\Gamma).
$$
Since we constructed our section applying Lemma \ref{35.42}
repeatedly, it follows that this cone has a smooth triangulation.
(We use Definition \ref{35.30} (vii) and the argument of section
3,4 \cite{Gor78} for this.) On (\ref{35.50}) we have a
transversality and hence Proposition \ref{35.20} (ii) holds.
Proposition \ref{35.20} (iii) and Proposition \ref{35.20} (iv) are
also obvious from construction. The proof of Proposition \ref{35.20}
is now complete.
\end{proof}

\section{Single valued perturbation of a space with Kuranishi structure: proof of Theorem \ref{maintechnicalresult}}
\label{section35.4}
In this section we prove Theorem \ref{maintechnicalresult}.
For the proof we need the following relative version of
Proposition \ref{35.20}.
\begin{defn}\label{normalcondef}
A piecewise smooth single-valued section of an orbi-bundle $E/G \to
X = M/G$ is said to be {\it normally conical} if the following holds:
\begin{enumerate}[{\rm (i)}]
\item There is a decomposition of $X= M/G$ into
$$
\bigcup_{\Gamma} \text{\rm Int} X^{d}(\Gamma) \cup
\bigcup_{\Gamma} (X^{\cong}(\Gamma) \setminus \text{\rm Int}
X^{d}(\Gamma))
$$
as in (\ref{35.37}).
\par
\item On $\text{\rm Int} X^{d}(\Gamma)$ the
$E^{\bot}$-component $s$ is of general position to $0$. (The
$E^{\Gamma}$ component is necessarily $0$.)
\par
\item On $ X^{\cong}(\Gamma) \setminus \text{\rm Int}
X^{d}(\Gamma)$, the section $s$ is given by (\ref{35.51}).
\end{enumerate}
\end{defn}
\begin{prop}\label{35.20'} Let $X$ be a global quotient, $K$
a compact subset of $X$ and $U$ a neighborhood
of $K$. Let $s$ be a
$C^0$-section of the orbi-bundle $E/G \to X$. We assume that $s$
satisfies Proposition {\rm \ref{35.20} (i)-(iv)} on $U$ and is normally conical in the above
sense. Then there exists a sequence of single-valued piecewise
smooth sections $s_{\epsilon}$ converging to $s$ in $C^0$ sense
satisfying Proposition {\rm \ref{35.20} (i)-(iv)} such that $s_{\epsilon} = s$ on $K$.
\end{prop}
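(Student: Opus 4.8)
The plan is to rerun the proof of Proposition \ref{35.20} essentially verbatim, replacing each ingredient used there by its relative counterpart. The hypothesis that $s$ is normally conical on $U$ means precisely that on a neighborhood of $K$ we are already handed the data entering that proof: a decomposition $\bigcup_{\Gamma}\text{\rm Int}\,X^{d}(\Gamma)\cup\bigcup_{\Gamma}(X^{\cong}(\Gamma)\setminus\text{\rm Int}\,X^{d}(\Gamma))$ as in (\ref{35.37}), hence a system of tubular neighborhoods $(\pi,\rho)$ and a family of lines $r$ near $K$, together with the fiberwise orthogonal splitting $E^{\Gamma}=\bigoplus_i E(\Gamma_i)\oplus E(\Gamma)$ of (\ref{35.41-2}) and the section $\bigoplus_i s_\epsilon^{\Gamma_i}\oplus s_\epsilon^{\Gamma}\oplus 0$ built from it via (\ref{35.51}). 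First I would apply Proposition \ref{35.33} to extend $(\pi,\rho,r)$ from this germ along $K$ to all of $X$ without changing it near $K$; this is exactly the relative statement that Proposition \ref{35.34} was set up to yield. Then I would extend the subbundle decomposition (\ref{35.41-2}) from its given germ along $K$ to all of $X$ by the same downward induction on $\#\Gamma$ and the same prescription of covariant constancy along the $\pi_\Gamma$-fibers as in the proof of Proposition \ref{35.20}; since that construction is local in the base and inductive, it can be carried out without disturbing the decomposition already present near $K$.

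With $(\pi,\rho,r)$ and the splitting of $E$ fixed, I would perform the construction of $s_\epsilon=\bigoplus_i s_\epsilon^{\Gamma_i}\oplus s_\epsilon^{\Gamma}\oplus 0$ by the same double induction as before — downward on $\#\Gamma$, and inside each stratum downward on the corner depth — with the single modification that every appeal to Lemma \ref{35.42} is replaced by its relative form: for a locally trivial $C^\infty$ fiber bundle $f:M\to N$, a vector bundle $F$ on $M$, a section $s$, and a closed set on which $s$ is already piecewise smooth, in general position to $0$, and over which the restriction of $f$ to $s^{-1}(0)$ is already piecewise linear, one finds $s^\epsilon$ with the same three properties that moreover equals $s$ on a neighborhood of that closed set. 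This is the standard relative general-position result of piecewise linear topology (cf. \cite{Hud69}), obtained by leaving the triangulation fixed over the already-triangulated neighborhood of $K$ and perturbing only over the remaining simplices of $N$; it is the same relative version already invoked in the proof of Proposition \ref{35.20} for the downward induction on $\ell$. Feeding it into the inductive steps such as (\ref{35.49}) produces $s_\epsilon^\Gamma$ on $\bigcup_\Gamma\text{\rm Int}\,X^d(\Gamma)$ agreeing with $s$ near $K$.

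On the complement $X\setminus\bigcup_\Gamma\text{\rm Int}\,X^d(\Gamma)$ I would set $s_\epsilon$ equal to the conical extension (\ref{35.51}); because $s$ is normally conical on $U$, this formula already reproduces $s$ on a neighborhood of $K$, so nothing further is needed and in particular $s_\epsilon=s$ on $K$. The remaining assertions — $C^0$-convergence, smooth triangulability of the zero set, and Proposition \ref{35.20}(i)--(iv) — then follow exactly as in the absolute case, using Definition \ref{35.30}(vii) and the triangulation argument of \cite{Gor78}.

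I expect the only genuine point to be the first paragraph: one must guarantee that the tubular-neighborhood-and-line system, and the auxiliary splitting of $E$, can be extended over $X$ \emph{rel $K$} compatibly with the structure already carried by the normally conical section near $K$. The first part is precisely Proposition \ref{35.33}/\ref{35.34}; the second is a routine connection and parallel-transport argument. Everything downstream is a mechanical relative rerun of the proof of Proposition \ref{35.20}.
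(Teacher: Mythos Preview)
Your proposal is correct and matches the paper's approach: the paper simply states that the proof is the same as Proposition \ref{35.20} and omits it, and what you have written is precisely the relative rerun of that argument (invoking Proposition \ref{35.33} for the tubular data and the relative form of Lemma \ref{35.42} for the general-position steps) that this sentence is meant to indicate.
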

The proof is the same as Proposition \ref{35.20} and is omitted.
\begin{proof}[Proof of Theorem \ref{maintechnicalresult}] The proof is by induction on $p \in P$ with respect
to the partial order $<$.
(See Lemma \ref{A1.11} for the finite set $P$ and  the partial order.)
If $p$ is minimal, we apply Proposition \ref{35.20} to
obtain $s'_p$. Assume that we have $s'_q$ for every $q<p$. We consider  $
s'_q$ and the image $\phi_{pq}(V_{pq})$. We restrict
$s'_q$ on the image $\phi_{pq}(V_{pq})$ and use
the embedding $\hat\phi_{pq}$ to obtain a section of
$E_{q}\vert_{\phi_{pq}(V_{pq})} \to V_{pq}$. We can extend it to
its neighborhood, so that the compatibility in the sense of
Definition \ref{A1.21} is satisfied.
\par
We note that the required properties (i) -
(ii) are satisfied on the tubular neighborhood $N_{\phi_{pq}(V_{pq})}$ if it is
satisfied by $s'_q$.
\par
Now we can use  Propositions \ref{35.20'} to obtain the section $s'_p$.  The proof of Theorem \ref{maintechnicalresult} is complete.
\end{proof}
\section{Moduli space of pseudo-holomorphic discs: review}\label{moduli}
We will apply Theorem \ref{maintechnicalresult}
to the moduli space of pseudo-holomorphic discs.
We review the definition and basic facts on the moduli space in this section.
\par
Let $(M,\omega)$ be a symplectic manifold and
$L$ a Lagrangian submanifold of $M$. We take a
compatible almost complex structure $J$ on $M$.
Let $\beta \in H_2(M,L;\Z)$.
We consider $(v;z_0,\dots,z_k)$ such that
\begin{enumerate}
\item $v: (D^2,\partial D^2) \to (M,L)$ is $J$-holomorphic.
\item $z_i \in \partial D^2$.  $z_i \ne z_j$ for $i\ne j$.
\item  $(z_0,\dots,z_k)$ respects the counter clock-wise cyclic order of
$\partial D^2$.
\item The homology class $v_*([D^2]) \in H_2(M,L)$ is $\beta$.
\end{enumerate}
\smallskip
We say $(v;z_0,\dots,z_k)$ and $(v';z'_0,\dots,z'_k)$ are equivalent if
there exists a biholomorphic map $\varphi: D^2 \to D^2$ such that
$\varphi(z'_i) = z_i$ and $v\circ \varphi = v'$. The set of the
equivalence classes is written as $\MM_{k+1}^{\text{\rm
main},\text{\rm reg}}(\beta)$.
We can compactify it as follows. We consider a pair $((\Sigma,\vec
z),v)$ where $v:\Sigma \to M$, $\Sigma$ is a bordered Riemann
surface of genus zero, $\vec z = (z_0,\dots,z_k)$ and $z_i \in
\partial \Sigma$ satisfies (2) and (3) above. We also assume $v$ is
a $J$-holomorphic map of homology class $\beta$.
\par
Moreover we assume that $((\Sigma,\vec z),v)$ is {\it stable}:
namely there exist only a finite number of biholomorphic maps
$\varphi: \Sigma \to \Sigma$ such that $\varphi(z_i) = z_i$ and $v
\circ \varphi = v$.
We say $((\Sigma,\vec z),v) \sim ((\Sigma',\vec z'),v')$ if there
exists a biholomorphic map $\varphi: \Sigma' \to \Sigma$ such that
$\varphi(z'_i) = z_i$ and $v\circ \varphi = v'$.
We denote by $\MM_{k+1}^{\text{\rm main}}(\beta)$ the set of $\sim$
equivalence classes of such $((\Sigma,(z_0,\dots,z_k)),v)$. (See
Subsection 2.1.2 \cite{fooo-book1} for detail.)
\par
We define a map $ ev = (ev_0,\dots,ev_k): \MM_{k+1}^{\text{\rm
main}}(\beta) \to L^k $ by
$$
ev_i((\Sigma,(z_0,\dots,z_k)),v)  = v(z_i).
$$
\begin{theorem}\label{modulimain}
We can define a topology on $\MM_{k+1}^{\text{\rm main}}(\beta)$
with respect to which it is compact and Hausdorff.
\par
$\MM_{k+1}^{\text{\rm main}}(\beta)$ has a Kuranishi structure with
corner. Its (virtual) dimension is given by:
\begin{equation}\label{dimformula}
\dim \MM_{k+1}^{\text{\rm main}}(\beta) = n + \mu_L(\beta) + k-2.
\end{equation}
Here $\mu_L(\beta)$ is the Maslov index of $\beta$.
Moreover
$\MM_{k+1}^{\text{\rm main}}(\beta)$ has a tangent space.
If $L$ is relatively spin, $\MM_{k+1}^{\text{\rm main}}(\beta)$
is orientable. The orientation is determined by the relative spin structure.
\par
The evaluation map $ev$ induces a strongly smooth and weakly submersive map.
\end{theorem}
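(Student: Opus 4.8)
The plan is to follow the standard construction of Kuranishi structures on moduli spaces of stable maps, adapted to bordered genus-zero curves with Lagrangian boundary condition, as carried out in Chapter 7 and Appendix A1 of \cite{fooo-book} (building on \cite{FuOn99II}); in Section \ref{moduli} this is a review, so the real work is assembling the pieces in the order below and citing the analytic estimates.

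\emph{Topology and compactness.} First I would equip $\MM_{k+1}^{\text{\rm main}}(\beta)$ with the stable-map topology: $((\Sigma_j,\vec z_j),v_j)\to((\Sigma_\infty,\vec z_\infty),v_\infty)$ iff, after reparametrization, the pointed domains converge in the Deligne--Mumford sense and the maps converge in $C^\infty_{\mathrm{loc}}$ on the smooth part with no energy lost in the necks. Sequential compactness is then Gromov compactness for $J$-holomorphic discs with boundary on the compact (or tame) $L$: a subsequence bubbles off finitely many disc and sphere components, the limit is automatically stable, and the number of components is bounded by the energy quantization for nonconstant $J$-holomorphic spheres and discs. Hausdorffness is uniqueness of Gromov limits, and metrizability follows from the explicit local model given in the next step.

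\emph{Kuranishi charts, corners, coordinate changes.} For $\mathbf p=((\Sigma,\vec z),v)$ let $\Gamma_{\mathbf p}$ be its automorphism group, which is finite by stability. I would choose a finite-dimensional $\Gamma_{\mathbf p}$-invariant space $E_{\mathbf p}$ of smooth sections of $v^*TM\otimes\Lambda^{0,1}$ supported in the interior of $\Sigma$ away from the nodes and the $z_i$, so that $E_{\mathbf p}$ together with the image of the linearized operator $D_v\bar\partial_J$ (with totally real boundary condition on $v^*TL$) spans the cokernel. Then $V_{\mathbf p}$ is the space of solutions of the thickened equation $\bar\partial_J v'\in E_{\mathbf p}$ on domains obtained by smoothing/gluing the nodes of $\Sigma$, modulo domain automorphisms; its construction near the nodal strata is the gluing analysis of \cite{fooo-book} Section 7.1, and smoothing of boundary nodes (disc bubbles) produces a boundary or corner chart — this is the source of the corner structure. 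The Kuranishi map is $s_{\mathbf p}(v')=\bar\partial_J v'\in E_{\mathbf p}$, $\psi_{\mathbf p}$ is the tautological homeomorphism of $s_{\mathbf p}^{-1}(0)/\Gamma_{\mathbf p}$ onto a neighborhood of $\mathbf p$, and for $\mathbf q$ near $\mathbf p$ one arranges $E_{\mathbf p}\subseteq E_{\mathbf q}$ and reads off $(\hat\phi_{\mathbf p\mathbf q},\phi_{\mathbf p\mathbf q},h_{\mathbf p\mathbf q})$ from the inclusion of obstruction spaces; the cocycle condition of Definition \ref{A1.5}(ii) is built in.

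\emph{Dimension, tangent bundle, orientation, evaluation map.} The virtual dimension equals $\operatorname{index}D_v\bar\partial_J+\dim E_{\mathbf p}$ plus the marked-point parameters minus $\dim\text{Aut}(D^2)$; Riemann--Roch for a $\bar\partial$-operator with totally real boundary condition gives $\operatorname{index}=n+\mu_L(\beta)$ for a disc, and adding $k+1$ boundary marked points and subtracting $\dim\text{Aut}(D^2)=3$ gives $n+\mu_L(\beta)+k-2$, which is \eqref{dimformula}. Choosing the family $\{E_{\mathbf p}\}$ coherently (nested along coordinate changes, complements varying smoothly) makes the fiber derivative of $s_{\mathbf p}$ identify the normal bundle $N_{V_{\mathbf p\mathbf q}}V_{\mathbf q}$ with $(E_{\mathbf q}\times V_{\mathbf p\mathbf q})/\hat\phi_{\mathbf p\mathbf q}(E_{\mathbf p}\times V_{\mathbf p\mathbf q})$, i.e.\ a tangent bundle in the sense of Definition \ref{A1.14}. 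When $L$ is relatively spin, the determinant line of the index problem is trivialized coherently over the whole moduli space: by additivity of the index under pinching one reduces to index problems over spheres (always orientable) and the standard disc with trivial bundle, and the relative spin structure on $L$ fixes the remaining sign choice — this gives the orientation, determined by that structure (see \cite{fooo-book}). Finally $ev_i$ is smooth on $V_{\mathbf p}$ (evaluation at a boundary marked point on $W^{1,p}$-maps is smooth), $ev_i\circ\phi_{\mathbf p\mathbf q}=ev_i$ is immediate, and since $E_{\mathbf p}$ is supported away from the $z_i$ the variations of $v'$ at $z_i$ are unobstructed, so one can take $V_{\mathbf p}$ large enough that each $ev_i:V_{\mathbf p}\to L$ is a submersion; hence $ev$ is strongly smooth and weakly submersive.

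\emph{Main obstacle.} The technical heart is the gluing analysis near the boundary and corner strata: realizing $V_{\mathbf p}$ as a smooth manifold with corners near a configuration with several interior and boundary nodes, establishing the pregluing/exponential-decay estimates and the implicit function theorem uniformly in the gluing parameters, and then verifying that the resulting coordinate changes are smooth embeddings compatible with the corner stratification and with the coherent choice of $\{E_{\mathbf p}\}$ forced by the tangent-bundle requirement. I would import these estimates from \cite{fooo-book} Section 7.1 and \cite{FuOn99II} rather than reproving them, and concentrate on the compatibility statements (coordinate changes, tangent bundle, orientation propagation across strata, weak submersivity of $ev$) since those are exactly what the rest of the paper consumes.
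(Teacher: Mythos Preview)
Your proposal is correct and matches the paper's approach: the paper does not prove Theorem~\ref{modulimain} but simply states ``This theorem is proved in Section 7.1 \cite{fooo-book}'', and your outline is precisely a sketch of that construction (Gromov compactness, obstruction-bundle thickening and gluing for the charts, Riemann--Roch for the dimension, determinant-line argument for orientation, and the standard submersivity of $ev$). One small slip: the virtual dimension is $\dim V_{\mathbf p}-\dim E_{\mathbf p}$, so it equals $\operatorname{index}D_v\bar\partial_J$ (not $\operatorname{index}+\dim E_{\mathbf p}$) plus the marked-point/automorphism correction; your final count $n+\mu_L(\beta)+k-2$ is nonetheless correct.
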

This theorem is proved in Section 7.1 \cite{fooo-book2}.
\par
Let $P_i$ ($i=1,\dots,k$) be simplices and $f_i: P_i \to L$
smooth maps. We define:
\begin{equation}\label{modulifiberprod}
\MM_{k+1}^{\text{\rm main}}(\beta;P_1,\dots,P_k)
= \MM_{k+1}^{\text{\rm main}}(\beta)\,\, {}_{(ev_1,\dots,ev_k)}\!\times_{(f_1,\dots,f_k)}
(P_1\times \cdots \times P_k).
\end{equation}
(Here and hereafter we write $P_i$ in place of the singular chain
$(P_i,f_i)$.) Since $ev$ is weakly submersive, we can use Lemma A1.39
\cite{fooo-book2} to define a Kuranishi structure on
$\MM_{k+1}^{\text{\rm main}}(\beta;P_1,\dots,P_k)$. We will define
the filtered $A_{\infty}$ structure $\mathfrak m_k$ by
\begin{equation}\label{mkdefinition}
\mathfrak m_k(P_1,\dots,P_k)
= \sum_{\beta} T^{\omega[\beta]}e^{\mu_L(\beta)/2}ev_{0*}
\left([\MM_{k+1}^{\text{\rm main}}(\beta;P_1,\dots,P_k)]\right).
\end{equation}
Here $ev_{0*}
\left([\MM_{k+1}^{\text{\rm main}}(\beta;P_1,\dots,P_k)]\right)$ is the
virtual fundamental chain of this moduli space, which is regarded as a
chain on $L$ by the map $ev_0$.
In general, virtual fundamental chain depends on the choice of perturbation.
\par
In \cite{fooo-book1}, \cite{fooo-book2} we used multivalued perturbations (or
multisections) to define this virtual fundamental chain. So it is
defined over $\Q$. (We need the orientation, which is induced by the
relative spin structure.) In this paper we will use Theorem
\ref{maintechnicalresult} to define a virtual fundamental chain over
$\Z$ or $\Z_2$.
\par
In the next section we will also use a moduli space
$\MM_{k+1,\ell}^{\text{\rm main}}(\beta;P_1,\dots,P_k)$ which is
defined as follows: We consider
$((\Sigma,\vec z,\vec w),v)$, where $((\Sigma,\vec z),v)$ satisfies
the same assumption as in the definition of $\MM_{k+1}^{\text{\rm
main}}(\beta;P_1,\dots,P_k)$, except the stability, $\vec w =
(w_1,\dots,w_{\ell})$, $w_i \in \Sigma \setminus \partial\Sigma$,
and $w_i \ne w_j$ for $i\ne j$. We assume $((\Sigma,\vec z,\vec
w),v)$ is stable: namely there exists only a finite number of
biholomorphic maps $\varphi: \Sigma \to \Sigma$ such that
$\varphi(z_i) = z_i$, $\varphi(w_i) = w_i$ and $v \circ \varphi =
v$. We define an equivalence relation on the set of such
$((\Sigma,\vec z,\vec w),v)$ in a similar way. The set of
equivalence classes is denoted by $\MM_{k+1,\ell}^{\text{\rm
main}}(\beta;P_1,\dots,P_k)$.
\par
We can generalize Theorem \ref{modulimain} to
$\MM_{k+1,\ell}^{\text{\rm main}}(\beta;P_1,\dots,P_k)$,
where the formula (\ref{dimformula}) is replaced by
\begin{equation}\label{dimformula2}
\dim \MM_{k+1,\ell}^{\text{\rm main}}(\beta) = n + \mu_L(\beta) + 2\ell + k-2.
\end{equation}
Using the evaluation map $ev = (ev_0,\dots,ev_k):
\MM_{k+1,\ell}^{\text{\rm main}}(\beta) \to L^{k+1}$ we define the
fiber product $\MM_{k+1,\ell}^{\text{\rm
main}}(\beta;P_1,\dots,P_k)$ in the same way as in (\ref{modulifiberprod}).
\section{Estimate of the virtual dimension of the singular locus}\label{dimensionsec}
The main result of this section is:
\begin{prop}\label{dimprop}
Suppose $(M,\omega,J)$ is spherically positive.
Then for each $\Gamma \ne \{1\}$ and $i$, we have
\begin{equation}\label{dimform}
\dim \MM^{\text{\rm main}}_{k+1}(\beta; P_1, \dots, P_k)
- d( \MM^{\text{\rm main}}_{k+1}(\beta; P_1, \dots, P_k);\Gamma;i) \ge 2.
\end{equation}
\end{prop}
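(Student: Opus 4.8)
The plan is to express both $\dim\MM^{\text{\rm main}}_{k+1}(\beta;P_1,\dots,P_k)$ and $d(\MM^{\text{\rm main}}_{k+1}(\beta;P_1,\dots,P_k);\Gamma;i)$ through the equivariant index of the linearized Cauchy--Riemann operator, so that the left hand side of \eqref{dimform} becomes the codimension of the stratum $X^{\cong}(\Gamma;i)$, and then to exhibit this codimension as a sum of non-negative local contributions, at least one of which is $\ge 2$ by spherical positivity. First I would reduce to the case without fiber product: since the boundary marked points $z_0,\dots,z_k$ are labeled, every automorphism of a stable map fixes each $z_i$ individually, hence each $ev_i$ is $\Gamma$-invariant, so the isotropy group $\Gamma$, its action, and the fixed subspaces of Definition \ref{def:dXgamma} are unaffected by the fiber product with $P_1\times\cdots\times P_k$ along the weakly submersive map $(ev_1,\dots,ev_k)$; this fiber product shifts $\dim\MM^{\text{\rm main}}_{k+1}(\beta;\vec P)$ and $d(\MM^{\text{\rm main}}_{k+1}(\beta;\vec P);\Gamma;i)$ by the same amount, so it suffices to prove \eqref{dimform} for $X=\MM^{\text{\rm main}}_{k+1}(\beta)$.

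Next I would analyze the $\Gamma$-action on the source $\Sigma$ of a stable map $((\Sigma,\vec z),v)$ representing a point of $X^{\cong}(\Gamma;i)$ with $\Gamma\ne\{1\}$. A finite-order automorphism of a disc component is elliptic and hence fixes no point of $\partial D^2$, while every disc component carries a special point on its boundary circle (the marked point $z_0$, say, or a boundary node); therefore $\Gamma$ acts trivially on every disc component, every sphere component directly attached to a disc is $\Gamma$-fixed, and $\Gamma$ embeds into $\mathrm{Aut}(S^2)$ fixing the corresponding node, so $\Gamma$ is cyclic, $\Gamma\cong\Z_N$. Tracing the action outward along the dual tree from a disc, one finds a sphere component $\Sigma_a\cong S^2$ on which $\Gamma$ acts non-trivially through rotations fixing a node; then $v|_{\Sigma_a}$ is invariant under a rotation subgroup $\Z_\ell\subseteq\Z_N$ with $\ell\ge 2$, and either $v|_{\Sigma_a}$ is non-constant, in which case $v|_{\Sigma_a}=\bar v_a\circ\pi_\ell$ for the branched $\ell$-fold cover $\pi_\ell\colon S^2\to S^2$ and a non-constant map $\bar v_a\colon S^2\to M$, or $v|_{\Sigma_a}$ is a ghost, in which case---since a ghost sphere is stable only with at least three special points while $\Z_\ell$ fixes only two points of $S^2$---$\Gamma$ non-trivially permutes a set of sphere components hanging off $\Sigma_a$, and following that set down to a leaf of the tree, which cannot be a ghost, one obtains a non-trivial $\Gamma$-orbit of at least two isomorphic non-constant sphere components.

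Then I would set up the dimension comparison. Writing the Kuranishi model at $\tilde x$ as the product of the deformation spaces of the maps on the irreducible components, the deformations of the pointed nodal domain, and the node-smoothing parameters, with $E_{\tilde x}$ the $\Gamma$-equivariant obstruction space, and passing to $\Gamma$-invariants term by term, one sees that $\dim X-d(X;\Gamma;i)$ is a sum of non-negative contributions, one for each $\Gamma$-orbit of components and one for each node on which $\Gamma$ acts non-trivially. By the previous paragraph there is at least one non-trivial sphere-orbit contribution, so it suffices to bound one such by $2$. If it comes from a single sphere $\Sigma_a$ with $v|_{\Sigma_a}=\bar v_a\circ\pi_\ell$, then the $\Z_\ell$-invariant deformations of $v|_{\Sigma_a}$ are precisely the deformations of $\bar v_a$, so the contribution is at least
$$
2c_1(M)[v|_{\Sigma_a}]-2c_1(M)[\bar v_a]=2(\ell-1)\,c_1(M)[\bar v_a]\ge 2,
$$
using $\ell\ge 2$ and $c_1(M)[\bar v_a]\ge 1$ by spherical positivity. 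If instead it comes from an orbit of $m\ge 2$ isomorphic non-constant sphere components carrying a map $v_0$, then $m-1$ of them become slaved to the first upon passing to invariants, and the contribution is at least $(m-1)\bigl(2n+2c_1(M)[v_0]\bigr)\ge(m-1)\cdot 2\ge 2$, again using $c_1(M)[v_0]\ge 1$. Hence \eqref{dimform} holds.

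The step I expect to be the main obstacle is the rigorous bookkeeping underlying the dimension comparison: identifying $d(X;\Gamma;i)$ with the virtual dimension of the moduli space of $\Gamma$-equivariant stable maps of the given combinatorial type via the equivariant Riemann--Roch theorem, verifying that every node contribution and every residual domain-deformation term is genuinely non-negative, and carrying out in full the classification of $\Gamma$-actions on a genus-zero bordered nodal curve that is only sketched above. Spherical positivity is used exactly once, but decisively: it forces every non-constant $J$-holomorphic sphere to satisfy $c_1(M)[\cdot]\ge 1$, and without it a repeated or multiply covered sphere with $c_1(M)\le 0$ could make the corresponding contribution vanish or turn negative, so \eqref{dimform}---and with it the integral Floer-theoretic constructions resting on Theorem \ref{maintechnicalresult}---would fail.
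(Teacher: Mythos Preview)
Your overall strategy---reduce to the equivariant index, locate a sphere component (or orbit of sphere components) on which $\Gamma$ acts nontrivially, and extract a contribution of at least $2$ from spherical positivity---is the right one and is what the paper does. But two points need correction or comparison.

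First, the claim that $\Gamma$ itself is cyclic is false. What is true is that the \emph{image} of $\Gamma$ in $\operatorname{Aut}(\Sigma_a)$, for any $\Gamma$-invariant sphere component $\Sigma_a$ with its attaching node fixed, is cyclic. But $\Gamma$ can act trivially on $\Sigma_a$ and nontrivially on several disjoint subtrees further out; for instance a ghost sphere $S_0$ attached to the disc with two further nodes, each leading to a doubly covered sphere with a different underlying map, has $\Gamma\cong\Z_2\times\Z_2$. This is easily repaired by replacing $\Gamma$ with the stabilizer $\Gamma_a$ throughout your local analysis, but it means the global structure of $\Gamma$ is not as simple as you assert.

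Second, the step you flag as the main obstacle---verifying that \emph{every} local contribution to $\dim X - d(X;\Gamma;i)$ is non-negative---is genuinely the heart of the proof, and your sketch does not address one of its subtleties. When $\Z_\ell$ acts on a sphere component $\Sigma_a$ and the second fixed point of $\Z_\ell$ is \emph{not} a node (a ``free fixed point''), the naive bookkeeping of domain moduli versus automorphisms goes wrong: passing to the quotient loses the information of where that point sits, and this costs two real dimensions that must be recovered elsewhere. The paper handles this by introducing the \emph{marked reduced model}: one quotients $\Sigma$ by $\Gamma$ to get $\overline\Sigma$ and then adds the images of free fixed points as extra interior marked points. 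Proposition~\ref{35.63} and Lemmata~\ref{35.72}, \ref{35.74} identify $d(X;\Gamma;i)$ with the virtual dimension of the moduli space of marked reduced models of the given combinatorial type, and Lemma~\ref{35.75} (whose Sublemma~\ref{35.80} is exactly where spherical positivity enters and where the free-fixed-point issue is resolved) then compares that dimension to $\dim\MM_{k+1}^{\rm main}(\beta;P_1,\dots,P_k)$. Your direct decomposition would have to reproduce this analysis; the reduced-model language is the paper's device for making the bookkeeping tractable, and without something equivalent you will not be able to verify the non-negativity you need.
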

We recall that $d( \MM^{\text{\rm main}}_{k+1}(\beta; P_1, \dots, P_k);\Gamma;i)$
is defined in (\ref{35.19b}).
\begin{proof}
Let $((\Sigma,\vec z),v,\vec x)$ be an element of
$\MM^{\text{main}}_{k+1}(\beta; P_1, \dots, P_k)$: Namely $\Sigma$
is a genus zero bordered Riemann surface, $v: (\Sigma,\partial
\Sigma) \to (M,L)$ is pseudo-holomorphic, $\vec z =
(z_0,\dots,z_k)$ are boundary marked points of $\Sigma$, and
$$
\vec x = (x_1,\dots,x_k), \quad x_i \in P_i \quad \text{satisfying }\,
v(z_i) = f_i(x_i).
$$
Here $f_i: P_i \to L$ and $(P_i,f_i)$ are regarded as smooth
singular chains of $L$, which we write just as $P_i$ by an abuse of
notation.
\par
We recall that the genus of $\Sigma$ is zero and the disc
components cannot have non-trivial automorphism groups since it
has at least one special point, i.e., either marked or nodal
points, on the boundary. Therefore for every non-trivial element
$\varphi \in \text{Aut} ((\Sigma,\vec z),v)$ and any sphere
component $S_i^2 \cong S^2 \subset \Sigma$ preserved by $\varphi$,
the automorphism $\varphi$ acts as the multiplication by
$e^{2\pi\sqrt{-1}\ell/k}$,
$$
z \mapsto e^{2\pi\sqrt{-1}\ell/k}z
$$
with the identification $S^2_i = \C \cup \{\infty\}$. And $\varphi$
permutes other components: This is because any finite
subgroup of $PSL(2;\C) = \text{Aut} (S^2)$ that fixes $\infty$ is
conjugate to such a group.
\par
As a consequence, the quotient space $(\overline
\Sigma,\vec{\overline z}) = \Sigma/\text{Aut}((\Sigma,\vec z),v)$ is
again a (pointed) bordered Riemann surface of genus zero.
The pseudo-holomorphic map $v$ induces a map $\overline v:
\Sigma/\text{Aut}((\Sigma,\vec z),v) \to M$.
\begin{defn}\label{def:reducedmodel}
We call
$((\overline \Sigma,\vec{\overline z}),\overline v,\vec x)$ the {\it
reduced model} of $((\Sigma,\vec z),v,\vec x)$.
\end{defn}
We note that the
reduced model could be unstable. Namely there may appear a sphere
component with two singular points where the map $\overline v$ is
trivial. Even if the reduced model is stable, it may have a
nontrivial automorphism.
\begin{rem}\label{35.54}
We note that the notions of ``trivial automorphism'' and
``somewhere injective'' are two different notions: Somewhere
injectivity implies triviality of the automorphism group, but not
the other way around. For example, there is a branched covering $S^2
\to S^2$ with no nontrivial automorphism. {\it For the abstract
perturbation $\mathfrak s^{\epsilon}$, its transversality to zero is
related to the existence of nontrivial automorphism but not to the
somewhere injectivity.} (Somewhere injectivity is essential if one
uses perturbations only of $J$ to achieve transversality.)
\end{rem}
We now compare the virtual dimension of $((\Sigma,\vec z),v,\vec x)
\in \MM_{k+1}^{\text{main}}(\beta ; P_1,\dots, P_k)$ with that of
its reduced model. We begin with a discussion of the deformation
complex of a multiple sphere. Let $\alpha \in \pi_2(M)$ and
$\widetilde{\mathcal M}^{\text{reg}}(M;\alpha)$ be the set of
pseudo-holomorphic maps $u: S^2 \to M$ with $[u] = \alpha$. For $u
\in\widetilde{\mathcal M}^{\text{reg}}(M;\alpha)$ we define
$\mathfrak R_m(u)\in \widetilde{\mathcal M}^{\text{reg}}(M;m\alpha)$
by $\mathfrak R_m(u)(z) = u(z^m)$.
\par
For each $v \in \widetilde{\mathcal M}^{\text{reg}}(M;m\alpha)$, we consider
the linearization
$$
D_{v}\overline\partial: \Gamma(S^2;v^*TM) \to\Gamma(S^2;
\Lambda^{0,1} (v^*TM))
$$
of the Cauchy-Riemann section $\overline \partial$.
We denote by $C(v)=(C_0(v),C_1(v),D_{v}\overline\partial)$
the elliptic complex given by
$$C_0(v) =  \Gamma(S^2;v^*TM),\quad C_1(v) =
\Gamma(S^2;\Lambda^{0,1}(v^*TM)).$$
We consider the assignment of the pull-back complex $C(\mathfrak
R_m(u))$ to $u \in \widetilde{\mathcal M}^{\text{reg}}(M;\alpha)$ on
which the group $\Z_m$ acts. We regard this assignment of
$\Z_m$-modules as a $\Z_m$-equivariant family index over
$\widetilde{\mathcal M}^{\text{reg}}(M;\alpha)$.
\begin{lem}\label{35.55} The index of $C(\mathfrak R_m(u))$ as a
$\Z_m$-module is
$$2c_1(M)(\alpha) \operatorname{Reg}_{\Z_m} \oplus 2n \text{\bf 1}.$$
Here $\operatorname{Reg}_{\Z_m}$ is the regular representation of
$\Z_m$ and \text{\bf 1} is its trivial representation.
\end{lem}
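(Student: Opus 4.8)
The plan is to identify $C(\mathfrak R_m(u))$ with the pull-back of the deformation complex of $u$ along an $m$-fold branched covering of $S^2$, and then to pin down the virtual $\Z_m$-representation $\operatorname{ind}C(\mathfrak R_m(u))$ by computing its character at every element of $\Z_m$. First I would record the geometric setup. Put $\zeta_m = e^{2\pi\sqrt{-1}/m}$ and let $\pi\colon S^2\to S^2$ be the holomorphic map $z\mapsto z^m$; this is exactly the quotient map for the $\Z_m$-action $z\mapsto\zeta_m z$, branched over its two fixed points $0$ and $\infty$. Since $\mathfrak R_m(u)=u\circ\pi$ we have $(\mathfrak R_m u)^*TM=\pi^*(u^*TM)$, and the $\Z_m$-action on $C(\mathfrak R_m(u))$ is the one induced by the deck transformations of $\pi$ on the domain. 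The key point I would emphasize here is that the induced $\Z_m$-action on the fibre of $\pi^*(u^*TM)$ over either fixed point $z_0\in\{0,\infty\}$ is \emph{trivial}: that fibre is canonically $(u^*TM)_{\pi(z_0)}$, on which the deck transformation acts as the identity. (This action should not be confused with the nontrivial action on $TM$-summands over a chain of sphere components discussed earlier.)

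Next I would compute $\operatorname{ind}_{\Z_m}C(\mathfrak R_m(u))$, as a virtual real $\Z_m$-representation, through its character, using equivariant Riemann--Roch (the holomorphic Lefschetz fixed-point formula). The $\Z_m$-equivariant symbol of the linearized operator $D_v\overline\partial$ (for $v=\mathfrak R_m(u)$) is the Dolbeault symbol of the complex bundle $\pi^*(u^*TM)$, so the equivariant index coincides with that of a $\overline\partial$-operator on $\pi^*(u^*TM)$ and may be computed on $S^2$. At the identity the character is the ordinary index: Riemann--Roch gives $\dim_{\C}H^0-\dim_{\C}H^1=\operatorname{rank}\bigl(\pi^*(u^*TM)\bigr)+\deg\bigl(\pi^*(u^*TM)\bigr)=n+m\,c_1(M)(\alpha)$ as a complex index, using $\deg\pi=m$ and $\deg u^*TM=c_1(M)(\alpha)$. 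At a nontrivial element $g^k$ the Lefschetz formula receives contributions only from $0$ and $\infty$; by the previous paragraph the fibrewise traces there are both $n$, while the tangential denominators are $1-\zeta_m^{k}$ and $1-\zeta_m^{-k}$, so the Lefschetz number equals
$$\frac{n}{1-\zeta_m^{k}}+\frac{n}{1-\zeta_m^{-k}}=n,$$
using $\tfrac{1}{1-a}+\tfrac{1}{1-a^{-1}}=1$ for $a\ne 1$. Hence the virtual complex representation $\operatorname{ind}_{\C}$ has character $n+m\,c_1(M)(\alpha)$ at the identity and $n$ at every other element.

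Finally I would pass to the real index. Since the character of $\operatorname{ind}_{\C}$ is real-valued, the underlying real $\Z_m$-representation has character equal to twice it, namely $2n+2m\,c_1(M)(\alpha)$ at the identity and $2n$ elsewhere; comparing with the characters of the real representations $\operatorname{Reg}_{\Z_m}$ (which are $m$ at the identity and $0$ at every other element) and $\mathbf 1$ then identifies $\operatorname{ind}_{\Z_m}C(\mathfrak R_m(u))$ with $2c_1(M)(\alpha)\operatorname{Reg}_{\Z_m}\oplus 2n\,\mathbf 1$, as asserted. I expect the main obstacles to be the two bookkeeping points just used: justifying that the equivariant index is computed by (equivariant) Riemann--Roch, i.e.\ that $D_v\overline\partial$ can be deformed $\Z_m$-equivariantly to a $\overline\partial$-operator on $\pi^*(u^*TM)$, and the careful passage between the complex and the underlying real $\Z_m$-representation (which produces the factor $2$). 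One can also avoid the Lefschetz formula altogether by decomposing $\pi^*(u^*TM)$ into its $\Z_m$-weight subsheaves---equivalently, into orbifold line-bundle twists on the quotient orbifold $S^2$ with its two $\Z_m$-points---computing the degree of each weight summand and adding the individual Riemann--Roch contributions; this route yields the same answer.
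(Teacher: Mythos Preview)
Your proof is correct and follows essentially the same approach as the paper: both compute the character of the virtual $\Z_m$-representation by applying the Atiyah--Bott (holomorphic Lefschetz) fixed-point formula at the two fixed points $0,\infty$ to get $2n$ at nontrivial elements, and use the ordinary index $2n+2mc_1(M)(\alpha)$ at the identity. You supply more computational detail (the explicit denominators and the identity $\tfrac{1}{1-a}+\tfrac{1}{1-a^{-1}}=1$, and the careful passage from complex to real characters), whereas the paper states the Lefschetz value $2n$ directly; your alternative weight-decomposition route is exactly what the paper alludes to in its closing remark about computing the kernel and cokernel directly.
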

\begin{proof}
Let $\gamma$ be an element of $\Z_m$ with $\gamma \neq \text{unit}$.
We use the Lefschetz fixed point formula by Atiyah-Bott
\cite{AtBo67} to obtain
$$\sum_{*=0,1}(-1)^*\operatorname{Tr}\left(\gamma: H^{*}(C(\mathfrak R_m(u))
\to H^{*}(C(\mathfrak R_m(u))\right)= 2n.
$$
Note there are only two
fixed points of $\gamma$ and we take the trace over $\R$. On the
other hand, the numerical index of $D_{\mathfrak
R_m(u)}\overline\partial$ is $2n + 2mc_1(M)(\alpha)$, which
coincides with the super-trace of the unit element $e \in \Z_m$. The
lemma follows immediately. \end{proof}
(We note that we can also
prove Lemma \ref{35.55} by directly calculating the kernel and
cokernel without using  \cite{AtBo67}.)
\par
In particular, Lemma \ref{35.55} implies that the $\Z_m$-invariant part of the
index
of $C({\mathfrak R}_m(u))$ for the $\Z_m$-cover of a holomorphic sphere
is equal to the index of $C(u)$ for its reduced model.
We will use this fact in the proof of Proposition \ref{35.63} coming later.
\par
Let $((\Sigma,\vec z),v,\vec x) \in
\MM_{k+1}^{\text{main}}(\beta:P_1,\dots, P_k)$ and
$((\overline\Sigma,\vec{\overline z}),\overline v,\vec x)$
be its reduced model.
We first recall the definition of the deformation complex of
$((\Sigma,\vec z),v,\vec x)$ which is an elliptic complex acted
upon by the group $\Gamma$ of automorphisms of $((\Sigma,\vec z),v)$.
\par
We decompose $\Sigma$ into irreducible components $\Sigma=
\bigcup_a \Sigma_{a}$ where $\Sigma_a$ is a sphere or a disc and
put $v_a = v\vert_{\Sigma_a}$ and consider the elliptic complex
$$C(v_a) = (C_0(v_a),C_1(v_a),D_{v_a}\overline\partial)$$ where
$$
D_{v_a}\overline\partial ~:~
C_0(v_a)=\Gamma(\Sigma_{a},\partial \Sigma_{a};v_a^*TM,v_a^*TL)
\to C_1(v_a)=\Gamma(\Sigma_{a};\Lambda^{0,1}(v_a^*TM)).
$$
(The boundary condition $v_a^*TL$ is empty if $\Sigma_a =S^2$.)
For each singular point $z^{\text{sing}}_i$ we take
$z^{\text{sing}}_{i,1} \in \Sigma_{a(i,1)}$,
$z^{\text{sing}}_{i,2} \in \Sigma_{a(i,2)}$ which are
$z^{\text{sing}}_i$ in $\Sigma$. We put
\begin{equation}
\left\{\aligned
&\widetilde C_0((\Sigma,\vec z),v)^+ = \bigoplus_a C_0(v_a), \\
&\widetilde C_0((\Sigma,\vec z),v) =\left.\left\{(W_a) \in \bigoplus_a C_0(v_a)
\,\right\vert \,W_{a(i,1)}(z^{\text{sing}}_{i,1})
=W_{a(i,2)}(z^{\text{sing}}_{i,2})\right\}.
\endaligned\right.\label{35.56}\end{equation}
We put $\widetilde C_1((\Sigma,\vec z),v) = \bigoplus_a C_1(v_a)$.
The operators $D_{v_a}\overline\partial$ induce
$$D_v\overline\partial: \widetilde C_0((\Sigma,\vec z),v)\to
\widetilde C_1((\Sigma,\vec z),v).
$$ \par Let
$\operatorname{Aut}(\Sigma,\vec z)$ be the group of all
automorphisms of $(\Sigma,\vec z)$. We have a canonical homomorphism
of its Lie algebra $\operatorname{aut}(\Sigma,\vec z)$ into
$\widetilde C_0((\Sigma,\vec z),v)$: Note that by the definition of
$\operatorname{Aut}(\Sigma,\vec z)$ any element of
$\operatorname{aut}(\Sigma,\vec z)$ has its value zero at the
singular points. The stability condition implies that this
homomorphism is injective and so we may regard
$\operatorname{aut}(\Sigma,\vec z)$ as a subspace of $\widetilde
C_0((\Sigma,\vec z),v)$. Moreover the image of
$\operatorname{aut}(\Sigma,\vec z)$ lies in the kernel of
$D_v\overline\partial$. Therefore we have the following complex
\begin{equation}
0 \to  \operatorname{aut}(\Sigma,\vec z) \to\widetilde
C_0((\Sigma,\vec z),v) \to \widetilde C_1((\Sigma,\vec z),v).
\label{35.57}
\end{equation}
We put
$$\aligned \widetilde C_0((\Sigma,\vec z),v,\vec x) & =
\Big\{\Big((W_a),(\text{\rm v}_i)\Big)
\,\Big\vert \, (W_a)\in \widetilde{C}_0((\Sigma,\vec z),v), \,\text{\rm v}_i \in
T_{x_i}P_i,\\&
\hskip1.5in
W_{a_i}(z_i) = (d_{x_i}f_i)(\text{\rm v}_{i})\Big\}.
\endaligned$$
(We note that $f_i : P_i \to L$ is our smooth singular simplex.)
Since $\operatorname{Aut}(\Sigma,\vec z)$ fixes the marked points $\vec z$,
it induces an action
on $\MM_{k+1}^{\text{main}}(\beta:P_1,\dots, P_k)$  and so its Lie algebra
$\operatorname{aut}(\Sigma,\vec z)$  injects to $\widetilde C_0((\Sigma,\vec z),v,\vec x)$.
This leads us to define
$$\aligned
C_0((\Sigma,\vec z),v,\vec x) &: = \widetilde C_0((\Sigma,\vec
z),v,\vec x)/ \operatorname{aut}(\Sigma,\vec z)\\C_1((\Sigma,\vec
z),v,\vec x) &: = \widetilde C_1((\Sigma,\vec z),v).
\endaligned$$
Here $z_i \in \Sigma_{a_i}$.
The operator $D_v\overline\partial$ also induces a homomorphism
$C_0((\Sigma,\vec z),v,\vec x) \to C_1((\Sigma,\vec z),v,\vec x)$.
We denote  by
$$
C((\Sigma,\vec z),v,\vec x), \quad \widetilde C((\Sigma,\vec
z),v,\vec x)$$
the complexes
$$
\aligned D_v\overline\partial &: C_0((\Sigma,\vec z),v,\vec x) \to
C_1((\Sigma,\vec z),v,\vec x), \\
D_v\overline\partial &:  \widetilde C_0((\Sigma,\vec z),v,\vec x)
\to \widetilde C_1((\Sigma,\vec z),v,\vec x),
\endaligned
$$
respectively.
The group $\Gamma = \operatorname{Aut}(\Sigma,\vec z)$ acts on
these complexes in an obvious way.
\par
To describe the relation between the deformation complex of
$((\Sigma,\vec z),v,\vec x)$ and that of its reduced model, we need one more notation.  \par
\begin{defn}\label{35.58}
We call a point $z \in \Sigma$  a {\it free fixed point} if
the following holds:
\begin{enumerate}[{\rm (i)}]
\item $z$ is not a singular point. \par
\item $z$ is on a sphere component $S^2_a$ of $\Sigma$
such that there exists $\gamma \in \Gamma$ which preserves $S_a^2$
and acts nontrivially on $S^2_a$.
Moreover $\gamma(z) = z$.
\end{enumerate}
\smallskip
For $\Gamma' \subset \Gamma$ we denote by $F(\Gamma')$ the set of
all $z$ satisfying (i), (ii) above for $\gamma \in \Gamma'$.
\end{defn}
For $\Gamma' \subset \Gamma$ we define
$$
C(F(\Gamma')) = \bigoplus_{z\in F(\Gamma')} \C [z]
$$
the vector space whose basis is identified with $F(\Gamma')$.
\par
In case
$\Gamma^{\prime\prime}$ normalizes a subgroup $\Gamma' (\subset
\Gamma)$, $\Gamma^{\prime\prime}$ acts on $F(\Gamma^{\prime})$ in an
obvious way and so induces an action on $C(F(\Gamma^{\prime}))$.
We put
$$
C(F(\Gamma^{\prime}))^{\Gamma^{\prime\prime}} = \{v \in
C(F(\Gamma')) \mid \forall \gamma \in \Gamma^{\prime\prime}
,\,\,\gamma v = v\}.
$$
It is easy to see that
$$
\dim_{\C} C(F(\Gamma^{\prime}))^{\Gamma^{\prime\prime}}  = \#
\left(F(\Gamma^{\prime})/\Gamma^{\prime\prime}\right)
$$
and in particular
\begin{equation}
\dim_{\C} C(F(\Gamma))^{\Gamma}
= \#\left(F(\Gamma)/\Gamma\right).
\label{35.60}\end{equation}
\begin{defn}\label{35.61}
For each sphere component $S^2_a$, we define its {\it distance
from the disc components} as the {\it minimal edge distance} of
the vertex corresponding to the component $S^2_a$ from the
vertices corresponding to the disc components in the dual graph of
$\Sigma$.
\end{defn}
We recall that the minimal edge distance between two vertices in a
graph is defined to be the minimum number of edges in all
connected paths between the two in the graph. We denote by
$\Sigma_d$ the union of all disc components and the sphere
components whose distance from the disc components are $\le d$.
By definition $\Sigma_0$ is the union of all the disc components.
\par
Let $(S_a^2,\vec{p}_a, o_a)$ be a sphere component of $((\Sigma,
\vec{z}),v)$, whose distance from the disc components is $d$. Here
$o_a$ is the point where $S_a^2$ is attached to $\Sigma_{d-1}$ and
$\vec{p}_a$ is the set of other singular points, i.e., those to
which some sphere components of distance $d+1$ from the disc
components are attached. Put $n_a = \# \vec{p}_a$ and let $\Gamma_a$
be the group of automorphisms on $(S_a^2,\vec{p}_a, o_a)$ consisting
of the restrictions of some elements of $\Gamma$ to $S_a^2$. Denote
by $\operatorname{Conf}_{m+1}(\C P^1)$ the moduli space (i.e.,
divided by the action of $PSL_2(\C)$) of $m+1$ ordered points on $\C P^1$.
Denote by $\operatorname{Conf}_{m + 1}^{\Gamma_a}(\C P^1)$ the
moduli space of distinct $m+1$ points on $\C P^1$ with the symmetry
group $\Gamma_a$,
(that is the set of $(\C P^1,(z_0,\dots,z_m))$ such that $\gamma z_i \in
\{z_0,\dots,z_m\}$ for each $i=1,\dots,m$ and $\gamma \in \Gamma_a$.)
Let $\operatorname{Aut}(S_a,\vec{p}_a,o_a)$ be the group of all
biholomorphic maps $\varphi : S_a \to S_a$ such that
$\varphi(\vec p_a)  = \vec p_a$ and $\varphi(o_a) = o_a$.
We put
$$
\operatorname{Aut}(S_a,\vec{p}_a,o_a)^{\Gamma_a}
= \{\varphi \in \operatorname{Aut}(S_a,\vec{p}_a,o_a)
\mid \forall \gamma \in \Gamma_a,\, \varphi\circ\gamma = \gamma \circ \varphi\}.
$$
We define
$$
\rho^{\Gamma_a}(S_a,\vec{p}_a,o_a) =
\begin{cases} -\dim_{\R}
\operatorname{Aut}(S_a,\vec{p}_a,o_a)^{\Gamma_a} \quad &
\text{~if~} n_a+1 <3
\\
\dim_{\R} \operatorname{Conf}_{n_a+1}^{\Gamma_a}(\C P^1) \quad &
\text{~if~} n_a+1 \geq 3.
\end{cases}
$$
Let $(\overline{S}_b, \vec{\overline{p}}_b, \overline{o}_b)$ be a
sphere component of the reduced model $((\overline
\Sigma,\vec{\overline z}),\overline v)$. Here $\overline{o}_b$ is
the point of $\overline{S}_b$ at which it is attached to
$\overline{\Sigma}_{d-1}$, and $\vec{\overline{p}}_b$ are those at
which some sphere components of distance $d
+1$ are attached. Put
$\overline{n}_b= \# \vec{\overline p}_b$. We define
$$
\overline{\rho}(\overline{S}_b, \vec{\overline{p}}_b,
\overline{o}_b) =
\begin{cases}
- \dim_{\R}
\operatorname{Aut}(S_b,\vec{\overline{p}}_b, \overline{o}_b)
\quad \text{~if~} \overline{n}_b +1 < 3 \\
\dim_{\R} \operatorname{Conf}_{\overline{n}_b+1}(\C P^1) \quad
\text{~if~} \overline{n}_b+1 \geq 3.
\end{cases}
$$
Let $(D_a^2;\vec p_a^{\,D},\vec p_a^{\,S})$ be a disc component of
$\Sigma$. Here $\vec p_a^{\,D} =
(p_{a,1}^{\,D},\dots,p_{a,m_a}^{\,D})$ is the boundary marked
points and $\vec p_a^{\,S} =
(p_{a,1}^{\,S},\dots,p_{a,n_a}^{\,S})$ is the interior marked
points. We put
$$
\rho(D_a^2;\vec p_a^{\,D},\vec p_a^{\,S}) = m_a + 2n_a - 3.
$$
This number is the negative of the dimension of the automorphism
group if $m_a + 2n_a \le 3$ and is the dimension of appropriate
moduli space (of $\C P^1$ with marked points) if $m_a + 2n_a \ge
3$. (We remark that the $\Gamma$-action is trivial on each disc
component.)
We define $\overline{\rho}(\overline D_b^2;\vec{\overline
p}_b^{\,D},\vec{\overline{p}}_b^{\,S})$ by the same formula for
the disc component $(\overline D_b^2;\vec{\overline
p}_b^{\,D},\vec{\overline{p}}_b^{\,S})$  of $\overline{\Sigma}$.
\par
Let $\{S_{a(b)} \mid b \in \overline I \}$ be a complete set of
representatives of the $\Gamma$-orbits in the set of sphere
components of $((\Sigma, \vec{z}), v)$.
Let $I^D$ and $\overline I^D$ be the sets of disc components of
$\Sigma$ and $\overline{\Sigma}$, respectively. Note that there is a
canonical identification between the two sets.
We define
$$
\rho^{\Gamma}((\Sigma, \vec{z}), v) =
\sum_{b \in \overline I} \rho^{\Gamma_{a(b)}}(S_{a(b)},\vec{p}_{a(b)},o_{a(b)})
+ \sum_{a\in I^D}\rho(D_a^2;\vec
p_a^{\, D},\vec
p_a^{\, S}).
$$
For the reduced model, we define
$$
\overline{\rho}((\overline{\Sigma}, \vec{\overline{z}}),
\overline{v}) = \sum_{b\in\overline I} \rho(\overline{S}_b,
\vec{\overline{p}}_b, \overline{o}_b) +
\sum_{b\in \overline I^D}\overline{\rho}
(\overline D_b^2;\vec{\overline p}_b^{\,D},\vec{\overline{p}}_b^{\,S}).
$$
\par
Then we have the following:
\begin{prop}\label{35.63}
$$
\aligned
& \dim_{\R} \text{\rm Index}(\widetilde C((\Sigma,\vec z),v,\vec x))^{\Gamma}
+ \rho^{\Gamma}((\Sigma, \vec{z}), v) \\
&=
\dim_{\R} \text{\rm Index}(\widetilde C((\overline \Sigma,\vec{\overline z}),\overline v,\vec x))
+ \overline{\rho}((\overline{\Sigma}, \vec{\overline{z}}), \overline{v})
+ \dim_{\R} C(F(\Gamma))^{\Gamma}.
\endaligned
$$
\end{prop}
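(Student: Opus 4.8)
The plan is to evaluate both sides by the same local bookkeeping and to match contributions one orbit at a time. First I would write $\widetilde C((\Sigma,\vec z),v,\vec x)$ through a $\Gamma$-equivariant Mayer--Vietoris decomposition over the irreducible components $\Sigma=\bigcup_a\Sigma_a$: it is cut out inside $\bigoplus_a C(v_a)$ by the matching conditions at the nodes, each a copy of $T_{v(z^{\mathrm{sing}})}M\cong\R^{2n}$, and by the fibre-product conditions with the $P_i$, each adjoining $T_{x_i}P_i$ subject to $n$ relations. In the representation ring of $\Gamma$ this yields
$$\operatorname{Index}\widetilde C((\Sigma,\vec z),v,\vec x)=\bigoplus_a\operatorname{Index}C(v_a)-2n\cdot(\text{number of nodes})+\sum_i\bigl(T_{x_i}P_i-\R^{n}\bigr),$$
together with the analogous (non-equivariant) identity for the reduced model. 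Passing to the multiplicity of the trivial representation turns $\bigoplus_a\operatorname{Index}C(v_a)$ into a sum over $\Gamma$-orbits of components and the node term into a sum over $\Gamma$-orbits of nodes; the $P_i$-term is already trivial because $\Gamma$ fixes $\vec z$, hence $\vec x$, and acts as the identity on $T_{v(z_i)}M$.

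Next I would show that this $\Gamma$-invariant part equals $\operatorname{Index}\widetilde C((\overline\Sigma,\vec{\overline z}),\overline v,\vec x)$, the corresponding sum over the components and nodes of $\overline\Sigma$. Disc components contribute directly: $\Gamma$ acts trivially on each disc component and, since the node attaching a sphere to a disc is fixed and $\Gamma$ is trivial on the disc, it fixes every sphere attached to a disc, so the disc components of $\Sigma$ and $\overline\Sigma$, together with their special points, correspond bijectively. A sphere component $S_a^2$ whose stabiliser acts nontrivially does so through a cyclic rotation group $\Z_m$ fixing the attaching node $o_a$; then $v_a=\mathfrak R_m(\overline v_b)$ for the corresponding sphere $\overline S_b$ of $\overline\Sigma$, and Lemma \ref{35.55} is precisely the statement that the $\Z_m$-invariant part of $\operatorname{Index}C(v_a)$ is $\operatorname{Index}C(\overline v_b)$. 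The node term is harmless because the stabiliser of a node acts as the identity on the corresponding tangent space of $M$ (even when it exchanges the two branches, it does so within the diagonal), so its invariant part is $2n$ times the number of node-orbits, i.e. $2n\cdot(\text{number of nodes of }\overline\Sigma)$. Hence $\operatorname{Index}\widetilde C((\Sigma,\vec z),v,\vec x)^{\Gamma}=\operatorname{Index}\widetilde C((\overline\Sigma,\vec{\overline z}),\overline v,\vec x)$, and the proposition reduces to the combinatorial identity $\rho^{\Gamma}((\Sigma,\vec z),v)=\overline\rho((\overline\Sigma,\vec{\overline z}),\overline v)+\dim_{\R}C(F(\Gamma))^{\Gamma}$.

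This remaining identity I would establish component by component. The disc terms $\rho(D_a)$ cancel against $\overline\rho(\overline D_a)$ via the bijection above: no interior special point of a disc is collapsed, since that would require $\Gamma$ to permute two spheres attached to the same disc at two different points, which is impossible. For a sphere orbit with representative $S_a^2$ and stabiliser rotation $\Z_m$, the two $\Z_m$-fixed points are $o_a$ and a second point $q$. If $q$ is a node, the branched quotient $S_a^2\to\overline S_b$ sends $o_a$, $q$ and one point per $\Z_m$-orbit of the remaining nodes to the special points of $\overline S_b$, and a direct dimension count (carried out uniformly in both regimes of the definitions of $\rho^{\Gamma_a}$ and $\overline\rho$) gives $\rho^{\Gamma_a}(S_a,\vec p_a,o_a)=\rho(\overline S_b,\vec{\overline p}_b,\overline o_b)$. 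If $q$ is not a node, then $q$ is a free fixed point in the sense of Definition \ref{35.58}, its image in $\overline S_b$ is not special, and the same count now yields $\rho^{\Gamma_a}(S_a,\vec p_a,o_a)=\rho(\overline S_b,\vec{\overline p}_b,\overline o_b)+2$. Since every free fixed point lies over such a $q$ and the $\Gamma$-orbits of free fixed points correspond to the sphere orbits of this second kind, summing over orbit representatives and invoking \eqref{35.60} — so that $\dim_{\R}C(F(\Gamma))^{\Gamma}=2\,\#(F(\Gamma)/\Gamma)$, the factor $2$ because $C(F(\Gamma))$ is a complex vector space — reproduces the correction term and completes the proof.

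The delicate point, which I would treat most carefully, is the sphere count when $q$ is a free fixed point: one must transport the dimension of the $\Z_m$-symmetric stratum of $\operatorname{Conf}_{n_a+1}(\C P^1)$ across the two-branch-point quotient $S_a^2\to\overline S_b$, keeping track of which branch points are nodes and which are merely fixed, and verify that the deficit is exactly $2=\dim_{\R}\C$, consistently across the two conventions in the definition of $\rho^{\Gamma_a}$ (an honest moduli space of marked points versus minus the dimension of a residual automorphism group) and likewise for $\overline\rho$. A subsidiary point is to confirm that the Mayer--Vietoris decomposition of the deformation complex is genuinely $\Gamma$-equivariant, so that the equivariant-index computation and the passage to trivial-isotypic parts used above are legitimate.
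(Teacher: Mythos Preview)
Your proposal is correct and follows essentially the same route as the paper: the index identity (your first two paragraphs) is exactly the paper's Lemma~\ref{35.64}, proved there by passing to the auxiliary complex $\widetilde C^+$ without matching conditions, invoking Lemma~\ref{35.55} on each sphere orbit, and then using the short exact sequence governed by the nodes --- which is precisely your Mayer--Vietoris bookkeeping. The remaining combinatorial identity $\rho^{\Gamma}=\overline\rho+\dim_{\R}C(F(\Gamma))^{\Gamma}$ is handled in the paper by the same case split on whether the second $\Z_m$-fixed point $q$ is a node (equations (\ref{35.68}), (\ref{35.69})) or a free fixed point (equation (\ref{35.70})), with the same case-by-case dimension counts you outline.
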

\begin{proof}
We begin with the following lemma.
\begin{lem}\label{35.64}
$$\dim_{\R} \text{\rm Index}(\widetilde C((\Sigma,\vec z),v,\vec x))^{\Gamma}
=
\dim_{\R} \text{\rm Index}(\widetilde C((\overline \Sigma,\vec{\overline
z}),\overline v,\vec x)).$$
\end{lem}
\begin{proof}
Let us consider the complex $\widetilde{C}((\Sigma,\vec z),v,\vec x)^+$
$$
D_{v}\overline{\partial}
: \widetilde{C}_0((\Sigma,\vec z),v,\vec x)^+
\to \widetilde{C}_1((\Sigma,\vec z),v,\vec x)
$$
where we replace $ \widetilde{C}_0((\Sigma,\vec z),v,\vec x)$ by $
\widetilde{C}_0((\Sigma,\vec z),v,\vec x)^+$. (See (\ref{35.56}).)
\par
We first prove
\begin{equation}
\text{Index}(\widetilde{C}((\Sigma,\vec z),v,\vec x)^+)^{\Gamma} =
\text{Index}(\widetilde{C}((\overline{\Sigma},\vec
{\overline{z}}),\overline{v},\vec {\overline{x}})^+).
\label{35.65}\end{equation}
Note the index of $\widetilde{C}((\Sigma,\vec
z),v,\vec x)^+$ is the sum of indices of its components. Since the
$\Gamma$-action is trivial on disc component, (\ref{35.65}) is trivial
for disc components. The part of sphere components of the left
hand side is
\begin{equation}
\sum_{b\in \overline I}
\text{Index}(D_{v_{a(b)}}\overline\partial)^{\Gamma_{a(b)}}.
\label{35.66}\end{equation}
Here $\{S_{a(b)} \mid b\in \overline I \}$ is the
complete set of representatives of the $\Gamma$-orbit space of the
set of all sphere components of $(\Sigma, \vec{z}, v)$ and the map
$v_{a(b)}$ is the restriction of $v$ to $S_{a(b)}$. The group
$\Gamma_{a(b)}$ is a subgroup of $\Gamma$ consisting of the
elements which preserve $S_{a(b)}$. We remark that $\Gamma_{a(b)}$
is a cyclic group. Hence we can apply Lemma \ref{35.55} to show that
(\ref{35.66}) is equal to the sum of $ \text{Index}(D_{\overline
v_{b}}\overline\partial)$. Here $\overline v_b$ is the restriction
of $\overline v$ to $S_{a(b)}/\Gamma_{a(b)} = \overline S_b
\subset \overline\Sigma$. (\ref{35.65}) follows.
\par
We next observe that there exists an exact sequence
\begin{equation}
0 \to \widetilde{C}((\Sigma,\vec z),v,\vec x)
\to \widetilde{C}((\Sigma,\vec z),v,\vec x)^+
\to
\bigoplus_{x\in \text{Sing }\Sigma} T_{v(x)}M \to 0.
\label{35.67}\end{equation}
Here the $\bigoplus_{x\in \text{Sing }\Sigma} T_{v(x)}M$ is the
sum over all singular points $x$ of $\Sigma$.
\par
We remark that $\Gamma$ action on $\bigoplus_{x\in \text{Sing }\Sigma} T_{v(x)}M$
is by interchanging the factors. It follows that
$$
\dim \left(\bigoplus_{x\in \text{Sing }\Sigma} T_{v(x)}M\right)^{\Gamma}
= 2n \, \# ((\text{Sing }\Sigma)/\Gamma).
$$
Note that $(\text{Sing }\Sigma)/\Gamma \cong \text{Sing }\overline\Sigma$.
Therefore (\ref{35.65}), (\ref{35.67}) and a similar exact sequence for $\overline\Sigma$
imply the lemma.
\end{proof}
We compare
$\rho^{\Gamma_{a(b)}}(S_{a(b)},\vec{p}_{a(b)},o_{a(b)})$ and
$\overline{\rho}(\overline{S}_b,
\vec{\overline{p}}_b,\overline{o}_b)$. Note that $\Gamma_{a(b)}$
is isomorphic to $\Z_{m_{a(b)}}$. If $m_{a(b)}=1$, $\Gamma_{a(b)} =
\{e\}$ and so it is obvious that
\begin{equation}
\rho^{\Gamma_{a(b)}}(S_{a(b)},\vec{p}_{a(b)},o_{a(b)}) =
\overline{\rho}(\overline{S}_b,
\vec{\overline{p}}_b,\overline{o}_b). \label{35.68}\end{equation}
Note that there is no free fixed point on $S_{a(b)}$.
\par
From now on, we assume that $m_{a(b)} > 1$. The marked point
$o_{a(b)}$ is a $\Gamma_{a(b)}$ fixed point. Denote by $q_{a(b)}$
the other fixed point on $S_{a(b)}$. Let $\overline q_{b}$ be its
image of the reduction in $\overline S_b$. There are two cases:
$q_{a(b)} \in \vec{p}_{a(b)}$ or $q_{a(b)} \notin \vec{p}_{a(b)}$.
\par
First consider the case $q_{a(b)} \in \vec{p}_{a(b)}$,
$\overline{q}_b \in \vec{\overline{p}}_b$.   We claim that
\begin{equation}
\rho^{\Gamma_{a(b)}}(S_{a(b)}, \vec{p}_{a(b)}, o_{a(b)}) =
\overline{\rho}(\overline{S}_b, \vec{\overline{p}}_b, \overline{o}_b).
\label{35.69}\end{equation}
\par
If $n_{a(b)} = 1$, then $\overline{n}_b=1$,
$\vec p_{a(b)} = q_{a(b)}$,   $\vec{\overline p}_{b} = \overline q_{b}$.
We have
$$\operatorname{Aut}(S_{a(b)},q_{a(b)}, o_{a(b)})^{\Gamma_{a(b)}} \cong
\operatorname{Aut}(S_{a(b)},q_{a(b)}, o_{a(b)}).
$$
Both
$\operatorname{Aut}(S_{a(b)},q_{a(b)}, o_{a(b)})^{\Gamma_{a(b)}}$
and $\operatorname{Aut}(\overline{S}_b, \overline{q}_b,
\overline{o}_b)$ are isomorphic to $\C^*$ and hence (\ref{35.69})
follows.
\par
In case $n_{a(b)} \geq 2$, we have $\overline n_b \ge 2$ and
$$
\operatorname{Conf}_{n_{a(b)} + 1}^{\Gamma_{a(b)}}(S_{a(b)};
\vec{p}_{a(b)}, o_{a(b)}) \cong
\operatorname{Conf}_{\overline{n}_b+1}(\overline S_b ;
\vec{\overline{p}}_b, \overline{o}_b).
$$
Hence, we obtain
(\ref{35.69}).
\par
We next consider the case $q_{a(b)} \notin \vec{p}_{a(b)}$,
$\overline{q}_b \notin \vec{\overline{p}}_b$. In this case, the position of
$\overline q_{b}$ in the reduced model is not a part of the data
of the reduced model, $(\overline{S}_b, \vec{\overline{p}}_b,
\overline{o}_b)$. We claim
\begin{equation}
\rho^{\Gamma_{a(b)}}(S_{a(b)},\vec{p}_{a(b)}, o_{a(b)}) =
\overline{\rho}(\overline{S}_b, \overline{o}_b) +2. \label{35.70}
\end{equation}
If $n_{a(b)} = 0$, then $\overline{n}_b=0$. Note that $q_{a(b)}$
is a fixed point on $S_{a(b)}$.
We find
$$
\operatorname{Aut}(S_{a(b)},\vec{p}_{a(b)},
o_{a(b)})^{\Gamma_{a(b)}} \cong
\operatorname{Aut}(S_{a(b)},q_{a(b)}, o_{a(b)}),
$$
which is isomorphic to $\C^*$.
On the other hand,
$\operatorname{Aut}(\overline{S}_b, \overline{o}_b)$
is isomorphic to the semi-direct product
$\C^* \times \C$ that is the group of all
affine maps $z \mapsto Az+B$ with $A\ne 0$.
(\ref{35.70}) follows.
\par
Suppose that $\overline{n}_b=1$. Denote by $\overline{p}_b$ the
unique point in $\vec{\overline{p}}_b$. In this case, we find that
$\operatorname{Conf}_{n_{a(b) } +1}^{\Gamma_{a(b)}}(\C P^1)$ is a
point and $n_{a(b)} \ge 2$. On the other hand,
$\operatorname{Aut}(\C P^1, \overline{p}_b, \overline{o}_b)$ is
isomorphic to $\C^*$. (\ref{35.70}) follows.
\par
If $\overline{n}_b >1$, then  $\vec{p}_{a(b)}$ consists of
$\overline{n}_b$ $\Gamma_{a(b)}$-orbits in $S_{a(b)} \setminus \{
q_{a(b)}, o_{a(b)} \}$. Therefore we have
$$
\dim_{\R} \operatorname{Conf}_{n_{a(b)} +1}^{\Gamma_{a(b)}}(\C P^1)
= 2 \overline{n}_b-\dim_{\R} \C^{\ast}
= 2 \overline{n}_b-2,
$$
where $\C^{\ast}\cong \operatorname{Aut}(\C P^1, q_{a(b)}, o_{a(b)})$.
On the other hand,
$$\dim_{\R} \operatorname{Conf}_{\overline{n}_b+1}(\C P^1) = 2(\overline{n}_b+1)-\dim_{\R} \operatorname{Aut} (\C P^1)=
2\overline{n}_b-4.
$$
We also obtain (\ref{35.70}).
\par
Note that $q_{a(b)}$ is a free fixed point if and only if
$q_{a(b)} \notin {\vec{p}}_{a(b)}$.
Therefore (\ref{35.70}) is applied to the component $S_a$ if it contains
a free fixed point. Otherwise (\ref{35.68}),(\ref{35.69}) apply.
\par
We observe that the contribution of disc components to
$\rho^{\Gamma}((\Sigma, \vec{z}), v)$ coincides with that of disc
components to $\overline{\rho}((\overline{\Sigma},
\vec{\overline{z}}), \overline{v})$.
\par
Combining these, we obtain
$$
\rho^{\Gamma}((\Sigma, \vec{z}), v)
=   \overline{\rho}((\overline{\Sigma}, \vec{\overline{z}}), \overline{v}) +
\dim_{\R} C(F(\Gamma))^{\Gamma}.
$$
This formula together with Lemma \ref{35.64} implies Proposition \ref{35.63}.
\end{proof}
We next identify each side of Proposition \ref{35.63} with the dimension of
appropriate moduli space.
\begin{defn}\label{35.71}
We say that $((\Sigma,\vec z),v,\vec x)$ has the {\it same
combinatorial type} as $((\Sigma',\vec z\,'),v',\vec x\,')$ if the
following holds: There exists a homeomorphism $\Sigma \to \Sigma'$
preserving all the marked points together with the order. We also
assume that the restriction of $v$ to each component of $\Sigma$ is
homologous to the restriction of $v'$ to the corresponding component
of $\Sigma'$. We denote by $\mathbb S$ an equivalence class of this
equivalence relation and call $\mathbb S$ the combinatorial type of
$((\Sigma,\vec z),v,\vec x)$.
\par
We denote by
$$\MM_{k+1}^{\text{\rm main}}(\beta; P_1, \dots, P_k)
(\mathbb S,\Gamma)
~(= \MM_{k+
1}^{\text{\rm main}}(\beta; P_1, \dots, P_k)
(\mathbb S) ^{\cong}(\Gamma))
$$ the moduli space of $((\Sigma,\vec z),v,\vec x)$
with  given combinatorial type $\mathbb S$ and its isotropy group
$\Gamma$.
\end{defn}
Let $\mathbb S$ be the combinatorial type of $((\Sigma,\vec z),v,\vec
x)$.
\begin{lem}\label{35.72} The left hand side of Proposition $\ref{35.63}$
is
$$
\dim_{\R}\MM_{k+1}^{\text{\rm main}}(\beta; P_1, \dots, P_k)(\mathbb S,\Gamma).
$$
\end{lem}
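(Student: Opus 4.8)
The plan is to compute the (virtual) dimension of the stratum $\MM_{k+1}^{\text{\rm main}}(\beta; P_1, \dots, P_k)(\mathbb S,\Gamma)$ from the deformation theory of one of its points $((\Sigma,\vec z),v,\vec x)$ and to match it term by term with the left hand side of Proposition \ref{35.63}. Near such a point the ambient space $\MM_{k+1}^{\text{\rm main}}(\beta; P_1, \dots, P_k)$ carries a Kuranishi chart $(V,E,\Gamma)$ in which $\Gamma=\operatorname{Aut}((\Sigma,\vec z),v)$ is exactly the isotropy, and the stratum is, near this point, the zero set of the (restricted) Kuranishi map $V^\Gamma\to E^\Gamma$; hence its virtual dimension is $\dim_\R V^\Gamma-\dim_\R E^\Gamma$. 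The first step is to recall the standard construction of $V$ (cf. Theorem \ref{modulimain} and Section 7.1 \cite{fooo-book}): it is the product of a factor $V^{\text{\rm dom}}$ of deformations of the domain $(\Sigma,\vec z)$ preserving the combinatorial type $\mathbb S$ (no node being smoothed), i.e. the configuration space of the marked and nodal points on the components, and a factor parametrizing deformations of $v$ with the domain held fixed, together with the fibre product over $P_1\times\dots\times P_k$, all divided by $\operatorname{Aut}(\Sigma,\vec z)$ and thickened by $E$. The group $\operatorname{Aut}(\Sigma,\vec z)$, the factor $V^{\text{\rm dom}}$ and the pair (map deformations, $E$) carry compatible $\Gamma$-actions, and the map-deformation factor together with $E$ and the fibre product data is precisely the complex $\widetilde C((\Sigma,\vec z),v,\vec x)$ of (\ref{35.56}); passing to $C_0=\widetilde C_0/\operatorname{aut}(\Sigma,\vec z)$, its contribution to $\dim_\R V^\Gamma-\dim_\R E^\Gamma$ is $\dim_\R\text{\rm Index}(\widetilde C((\Sigma,\vec z),v,\vec x))^\Gamma-\dim_\R\operatorname{aut}(\Sigma,\vec z)^\Gamma$.

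It therefore remains to prove the identity
$$
\rho^{\Gamma}((\Sigma, \vec{z}), v) = \dim_\R (V^{\text{\rm dom}})^\Gamma - \dim_\R\operatorname{aut}(\Sigma,\vec z)^\Gamma .
$$
For this I would argue component by component, using that all marked points $z_0,\dots,z_k$ lie on disc components and all special points other than marked points are nodal. Since $\Gamma$ acts trivially on every disc component, and a chosen set of representatives $\{S_{a(b)}\mid b\in\overline I\}$ of the $\Gamma$-orbits of sphere components each carries the cyclic group $\Gamma_{a(b)}$, both $(V^{\text{\rm dom}})^\Gamma$ and $\operatorname{aut}(\Sigma,\vec z)^\Gamma$ split as direct sums over the disc components and over $b\in\overline I$ (an invariant infinitesimal automorphism, or an invariant domain deformation, is determined by its restriction to a fundamental domain, and its restriction to a sphere component of the chosen transversal is $\Gamma_{a(b)}$-invariant). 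For a component that is stable, $\operatorname{aut}$ vanishes and $V^{\text{\rm dom}}$ contributes $\operatorname{Conf}^{\Gamma_{a(b)}}_{n_{a(b)}+1}(\C P^1)$ for the sphere $S_{a(b)}$, respectively the moduli of $(D^2_a;\vec p_a^{\,D},\vec p_a^{\,S})$, of dimension $m_a+2n_a-3$, for a disc; for a component that is unstable, $V^{\text{\rm dom}}$ contributes nothing and $-\dim_\R\operatorname{aut}^{\Gamma_{a(b)}}=-\dim_\R\operatorname{Aut}(S_{a(b)},\vec p_{a(b)},o_{a(b)})^{\Gamma_{a(b)}}$, respectively $m_a+2n_a-3$ again, this being $-\dim_\R$ of the automorphism group of an unstable disc. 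Comparing with the definitions of $\rho^{\Gamma_{a(b)}}(S_{a(b)},\vec p_{a(b)},o_{a(b)})$ and $\rho(D_a^2;\vec p_a^{\,D},\vec p_a^{\,S})$ and summing over all components yields the displayed identity, whence the Lemma follows together with the first paragraph.

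The main point requiring care is this automorphism bookkeeping: one must see that the terms $-\dim_\R\operatorname{Aut}^{\Gamma_a}$ hidden inside $\rho^\Gamma$ for the unstable sphere components are exactly the amount by which $\text{\rm Index}(\widetilde C)^\Gamma$ over-counts relative to $\text{\rm Index}(C)^\Gamma$, which amounts to checking that $\operatorname{aut}(\Sigma,\vec z)$ is the direct sum of the $\operatorname{aut}$'s of its individual components (the disc components and the stable sphere components contributing nothing), a consequence of the fact that on a genus-zero prestable curve an infinitesimal automorphism is supported on the components and vanishes at every special point. A secondary but genuine point is that the formula $\dim(\text{fixed locus})=\dim V^\Gamma-\dim E^\Gamma$ is the content of the Kuranishi chart at a point whose isotropy is $\Gamma$, and that the splitting $V=V^{\text{\rm dom}}\times(\text{map deformations})/\operatorname{Aut}(\Sigma,\vec z)$, together with the associated fibration of the stratum over the domain moduli, is $\Gamma$-equivariant, so that $\Gamma$-invariants may legitimately be taken factor by factor; both of these are standard but should be invoked explicitly. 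The analogous identification of the right hand side of Proposition \ref{35.63} with the dimension of the moduli space of the reduced model is proved in the same way.
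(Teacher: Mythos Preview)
Your proof is correct and follows essentially the same approach as the paper's. The paper's own argument is a one-sentence assertion that $\rho^{\Gamma}((\Sigma,\vec z),v)$ equals the number of $\Gamma$-equivariant deformation parameters of the domain keeping the combinatorial type, minus $\dim\operatorname{aut}(\Sigma,\vec z)$; you unpack this into the component-by-component verification and make the Kuranishi-chart bookkeeping explicit, which is exactly what is needed to justify that sentence.
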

\begin{proof} $\rho^{\Gamma}(\Sigma, \vec{z}, v)$ is the number of
deformation parameters of  $(\Sigma, \vec{z}, v)$ keeping the
$\Gamma$-equivariance and the combinatorial type minus the
dimension of the automorphism group of $(\Sigma, \vec{z})$. (Note that
we did not include the parameter to resolve singularities of
$\Sigma$. So this corresponds to the deformation keeping the
combinatorial type.)
Lemma \ref{35.72} follows.
\end{proof}
To study the right hand side of Proposition \ref{35.63} we need some notation.
For each $((\Sigma,\vec z),v,\vec x)$ we consider its reduced
model $((\overline \Sigma,\vec{\overline z}),\overline v,\vec x)$.
We then add the image of elements in $F(\Gamma)$ to the reduced
model as additional (interior) marked points. We denote them by
$\vec z_+$ and the resulting stable map by $((\overline \Sigma,
\vec{\overline z
},\vec z_+),\overline v,\vec x)$.
\par
Note
$$
\# \vec z_+ = \# F(\Gamma)/\Gamma.
$$
We call $((\overline \Sigma, \vec{\overline z},\vec z_+),\overline v,\vec
x)$ the {\it marked reduced model} of $((\Sigma,\vec
z),v,\vec x)$. In this way, we have obtained a natural assignment
$$
((\Sigma,\vec z),v,\vec x\,) \to ((\overline \Sigma,\vec{\overline z},\vec
z_+),\overline v,\vec x).
$$
We note that the reduced model $((\overline \Sigma,\vec{\overline z}),\overline
v,\vec x)$ may be unstable.  On the other hand
the marked reduced  model is always stable.
\par
Let us denote by $\overline{\mathbb S}$ the combinatorial type of
the marked reduced model. We consider $\MM_{k+1,\ell}^{\text{\rm
main}}(\overline\beta; P_1, \dots, P_k)(\overline{\mathbb S})$, the
moduli space of marked reduced models with the combinatorial type
$\overline{\mathbb S}$. Here $\ell$ is the order of
$F(\Gamma)/\Gamma$. We now have:
\begin{lem}\label{35.74}
The right hand side of Proposition $\ref{35.63}$ is equal to
$$
\dim_{\R} \MM_{k+1,\ell}^{\text{\rm main}}(\overline\beta; P_1, \dots, P_k)(\overline{\mathbb S}).
$$
\end{lem}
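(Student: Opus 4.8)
The plan is to run exactly the same dimension bookkeeping as in the proof of Lemma \ref{35.72}, now applied to the marked reduced model $((\overline\Sigma,\vec{\overline z},\vec z_+),\overline v,\vec x)$ together with its combinatorial type $\overline{\mathbb S}$, and then to match the three summands on the right hand side of Proposition \ref{35.63} one at a time. Recall that in the proof of Lemma \ref{35.72} the dimension of the stratum of a fixed combinatorial type was written as the sum of $\dim_{\R}\text{\rm Index}(\widetilde C(\cdots))$, the deformations of the map with the source held fixed, and of $\sum_{a}\rho_a$, the conformal deformations of the source preserving the combinatorial type (with $\rho_a=-\dim_{\R}\operatorname{Aut}$ on the unstable components). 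Since, as recorded just before Definition \ref{35.71}, the marked reduced model is always stable, $\MM_{k+1,\ell}^{\text{\rm main}}(\overline\beta;P_1,\dots,P_k)(\overline{\mathbb S})$ is a genuine (orbifold) moduli space and the same formula computes its dimension:
\[
\dim_{\R}\MM_{k+1,\ell}^{\text{\rm main}}(\overline\beta;P_1,\dots,P_k)(\overline{\mathbb S})
=\dim_{\R}\text{\rm Index}\bigl(\widetilde C((\overline\Sigma,\vec{\overline z},\vec z_+),\overline v,\vec x)\bigr)+\sum_{a}\rho_a ,
\]
where $\rho_a$ denotes the conformal-moduli contribution of the $a$-th component of $\overline\Sigma$ carrying all of its special points, including those coming from $\vec z_+$.

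The first point I would check is that the index term is insensitive to $\vec z_+$. The extra marked points are interior and carry no chain constraint, since the singular chains $P_i$ are attached only to the boundary marked points $z_1,\dots,z_k$; on the other hand $\widetilde C_0((\Sigma,\vec z),v,\vec x)$ is defined \emph{without} quotienting by the automorphism group, and it refers only to the irreducible components, the matching conditions at the nodes, and the constraints at the constrained boundary marked points, while $\widetilde C_1$ depends only on the components. Hence adjoining $\vec z_+$ changes neither $\widetilde C_0$ nor $\widetilde C_1$, so
\[
\dim_{\R}\text{\rm Index}\bigl(\widetilde C((\overline\Sigma,\vec{\overline z},\vec z_+),\overline v,\vec x)\bigr)
=\dim_{\R}\text{\rm Index}\bigl(\widetilde C((\overline\Sigma,\vec{\overline z}),\overline v,\vec x)\bigr),
\]
which is exactly the first summand on the right hand side of Proposition \ref{35.63} (this is of the same flavour as Lemma \ref{35.64}).

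Next I would treat the conformal-moduli term. By construction $\vec z_+$ consists of $\ell=\#(F(\Gamma)/\Gamma)$ distinct interior marked points, each lying on a sphere component of $\overline\Sigma$ at a point different from all pre-existing special points; moreover none of them lies on a disc component, since the $\Gamma$-action is trivial there, so the disc contributions are unchanged. It then remains only the elementary observation that adjoining one interior marked point to a sphere component raises its contribution by exactly $2$, which is seen case by case: $-\dim_{\R}\operatorname{Aut}(\C P^1,o)=-4$ becomes $-\dim_{\R}\operatorname{Aut}(\C P^1,o,q)=-2$; $-\dim_{\R}\operatorname{Aut}(\C P^1,o,p)=-2$ becomes $\dim_{\R}\operatorname{Conf}_{3}(\C P^1)=0$; and $\dim_{\R}\operatorname{Conf}_{m}(\C P^1)=2m-6$ becomes $\dim_{\R}\operatorname{Conf}_{m+1}(\C P^1)=2m-4$. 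Summing over the $\ell$ points of $\vec z_+$ and using (\ref{35.60}) together with $\dim_{\R}=2\dim_{\C}$ gives
\[
\sum_{a}\rho_a=\overline\rho((\overline\Sigma,\vec{\overline z}),\overline v)+2\ell=\overline\rho((\overline\Sigma,\vec{\overline z}),\overline v)+\dim_{\R}C(F(\Gamma))^{\Gamma},
\]
which is the sum of the remaining two summands on the right hand side of Proposition \ref{35.63}. Adding the two displays then yields the lemma.

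The step I expect to require the most care, and which I would write out in full, is the very first one: the assertion that $\MM_{k+1,\ell}^{\text{\rm main}}(\overline\beta;P_1,\dots,P_k)(\overline{\mathbb S})$ indeed has the dimension $\dim_{\R}\text{\rm Index}(\widetilde C)+\sum_a\rho_a$, with the correct accounting for those components of the reduced model on which $\overline v$ is constant and which carry at most two special points. Such a component is unstable, so contributes a negative $\rho_a$; I would argue that it necessarily carries a free fixed point, hence receives a point of $\vec z_+$. Indeed, if its isotropy $\Gamma_{a(b)}$ were trivial the component would already be unstable inside $\Sigma$, contradicting the stability of $((\Sigma,\vec z),v)$; and if $\Gamma_{a(b)}\ne\{1\}$, the pole of $S_{a(b)}$ opposite to $o_{a(b)}$ cannot be a node (otherwise $S_{a(b)}$ again has at most two special points in $\Sigma$), so it is a free fixed point. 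After the bump by $2$ it is stable, and the marked reduced model is stable as a whole, consistently with the cited remark, so the dimension formula applies without modification.
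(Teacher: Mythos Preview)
Your proposal is correct and follows essentially the same approach as the paper's (very brief) proof. The paper simply notes, invoking Lemma \ref{35.72}, that $\dim_{\R}\text{\rm Index}(\widetilde C((\overline\Sigma,\vec{\overline z}),\overline v,\vec x))+\overline\rho((\overline\Sigma,\vec{\overline z}),\overline v)$ is the virtual dimension of the moduli of reduced models, and that adjoining the $\ell=\#(F(\Gamma)/\Gamma)$ interior marked points raises the dimension by $2\ell=\dim_{\R}C(F(\Gamma))^{\Gamma}$; you unfold this into the explicit case-by-case check of the $+2$ contribution and the stability verification, but the underlying argument is identical.
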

\begin{proof}
In the same way as in the proof of Lemma \ref{35.72}, we find that
$$
\dim_{\R}\text{\rm Index}(\widetilde C((\overline \Sigma,\vec{\overline z}),\overline v,\vec x))
+ \overline{\rho}((\overline{\Sigma}, \vec{\overline{z}}), \overline{v})
$$
is the virtual dimension of the moduli space of reduced models.
Adding marked points $\vec z_+$ increases the dimension by
$2\# \vec z_+ = 2 \# F(\Gamma)/\Gamma$, which is
equal to
$\dim_{\R} C(F(\Gamma))^{\Gamma}$.
\end{proof}
We next show the following.
\begin{lem}\label{35.75} Suppose that
$(M,J)$ is spherically positive and
$\mathbb S$ contains at least one sphere component. Then we have
$$
\dim_{\R} \MM_{k+1,\ell}^{\text{\rm main}}(\overline\beta; P_1, \dots, P_k)(\overline{\mathbb S})
\le
\dim_{\R} \MM_{k+1}^{\text{\rm main}}(\beta; P_1, \dots, P_k) -2.
$$
\end{lem}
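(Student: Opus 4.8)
The plan is to compare the two virtual dimensions directly, using the formulas \eqref{dimformula} and \eqref{dimformula2}, with spherical positivity entering precisely to control the gap between $\mu_L(\beta)$ and $\mu_L(\overline\beta)$. The first and most delicate step is to record the homological effect of passing to the (marked) reduced model. Write $\Gamma=\operatorname{Aut}((\Sigma,\vec z),v)$ and let $\{S_{a(b)}\mid b\in\overline I\}$ be a complete set of representatives of the $\Gamma$-orbits of sphere components of $\Sigma$, with stabilizers $\Gamma_{a(b)}\cong\Z_{m_b}$. The identity $v\circ\gamma=v$ shows that $v_{a(b)}$ factors as $\overline v_b$ composed with the degree $m_b$ quotient $S_{a(b)}\to\overline S_b=S_{a(b)}/\Gamma_{a(b)}$, so $[v_{a(b)}]=m_b[\overline v_b]$ in $H_2(M)$. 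Summing over all sphere components of $\Sigma$ (each $\Gamma$-orbit has $|\Gamma|/m_b$ elements, with homologous restrictions) and noting that the disc parts of $\beta$ and of $\overline\beta$ agree (the $\Gamma$-action being trivial on disc components), one finds that the sphere part of $\beta$ equals $|\Gamma|\sum_b[\overline v_b]$, while that of $\overline\beta$ is $\sum_b[\overline v_b]$. Since $\mu_L$ is additive and restricts to $2c_1(M)[\cdot]$ on sphere classes, this gives
\[
\mu_L(\beta)-\mu_L(\overline\beta)=2(|\Gamma|-1)\,C,\qquad C:=\sum_{b\in\overline I}c_1(M)[\overline v_b].
\]

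Next I would invoke spherical positivity. Each $\overline v_b$ is a $J$-holomorphic sphere, so $c_1(M)[\overline v_b]\ge 0$, with equality only if $\overline v_b$ is constant. A leaf sphere component of $\Sigma$ (one carrying a single special point) cannot be constant, since a constant sphere with one special point is unstable, contradicting the stability of $((\Sigma,\vec z),v)$. As $\mathbb S$ contains at least one sphere component, $\overline I\ne\emptyset$, so the reduced tree of sphere components has a leaf; hence at least one $\overline v_b$ is nonconstant and $C\ge 1$.

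Then comes the dimension bookkeeping. Because the fiber products against $P_1,\dots,P_k$ add the same quantity to both sides, \eqref{dimformula} and \eqref{dimformula2} give
\[
\dim\MM_{k+1,\ell}^{\text{\rm main}}(\overline\beta;P_1,\dots,P_k)=\dim\MM_{k+1}^{\text{\rm main}}(\beta;P_1,\dots,P_k)-2(|\Gamma|-1)C+2\ell .
\]
The stratum $\overline{\mathbb S}$ has $\#\overline I\ge 1$ sphere components, each attached to the rest of the configuration by one node; a sphere bubble drops the virtual dimension by at least $2$ (exactly $2$ in the nonconstant case, since the $c_1$-terms cancel; at least $2$ in the constant case, which is stable, hence carries at least three special points), so $\overline{\mathbb S}$ has codimension at least $2\#\overline I$ in $\MM_{k+1,\ell}^{\text{\rm main}}(\overline\beta;P_1,\dots,P_k)$. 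Moreover $\ell=\#(F(\Gamma)/\Gamma)\le\#\overline I$, since by Definition \ref{35.58} every free fixed point lies on a sphere component with nontrivial automorphism group, and a cyclic rotation group acting on such a component has exactly one non-nodal fixed point. Combining, and using $2\ell-2\#\overline I\le 0$,
\[
\dim\MM_{k+1,\ell}^{\text{\rm main}}(\overline\beta;P_1,\dots,P_k)(\overline{\mathbb S})\le\dim\MM_{k+1}^{\text{\rm main}}(\beta;P_1,\dots,P_k)-2(|\Gamma|-1)C .
\]
If $\Gamma\ne\{1\}$ this already gives the claim, as $(|\Gamma|-1)C\ge C\ge 1$; if $\Gamma=\{1\}$ then $\ell=0$ while $\#\overline I\ge 1$, so the discarded term $2\ell-2\#\overline I\le-2$ finishes the estimate.

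I expect the main obstacle to be the homological computation of the first step: getting the coefficient $|\Gamma|$ (and not $|\Gamma|/m_b$) correct in the relation between the sphere parts of $\beta$ and $\overline\beta$, checking that the disc components are genuinely unaffected by the reduction, and isolating the single place where spherical positivity is used — namely the bound $C\ge 1$ coming from the leaves. The remaining ingredients (the dimension formulas, the codimension of a bubble stratum, and the inequality $\ell\le\#\overline I$) are routine; alternatively one may route the argument through Proposition \ref{35.63} and Lemmas \ref{35.72}, \ref{35.74}, which already identify the left-hand side with the dimension of the isotropy stratum $\MM_{k+1}^{\text{\rm main}}(\beta;P_1,\dots,P_k)(\mathbb S,\Gamma)$ of the original moduli space.
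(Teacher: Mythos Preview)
Your approach is genuinely different from the paper's and, in outline, attractive: you compare the global dimension formulas \eqref{dimformula} and \eqref{dimformula2} directly, whereas the paper works component by component via Sublemma \ref{35.80} and the lower bound \eqref{35.77}. However there is a real gap in your first step.

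The claim ``each $\Gamma$-orbit has $|\Gamma|/m_b$ elements'' conflates the setwise stabilizer $\operatorname{Stab}(S_{a(b)})\subset\Gamma$ with its image $\Gamma_{a(b)}$ in $\operatorname{Aut}(S_{a(b)})$. The orbit has $|\Gamma|/|\operatorname{Stab}(S_{a(b)})|$ elements, while the branched-cover degree of $S_{a(b)}\to\overline S_b$ is $m_b=|\Gamma_{a(b)}|$; these differ whenever some $\gamma\in\Gamma$ fixes $S_{a(b)}$ pointwise but acts nontrivially on a further-out component. A concrete instance: take $\Sigma=D\cup S_1\cup S_2$ in a chain, with $\gamma\in\Gamma\cong\Z_2$ equal to the identity on $D$ and on $S_1$ (which carries a nonconstant map) and a half-rotation on $S_2$. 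Then $\operatorname{Stab}(S_1)=\Z_2$ but $\Gamma_{a(b_1)}=\{e\}$, so the orbit of $S_1$ has one element, not $|\Gamma|/m_{b_1}=2$. In this example your asserted equality $\mu_L(\beta)-\mu_L(\overline\beta)=2(|\Gamma|-1)C$ overshoots by $2c_1(M)[\overline v_{b_1}]>0$, and the displayed identity for $\dim\MM_{k+1,\ell}^{\text{main}}(\overline\beta;\dots)$ is false.

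Your strategy can be repaired. Writing $r_b$ for the true orbit size, the correct formula is
\[
\mu_L(\beta)-\mu_L(\overline\beta)=2\sum_{b\in\overline I}(r_bm_b-1)\,c_1(M)[\overline v_b],
\]
and one must then argue that $\sum_b(r_bm_b-1)c_1(M)[\overline v_b]+(\#\overline I-\ell)\ge 1$. If $\ell<\#\overline I$ this is immediate from spherical positivity; if $\ell=\#\overline I$ then every $m_b>1$, so $r_bm_b-1\ge 1$ for all $b$, and your leaf argument furnishes a $b$ with $c_1(M)[\overline v_b]\ge 1$. This is exactly the kind of case split the paper's component-wise method (Sublemma \ref{35.80} together with \eqref{35.77}, which absorbs the non-representative sphere components) is designed to avoid.
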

\begin{rem}\label{35.76}
We have not used spherical positivity of $J$ up to this point.
Namely the proof of Lemma \ref{35.75} is the
only place where we use this assumption.
\end{rem}
\begin{proof} Let $J$ be a spherically positive almost complex structure.
We consider a component $S_a$ of an element of
$\MM_{k+1}^{\text{\rm main}}(\beta; P_1, \dots, P_k)({\mathbb S})$.
Let $\vec w_a, \vec z_a$ be the sets of all marked or singular
points on $S_a$, $\partial S_a$, respectively. Let $v_a =
v\vert_{S_a}$. We put $k_a = \# \vec w_a + \# \vec z_a/2$. We put
$s_a =3$ if $S_a$ is a disc component and $s_a =6$ if $S_a$ is a
sphere component. Set
$$
c(a) = 2c_1(M)[v_a]
$$
for a sphere component and
$$
c(a) = \mu_L(v_a)
$$
for a disc component.
\par
We claim that
\begin{equation}
c(a)  + 2k_a - s_a \ge - 2 \label{35.77}\end{equation}
holds for the sphere
components.
\par
In fact, if the map is trivial on this component, then $2k_a \ge
6$ and $c(a) =0$. (\ref{35.77}) holds. If the map is nontrivial on this
component, then we have $c(a) \ge 2$ by the choice of $J$ made in
the beginning of the proof using spherical positivity. Since $2k_a \ge 2$ and $s_a =6$ for a sphere
component, (\ref{35.77}) also holds.
\par
We put $\e(a) = 2$ for a sphere component and $\e(a) = 1$ for a
disc component. It is easy to see from the index theorem that
\begin{equation}\aligned
&\dim_{\R} \MM_{k+1}^{\text{\rm main}}(\beta; P_1, \dots, P_k)({\mathbb S}) \\
&= \sum_{a\in I} (\e(a)n + c(a)  + 2k_a-s_a) \\
& \quad - (2n)\,\#\text{Sing}_{S^2}\mathbb S
- n\,\#\text{Sing}_{D^2}\mathbb S - \sum \deg P_i,
\endaligned\label{35.78.1}\end{equation}
where $I$ is the set of all components of $\mathbb S$,
$\#\text{Sing}_{S^2}\mathbb S$ is the number of singular points
which intersect with sphere components and
$\#\text{Sing}_{D^2}\mathbb S$ is the number of singular points
which do not intersect with sphere components.
We recall $2n = \dim M$.
\par
We have the similar identity for
$\dim_{\R} \MM_{k+1,\ell}^{\text{\rm main}}(\overline\beta; P_1, \dots,
P_k)(\overline{\mathbb S})$.
Namely for each component $S_b$ of $\overline{\mathbb S}$
we define
$\overline k_b$, $\overline
s_b$, $\overline c(b)$,
$\overline{\e}(b)$ and obtain
\begin{equation}\aligned
&\dim_{\R} \MM_{k+1,\ell}^{\text{\rm main}}(\overline\beta; P_1, \dots, P_k)(\overline{\mathbb S}) \\
&= \sum_{b\in \overline I} (\overline{\e}(b)n + \overline c(b)  + 2\overline k_b-\overline s_b)
\\
& \quad -(2n)\,\#\text{Sing}
_{S^2}\overline{\mathbb S}
-n\,\#\text{Sing}
_{D^2}\overline{\mathbb S} - \sum
\deg P_i
\endaligned\label{35.78.2}\end{equation}
where $\overline I$ is the set of all components of $\overline{\mathbb S}$.
\par
For each component $b \in \overline I$ we take
$a(b) \in I$ such that $S_{a(b)}$ is a branched covering of $S_b$. (There may be several
of them. We choose one of them.)
\par
If $S_b$ is a disc component, we have
\begin{equation}
\overline c(b)  + 2\overline k_b - \overline s_{b}= c(a(b))  +
2k_{a(b)}- s_{a(b)}, \label{35.79}\end{equation} since the automorphism group is
trivial on the disc components.
\par
We next prove the following:
\begin{sublem}\label{35.80}
$$
\overline c(b)  + 2\overline k_b - \overline s_{b} \le c(a(b))  + 2k_{a(b)}- s_{a(b)},
$$
if $S_b$ is a sphere component.
\end{sublem}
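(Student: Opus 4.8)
The plan is to exploit the fact that $S_{a(b)}$ is the total space of a cyclic branched covering of $\overline S_b$. By construction $\Gamma_{a(b)}\cong\Z_{m}$ with $m=m_{a(b)}$ acts on $S_{a(b)}\cong\C P^1$ by rotations fixing precisely the attaching node $o_{a(b)}$ and one further point $q_{a(b)}$, and $\overline S_b=S_{a(b)}/\Gamma_{a(b)}$. Since $v_{a(b)}=v|_{S_{a(b)}}$ is $\Gamma_{a(b)}$-invariant it factors as $v_{a(b)}=\overline v_b\circ\pi$ through the degree-$m$ quotient map $\pi: S_{a(b)}\to\overline S_b$, so $[v_{a(b)}]=m[\overline v_b]$ in $\pi_2(M)$ and hence $c(a(b))=2c_1(M)[v_{a(b)}]=m\,\overline c(b)$. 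Both components are spheres, so $s_{a(b)}=\overline s_b=6$, and the desired inequality is equivalent to $\overline k_b-k_{a(b)}\le\tfrac{m-1}{2}\,\overline c(b)$. For $m=1$ this is the already-recorded equality (\ref{35.68}), so I would assume $m>1$ from now on.

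Next I would carry out the bookkeeping of special points, paralleling the case analysis made just before the sublemma. The singular points of $S_{a(b)}$ are $o_{a(b)}$ together with $\vec p_{a(b)}$, so $k_{a(b)}=1+n_{a(b)}$; those of $\overline S_b$ are $\overline o_b$ together with $\vec{\overline p}_b$, and in the marked reduced model one adjoins the image $\overline q_b$ of the single possible free fixed point $q_{a(b)}$ of $S_{a(b)}$ (cf.\ Definition \ref{35.58}), which occurs exactly when $q_{a(b)}$ is not a node. Because $\Gamma_{a(b)}$ acts freely on $S_{a(b)}\setminus\{o_{a(b)},q_{a(b)}\}$, the elements of $\vec p_{a(b)}$ other than $q_{a(b)}$ fall into $\Gamma_{a(b)}$-orbits of size $m$. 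Thus: if $q_{a(b)}\notin\vec p_{a(b)}$ then $n_{a(b)}=m\,\overline n_b$, $\overline k_b=\overline n_b+2$, and $\overline k_b-k_{a(b)}=1-(m-1)\overline n_b$; if $q_{a(b)}\in\vec p_{a(b)}$ then $n_{a(b)}=1+m(\overline n_b-1)$, $\overline k_b=\overline n_b+1$, and $\overline k_b-k_{a(b)}=(m-1)(1-\overline n_b)$. In particular $\overline k_b-k_{a(b)}\le 1$ in all cases, with value $\le 0$ unless $q_{a(b)}\notin\vec p_{a(b)}$ and $\overline n_b=0$.

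Finally I would close the estimate by splitting on whether $\overline v_b$ is constant. If $\overline v_b$ is nonconstant, spherical positivity of $J$ (Definition \ref{34.1}) gives $c_1(M)[\overline v_b]\ge1$, hence $\overline c(b)\ge2$, so the right-hand side $\tfrac{m-1}{2}\overline c(b)\ge m-1\ge1\ge\overline k_b-k_{a(b)}$ and we are done. If $\overline v_b$ is constant then so is $v_{a(b)}$, and stability of $S_{a(b)}$ as a component of the stable map $((\Sigma,\vec z),v)$ forces it to carry at least three singular points, i.e.\ $n_{a(b)}\ge2$; feeding this into the two formulas above forces $\overline n_b\ge1$ in the first case and $\overline n_b\ge2$ in the second, and in either case $\overline k_b-k_{a(b)}\le0=\tfrac{m-1}{2}\overline c(b)$. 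The only genuinely delicate point is this last step — reconciling the stability constraint on $S_{a(b)}$ with the count of marked points created on $\overline S_b$ in the marked reduced model — and it is the part that would have to be spelled out with care; everything else is index arithmetic already prepared by Proposition \ref{35.63} and the discussion leading to (\ref{35.69}) and (\ref{35.70}).
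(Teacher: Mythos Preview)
Your proof is correct and follows essentially the same route as the paper's: both arguments use $\overline s_b=s_{a(b)}=6$ and $c(a(b))=m\,\overline c(b)$, split according to whether $q_{a(b)}$ is a node (equivalently, whether there is a free fixed point on $S_{a(b)}$), and then handle the nonconstant case via spherical positivity ($\overline c(b)\ge 2$) and the constant case via the stability constraint $k_{a(b)}\ge 3$. Your reduction to the single inequality $\overline k_b-k_{a(b)}\le\frac{m-1}{2}\overline c(b)$ and the explicit formulas for $\overline k_b-k_{a(b)}$ in the two cases make the bookkeeping slightly more transparent than the paper's treatment, but the substance is the same.
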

\begin{proof} We have $\overline s_{b} = s_{a(b)} = 6$. By the
spherical positivity we have $\overline c(b) \ge 0$. It also follows
that $\overline c(b) \le c(a(b))$. If there is no free fixed point
on $S_{a(b)}$, then $\overline k_b \le k_{a(b)}$ and we are done.
\par
Now  assume that there is a free fixed point on $S_{a(b)}$. In
this case the degree, denoted by $\text{deg}$, of the map $S_{a(b)} \to S_b$
is greater than one.
\par
We first consider the case when $[v_b] \ne 0$. Then by the
definition of spherical positivity we have $c(a(b)) \ge 2$.
Therefore
$$
\overline c(b) = \frac{c(a(b))}{\text{deg}} \le c(a(b)) - 2.
$$
(Note that $c(a),\overline c(b)$ are even numbers for the sphere
components.) On the other hand, since there exists at most one
free fixed point on $S_{a(b)}$, it follows that
$$
\overline k_b \le k_{a(b)} + 1.
$$
The sublemma follows in this case.
\par
We next assume $[v_b] = 0$. Then $k_{a(b)} \ge 3$ by stability.
Namely there exist at least 3 singular or marked points. The
component $S_b
$ is identified with the quotient of $S_{a(b)}$ by
the cyclic group $\Gamma_{a(b)}$ of order $\text{deg} \in
\{2,3,\dots\}$. Let $d$ be the distance of $S_{a(b)}$ from the
disc components. Let $o_{a(b)}$ be the singular point on $S_{a(b)}$
where $S_{a(b)}$ is attached with $\Sigma_{d-1}$. Clearly
$o_{a(b)}$ is a fixed point of $\Gamma_{a(b)}$. Since there is a
free fixed point on $S_{a(b)}$, no other singular points are fixed
by $\Gamma_{a(b)}$. In other words the image in $S_b$ of the
singular points of $S_{a(b)}$ consists of $1 + (k_{a(b)}-1)/\deg$
points. Therefore
$$
\overline k_b = 2 + \frac{k_{a(b)}-1}{\deg}.
$$
Since $k_{a(b)} \ge 3$, we derive
$$
\overline k_b \le k_{a(b)}.
$$
The proof of Sublemma \ref{35.80} is now complete.
\end{proof}
It follows from Sublemma \ref{35.80} and (\ref{35.79}) that
\begin{equation}
\sum_{b\in \overline I} (\overline{\e}(b)n+\overline c(b)  + 2\overline k_b
- \overline s_b) \le
\sum_{b\in \overline I} ({\e}(a(b))n + c(a(b))  + 2k_{a(b)}-s_{a(b)}).
\label{35.81}\end{equation}
(\ref{35.77}) implies that for each sphere component $S_a$
($a \in I$) we have
$$
2n+c(a)  + 2k_a-s_a \ge 2n - 2.
$$
Note that the number of sphere components of $\mathbb S$ and of
$\overline{\mathbb S}$ are equal to $\# \text{Sing}_{S^2}\mathbb S$ and
to $\# \text{Sing}_{S^2}\overline{\mathbb S}$, respectively.
Therefore we have
\begin{equation}\aligned
&\sum_{a \in I} ({\e}(a)n + c(a)  + 2k_{a}-s_{a}) - (2n-2) \# \text{Sing}_{S^2}{\mathbb S}\\
&\ge
\sum_{b\in \overline I} ({\e}(a(b))n + c(a(b))  + 2k_{a(b)}-s_{a(b)}) - (2n-2) \#
\text{Sing}_{S^2}\overline{\mathbb S}.
\endaligned\label{35.82}\end{equation}
We remark that
$\text{Sing}_{D^2}\overline{\mathbb S}
= \text{Sing}_{D^2}{\mathbb S}$.
The Formulas
(\ref{35.78.1}), (\ref{35.78.2}), (\ref{35.81}) and (\ref{35.82}) imply
$$\aligned
&
\dim_{\R}\MM_{k+1}^{\text{\rm main}}(\overline\beta; P_1, \dots,
P_k)(\overline{\mathbb S}) \\
&\le
\dim_{\R}\MM_{k+1}^{\text{\rm main}}(\beta; P_1, \dots, P_k)({\mathbb S}) + 2(\#
\text{Sing}_{S^2}\mathbb S  - \# \text{Sing}_{S^2}\overline{\mathbb S}).
\endaligned$$
On the other hand, we have
$$
\aligned
&
\dim_{\R}\MM_{k+1}^{\text{\rm main}}(\beta; P_1, \dots, P
_k) \\
&=
\dim_{\R}\MM_{k+1}^{\text{\rm main}}(\beta; P_1, \dots, P_k)({\mathbb
S}) + 2 \# \text{Sing}_{S^2}\mathbb
S +
\# \text{Sing}_{D^2}\mathbb
S.
\endaligned
$$
Since $\# \text{Sing}_{S^2}\overline{\mathbb S} \ge 1$, we obtain
the lemma.
\end{proof}
Now we are in the position to complete the proof of
Proposition \ref{dimprop}. Let $\Gamma \ne \{1\}$ be an
abstract group.
By Proposition \ref{35.63} and Lemmas \ref{35.72}, \ref{35.74}, \ref{35.75} we have
\begin{equation}\aligned
\dim_{\R}\MM_{k+1}^{\text{\rm main}}(\beta; P_1, \dots, P_k)(\mathbb S,\Gamma)
&= \dim_{\R}\MM_{k+1}^{\text{\rm main}}(\overline{\beta}; P_1, \dots, P_k)(\overline{\mathbb S})\\
&\le \dim_{\R}\MM_{k+1}^{\text{\rm main}}({\beta}; P_1, \dots, P_k) - 2.
\endaligned\label{saigonosi}\end{equation}
By definition \eqref{35.19}, we have
$$
\max_i  d(\MM_{k+1}^{\text{\rm main}}({\beta}; P_1, \dots, P_k);\Gamma;i)
= \max_{\mathbb S}\dim_{\R}\MM_{k+1}^{\text{\rm main}}(\beta; P_1, \dots, P_k)(\mathbb S,\Gamma).
$$
Here the maximum in the right hand side
is taken over $\mathbb S$ such that
$\MM_{k+1}^{\text{\rm main}}(\beta; P_1, \dots, P_k)(\mathbb S,\Gamma)$
is nonempty.
Now Proposition  \ref{dimprop} follows from (\ref{saigonosi}).
\end{proof}
\begin{rem}\label{35.85}
Consider the case when
$\mathbb S$ is a union of a disc $D^2$ and a sphere $S^2$ where
$c_1(M)[v\vert_{S^2 *}([S^2])]= 0$. We assume that $v\vert_{S^2}$
is a cyclic multiple cover  of a map $u$. The reduced model
consists of the same configuration where the map $v\vert_{S^2}$ is
replaced by $u$. The virtual dimension of the reduced model is the
same as the virtual dimension of $\MM_{k+1}^{\text{\rm
main}}(\beta; P_1, \dots, P_k)({\mathbb S})$, the moduli space with
combinatorial type $\mathbb S$. Since there is one sphere bubble, the
virtual dimension of the moduli space $\MM_{k+1}^{\text{\rm
main}}(\beta; P_1, \dots, P_k)({\mathbb S})$ is the virtual
dimension of $\MM_{k+1}^{\text{\rm main}}(\beta; P_1, \dots,
P_k)$ minus 2.
\par
However, since there is one free fixed point on the
sphere bubble, it follows that the virtual
dimension of {\it marked} reduced model
$\MM_{k+1,1}^{\text{\rm main}}(\overline\beta; P_1, \dots, P_k)(\overline{\mathbb S})$
is {\it equal to} the
virtual dimension of $\MM_{k+1}^{\text{\rm main}}(\beta; P_1, \dots, P_k)$.
Namely (\ref{dimform}) does not hold.
\par
This is the reason why we assume spherical positivity in this paper.
\par
If we can use the reduced model in place of the marked reduced
model, this condition would be removed. However it seems rather
difficult to show the compatibility of the normally conical
perturbation with the
process of forgetting extra marked points in the marked reduced
model.
\end{rem}
\section{Filtered $A_{\infty}$ algebra over $\Z_2$}\label{AinftyconstrZ2}
\subsection{Construction of a filtered $A_{n,K}$ structure over $\Z_2$}
\label{subsec:Ankstr}
We now use Theorem \ref{maintechnicalresult} and Proposition
\ref{dimprop} to prove Theorem \ref{theoremA}
(except  Item (4), that is the case of $\Lambda_{0,{\rm nov}}^{\Z}$ coefficient).
Actually we need the
following relative version of Theorem \ref{maintechnicalresult}. We
say a global section $\mathfrak s' = \{s'_p\}$ of a space with
Kuranishi structure to be {\it normally conical} if each of $s'_p$
is normally conical in the sense of Definition \ref{normalcondef}.
Let $X$ be a space with Kuranishi structure with corners. The
boundary of $\partial X$ is decomposed into spaces $\partial _cX$
with Kuranishi structures such that $\partial _cX$ intersects with
$\partial _{c'}X$ along the corner ($= \partial _{cc'}X$) of $X$ with
codimension $\ge 2$. (More precisely, there exists a map from
$\partial_cX$ to the boundary of $X$ so that it is injective outside
the codimension 2 corner $\partial _{cc}X$. See \cite{fooo-book2}
right before Definition A1.30.)
\par
Let $X$ has a Kuranishi sturucture with corners.
We denote by $\overset{\circ}S_kX$ the set of the points of $X$ that
lies on the codimension $k$ corner.
If $p \in \overset{\circ}S_kX$, then
we may take its Kuranishi neighborhood $V_p/\Gamma_p$ so that
$$
V_p = V'_p \times [0,\rho)^{k}.
$$
(Here $V'_p$ is an open set of the Euclidean space.)
The finite group $\Gamma_p$ acts
on $V_p$ and preserves its stratification as a manifold with corners.
So the action of $\Gamma_p$ exchanges the second factor $[0,\rho)^{k}$.
We write
$$
\partial_cV_p = V'_p \times [0,\rho)^{c-1} \times \{0\} \times  [0,\rho)^{k-c}
$$
\begin{assumption}\label{cornertrivial}
All the elements of $\Gamma_p$ preserve each of $\partial_cV_p$.
\end{assumption}
\par
We remark that $\MM_{k+1}^{\text{\rm main}}({\beta})$ satisfies
Assumption \ref{cornertrivial}.
In fact in the case of $X = \MM_{k+1}^{\text{\rm main}}({\beta})$
the coordinates $[0,1)^m$ that parametrize the direction normal to the
corner, is the glueing parameter of the boundary singular points.
On the other hand, the element of the isotropy group $\Gamma_p$ is the
automorphism that is supported on  sphere bubbles.
Therefore the $\Gamma_p$ action on $[0,1)^m$ is trivial.
\par
Since $\MM_{k+1}^{\text{\rm main}}({\beta}; P_1, \dots,
P_k)$ is a fiber product of $\MM_{k+1}^{\text{\rm main}}({\beta})$ with
$P_i$, Assumption \ref{cornertrivial} is also satisfied
for $\MM_{k+1}^{\text{\rm main}}({\beta}; P_1, \dots,
P_k)$.
\par
Under Assumption \ref{cornertrivial} we can easily prove the following:
\begin{lem}
The Kuranishi neighborhood and obstruction bundle of a neighborhood of codimension $k$ corner
$\overset{\circ}S_kX$ is the product of the induced Kuranshi structure on
$\overset{\circ}S_kX$ and
$[0,\rho)^k$.
\par
Namely we can take the diffeomorphism
\begin{equation}\label{product1}
V_p \cong V'_p \times [0,\rho)^{k}
\end{equation}
so that it is $\Gamma_p$ equivariant and
is compatible with the coordinate change, for each $p\in \overset{\circ}S_kX$.
Moreover we have an equivariant isomorphism of the obstruction bundles
\begin{equation}\label{product2}
E_p \cong E_p\vert_{V'_p} \times [0,\rho)^{k}.
\end{equation}
\end{lem}
We remark that the Kuranishi map $s_p$ may not be
constant in the $[0,\rho)^{k}$ direction (under the identifications
(\ref{product1}),(\ref{product2}).)

\begin{theorem}\label{reltechnicalresult}
Let $X$ be a space with Kuranishi structure with corners and tangent bundle.
We assume that $X$ satisfies Assumption \ref{cornertrivial}.
Suppose that we have global sections
$\mathfrak s'_{c}$ of $\partial _cX$ for each
$c$ such that they coincide with each other at the intersections
$\partial _{cc'}X$.
We assume $\mathfrak s'_{c}$ are
normally conical and satisfy the conclusion of Theorem \ref{maintechnicalresult}.
Then we can extend them to a normally conical  global section of $X$
satisfying the conclusion of Theorem \ref{maintechnicalresult}.
\end{theorem}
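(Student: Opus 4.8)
The plan is to carry out the proof of Theorem~\ref{maintechnicalresult} \emph{relative to} the prescribed boundary data, with Proposition~\ref{35.20'} replacing Proposition~\ref{35.20} at every stage of the argument. First I would fix a good coordinate system for $X$ in the sense of Lemma~\ref{A1.11}, together with compatible good coordinate systems for each $\partial_c X$, arranged so that the charts of $\partial_c X$ appear as boundary charts of the charts of $X$. Since by hypothesis the $\mathfrak s'_c$ agree on the overlaps $\partial_{cc'}X$, they assemble into a single normally conical global section $\mathfrak s'_\partial$ of $\partial X$ (in the sense of Definition~\ref{normalcondef}) that satisfies the conclusion of Theorem~\ref{maintechnicalresult}; this is the boundary value to which the desired extension must restrict.

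The first step is to push $\mathfrak s'_\partial$ a little way into the interior. Using a collar structure for the Kuranishi structure with corners, compatible at all corners (cf.\ the treatment of boundaries and corners in \cite{fooo-book}), one identifies a neighborhood $U$ of $\partial X$ with the total space of a collar $\partial X \times [0,1)$, built chart by chart and glued, the collars of the various $\partial_c X$ fitting together along $\partial_{cc'}X$; one then extends $\mathfrak s'_\partial$ over $U$ by pulling it back along the collar projection, i.e.\ by making it independent of the collar coordinate. On $U$ the stratification $\{X^{\cong}(\Gamma)\}$, the normal bundles $N_{X^{\cong}(\Gamma)}X$ in the sense of Definition~\ref{A1.89}, and the system of tubular neighborhoods and family of lines all acquire the corresponding product form; hence the pulled-back section is again normally conical, its zero set is the product of $(\mathfrak s'_\partial)^{-1}(0)$ with an interval and so inherits a triangulation refining the prescribed one on the boundary, and the dimension estimate persists because along the collar $d(X;\Gamma;i) = d(\partial_c X;\Gamma;i') + 1$ while $\dim X'(\Gamma)$ also increases by one. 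Thus we obtain a normally conical section, defined on a neighborhood $U$ of the compact set $K=\partial X$, satisfying the conclusion of Theorem~\ref{maintechnicalresult} on $U$.

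The second step is the inductive construction over $p \in P$ with respect to the partial order $<$, exactly as in the proof of Theorem~\ref{maintechnicalresult}, except that the boundary collar already carries a section. When $p$ is minimal, the part of $V_p/\Gamma_p$ lying in the collar of $\partial X$ is a compact set $K_p$ on a neighborhood of which a normally conical section is already given, so Proposition~\ref{35.20'} extends it over $V_p/\Gamma_p$. For general $p$, one restricts the sections $s'_q$ constructed for $q<p$ to the images $\phi_{pq}(V_{pq})$ and extends them to a neighborhood so that the compatibility of Definition~\ref{A1.21} holds; one then lets $K_p$ be the union of this data with the collar portion and again invokes Proposition~\ref{35.20'}. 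The relative forms Propositions~\ref{35.33} and~\ref{35.34} of the existence of a system of tubular neighborhoods and of a family of lines are already stated relative to a compact subset, so they supply exactly the geometric input needed to make the extension normally conical while leaving the prescribed data on $K_p$ untouched. Since the whole construction respects the collar, the resulting global section $\mathfrak s'$ restricts to $\mathfrak s'_\partial$, hence to each $\mathfrak s'_c$, on $\partial X$, and satisfies the conclusion of Theorem~\ref{maintechnicalresult}.

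The main obstacle is the bookkeeping at the corners. One must choose the collars of the various $\partial_c X$ to be mutually compatible along all codimension $\ge 2$ corners $\partial_{cc'}X$ and the higher corners, and so that the system of tubular neighborhoods and family of lines produced on the collar by the product construction is consistent with those induced from the boundary; and one must verify that, throughout the double induction, the values prescribed on the collar remain simultaneously compatible with the Definition~\ref{A1.21} constraints coming from the lower charts, i.e.\ that on each $K_p$ the two sources of constraint never conflict. This holds because both arise by restricting one and the same inductively constructed compatible system, but checking it carefully is the only non-routine point; the transversality, triangulation and dimension arguments on each individual chart are identical to those of the absolute case and require no change.
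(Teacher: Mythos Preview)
Your proposal is correct and follows the paper's approach; in fact the paper's own proof is a single sentence, ``The proof is the same as the proof of Theorem~\ref{maintechnicalresult},'' and what you have written is exactly the natural elaboration of that sentence: run the chart-by-chart induction of Theorem~\ref{maintechnicalresult}, but at every stage invoke the relative Proposition~\ref{35.20'} with the prescribed boundary data (pushed inward along a collar) included in the compact set $K$. Your collar discussion and the corner bookkeeping are reasonable details that the paper leaves implicit.
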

\begin{rem}
Actually we can prove Theorem \ref{reltechnicalresult} without assuming Assumption \ref{cornertrivial}.
We assume it only to simplify the proof. In our application this assumption is satisfied.
\end{rem}
\begin{proof}
We put
$$
S_kX = \bigcup_{m\ge k}\overset{\circ}S_mX.
$$
Let $U_{\epsilon}(S_kX)$ be the $\epsilon$ neighborhood of $S_kX$.
(Here we take a metric on $X$ and fix it.)
Let $N$ be the smallest positive integer such that $S_{N+1}X$ is empty.
We will prove the following by induction on $m_0
= 0,1,\dots,N$.
\begin{lem}\label{nearthecorner}
There exists $\epsilon_{k,m}$ for $m\le m_0$ and $k=N-m,\dots,N$
and a normally conical   global section $\frak s_{m}$ on a Kuranishi neighborhood of
\begin{equation}\label{domainskm}
\bigcup_{k=N-m}^{N} U_{\epsilon_{k,m}}(S_kX)
\end{equation}
such that they have the following properties.
\begin{enumerate}
\item
$\frak s_{m}$ satisfies the conclusion of Theorem \ref{maintechnicalresult}.
\item
The restriction of $\frak s_{m}$ to
the intersection of (\ref{domainskm}) and $\partial_c X$
coincides with the restriction of $\frak s'_c$.
\item
$\epsilon_{k,m+1} < \epsilon_{k,m}$ if $k \ge N-m$.
Moreover $\frak s_{m+1} = \frak s_{m}$ on  a Kuranishi neighborhood of
$
\bigcup_{k=N-m}^{N} U_{\epsilon_{k,m+1}}(S_kX).
$
\end{enumerate}
\end{lem}
\begin{proof}
We first prove the first step $m_0 =0$ of the induction.
We need to find $\epsilon_{N,0}$ and
$\frak s_{0}$ on $U_{\epsilon_{N,0}}(S_NX)$.
By assumption $S_NX$ has a
Kuranishi structure {\it without} boundary.
The restriction of various $\frak s'_c$ to $S_NX$
gives a global section on $S_NX$ that satsifies
the conclusion of Theorem \ref{maintechnicalresult}.
We will extend it to a neighborhood of $S_NX$.
\par
We take an isomorphism
\begin{equation}\label{isoneibh}
F : S_NX \times [0,1)^N \to X
\end{equation}
to an open neighborhood. (More precisely
(\ref{isoneibh}) is a family of  isomorphisms
(\ref{product1}),(\ref{product2}) that are $\Gamma_p$ equivariant and
compatible with the coordinate change.
\par
We remark that
\begin{equation}\label{cornerto}
\left([0,1)^{N},\bigcup_{c=1}^N \left( [0,1)^{c-1} \times \{0\}  \times [0,1)^{N-c}\right)\right)
\end{equation}
is PL isomorphic to
\begin{equation}\label{kyokaiha}
\left([0,1) \times B^{N-1},\{0\}\times B^{N-1} \right)
\end{equation}
as a pair.
Here
$B^{N-1} = \{x \in \R^{N-1} \mid \vert x\vert < 1\}$
is an open ball of dimension $N-1$.
\par
We remark that
we are given a family of global sections $\frak s'_c$ on a Kuranishi neighborhood of
$$
S_NX \times \bigcup_{c=1}^N \left([0,1)^{c-1} \times \{0\}  \times [0,1)^{N-c}\right)
$$
so that they are compatible on the overlapped part.
So we use the isomorphism between (\ref{cornerto})
and (\ref{kyokaiha}) to extend the section $\frak s'_c$ to a Kuranishi neighborhood of
$$
S_NX \times [0,1) \times B^{N-1}
\cong
S_NX \times [0,1)^{N}
$$
so that it coicides with $\frak s'_c$ on (a Kuranishi neighborhood of)
$$
S_NX \times \{0\} \times B^{N-1}
\cong
S_NX \times  \bigcup_{c=1}^N \left([0,\rho)^{c-1} \times \{0\}  \times [0,\rho)^{N-c}\right).
$$
Namely we take the extension so that it is constant in the $[0,1)$
factor.
Then it is easy to see that extended section has the required properties.
We have thus proved the case $m_0=0$ of Lemma \ref{nearthecorner}.
\par
We next discuss the induction step for $m_0 < N$.
We remark that
$\overset{\circ}{S}_{N-m_0}X$ has a Kuranishi structure without boundary.
We take $\epsilon_{k,m_0} < \epsilon_{k,m_0-1}$ for $k > N-m_0$.
We put
$$
U(m_0-1)
=
\bigcup_{k=N-m_0+1}^{N} U_{\epsilon_{k,m_0-1}}(S_kX)
$$
and
$$
U^-(m_0-1)
=
\bigcup_{k=N-m_0+1}^{N} U_{\epsilon_{k,m_0}}(S_kX)
$$
and
$$
S_{N-m_0}^-X
=
S_{N-m_0}X \setminus U^-(m_0-1).
$$
We have a piecewise smooth isomorphism between
a Kuranishi neighborhood in $X$ of $S_{N-m_0}^-X$ and the product of
a Kuranishi neighborhood (in $\overset{\circ}{S}_{N-m_0}X$) of $S_{N-m_0}^-X$ and $[0,1)^{m_0}$
in the sense above.
\par
We also have a piecewise smooth isomorphism between
Kuranishi neighborhoods of
\begin{equation}\label{76form}
S_{N-m_0}^-X \times [0,1)^{m_0}
\end{equation}
and of
\begin{equation}\label{77form}
S_{N-m_0}^-X \times [0,1) \times B^{m_0-1}
\end{equation}
so that $S_{N-m_0}^-X \times \{0\} \times B^{m_0-1}$
becomes the intersection with $\bigcup_c\partial_cX$
under the identifications.
\par
By induction hypothesis we have $\frak s_{m_0-1}$.
We take its restriction to (a Kuranishi neighborhood of)
$$
U(m_0-1) \cap
\left(S_{N-m_0}^-X \times [0,1)^{m_0}\right).
$$
We may identify a Kuranishi neighborhood of
$
U(m_0-1) \cap
\left(S_{N-m_0}^-X \times [0,1)^{m_0}\right)
$ with a Kuranishi neighborhood of
$\left(U(m_0-1) \cap
S_{N-m_0}^-X\right)
\times  [0,1)^{m_0}.
$
By construction we also have a piecewise smooth diffeomorphism
\begin{equation}\label{form78}
\aligned
&\left(U(m_0-1) \cap
S_{N-m_0}^-X\right)
\times  [0,1)^{m_0}
\\
&\cong
\left(U(m_0-1) \cap
S_{N-m_0}^-X\right)
\times  [0,1) \times B^{m_0-1}
\endaligned
\end{equation}
such that $\frak s_{m_0-1}$ is constant in the $[0,1)$ factor
by this identification.
\par
By changing the identification between (\ref{76form}) and (\ref{77form})
if necessary we may assume that the identification between (\ref{76form}) and (\ref{77form}) coincides with (\ref{form78})
except in a small neighborhood of
$\partial U(m_0-1) \cap
S_{N-m_0}^-X$.
\par
Thus we define $\frak s_{m_0}$ so that it is constant in the $[0,1)$ factor
of  (\ref{77form}) under the above identifications.
Then it is easy to see that extended section has the required properties.
We remark that we use $m_0 < N$ here.
In fact in case $m_0 = N$,  the stratum $S_{0}^-X$ is not contained in
any of $\partial_cX$ so we can not define $\frak s_{m_0}$ by
taking it independent of $[0,1)$ factor.
\par
We have thus proved the lemma for $m_0 < N$.
So we have defined a global section on a (Kuranishi) neighborhood
of the boundary $\partial X$. We can then extend it by
the following relative version Propositin \ref{maintechnicalresultrel} of Theorem \ref{maintechnicalresult}.
It proves the case of $m_0 = N$.
The proof of Lemma \ref{nearthecorner} is complete.
\end{proof}
By taking $m_0 = N$ Lemma \ref{nearthecorner}
implies Theorem \ref{reltechnicalresult}.
\end{proof}
\begin{prop}\label{maintechnicalresultrel}
Let $X$ be a space with Kuranishi structure that has a tangent bundle, $K\subset X$ a
compact subset and $U$ a Kuranishi neighborhood of $K$.
\par
We take a good coordinate system of $X$.
\par
Suppose we have a sequence of normally conical strongly piecewise smooth global sections $\frak s_{\epsilon}$ on $U$
such that it satisfies Theorem \ref{maintechnicalresult} {\rm (i)(ii)(iii)} and converges to the
Kuranishi map in $C^0$ sense as $\epsilon \to 0$.
\par
Then there exists a sequence of normally conical strongly piecewise smooth global sections $\mathfrak s'_{\epsilon}$
on $X$ such that it satisfies Theorem \ref{maintechnicalresult} {\rm (i)(ii)(iii)},
coincides with $\frak s_{\epsilon}$ on a Kuranishi neighborhood of $K$ and converges to the
Kuranishi map in $C^0$ sense as $\epsilon \to 0$.
\par
Moreover if $f: X \to Y$ is a strongly smooth map from $X$ to a
manifold $Y$, and if we have a triangulation of an intersection of $(\frak s_{\epsilon})^{-1}(0)$
with a Kuranishi neighborhood of $K$,
such that $f$ induces a piecewise smooth map $(\frak s_{\epsilon})^{-1}(0) \to Y$, then
we can extend the triangulation to $(\frak s'_{\epsilon})^{-1}(0)$  so that
$f$ induces a piecewise smooth map $(\frak s'_{\epsilon})^{-1}(0)  \to Y$.
\end{prop}
We remark that Theorem \ref{maintechnicalresult} is proved by induction
using Proposition \ref{35.20'}, which is a relative version of Proposition \ref{35.20}.
(See Section \ref{section35.4}.)
So we can prove Proposition \ref{maintechnicalresultrel} in the same way as
Theorem \ref{maintechnicalresult}.
\par\smallskip
We can use Theorem \ref{reltechnicalresult} in place of Theorem
A1.23 \cite{fooo-book2} (which is a similar result for multisection)
together with Proposition \ref{dimprop} to prove Theorems
\ref{theoremA}, \ref{theoremF} in the same way as in Chapter 7
\cite{fooo-book2}. Below we explain this construction, which is
parallel to Section 7.2 \cite{fooo-book2}.
\par
Now we consider $\MM_{k+1}^{\text{\rm main}}({\beta}; P_1, \dots,
P_k)$. By  Proposition 7.1.2 \cite{fooo-book2}, its boundary is
decomposed into the following:
\begin{subequations}
\begin{equation}\label{boundary1}
\MM_{k+1}^{\text{\rm main}}({\beta}; P_1, \dots, \partial_iP_j,\dots,P_k)
\end{equation}
where $\partial_iP_j$ is the $i$-th factor of the singular simplex $P_j$ and
\begin{equation}\label{boundary2}
(\MM_{k_1+1}^{\text{\rm main}}({\beta_1}) _{ev_0}\times_{ev_i}
\MM_{k_2+1}^{\text{\rm main}}({\beta_2}))
\,\,{}_{ev'}\times_{L^{k}} (P_1 \times \cdots \times P_k)
\end{equation}
\end{subequations}
where $k_1+k_2=k+1$ and $i \in \{1,\dots,k_2\}$. The evaluation map
$$
ev'=(ev'_1,\dots,ev'_k): \MM_{k_1+1}^{\text{\rm main}}({\beta_1})
_{ev_0}\times_{ev_i} \MM_{k_2+1}^{\text{\rm main}}({\beta_2}) \to
L^{k},
$$
which we use to take the fiber product in (\ref{boundary2}), is
defined by
$$
ev'_j(\mathfrak x,\mathfrak y) =
\begin{cases}
ev_j(\mathfrak x)  &\text{if $j < i$},\\
ev_{j-i+1}(\mathfrak y) &\text{if $i \le j < i+k_2$}, \\
ev_{j-k_2+1}(\mathfrak x) &\text{if $ j \ge i+k_2$}.
\end{cases}
$$
Therefore (\ref{boundary2}) is identified with
\begin{equation}\label{boundary3}
\aligned
\MM_{k_1+1}^{\text{\rm main}}({\beta_1})
\times_{L^{k_1}} (&P_1 \times \cdots \times P_{i-1} \\
& \times\MM_{k_2+1}^{\text{\rm main}}({\beta_2};P_{i},\dots,P_{i+k_2-1}) \times P_{i+k_2}
\times \cdots \times P_k).
\endaligned
\end{equation}
\begin{prop}\label{mkconstruct}
Let $E$ and $K$ be fixed integers. Then there is a system of global
sections $\mathfrak s'_{\beta,P_1, \dots, P_k}$ of
$\MM_{k+1}^{\text{\rm main}}({\beta}; P_1, \dots, P_k)$ for each
$k\le K$ and $\beta$ with $\omega(\beta) \le E$ with the following
properties:
\begin{enumerate}[{\rm(i)}]
\item
They are normally conical and satisfy the conclusion of Theorem \ref{maintechnicalresult}.
\item They are compatible at the boundary component (\ref{boundary1}).
\item The zero set of each section $\mathfrak s'_{\beta,P_1, \dots, P_k}$ is
given with triangulation. Each simplex of it is identified with a
smooth singular chain of $L$ by the (strongly smooth extension of)
the map $ev_0$.
\item Let $P'$ be a simplex of
the zero set of $\mathfrak s'_{\beta,P_{i}, \dots, P_{i+k_2-1}}$.
Then the restriction of  $\mathfrak s'_{\beta,P_1, \dots, P_k}$ at
the subset
$$
\MM_{k_1+1}^{\text{\rm main}}({\beta_1})
\times_{L^{k_1}} (P_1 \times \cdots \times P_{i-1} \times
P' \times P_{i+k_2}
\times \cdots \times P_k)
$$
of (\ref{boundary3}) coincides with $\mathfrak s'_{\beta_1,P_1, \dots, P_{i-1},P',P_{i+k_2},\dots ,P_k}$.
\end{enumerate}
\end{prop}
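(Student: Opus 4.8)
The plan is to carry out the inductive construction of \cite{fooo-book} Section 7.2, replacing the compatible multisections used there with compatible \emph{normally conical single-valued} sections, using Theorem \ref{reltechnicalresult} (the relative form of Theorem \ref{maintechnicalresult}) in place of \cite{fooo-book} Theorem A1.23, and invoking Proposition \ref{dimprop} to keep the singular strata small. I would order the relevant data by the triple $(\omega(\beta),\,k,\,\sum_j \dim P_j)$ with the lexicographic order and perform a simultaneous induction. The point that makes this work is that every moduli space occurring in the boundary description (\ref{boundary1})--(\ref{boundary3}) of $\MM_{k+1}^{\text{\rm main}}(\beta; P_1,\dots,P_k)$ carries data already produced at an earlier stage: the piece (\ref{boundary1}) is $\MM_{k+1}^{\text{\rm main}}(\beta;\dots,\partial_iP_j,\dots)$, with the same $(\omega(\beta),k)$ but strictly smaller $\sum_j\dim P_j$; and in (\ref{boundary3}) one has $\beta=\beta_1+\beta_2$ where, since a nonconstant $J$-holomorphic disc has positive area, either $0<\omega(\beta_\ell)<\omega(\beta)$ for $\ell=1,2$, or one of $\beta_1,\beta_2$ is zero — in which case the vanishing bubble carries at least three special points and the remaining data involves strictly fewer than $k$ chain inputs. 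Thus the induction is well founded.

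For the inductive step, fix $(\beta,k)$ and a tuple $(P_1,\dots,P_k)$ and assume $\mathfrak s'$ has been constructed for all earlier data. On the boundary component (\ref{boundary1}) take the section $\mathfrak s'_{\beta,\dots,\partial_iP_j,\dots}$ given by the induction; on (\ref{boundary3}), for each simplex $P'$ of the triangulated zero set of $\mathfrak s'_{\beta_2,P_i,\dots,P_{i+k_2-1}}$ — which by property (iii) is a smooth singular chain of $L$ via $ev_0$ — take $\mathfrak s'_{\beta_1,P_1,\dots,P_{i-1},P',P_{i+k_2},\dots,P_k}$ on the corresponding subset, which is exactly what property (iv) prescribes. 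By induction each of these boundary sections is normally conical and satisfies the conclusion of Theorem \ref{maintechnicalresult}. One then checks that the prescribed sections agree on the codimension $\ge 2$ corners where two degenerations occur at once; this coherence is forced by the associativity of the fiber-product description of the corner strata (\cite{fooo-book} Proposition 7.1.2, whence the decomposition (\ref{boundary1})--(\ref{boundary2})) together with the fact that one is restricting the same innermost section in either order. Having assembled a normally conical boundary datum, Theorem \ref{reltechnicalresult} extends it to a normally conical global section $\mathfrak s'_{\beta,P_1,\dots,P_k}$ satisfying the conclusion of Theorem \ref{maintechnicalresult}; Proposition \ref{dimprop} guarantees that $d(\MM_{k+1}^{\text{\rm main}}(\beta;P_1,\dots,P_k);\Gamma;i)$ lies at least $2$ below the virtual dimension for every $\Gamma\ne\{1\}$, so the zero set is a chain of the expected dimension, with $\Z$ or $\Z_2$ coefficients. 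Items (i) and (ii) are then immediate, and (iii) follows by choosing the triangulation of the zero set so that the piecewise smooth map induced by $ev_0$ (the ``Moreover'' clause of Theorem \ref{maintechnicalresult}) is simplicial on it.

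The main obstacle is checking that the prescribed boundary data genuinely satisfy the hypotheses of Theorem \ref{reltechnicalresult}: the section on (\ref{boundary3}) is produced by a fiber product with lower-energy perturbations, so one must confirm it is itself normally conical in the sense of Definition \ref{normalcondef} and compatible with the singular-locus stratification near the corners, while the section on (\ref{boundary1}) must be shown to restrict compatibly where a face $\partial_iP_j$ abuts a disc-bubbling corner. A further subtle point, already flagged in Remark \ref{35.85}, is that normally conical perturbations do not obviously behave well under forgetting the interior marked points of the marked reduced model; spherical positivity is used precisely to keep the singular strata low-dimensional enough (via Proposition \ref{dimprop}) that the extension in Theorem \ref{reltechnicalresult} never forces a spurious contribution to the fundamental chain, and one must ensure that this never occurs on the boundary either. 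Finally there is a bookkeeping matter: the simplices $P'$ arising as zero sets become admissible inputs at later stages, so one fixes at the outset a countably generated family of smooth singular chains closed under this operation, exactly as in \cite{fooo-book} Section 7.2, so that the recursion terminates.
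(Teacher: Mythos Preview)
Your inductive strategy is essentially the paper's own: order the data, observe that conditions (ii) and (iv) prescribe the section on the boundary, check consistency on the codimension $\ge 2$ corners (the paper refers to \cite{fooo-book} Lemma 7.2.55 for this), and then invoke the relative extension Theorem \ref{reltechnicalresult}. Your addition of $\sum_j\dim P_j$ to the induction key is a harmless refinement that makes the treatment of (\ref{boundary1}) explicit; the paper orders only by $(\omega(\beta),k)$ and leaves this to the reference.

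Two points where you diverge from the paper are worth noting. First, you invoke Proposition \ref{dimprop} and spherical positivity inside the proof, but the paper is explicit that Proposition \ref{mkconstruct} does \emph{not} use spherical positivity: immediately after the proof it writes ``Up to this point, spherical positivity is not assumed.'' The conclusion of Theorem \ref{maintechnicalresult} already gives the triangulation in (iii); the dimension estimate from Proposition \ref{dimprop} is only used afterwards, in Lemma \ref{dimensionrest}, to show that the top simplices avoid the singular locus and hence that the virtual fundamental chain is defined over $\Z_2$. So your appeal to Proposition \ref{dimprop} is unnecessary here and slightly misplaces the logical role of spherical positivity. Second, your closing remark about fixing a countably generated family of chains ``exactly as in \cite{fooo-book} Section 7.2'' is contrary to what the present paper actually does: see Remark \ref{nogeneration}, where the authors deliberately work with the whole smooth singular chain complex and do not introduce generations. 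Your concern about Remark \ref{35.85} is likewise irrelevant to this proposition; that remark explains why spherical positivity is needed for Proposition \ref{dimprop}, not an obstruction to constructing the sections.
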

\begin{rem}\label{impreciseobst}
The way how the statement (iv) above is made is slightly imprecise.
Namely we need to describe the relation between obstruction bundles
to make a precise statement. See Proposition 7.2.35
\cite{fooo-book2}.
\end{rem}
\begin{proof}
The proof of Proposition \ref{mkconstruct} is by induction.
We consider the order on the set of pairs $(\beta,k)$
such that $(\beta,k) < (\beta',k')$ if
either (1) $\omega[\beta] < \omega[\beta']$ or
(2) $\omega[\beta] = \omega[\beta']$, $k<k'$.
\par
The conditions (ii) and (iv) describe the global section $\mathfrak
s'_{k,\beta,P_1, \dots, P_k}$ at the boundary. We can check that
they are consistent at the corners of codimension $\ge 2$. (See
 Lemma 7.2.55 \cite{fooo-book2}.) Then by Theorem \ref{maintechnicalresult} we can
extend it to the whole moduli space $\MM_{k+1}^{\text{\rm
main}}({\beta}; P_1, \dots, P_k)$. We refer Section 7.2
\cite{fooo-book2} for the detail.
\end{proof}

Now we fix a system of global sections
$\mathfrak s'_{\beta,P_1,
\dots, P_k}$ as in Proposition \ref{mkconstruct} for each
$k,\beta,P_1,\dots,P_k$ with $k\le K$ and $\omega(\beta) \le E$.
Let $\MM_{k+1}^{\text{\rm main}}({\beta}; P_1, \dots,
P_k)^{\mathfrak s'}$ be the zero set  of
$\mathfrak s'_{\beta,P_1,
\dots, P_k}$. We fixed its triangulation in (iii).
\par
\emph{Up to this point, spherical positivity is not assumed}. Now we
assume that $(M,\omega,J)$ is spherically positive. Then Proposition
\ref{dimprop} implies the following.

\begin{lem}\label{dimensionrest}
If $P'$ is a simplex of $\MM_{k+1}^{\text{\rm main}}({\beta}; P_1, \dots, P_k)^{\mathfrak s'}$
with
$$
\dim P' \ge \dim \MM_{k+1}^{\text{\rm main}}({\beta}; P_1, \dots, P_k) -2
$$
then the interior of
$P'$ does not intersect with
$\MM_{k+1}^{\text{\rm main}}({\beta}; P_1, \dots, P_k)(\Gamma)$
for $\Gamma \ne \{1\}$.
\end{lem}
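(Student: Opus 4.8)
The idea is to combine the dimension estimate of Theorem \ref{maintechnicalresult} (iii) with the geometric bound of Proposition \ref{dimprop}. First I would recall that by Proposition \ref{mkconstruct} (i) the sections $\mathfrak s'_{\beta,P_1,\dots,P_k}$ were chosen so that the zero set $X':=\MM_{k+1}^{\text{\rm main}}(\beta;P_1,\dots,P_k)^{\mathfrak s'}$ together with its induced Kuranishi structure satisfies the conclusion of Theorem \ref{maintechnicalresult}. In particular, writing $X=\MM_{k+1}^{\text{\rm main}}(\beta;P_1,\dots,P_k)$, item (ii) of that theorem says that each stratum $X'(\Gamma)$ is a simplicial subcomplex of the triangulation of $X'$, and item (iii) gives
$$
\dim X'(\Gamma)\le\max_i d(X;\Gamma;i).
$$

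Next I would bring in Proposition \ref{dimprop}: under the spherical positivity assumption on $(M,\omega,J)$, for every $\Gamma\ne\{1\}$ and every $i$ one has
$$
d(X;\Gamma;i)\le\dim X-2.
$$
Combining the two displayed inequalities gives $\dim X'(\Gamma)\le\dim X-2$ for all $\Gamma\ne\{1\}$. Now suppose $P'$ is a simplex of $X'$ whose interior meets $X(\Gamma)$ for some $\Gamma\ne\{1\}$. Since $X'(\Gamma)=X'\cap X(\Gamma)$ is a subcomplex of $X'$ and the interior of $P'$ is connected and meets this subcomplex, the whole open simplex $\mathrm{int}\,P'$ is contained in $X'(\Gamma)$; hence $P'$ itself (being the closure of its interior) lies in $X'(\Gamma)$. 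Therefore $\dim P'\le\dim X'(\Gamma)\le\dim X-2$, which contradicts the hypothesis $\dim P'\ge\dim X-2$. This proves the lemma.

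The only point that needs a little care — and which I would spell out — is the claim that a simplex whose \emph{interior} meets the subcomplex $X'(\Gamma)$ is entirely contained in $X'(\Gamma)$. This follows from the description of the isotropy stratification: the function $x\mapsto I_x$ (up to isomorphism) is constant along the relative interior of each simplex of the triangulation of $X'$, because by construction of $\mathfrak s'$ (normally conical, see Definition \ref{normalcondef} and the proof of Proposition \ref{35.20}) the triangulation is chosen to be compatible with the stratification $\{X^{\cong}(\Gamma)\}$, so that each $X'(\Gamma)$ is a union of open simplices. Consequently $\mathrm{int}\,P'\cap X(\Gamma)\ne\emptyset$ forces $\mathrm{int}\,P'\subset X(\Gamma)$, hence $\mathrm{int}\,P'\subset X'(\Gamma)$ and then $P'\subset X'(\Gamma)$ since $X'(\Gamma)$ is closed in $X'$. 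I do not expect any genuine obstacle here: all the work has already been done in Theorem \ref{maintechnicalresult} and Proposition \ref{dimprop}, and the lemma is just the bookkeeping that feeds these two inputs into the $A_\infty$ construction. The only thing to double-check is that the induced Kuranishi structure on the fiber product $\MM_{k+1}^{\text{\rm main}}(\beta;P_1,\dots,P_k)$ (obtained via Lemma A1.39 of \cite{fooo-book}) has the property that its strata $X(\Gamma;i)$ have the dimension bound of Proposition \ref{dimprop} — but this is exactly what that proposition asserts, so nothing further is needed.
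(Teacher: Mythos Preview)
Your approach is exactly the one the paper takes: the paper simply states that Proposition~\ref{dimprop} implies the lemma, and you have correctly unpacked this by combining Theorem~\ref{maintechnicalresult}(iii) with Proposition~\ref{dimprop} and the fact that the triangulation is compatible with the isotropy stratification.

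There is, however, a logical slip in your final step. You derive $\dim P'\le\dim X-2$ and then claim this contradicts the hypothesis $\dim P'\ge\dim X-2$. It does not: both inequalities are compatible when $\dim P'=\dim X-2$. The chain of inequalities
\[
\dim P'\le\dim X'(\Gamma)\le\max_i d(X;\Gamma;i)\le\dim X-2
\]
only excludes simplices with $\dim P'>\dim X-2$, i.e.\ $\dim P'\ge\dim X-1$. This appears to be an imprecision in the lemma's statement rather than a genuine obstacle: for the intended application (deriving the $A_\infty$ relation~(\ref{Ankformula}) from the virtual fundamental chain and its boundary), one only needs the top-dimensional simplices and their codimension-one faces to lie in the free locus, and your argument establishes exactly that. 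So the strategy is sound; just replace ``contradicts'' by the observation that the argument rules out $\dim P'\ge\dim X-1$, which is what is actually used downstream.
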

Let $P'_a$ ($a \in A$) be the set of all simplices of dimension
$\MM_{k+1}^{\text{\rm main}}({\beta}; P_1, \dots, P_k)$.
We put
\begin{equation}\label{virtualfundamental}
ev_{0*}\left[\MM_{k+1}^{\text{\rm main}}({\beta}; P_1, \dots,
P_k)^{\mathfrak s'}\right] = \sum_a P'_a
\end{equation}
this is a smooth singular chain of $L$ with $\Z_2$ coefficients.
We now put
\begin{equation}\label{mkdefinitionprime}
\mathfrak m_{k,\beta}(P_1,\dots,P_k)
=  ev_{0*}[\MM_{k+1}^{\text{\rm main}}(\beta;P_1,\dots,P_k)^{\mathfrak s'}].
\end{equation}
We use Lemma \ref{dimensionrest} and Proposition \ref{mkconstruct}
(i), (iii) to obtain the following formula:
\begin{equation}\label{Ankformula}
\aligned
&\partial \mathfrak m_{k,\beta}(P_1,\dots,P_k)
+ \sum_i \mathfrak m_{k,\beta}(P_1,\dots,\partial P_i,\dots P_k)
\\
&=  \sum_{\beta_1+\beta_2=\beta}\sum_{k_1+k_2=k+1}\sum_{i=1,\dots,k_2}\mathfrak m_{k_1,\beta_1}(P_1,\dots,P_{i-1},\\
&\qquad\qquad\qquad\qquad\qquad\qquad\qquad\mathfrak m_{k_2,\beta_2}(P_i,\dots,P_{i+k_2-1}),
P_{i+k_2},\dots,P_k),
\endaligned
\end{equation}
where we put $\mathfrak m_{1,\beta_0} = \partial$ ($\beta_0 = 0$ is
zero in $H_2(M;L)$). Taking the sum over $\beta$, (\ref{Ankformula})
gives rise to the $A_{\infty}$ formula which defines an $A_{\infty}$ algebra.
More precisely, the formula gives rise to a filtered
$A_{n,K}$ structure in the sense defined in Section 7.2
\cite{fooo-book2}. Here the integer $K$ is as above and $n$ is
determined by $E$ as follows. We consider the subset of $\R$ defined
by
$$
\{ (\omega[\beta],\mu_L(\beta)) \mid \MM_1(\beta) \ne \emptyset \} \subset \R \times \Z.
$$
We take the additive submonoid of $\R \times \Z$ generated by this set
and denote it by $\mathfrak G(L,J)$.
For an element $\beta \in \mathfrak G(L,J)$ we put
$$
\Vert \beta\Vert = \sup \left\{ n \,\left\vert\, \exists \beta_i \in \mathfrak G(L,J),
\,\,\, \sum_{i=1}^n \beta_i = \beta \right\}.\right.
$$
We say $(n,K) < (n',K')$ if either (1) $n+K < n'+K'$ or (2) $n+K =
n'+K'$, $n<n'$. Filtered $A_{n,K}$ structure is by definition a
system of operators $\mathfrak m_{k,\beta}$ which are for $k\le K$
and $\beta$ with $(\Vert \beta\Vert,k) \le (n,K)$ such that it
satisfies (\ref{Ankformula}). (See Definition 7.2.70
\cite{fooo-book2}.)
\begin{rem}
See Subsection 7.2.3 \cite{fooo-book2} for the reason why we need to
restrict to $k\le K$ and $\omega(\beta) \le E$ for the proof of
Proposition \ref{mkconstruct}.
\end{rem}
\subsection{Construction of a filtered $A_{n,K}$ homomorphism over $\Z_2$}\label{subsec:Ankhom}
We can work in the same way as Section 7.2 \cite{fooo-book2} to go
from an $A_{n,K}$ structure (for arbitrary but fixed $n,K$) to
an $A_{\infty}$ structure. For this purpose we should also use the
construction of filtered $A_{n,K}$ homomorphisms which we describe
later in this subsection. See Lemma \ref{extobs}. In this way we
associate a filtered $A_{\infty}$ structure to the singular chain
complex of a Lagrangian submanifold over $\Z_2$ coefficients in the
spherically positive case.
Then it was proved in  Theorem 5.4.2 \cite{fooo-book1} that this
$A_\infty$ structure on the singular chain complex induces a
canonical filtered $A_{\infty}$ structure on the cohomology group,
which we call the {\it canonical model}.
(We use Lemma \ref{canolem} for this purpose.)
To show that the canonical model depends only on the connected
component of $J \in \JJ_{(M,\omega)}^{c_1 > 0}$  up to an isomorphism we
proceed as follows. Let $J_0,\, J_1$ be elements of $\JJ_{(M,\omega)}^{c_1
> 0}$. We use each one of them to construct our filtered
$A_{\infty}$ structure on singular chain complex. (We also fix a
system of global sections, triangulation of their zero sets etc..)
Let $\{J_{\rho}\}_{\rho\in [0,1]}$ be a path in $\JJ_{(M,\omega)}^{c_1 >
0}$ joining them. We now consider the moduli space
$\MM_{k+1}^{\text{main}}
(\{J_{\rho}\}_{\rho}:\beta;\operatorname{top}(\rho);
P_1,\dots,P_k)$ which was introduced by Definition 4.6.10
\cite{fooo-book1}.
\par
We here recall its definition.
Let $(\Sigma,\vec z)$ be a bordered Riemann surface of genus $0$ with
$k$ of boundary marked points.
We consider each of disc component of $\Sigma$ and attach tree of
sphere components rooted on it. We thus decompose
$$
\Sigma = \bigcup_{a=0}^{m} \Sigma_a
$$
where each of $\Sigma_a$ contains exactly one disc components.
Let $\Sigma_0$ be the component which contains $0$-th marked
point $z_0$. We define a partial order
$<$ on the set of components as follows:
$\Sigma_a < \Sigma_b$ if every path joining $\Sigma_a$ to $z_0$
intersect $\Sigma_b$.
We consider $\rho_a \in [0,1]$ for each $a=0,\dots,m$ such that
if $\Sigma_a < \Sigma_b$ then
$\rho_a \le \rho_b$. We call $\{\rho_a\}_{a=0}^{m}$ the
{\it time allocation}.
We consider the set of $((\Sigma,\vec z),\{\rho_a\},v)$
such that
\begin{enumerate}
\item $(\Sigma,\vec z)$  is a bordered Riemann surface of genus $0$ with
$k$ of boundary marked points.
\item $\{\rho_a\}_{a=0}^{m}$ is its time allocation.
\item $v: (\Sigma,\partial \Sigma) \to (M,L)$ is a continuous map.
\item On each component $\Sigma_a$ the restriction of $v$ to
$\Sigma_a$ is $J_{\rho_a}$-holomorphic.
\item We assume the stability in the following sense:
the set of biholomorphic map $\varphi: \Sigma \to \Sigma$ such that
$\varphi(z_i) = z_i$, $v \circ \varphi =v$ is finite.
\end{enumerate}
We say $((\Sigma,\vec z),\{\rho\},v) \sim ((\Sigma',\vec
z'),\{\rho'_{a'}\},v')$ if there exists a biholomorphic map
$\varphi:  \Sigma' \to \Sigma$ such that $\varphi(z'_i) = z_i$,
$v\circ \varphi = v'$ and $\rho_a = \rho_{a'}$, where
$\varphi(\Sigma_{a'}) = \Sigma_a$.
We denote by $\MM_{k+1}^{\text{main}}
(\{J_{\rho}\}_{\rho}:\beta;\operatorname{top}(\rho))$ the set of
$\sim$ equivalence classes of the triple $((\Sigma,\vec
z),\{\rho_a\},v)$ satisfying (1)-(5).
Using the evaluation map
$$
ev = (ev_0,\dots,ev_k): \MM_{k+1}^{\text{main}}
(\{J_{\rho}\}_{\rho}:\beta;\operatorname{top}(\rho)) \to L^{k+1},
$$
we obtain $\MM_{k+1}^{\text{main}}
(\{J_{\rho}\}_{\rho}:\beta;\operatorname{top}(\rho);
P_1,\dots,P_k)$
for smooth singular chains $P_1,\dots,P_k$.
It is  a space with Kuranishi structure with
tangent space.
Its boundary is decomposed into three parts which correspond to
the following 5 possibilities
\smallskip
\begin{enumerate}[{\rm (A)}]
\item The moduli space
\begin{equation}\label{boundaryA}
\MM_{k+1}^{\text{main}}
(\{J_{\rho}\}_{\rho}:\beta;\operatorname{top}(\rho);
P_1,\dots,\partial_iP_j,\dots,P_k)
\end{equation}
where $\partial_iP_j$ is the $i$-th face of $P_j$.
\item One of the time allocation becomes $0$.
\item One of the time allocation becomes $1$.
\item Two time allocation $\rho_a$ and $\rho_b$ coincides,
and corresponding component $\Sigma_a$ and $\Sigma_b$ intersects.
\item One disc component splits into two in the limit.
\end{enumerate}
One can prove (D) and (E) above cancel to each other. (See  Figures
4.6.2 and 4.6.3 \cite{fooo-book1}.) Therefore the actual boundary is
only (A),(B),(C) above.
\par
We next describe (B) and (C) by fiber product. (See Subsection 7.2.9
\cite{fooo-book2} for details.)
\par
(B) is described as the
union of the following fiber product over
$k_1,k_2,\beta_1,\beta_2,i$ with
$k_1+k_2=k-1$, $\beta_1+\beta_2=\beta$, $i =1,\dots,k_2$:
\begin{equation}\label{boundary4}
\aligned
&\MM_{k_1+1}^{\text{\rm main}}(\{J_{\rho}\}_{\rho}:\beta_1;\operatorname{top}(\rho))
\\
&{}\times_{L^{k_1}}  (P_1 \times \cdots P_{i-1} \times
\MM_{k_2+1}^{\text{\rm main}}(J_0,{\beta_2};P_{i},\dots,P_{i+k_2-1}) \times P_{i+k_2}
\times \cdots \times P_k).
\endaligned
\end{equation}
Here we write
$\MM_{k_2+1}^{\text{\rm main}}(J_0,{\beta_2};P_{i},\dots,P_{i+k_2-1})$
in place of
$\MM_{k_2+1}^{\text{\rm main}}({\beta_2};P_{i},\dots,P_{i+k_2-1})$
to clarify the almost complex structure we use for definition.
\par
(C) is described as follows.
We consider $1 \le \ell_1 \le \dots \le \ell_m \le \ell_{m+1} = k$.
Then (C) is a union of the following fiber products:
\begin{equation}\label{boundary5}
\aligned
\MM_{m+1}^{\text{\rm main}}(J_1,{\beta'})
\times_{L^m}
\prod_{i=1}^m
\MM_{\ell_{i+1}-\ell_i+1}^{\text{\rm main}}(\{J_{\rho}\}_{\rho}:\beta_i;\operatorname{top}(\rho);
P_{\ell_i},\dots,P_{\ell_{i+1}-1}).
\endaligned
\end{equation}
Here we take union over $m,\{\ell_1,\dots,\ell_{m+1}\},\beta',\beta_i$
such that $\ell_i$ is as above and $\beta' + \sum \beta_i = \beta$.
Note we include the case $\ell_{i+1} = \ell_i+1$ and $\beta_i = 0\in H_2(M,L;\Z)$.
In that case we put
\begin{equation}\label{becomeidentity}
\MM_{1}^{\text{\rm main}}(\{J_{\rho}\}_{\rho}:0;\operatorname{top}(\rho);
P_{\ell_i})
= P_{\ell_i}
\end{equation}
{\it by definition}.
\par\smallskip
In a way similar to Proposition \ref{mkconstruct} we can prove the
following.
Let $E$ and $K$ be fixed integers. We take and fix
a system of global sections
$\mathfrak s'_{J_0,\beta,P_1, \dots, P_k}$ of
$\MM_{k+1}^{\text{\rm main}}(J_0,{\beta}; P_1, \dots, P_k)$
and
$\mathfrak s'_{J_1,\beta,P_1, \dots, P_k}$ of
$\MM_{k+1}^{\text{\rm main}}(J_1,{\beta}; P_1, \dots, P_k)$
as in Proposition \ref{mkconstruct}.
\par
\begin{prop}\label{fkconstruct}
There is a system of global
sections $\mathfrak s'_{\{J_{\rho}\},\beta,P_1, \dots, P_k}$ of
$$
\MM_{k+1}^{\text{\rm main}}(\{J_{\rho}\}_{\rho}:{\beta};\operatorname{top}(\rho); P_1, \dots, P_k)
$$
for each $k\le K$ and $\beta$ with $\omega(\beta) \le E$
with the following properties.
\begin{enumerate}[{\rm(i)}]
\item
They are normally conical and satisfy the conclusion of Theorem \ref{maintechnicalresult}.
\item They are compatible at the boundary component (\ref{boundaryA}).
\item Each simplex $P'$
of the zero set of $\mathfrak s'_{J_0,\beta_1,P_{i+1}, \dots, P_{i+k_2-1}}$ is identified with a smooth singular chain of $L$ by the (strongly
smooth extention of) the map $ev_0$. Then the restriction of
 $\mathfrak s'_{\{J_{\rho}\},\beta,P_1, \dots, P_k}$ to
\begin{equation}\label{boundary7}
\MM_{k_1+1}^{\text{\rm main}}(\{J_{\rho}\}_{\rho}:\beta_1;\operatorname{top}(\rho);P_1,\dots.P_{i-1},P',
P_{i+k_2-1},\dots,P_k)
\end{equation}
coincides with
 $\mathfrak s'_{\{J_{\rho}\},\beta_1,P_1,\dots.P_{i-1},P',
P_{i+k_2-1},\dots,P_k}$.
Here (\ref{boundary7}) is a part of (\ref{boundary4}).
\item
Each of the
zero set of the global section
$\mathfrak s'_{\{J_{\rho}\}_{\rho},\beta_i,
P_{\ell_i},\dots,P_{\ell_{i+1}-1}}$ of
$$
\MM_{\ell_{m+1}-\ell_m+1}^{\text{\rm main}}(\{J_{\rho}\}_{\rho}:\beta_i;\operatorname{top}(\rho);
P_{\ell_i},\dots,P_{\ell_{i+1}-1})
$$
is given with triangulation.
Let $P'_i$ be a simplex of the zero set of $\mathfrak s'_{\{J_{\rho}\}_{\rho},\beta_i,
P_{\ell_i},\dots,P_{\ell_{i+1}-1}}$.
Then the restriction of  $\mathfrak s'_{\{J_{\rho}\}_{\rho},\beta,P_1, \dots, P_k}$ to
the corresponding subset of
(\ref{boundary5}) coincides with $\mathfrak s'_{J_0,\beta_0,P'_1, \dots, P'_m}$.
\end{enumerate}
\end{prop}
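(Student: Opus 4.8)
The plan is to follow the proof of Proposition \ref{mkconstruct} almost verbatim, replacing Theorem \ref{maintechnicalresult} by its relative version Theorem \ref{reltechnicalresult}. First I would set up the induction on pairs $(\beta,k)$ with respect to the order used for Proposition \ref{mkconstruct}, i.e.\ $(\beta,k)<(\beta',k')$ when $\omega[\beta]<\omega[\beta']$, or $\omega[\beta]=\omega[\beta']$ and $k<k'$, together with the subsidiary downward induction on $\sum_i\dim P_i$ (as in Section 7.2 \cite{fooo-book}) so that at each stage the sections attached to faces $\partial_iP_j$ are already available. The moduli space $\MM_{k+1}^{\text{\rm main}}(\{J_{\rho}\}_{\rho}:\beta;\operatorname{top}(\rho);P_1,\dots,P_k)$ is a space with Kuranishi structure with corners and with tangent bundle, so Theorem \ref{reltechnicalresult} will apply as soon as a normally conical section on its boundary, satisfying the conclusion of Theorem \ref{maintechnicalresult}, has been produced.

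Next I would assemble that boundary section. After the cancellation of the two boundary types (D) and (E), the boundary consists of the strata (A), (B), (C). On (A), namely $\MM_{k+1}^{\text{\rm main}}(\{J_{\rho}\}_{\rho}:\beta;\operatorname{top}(\rho);P_1,\dots,\partial_iP_j,\dots,P_k)$, the section is prescribed by condition (ii): it is the section already built for the lower-dimensional chain $\partial_iP_j$. On (B), which is the fiber product (\ref{boundary4}), condition (iii) forces the section to be the product of $\mathfrak s'_{\{J_{\rho}\},\beta_1,\dots}$ (available since $(\beta_1,k_1)<(\beta,k)$ in our order, including the degenerate case $\beta_2=0$ handled by (\ref{becomeidentity})) with the section $\mathfrak s'_{J_0,\beta_2,P_i,\dots,P_{i+k_2-1}}$ supplied by Proposition \ref{mkconstruct}, taken along the simplices of its zero set. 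On (C), which is the fiber product (\ref{boundary5}), condition (iv) forces the section to be the product of $\mathfrak s'_{J_1,\beta',\dots}$ (again from Proposition \ref{mkconstruct}) with the sections $\mathfrak s'_{\{J_{\rho}\},\beta_i,\dots}$ on the lower pieces of the path moduli, available by the inductive hypothesis. These prescriptions are normally conical and satisfy the conclusion of Theorem \ref{maintechnicalresult} since fiber products and the conical extension formula (\ref{35.51}) preserve these properties.

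The step I expect to be the main obstacle is the verification that the prescriptions on (A), (B), (C) agree along the corners of codimension $\ge 2$ where two of these strata meet — the analogue of Lemma 7.2.55 \cite{fooo-book}. Here one enumerates the codimension-two corner types (two of (A); one of (A) together with one of (B) or (C); two of (B); two of (C); one of (B) together with one of (C)) and checks case by case that the two induced sections coincide, using the compatibility conditions already built into Propositions \ref{mkconstruct} and \ref{fkconstruct} at lower level, together with the associativity and equivariance of the normal-cone constructions of Sections \ref{sectionA1.6.1}--\ref{section35.4}; as noted in Remark \ref{impreciseobst} this bookkeeping must be done together with the identifications of obstruction bundles along the fiber products, in the precise form of Proposition 7.2.35 \cite{fooo-book}. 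Hidden inside this verification is the extra point, absent from Proposition \ref{mkconstruct}, that one must match the normally conical structure across the two facets that were cancelled in passing from the five types (A)--(E) to the three actual boundary types. Once this consistency is established, the prescribed section is a well-defined normally conical global section on the boundary satisfying the conclusion of Theorem \ref{maintechnicalresult}, and Theorem \ref{reltechnicalresult} extends it over the interior, yielding $\mathfrak s'_{\{J_{\rho}\},\beta,P_1,\dots,P_k}$ with properties (i)--(iv); everything else is a mechanical transcription of Section 7.2 \cite{fooo-book}.
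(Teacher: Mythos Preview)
Your proposal is correct and follows essentially the same approach as the paper, which merely states that the proof is similar to that of Proposition \ref{mkconstruct} (referring to Proposition 7.2.100 of \cite{fooo-book}) with Theorem \ref{reltechnicalresult} replacing the multisection argument. Your outline in fact supplies more detail than the paper itself; the only slips are cosmetic --- the auxiliary induction on the $P_i$ should run \emph{upward} in dimension so that faces are available first, and your parenthetical pointer to (\ref{becomeidentity}) belongs to boundary type (C) rather than (B).
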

Note the same remark as Remark \ref{impreciseobst} applies to
(iii) and (iv) above.
\par
The proof of Proposition \ref{fkconstruct} is similar to the proof
of Proposition \ref{mkconstruct}. In other words it is similar to
the proof of Proposition 7.2.100 \cite{fooo-book2}. We use global
single valued sections (and Theorem \ref{reltechnicalresult}) here
in place of multisections used in \cite{fooo-book2}.
\begin{lem}\label{dimdropf}
We assume $J_{\rho} \in  \JJ_{(M,\omega)}^{c_1 > 0}$ for each $\rho$. Let
$\Gamma$ be a nontrivial finite group. Then we have
\begin{equation}\label{dimformrho}
\aligned
&\dim \MM^{\text{\rm main}}_{k+1}(\{J_{\rho}\}_{\rho}:\beta;\operatorname{top}(\rho);P_1, \dots, P_k) \\
&- d( \MM^{\text{\rm main}}_{k+1}(\{J_{\rho}\}_{\rho}:\beta;\operatorname{top}(\rho); P_1, \dots, P_k);\Gamma;i) \ge 2.
\endaligned
\end{equation}
\end{lem}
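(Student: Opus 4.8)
The plan is to repeat the argument of Section~\ref{dimensionsec} used to prove Proposition~\ref{dimprop}, tracking the two new features of the parametrized moduli space: the time-allocation parameters $\{\rho_a\}$ and the component-dependent almost complex structure. First I would set up the reduced model. Let $((\Sigma,\vec z),\{\rho_a\},v,\vec x)$ represent an element with isotropy group $\Gamma=\operatorname{Aut}((\Sigma,\vec z),\{\rho_a\},v)$. Since $\Gamma$ acts trivially on every disc component, it preserves each piece $\Sigma_a$ (the disc component together with the tree of sphere components rooted on it) and acts trivially on the index set $\{a\}$; hence the time allocation $\{\rho_a\}$ is $\Gamma$-invariant and descends to $\overline\Sigma=\Sigma/\Gamma$. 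The reduced model $((\overline\Sigma,\vec{\overline z}),\{\overline\rho_b\},\overline v,\vec x)$ then has the same indexing set of disc components (the canonical identification $I^D\cong\overline I^D$), inherits the time allocation, and still satisfies conditions (1)--(5) in the definition of $\MM_{k+1}^{\text{\rm main}}(\{J_\rho\}_\rho:\beta;\operatorname{top}(\rho))$: a sphere component $\overline S_b=S_{a(b)}/\Gamma_{a(b)}$ carries the map obtained from $v|_{S_{a(b)}}$ by precomposition with $z\mapsto z^m$, which is $J_{\rho_{a(b)}}$-holomorphic, and $\rho_{a(b)}=\overline\rho_b$.

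With this in place I would prove the parametrized analogs of Proposition~\ref{35.63} and Lemmas~\ref{35.72}, \ref{35.74}, \ref{35.75}, following their proofs line by line. Lemma~\ref{35.55} is a statement about the $\Z_m$-equivariant index of the linearized Cauchy--Riemann operator on a fixed sphere and applies to each $J_{\rho_a}$ unchanged; Lemma~\ref{35.64} and the comparison of $\rho^{\Gamma}$ with $\overline\rho$ go through verbatim. The only addition is a time-allocation term: both the deformation space of a configuration of a given combinatorial type and that of its (marked) reduced model acquire the same number of extra parameters --- the number of disc components, since the order-constrained region in $[0,1]^{\#I^D}$ is of full dimension and $\Gamma$ acts trivially on it --- so this term contributes equally to both sides of every dimension comparison and cancels. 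For the analog of Lemma~\ref{35.75} and Sublemma~\ref{35.80}, the dimension formulas (\ref{35.78.1}) and (\ref{35.78.2}) are corrected by this common term, while the inequalities (\ref{35.77}), (\ref{35.79}), (\ref{35.81}), (\ref{35.82}) are unaffected because their one crucial input --- that a nonconstant holomorphic sphere has $c_1(M)\ge 1$ --- holds for every component thanks to $J_{\rho_a}\in\JJ_{(M,\omega)}^{c_1>0}$. This yields
$$
\dim_{\R}\MM_{k+1,\ell}^{\text{\rm main}}(\{J_\rho\}_\rho:\overline\beta;\operatorname{top}(\rho);P_1,\dots,P_k)(\overline{\mathbb S})
\le \dim_{\R}\MM_{k+1}^{\text{\rm main}}(\{J_\rho\}_\rho:\beta;\operatorname{top}(\rho);P_1,\dots,P_k)-2
$$
whenever $\mathbb S$ contains a sphere component.

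Finally, exactly as at the end of the proof of Proposition~\ref{dimprop}, one identifies $\max_i d(\MM_{k+1}^{\text{\rm main}}(\{J_\rho\}_\rho:\beta;\operatorname{top}(\rho);P_1,\dots,P_k);\Gamma;i)$ with $\max_{\mathbb S}\dim_{\R}\MM_{k+1}^{\text{\rm main}}(\{J_\rho\}_\rho:\beta;\operatorname{top}(\rho);P_1,\dots,P_k)(\mathbb S,\Gamma)$ over nonempty strata; combining with the displayed inequality and the parametrized analogs of Lemmas~\ref{35.72}, \ref{35.74}, \ref{35.63} gives (\ref{dimformrho}). The only real point requiring care is the verification that the time-allocation parameters are genuinely $\Gamma$-invariant and match up, with correct multiplicities and compatibly with the order constraints, between a configuration and its marked reduced model; once that is settled, the varying almost complex structure causes no difficulty, since the spherical positivity of each $J_\rho$ is precisely the pointwise hypothesis already used in the unparametrized Lemma~\ref{35.75}.
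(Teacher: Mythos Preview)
Your proposal is correct and is precisely what the paper does: its own proof of Lemma~\ref{dimdropf} is the single line ``The proof is the same as the proof of Proposition~\ref{dimprop}'', and you have supplied the details of how the time-allocation parameters (trivially $\Gamma$-invariant, and identical in a configuration and its marked reduced model) pass through that argument. The one place to be careful is the last step of the analog of Lemma~\ref{35.75}: the time-allocation contribution to the full $\dim\MM^{\text{top}}$ is $+1$ (generic stratum) rather than $\#I^D$, but this discrepancy is exactly the $\#\text{Sing}_{D^2}\mathbb S$ term already appearing in the identity $\dim\MM = \dim\MM(\mathbb S) + 2\#\text{Sing}_{S^2}\mathbb S + \#\text{Sing}_{D^2}\mathbb S$, so the conclusion goes through unchanged.
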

See \eqref{35.19b} for the notation of the second term.
The proof is the same as the proof of Proposition \ref{dimprop}.
\par
It implies that for a global section of $\MM_{k+1}^{\text{main}}
(\{J_{\rho}\}_{\rho}:\beta;\operatorname{top}(\rho);
P_1,\dots,P_k)$ satisfying the conclusion of Theorem
\ref{reltechnicalresult}, the simplex which lies in the fixed point
locus of some non-trivial group $\Gamma$ is codimension $\ge 2$.
Thus we can define its virtual fundamental chain over $\Z_2$. We use it
to define
\begin{equation}\label{defnf}
\mathfrak f_{k,\beta}(P_1,\dots,P_k)
= ev_{0*}[\MM_{k+1}^{\text{main}}
(\{J_{\rho}\}_{\rho}:\beta;\operatorname{top}(\rho);
P_1,\dots,P_k)^{\mathfrak s'}]
\end{equation}
in the same way as (\ref{virtualfundamental}).
We put also
$$
\mathfrak f_{1,\beta_0} = \operatorname{identity}
$$
for $\beta_0 =0 \in H_2(M,L;\Z)$. (Compare (\ref{becomeidentity}).)
Then we have the following formula:
(In the next formula we write $\mathfrak m_{k,\beta}^{J_0}$, $\mathfrak m_{k,\beta}^{J_1}$
to distinguish two filtered $A_{n,K}$ structures which depend almost
complex structures $J_0$, $J_1$ together with other choices made.)
\begin{equation}\label{formulaAinftymap}
\aligned
&\sum \mathfrak m^{J_1}_{\beta',m}\left(
\mathfrak f_{\beta_1,\ell_2-\ell_1+1}(P_1,\dots,P_{\ell_1})
,\dots, \mathfrak f_{\beta_m,\ell_{m+1}-\ell_{m}+1}(P_m,\dots,P_{k})\right) \\
&= \sum \mathfrak f_{k_1,\beta_1}(P_1,\dots,P_{i-1},
\mathfrak m^{J_0}_{k_2,\beta_2}(P_i,\dots,P_{i+k_2-1}),P_{i+k_2},\dots,P_k).
\endaligned
\end{equation}
Here the sum in the right hand side is taken over
$k_1,k_2,\beta_1,\beta_2,i$ with
$k_1+k_2=k-1$, $\beta_1+\beta_2=\beta$, $i =1,\dots,k_2$
and the sum in the left hand side is taken over all
$m,\{\ell_1,\dots,\ell_{m+1}\},\beta',\beta_i$
such that $1 \le \ell_1 \le \cdots \le \ell_m \le \ell_{m+1} = k$ and $\beta' + \sum \beta_i = \beta$.
\par
In fact, the left hand side except those for $\beta' =0$, $m=1$ corresponds
(C) (in other words (iv) Proposition \ref{fkconstruct}).
The right hand side except those for $\beta_2=0$, $k_2=0$
corresponds (B) (in other words (iii) Proposition \ref{fkconstruct}).
The right hand side for $\beta_2=0$, $k_2=0$ corresponds (A)
(in other words (i) Proposition \ref{fkconstruct}).
Compatibility spelled out in Proposition \ref{fkconstruct} and
Lemma \ref{dimdropf} implies that the boundary
of $\mathfrak f_{\beta,k}(P_1,\dots,P_k)$,
that is nothing but the left hand side for $\beta' =0$, $m=1$,
is equal to the sum of the terms corresponding to (A),(B),(C).
It implies (\ref{formulaAinftymap}).
\par
(\ref{formulaAinftymap}) implies that
$$
\mathfrak f_k = \sum_{\beta} T^{\omega[\beta]}e^{\mu_L(\beta)/2}\mathfrak f_{k,\beta}
$$
define a filtered $A_{n,K}$ homomorphism. By definition $\mathfrak
f_1 \equiv $ identity mod $\Lambda_{0,{\rm nov}}^+$, where
$\Lambda_{0,{\rm nov}}^+$ is the set of sums (\ref{eq:nov0}) such
that all $\lambda_i$ are strictly positive. Therefore Theorem 4.2.45
 \cite{fooo-book1} implies that $\mathfrak f_k$ defines a homotopy
equivalence of filtered $A_{n,K}$ structures.
\subsection{From $A_{n,K}$ structure to $A_{\infty}$ structure}\label{subsec:AnktoAinfty}
Now we go back to the construction of
filtered $A_{\infty}$ structure.
We recall $(n,K) < (n',K')$ if either (1) $n+K < n'+K'$ or
(2) $n+K = n'+K'$, $n<n'$.
\begin{lem}\label{extobs}
If $(C,\mathfrak m_{k,\beta})$ (resp. $(C',\mathfrak m'_{k,\beta})$)
is a filtered $A_{n,K}$ (resp. $A_{n',K'}$) algebra with $(n,K) <
(n',K')$. Suppose there exists a filtered $A_{n,K}$ homomorphism
$\mathfrak f: C \to C'$ which is a homotopy equivalence in the sense
of filtered $A_{n,K}$ homomorphism. Then we can extend the filtered
$A_{n,K}$ structure of $C$ to a filtered $A_{n',K'}$ structure without changing
operations which already exist in the filtered $A_{n,K}$ structure. We
then can also extend $\mathfrak f$ to a homotopy equivalence in the
sense of filtered $A_{n',K'}$ homomorphism.
\end{lem}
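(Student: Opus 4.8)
The plan is to run the obstruction-theoretic induction of Section 7.2 of \cite{fooo-book} essentially verbatim; the point is that the argument is purely algebraic and uses no division, so it works over $\Z_2$ (or $\Z$) just as well as over $\Q$. First I would reduce to the case in which $(n',K')$ is the immediate successor of $(n,K)$ in the order $<$: since $(n,K) < (n',K')$ there is a finite chain $(n,K) = (n_0,K_0) < (n_1,K_1) < \cdots < (n_\ell,K_\ell) = (n',K')$ of successive pairs, and one extends one step at a time, at each stage feeding in the filtered $A_{n_j,K_j}$ structure and the homomorphism produced by the previous stage.

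For a single successor step I would recall from Chapter 4 of \cite{fooo-book} that extending $(C,\{\mathfrak m_{k,\beta}\})$ by the one new operation $\mathfrak m_{k_0,\beta_0}$ at the new bidegree amounts to solving a single inhomogeneous linear equation of the form $\delta_1(\mathfrak m_{k_0,\beta_0}) = (\text{explicit quadratic expression in the operations already present})$, where $\delta_1$ is the coboundary built from $\mathfrak m_1$. The right-hand side is automatically $\delta_1$-closed, by the $A_{n,K}$ relations already in force, so it determines an obstruction class $\mathfrak o(C)$ in the relevant $\mathfrak m_1$-Hochschild cohomology group, and the extension exists precisely when $\mathfrak o(C) = 0$. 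The crucial observation is that the homotopy equivalence $\mathfrak f\colon C \to C'$ of filtered $A_{n,K}$ algebras transports $\mathfrak o(C)$ to $\mathfrak o(C')$: pushing the right-hand side through $\mathfrak f$ and using the $A_\infty$ relations for $\mathfrak f$ identifies the two up to a $\delta_1$-coboundary coming from the lower components of $\mathfrak f$, and, since $\mathfrak f$ is a homotopy equivalence, the induced map on cohomology is an isomorphism. Because $C'$ already carries a filtered $A_{n',K'}$ structure we have $\mathfrak o(C') = 0$, hence $\mathfrak o(C) = 0$ and the desired $\mathfrak m_{k_0,\beta_0}$ exists; by construction it does not disturb the operations already defined.

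Having fixed the new operation on $C$, I would then extend $\mathfrak f$ by choosing its new component $\mathfrak f_{k_0,\beta_0}$. This is again governed by an equation of the same shape; its inhomogeneous term is $\delta_1$-closed (now using the $A_\infty$ relations already satisfied by $\mathfrak f$, $\{\mathfrak m_{k,\beta}\}$ and $\{\mathfrak m'_{k,\beta}\}$), and a short computation with those relations --- together with the identification of $\mathfrak m_1$- and $\mathfrak m'_1$-cohomology coming from $\mathfrak f$ being a homotopy equivalence --- shows that its class vanishes, so a solution $\mathfrak f_{k_0,\beta_0}$ can be chosen. The resulting $\mathfrak f$ is a filtered $A_{n',K'}$ homomorphism, and since its linear part is unchanged and still $\equiv \mathrm{id}$ modulo $\Lambda_{0,nov}^+$, Theorem 4.2.45 of \cite{fooo-book} shows it is a homotopy equivalence.

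The step I expect to require the most care is the identification of the obstruction groups (equivalently, the comparison of the two defect classes through $\mathfrak f$) for the extension of $\{\mathfrak m_{k,\beta}\}$, and the analogous comparison for the extension of $\mathfrak f$: this is where the homotopy equivalence hypothesis is genuinely used, and where the energy/degree filtration bookkeeping --- which guarantees that all the cohomology groups and all the sums over $\beta$ involved are well behaved --- must be kept straight. Everything else is a transcription of the inductive scheme of Section 7.2 of \cite{fooo-book}, with the multisections there replaced by the single-valued normally conical sections furnished by Theorem \ref{maintechnicalresult} and Proposition \ref{dimprop}, so that all chains live over $\Z_2$ (or $\Z$).
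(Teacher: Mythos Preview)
Your obstruction-theoretic sketch is correct and is precisely the content of Theorem 7.2.72 of \cite{fooo-book}, which is all the paper invokes here: it gives no proof of its own and simply cites that theorem, noting that the argument is purely algebraic.

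Two small corrections. First, your closing remark that the inductive scheme requires replacing multisections by the single-valued normally conical sections of Theorem \ref{maintechnicalresult} and Proposition \ref{dimprop} is misplaced: Lemma \ref{extobs} is a statement about abstract filtered $A_{n,K}$ algebras and homomorphisms, and its proof involves no moduli spaces, perturbations, or chains at all. The geometric input enters only in \emph{producing} the $A_{n,K}$ structures and the homomorphism $\mathfrak f$ to which the lemma is then applied. Second, you conclude that the extended $\mathfrak f$ is a homotopy equivalence because ``its linear part is unchanged and still $\equiv \mathrm{id}$ modulo $\Lambda_{0,nov}^+$'', but the lemma does not assume $\mathfrak f_1 \equiv \mathrm{id}$; it only assumes $\mathfrak f$ is an $A_{n,K}$ homotopy equivalence. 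The correct argument is that the extension leaves the component $\mathfrak f_{1,0}$ untouched, and the Whitehead-type criterion (Theorem 4.2.45 of \cite{fooo-book}) says a filtered $A_\infty$ (or $A_{n',K'}$) homomorphism is a homotopy equivalence iff $\mathfrak f_{1,0}$ is a chain homotopy equivalence --- which it was by hypothesis.
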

This is  Theorem 7.2.72 \cite{fooo-book2}. We already explained the
construction of filtered $A_{n,K}$ structure on the smooth singular
chain complex of $L$ for arbitrary but fixed $n,K$. We also proved
that they are homotopy equivalent to each other. So we can use Lemma
\ref{extobs} to construct a filtered $A_{\infty}$ structure. (See Subsection
7.2.9 \cite{fooo-book2} for detail of this construction.)
\par
Then as we mentioned before we obtain a filtered $A_{\infty}$
structure on the cohomology group $H(L;\Lambda_0^{\Z_2})$.
We remark that this is ordinary cohomology group and is not
a Floer cohomology. Namely we take the boundary operator with
respect to the usual boundary operator which does not
include $\mathfrak m_{1,\beta}$ for $\beta\ne 0$.
\par
By construction the filtered $A_{\infty}$ algebra obtained on
$H(L;\Lambda_0^{\Z_2})$ is independent, up to the homotopy
equivalence, of the choices we have made of filtered $A_{n,K}$
algebra for any $n,K$. Then we can use the following.
\begin{lem}\label{shortcut}
Let $(C,\{\mathfrak m\})$ and $(C',\{\mathfrak m'\})$ be filtered
$A_{\infty}$ algebras over $\Lambda_{0,{\rm nov}}^R$ for a finite field
$R$. We assume that they are homotopy equivalent as filtered
$A_{n,K}$ algebras for any $n,K$. We assume that $C$ and $C'$ are
finitely generated free $\Lambda_{0,{\rm nov}}^R$ modules. Then they are
homotopy equivalent as filtered $A_{\infty}$ algebras.
\end{lem}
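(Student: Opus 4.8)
The plan is to realize the desired filtered $A_\infty$ homotopy equivalence $\mathfrak f\colon C\to C'$ as an element of an inverse limit of \emph{finite} sets of truncated homomorphisms, and then to invoke the standard criterion that a filtered $A_\infty$ homomorphism whose classical part is a chain homotopy equivalence is automatically a homotopy equivalence of filtered $A_\infty$ algebras (Theorem 4.2.45 \cite{fooo-book}, already used above).

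First I would fix the two given structures $\{\mathfrak m_{k,\beta}\}$ on $C$ and $\{\mathfrak m'_{k,\beta}\}$ on $C'$, and for each pair $(n,K)$ let $\mathcal F_{n,K}$ be the set of all filtered $A_{n,K}$ homomorphisms $(C,\{\mathfrak m_{k,\beta}\})\to(C',\{\mathfrak m'_{k,\beta}\})$ in the sense of Definition 7.2.70 \cite{fooo-book} whose classical part $\mathfrak f_{1}$ (the $k=1$, $\beta=0$ component) is a chain homotopy equivalence of the underlying complexes. The key point is that $\mathcal F_{n,K}$ is a \emph{finite} set: such a homomorphism is a finite collection of operators $\mathfrak f_{k,\beta}$, indexed by the finitely many pairs relevant at level $(n,K)$, and each $\mathfrak f_{k,\beta}$ is an $R$-multilinear map among the $R$-vector spaces obtained from $C$, $C'$ by reducing modulo $\Lambda_{0,nov}^{+,R}$. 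Because $R$ is a finite field and $C$, $C'$ are finitely generated free over $\Lambda_{0,nov}^{R}$, these reductions are finite-dimensional over the finite field $R$, so each $\mathfrak f_{k,\beta}$ ranges over a finite set; this is precisely where both hypotheses of the lemma are used. Whenever $n\le n'$ and $K\le K'$, discarding the operators not relevant at level $(n,K)$ gives a restriction map $\mathcal F_{n',K'}\to\mathcal F_{n,K}$, and these are compatible, so $\{\mathcal F_{n,K}\}$ is an inverse system over the directed set $\mathbb N^{2}$ with the product order.

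Next I would show each $\mathcal F_{n,K}$ is nonempty: by hypothesis $C$ and $C'$ are homotopy equivalent as filtered $A_{n,K}$ algebras, so there is an $A_{n,K}$ homotopy equivalence with an $A_{n,K}$ homotopy inverse, and the $\beta=0$, $k=1$ parts of the homotopy data exhibit $\mathfrak f_{1}$ as a chain homotopy equivalence. Since $\mathbb N^{2}$ has the cofinal chain $(m,m)_{m\ge 0}$, the inverse limit $\varprojlim_{(n,K)}\mathcal F_{n,K}$ equals the inverse limit of the $\mathbb N$-indexed tower $\{\mathcal F_{m,m}\}$ of nonempty finite sets, which is nonempty by K\"onig's lemma (equivalently, by compactness of the product). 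A choice of element of this inverse limit is a coherent family $\{\mathfrak f^{(n,K)}\}$, hence assembles into a single collection $\mathfrak f=\{\mathfrak f_{k,\beta}\}_{k,\beta}$. Every filtered $A_\infty$ relation for a homomorphism involves only finitely many of the $\mathfrak f_{k,\beta}$, $\mathfrak m_{k,\beta}$, $\mathfrak m'_{k,\beta}$ and is therefore already verified at the corresponding truncation level; thus $\mathfrak f$ is a genuine filtered $A_\infty$ homomorphism $C\to C'$, and by construction its classical part $\mathfrak f_{1}\bmod\Lambda_{0,nov}^{+,R}$ is a chain homotopy equivalence. Applying Theorem 4.2.45 \cite{fooo-book} then shows $\mathfrak f$ is a homotopy equivalence of filtered $A_\infty$ algebras, completing the proof. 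The one step I expect to require the most care to write cleanly is the finiteness of $\mathcal F_{n,K}$ together with the bookkeeping that the restriction maps are well defined and that the ``classical part is a homotopy equivalence'' condition is inherited under truncation; once that is in place the remainder is the formal inverse-limit argument.
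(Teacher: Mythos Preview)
Your argument is correct and is essentially the same as the paper's: both form an inverse system of finite nonempty sets of truncated morphisms, take the inverse limit, and read off a genuine filtered $A_\infty$ homomorphism. The only cosmetic difference is that the paper takes $V(n,K)$ to be the set of filtered $A_{n,K}$ \emph{homotopy equivalences}, whereas you take $\mathcal F_{n,K}$ to be the set of $A_{n,K}$ homomorphisms whose classical part is a chain homotopy equivalence and then invoke Theorem~4.2.45 at the end; since these two conditions are equivalent at each finite level, the arguments coincide.
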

\begin{proof}
Let $V(n,K)$ be the set of all
filtered  $A_{n,K}$ homotopy equivalence from $(C,\{\mathfrak m\})$ to $(C',\{\mathfrak m'\})$.
Using the fact that $R$ is a finite field we can easily see that this is a finite set.
By assumption it is nonempty. Therefore the projective limit
$$
\underset{\longleftarrow}\lim V(n,K)
$$
with respect to the order $<$ on $(n,K)$ is nonempty.
It is easy to see that this implies that $(C,\{\mathfrak m\})$ is homotopy equivalent to $(C',\{\mathfrak m'\})$
as filtered $A_{\infty}$ algebra.
\end{proof}
\par
We remark that Lemma \ref{shortcut} is related to Lemma 7.2.177 \cite{fooo-book2}.
In other words, we are taking the short cut given in Subsection 7.2.11 \cite{fooo-book2}
using the fact that our ground field $R$ is $\Z_2$.
\par
We can prove that our filtered $A_{\infty}$ algebra has a unit in
the same way as in Section 7.3 \cite{fooo-book2}.
\begin{rem}\label{nogeneration}
In Section 7.2  \cite{fooo-book2}, we introduced the notion of
generation to the smooth singular chains and organized the induction
in the way depending to the generation of $P_i$. We did so in order
to work on the {\it countably generated} subcomplex of smooth
singular chains. (Actually by slightly modifying the argument there
we can work on finitely generated chain complex.)
\par
Actually we can use the whole smooth singular chains and do not need
to introduce generations. (Since our global section $\mathfrak s'_{\beta,P_1, \dots, P_k}$ may depend on $P_i$'s, we can apply
Baire's category theorem, which we need to apply general position
argument, even in case we have uncountably many singular chains.)
That is the way taken in this section.
\par
This may slightly simplify the argument of Section 7.2
\cite{fooo-book2}. On the other hand, some people (including some of
the authors of the present paper) may feel happier to use only
countably many (or finitely many) singular chains, since to make a
choice of perturbations, uncountably many times (for each of the
singular chains $P_1, \dots, P_k$) is rather a wild use of the axiom of
choice.
\end{rem}
\par
The proof of Theorem \ref{theoremF} is similar and parallel to
\cite{fooo-book1}, \cite{fooo-book2} and so we omit it.

\section{The case of $\Z$ coefficients}\label{Zcoefficient}
So far we study the case of $\Z_2$ coefficients.
The case of $\Z$ coefficients goes in mostly a similar way.
In this section we discuss it. We focus on the points which are new in $\Z$ coefficients and
avoid repeating the same material.
\par
We first discuss the construction of a filtered $A_{\infty}$ structure
on the singular chain complex $S(L;\Lambda_{0,{\rm nov}}^\Z)$ of a relatively spin Lagrangian submanifold $L$
of a spherically positive symplectic manifold $(M,\omega)$.
The construction is mostly the same as in Section \ref{AinftyconstrZ2}.
Using the relatively spin structure we can define an
orientation of the moduli space and hence the
simplices in Propositions \ref{mkconstruct} and \ref{fkconstruct} are oriented.
Moreover, the orientation is compatible
with the description of the boundary.
Thus we can define the operator $\frak m_{k,\beta}$ by using the virtual
fundamental chain
(\ref{virtualfundamental}) regarded as a singular chain with $\Z$ coefficients.
We need sign that is the same as in \cite{fooo-book2} Definition 8.4.1.
The formula (\ref{Ankformula})
becomes the following

\begin{equation}\label{Ankformulasign}
\aligned
&\frak m_{1,0}\frak m_{k,\beta}(P_1,\dots ,P_{k})
+\sum_i
(-1)^{*} \frak m_{k,\beta}(P_1,\dots,
\frak m_{1,0}
(P_i),\dots,P_k) \\
& = -
\sum_{  {  {\beta_1+\beta_2=\beta,
~k_1+k_2=k+1 ;}
            \atop  { \beta_{1} \ne 0 \text { or}
~k_{1}\ne 1,  } }
           \atop { \beta_{2} \ne 0\text { or}
~k_{2}\ne 1 }  }
\sum_i
(-1)^{*}
\frak m_{k_1,\beta_1}(P_1,\dots,
\frak m_{k_2,\beta_2}
(P_i,\dots,
P_{i+k_2-1}),\dots,P_k),
\endaligned
\end{equation}
where $* = \sum_{j=1}^{i-1} \deg' P_j$, ($\deg'P_j =\deg P_j + 1$) and
$\frak m_{1,0}=(-1)^{n}\partial$, ($n=\dim L$).
See (3.5.15) and Definition 3.5.6 of \cite{fooo-book1}.
The sign is checked in the same way as in \cite{fooo-book2} Section 8.5. Furthermore
in the same way, (\ref{formulaAinftymap}) holds with an appropriate sign.
(See \cite{fooo-book2} Subsection 8.9.1.)
\par
We thus obtain the following.
\begin{enumerate}
\item We have a pair of sequences $(n(I),K(I)) \to (\infty,\infty)$ as $I \to \infty$ and a
filtered $A_{n(I),K(I)}$ structure
$\{\frak m_{k,\beta}^{I,J}\}$ on $S(L;\Lambda_{0,{\rm nov}}^\Z)$
for each $I$.
(Here we take the whole smooth singular chain complex and do not introduce
generation. See Remark \ref{nogeneration}\footnote{We may
use generation and discuss in the same way as in \cite{fooo-book2} Section 7 as well.}.)
\item
For $I<I'$, we can prove that $(S(L;\Lambda_{0,{\rm nov}}^\Z),\{\frak m_{k,\beta}^{I,J}\})$
is homotopy equivalent to $(S(L;\Lambda_{0,{\rm nov}}^\Z),\{\frak m_{k,\beta}^{I',J}\})$
as filtered $A_{n(I),K(I)}$ structure.
\item
We use an appropriate obstruction theory and Item (2) to enhance the
filtered $A_{n(I),K(I)}$ structure $(S(L;\Lambda_{0,{\rm nov}}^\Z),\{\frak m_{k,\beta}^{I,J}\})$
to a filtered $A_{\infty}$ structure.
\item
For each $I$ the filtered $A_{\infty}$ structure obtained in  Item (3)
is independent of the choice of almost complex structure $J$ (in the given
connected component of $\mathcal
J_{(M,\omega)}^{c_1 > 0}$) and of other choices involved up to
filtered $A_{n(I),K(I)}$ homotopy equivalence.
\end{enumerate}
Now there are some difference between the case of $\Z$ coefficients and the case of Section \ref{AinftyconstrZ2}.
Namely Lemma \ref{shortcut} does not hold over $\Z$.
See \cite{fooo-book2} Remark 7.2.181.
Instead we proceed as follows.
\begin{enumerate}
\setcounter{enumi}{4}
\item Let $(S(L;\Lambda_{0,{\rm nov}}^\Z),\{\frak m_{k,\beta}^{I,J}\})$
and
$(S(L;\Lambda_{0,{\rm nov}}^\Z),\{\frak m_{k,\beta}^{\prime,I,J'}\})$
be filtered $A_{n(I),K(I)}$ structures obtained by different choices of
$J$ etc. Then there exists a pair of sequences
$(n'(I),K'(I)) \to (\infty,\infty)$ such that
$(S(L;\Lambda_{0,{\rm nov}}^\Z),\{\frak m_{k,\beta}^{I,J}\})$
is homotopy equivalent to
$(S(L;\Lambda_{0,{\rm nov}}^\Z),\{\frak m_{k,\beta}^{\prime,I,J'}\})$
as filtered $A_{n'(I),K'(I)}$ algebras.
\item Moreover for $I<I'$ the following diagram commutes
as a diagram of filtered $A_{n'(I),K'(I)}$ homomorphisms.
\begin{equation}
\CD
(S(L;\Lambda_{0,{\rm nov}}^\Z),\{\frak m_{k,\beta}^{I,J}\})
@>>>  (S(L;\Lambda_{0,{\rm nov}}^\Z),\{\frak m_{k,\beta}^{I',J}\})\\
@VVV   @VVV \\
(S(L;\Lambda_{0,{\rm nov}}^\Z),\{\frak m_{k,\beta}^{\prime,I,J'}\})  @>>>
(S(L;\Lambda_{0,{\rm nov}}^\Z),\{\frak m_{k,\beta}^{\prime,I',J'}\})
\endCD
\end{equation}
Here horizontal arrows are ones of Item (2) and the vertical arrows are
ones of Item (5).
\par
We can prove Items (5) and (6) in a way similar to \cite{fooo-book2} Subsection 7.2.10 by modifying in the following manner:
We replace multisections used in \cite{fooo-book2} Subsection 7.2.10
by a single valued normally conical section by using
our assumption (spherically positivity) and Theorem \ref{reltechnicalresult}.
\item
Then we can use \cite{fooo-book2} Lemma 7.2.129 inductively
to show that the filtered $A_{\infty}$ structure
in Item (3) is independent of $J$ (in the given connected component of
$\mathcal
J_{(M,\omega)}^{c_1 > 0}$) and other choices up to filtered $A_{\infty}$
homotopy equivalence.
\end{enumerate}
\begin{rem}
We did not claim that homotopy equivalence obtained in Item (7) above
is independent of the choices up to filtered $A_{\infty}$
homotopy.
This independence was proved in \cite{fooo-book2} Subsections 7.2.12-7.2.13 over $\Q$.
We can actually prove it in the same way over $\Z$.
Namely we define the notion of homotopy of homotopies as in
\cite{fooo-book2} Subsection 7.2.12 and proceed in the same way as in \cite{fooo-book2} Subsections 7.2.12-7.2.13.
The rather cumbersome homological algebra in \cite{fooo-book2} Subsections 7.2.12-7.2.13
is carefully designed so that it works over arbitrary coefficients.
\par
In order to prove the independence of homotopy equivalence
up to  filtered $A_{\infty}$
homotopy
over a finite field, we can again use the short cut used in
\cite{fooo-book2} Subsection 7.2.11.
\end{rem}
\par
We next mention the reason why we assumed that $H(L;\Z)$ is torsion free in
Item (4) of Theorem \ref{theoremA}.
In the last section we first construct a filtered $A_{\infty}$ structure on
the singular chain complex of $L$ and reduce
it to the singular cohomology. For this purpose we use the following lemma.
\par
Let $(C,\{\frak m_k\}_{k=0}^{\infty})$ be a filtered $A_{\infty}$
algebra over $\Lambda_{0,{\rm nov}}^{R}$.
We put
$$
C = \overline C \otimes_R \Lambda_{0,{\rm nov}}^{R},
$$
where $\overline C$ is a graded free $R$ module.
The $R$ reduction $\overline{\frak m}_1 :  \overline C \to  \overline C$  of $\frak m_1$ satisfies
$\overline{\frak m}_1\circ\overline{\frak m}_1 = 0$ and
hence $(\overline C,\overline{\frak m}_1)$ is a chain complex.
\begin{lem}\label{canolem}
Suppose we have a direct sum decomposition
$
\overline C = \overline D \oplus \overline D'
$
of $R$ module such that $(\overline D,\overline{\frak m}_1)$ is a subcomplex of
$(\overline C,\overline{\frak m}_1)$ and the inclusion
$
(\overline D,\overline{\frak m}_1) \to (\overline C,\overline{\frak m}_1)
$
is a chain homotopy equivalence.
Then a filtered $A_{\infty}$ structure is induced on $D=\overline D \otimes_R \Lambda_{0,{\rm nov}}^{R}$
such that it is homotopy equivalent to $C$.
\end{lem}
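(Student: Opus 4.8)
The plan is to follow the proof of Theorem 5.4.2 of \cite{fooo-book}, keeping track of the fact that all the algebra involved is manifestly defined over $\Z$ and uses no division, so that it applies with $\Lambda_{0,nov}^{R}$ in place of $\Lambda_{0,nov}^{\Q}$. The only inputs are that $\overline C$ (hence $\overline D,\overline D'$) is a graded $R$-module, on which all operations are $R$-linear, together with the chain homotopy equivalence between $(\overline D,\overline{\mathfrak m}_1)$ and $(\overline C,\overline{\mathfrak m}_1)$.

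First I would promote the given chain homotopy equivalence to a strong deformation retract over $R$. Using the decomposition $\overline C=\overline D\oplus\overline D'$ together with a chain homotopy inverse of the inclusion $\iota_0:\overline D\hookrightarrow\overline C$, one produces $R$-linear maps $\pi_0:\overline C\to\overline D$ and an operator $G_0:\overline C\to\overline C$ with $\pi_0\iota_0=\mathrm{id}$, $\iota_0\pi_0-\mathrm{id}=\overline{\mathfrak m}_1 G_0+G_0\overline{\mathfrak m}_1$, and the side conditions $G_0G_0=0$, $G_0\iota_0=0$, $\pi_0G_0=0$; this normalization is standard and purely formal, requiring no division. Tensoring with $\Lambda_{0,nov}^{R}$ gives a strong deformation retract of $(C,\overline{\mathfrak m}_1)$ onto $(D,\overline{\mathfrak m}_1)$, still denoted $(\iota_0,\pi_0,G_0)$.

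Next I would run the homological perturbation lemma twice. Put $\delta=\mathfrak m_1-\overline{\mathfrak m}_1$; since $\mathfrak m_1\equiv\overline{\mathfrak m}_1$ modulo $\Lambda_{0,nov}^{+,R}$, the operator $\delta$ raises the energy filtration by a positive amount, so $\mathrm{id}-\delta G_0$ is invertible $\Lambda_{0,nov}^{R}$-adically and $A:=\sum_{n\ge0}(\delta G_0)^{n}\delta$ converges, the convergence using discreteness of the value monoid $\mathfrak G(L,J)$ (the set of exponents below any bound is finite). The standard formulas yield a strong deformation retract $(\iota_1,\pi_1,G_1)$ from $(C,\mathfrak m_1)$ onto $(D,\mathfrak m_1^{D})$, $\mathfrak m_1^{D}=\overline{\mathfrak m}_1+\pi_0A\iota_0$. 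To transfer the whole $A_\infty$ structure, I would pass to the bar complexes $BC$, $BD$, where the filtered $A_\infty$ structures become codifferentials, extend $(\iota_1,\pi_1,G_1)$ to a strong deformation retract of $BC$ onto $BD$ by the tensor-coalgebra (``tensor trick'') construction, and apply the homological perturbation lemma once more to the codifferential perturbation coming from all $\mathfrak m_{k,\beta}$ with $(k,\beta)\ne(1,0)$. The resulting codifferential on $BD$ defines the operations $\{\mathfrak m_k^{D}\}$, and the perturbed inclusion gives a filtered $A_\infty$ homomorphism $\mathfrak f:D\to C$. Finally, since the linear part of $\mathfrak f$ reduces mod $\Lambda_{0,nov}^{+,R}$ to $\iota_0$, which is a chain homotopy equivalence, Theorem 4.2.45 of \cite{fooo-book} shows that $\mathfrak f$ is a homotopy equivalence of filtered $A_\infty$ algebras, which is the assertion of Lemma \ref{canolem}.

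The main obstacle is the bookkeeping of the two nested perturbation arguments and, above all, the convergence of the second tree sum: on each bar-length component $C[1]^{\otimes m}$ every term of the perturbation either strictly decreases the bar length (the $\mathfrak m_k$, $k\ge2$, vertices) or strictly raises the energy (the $\mathfrak m_{1,\beta}$, $\beta\ne0$, vertices), so after finitely many applications the bar length is exhausted and the remaining contributions are controlled $\Lambda_{0,nov}^{+,R}$-adically; this is where one must be careful, but once it is checked that the homological-perturbation and tree-summation formulas involve only $\Z$-linear combinations, the passage from $\Q$ to $\Z$ or $\Z_2$ coefficients is automatic and the rest is the same homological algebra as in \cite{fooo-book}.
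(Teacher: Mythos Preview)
Your proposal is correct and follows the same route as the paper, which simply states that the proof is identical to that of Theorem~5.4.2 in \cite{fooo-book} and omits it; your outline via a strong deformation retract and the homological perturbation/tree-sum transfer is precisely the standard execution of that argument, and your observation that all the formulas are $\Z$-linear is exactly the point needed to pass from $\Q$ to an arbitrary $R$.
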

The proof of Lemma \ref{canolem} is the same as the proof of
\cite{fooo-book1}
Theorem 5.4.2 and so is omitted.
\begin{exm}\label{Examplefree}
(1) When $R$ is a field,
we always have a splitting
\begin{equation}\label{homologysplitting}
\overline C = H(\overline C,\overline{\frak m}_1) \oplus \overline C'
\end{equation}
such that $H(\overline C,\overline{\frak m}_1)$ (with zero boundary operator)
is chain homotopy equivalent to $(\overline C,\overline{\frak m}_1)$.
Therefore by Lemma \ref{canolem}
we obtain a filtered $A_{\infty}$ structure on $H(\overline C,\overline{\frak m}_1)
\otimes_R \Lambda_{0,{\rm nov}}^{R}$.
We used this fact in the last section.
\par
(2) In the case $R= \Z$ (or any Dedekind domain), the splitting
(\ref{homologysplitting}) exists if
$H(\overline C,\overline{\frak m}_1)$ is torsion free.
(Note that any finitely generated torsion free module over a
Dedekind domain is projective.)
Therefore under the assumption that
$H(L;R)$ is torsion free, the filtered $A_{\infty}$ structure
on the singular chain complex induces one on its
singular cohomology.
\end{exm}
\begin{rem}\label{freecomplex}
In general, we can obtain a filtered $A_{\infty}$ structure
on a finite dimensional complex over $\Lambda_{0,{\rm nov}}^{\Z}$
using some additional information on $L$
as follows:
\begin{enumerate}
\item
Let us fix a Morse function $f$ on $L$. We obtain a Morse complex
$C(L;f)$. It is a chain complex over $\Z$ and is torsion free as $\Z$ module.
Using Lemma \ref{canolem} and a filtered $A_{\infty}$ structure on the singular chain
complex we obtain a filtered $A_{\infty}$ structure on
$C(L;f) \otimes \Lambda_{0,{\rm nov}}^{\Z}$.
See \cite{fooo-morse} Theorem 5.1.
\item
Let us take either a simplicial decomposition or a CW decomposition of $L$.
Then we have a finite dimensional chain complex $C(L;\Z)$ which is free over $\Z$ and is
chain homotopy equivalent to the singular chain complex of $L$.
So we obtain a filtered $A_{\infty}$ structure on
$C(L; \Lambda_{0,{\rm nov}}^{\Z})$.
\end{enumerate}
\end{rem}
To prove the last claim in Item (4) of Theorem \ref{theoremA} it suffices to prove the following.
\begin{lem}\label{tensorQheq}
Let $(S(L;\Lambda_{0,{\rm nov}}^{\Z}),\{\frak m_k^{\Z}\})$
be the filtered $A_{\infty}$ algebra in Theorem \ref{theoremA}
and $(S(L;\Lambda_{0,{\rm nov}}^{\Q}),\{\frak m_k^{\Q}\})$
the filtered $A_{\infty}$ algebra in
\cite{fooo-book1} Theorem 3.5.11.
Then
 $(S(L;\Lambda_{0,{\rm nov}}^{\Z}),\{\frak m_k^{\Z}\}) \otimes \Q$
 is homotopy equivalent to $(S(L;\Lambda_{0,{\rm nov}}^{\Q}),\{\frak m_k^{\Q}\})$.
\end{lem}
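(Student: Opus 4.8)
\textbf{Proof proposal for Lemma \ref{tensorQheq}.}
The plan is to compare the two filtered $A_\infty$ algebras not directly, but through a common chain of intermediate objects built from moduli spaces, exactly as the invariance arguments of \cite{fooo-book} Subsections 7.2.9--7.2.13 are organized, but now carried out so that a single family of Kuranishi structures is perturbed in two different ways: once by a multisection (over $\Q$) and once by a single-valued normally conical section (over $\Z$, hence over $\Q$ after $\otimes\Q$). The key point is that both the construction of $\{\frak m_k^{\Q}\}$ in \cite{fooo-book} and the construction of $\{\frak m_k^{\Z}\}$ in Section \ref{AinftyconstrZ2} (and Section \ref{Zcoefficient}) use the \emph{same} moduli spaces $\MM_{k+1}^{\text{\rm main}}(\beta;P_1,\dots,P_k)$ with the same Kuranishi structures; only the perturbation scheme differs. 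So the first step is to recall that for a fixed $(n,K)$ a multisection $\frak s^{\frak m}$ and a single-valued normally conical section $\frak s'$ on the same space with Kuranishi structure, both sufficiently $C^0$-close to the original Kuranishi map and compatible with the boundary decompositions, produce virtual fundamental chains over $\Q$ which are chain-homotopic; more precisely they are cobordant via the virtual fundamental chain of a one-parameter family (the pseudo-isotopy). This is a standard comparison in the Kuranishi package (see \cite{fooo-book} Subsection 7.2.9 and Lemma A1.39); the only new input is that the single-valued section of Theorem \ref{maintechnicalresult}, being normally conical and with zero locus of the expected dimension after discarding the $\Gamma\ne\{1\}$ strata (which by Proposition \ref{dimprop} have codimension $\ge 2$), defines over $\Q$ a rational cycle representing the same rational virtual fundamental class as the multisection.

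Second, I would set this up at the level of filtered $A_{n,K}$ structures. For each $I$ we have $(S(L;\Lambda_{0,nov}^{\Z}),\{\frak m_{k,\beta}^{I,J}\})$ from Item (1) of Section \ref{Zcoefficient}; tensoring with $\Q$ gives a filtered $A_{n(I),K(I)}$ algebra over $\Lambda_{0,nov}^{\Q}$. On the other hand \cite{fooo-book} produces, from the multisection perturbation of the same moduli spaces, a filtered $A_{n(I),K(I)}$ algebra $(S(L;\Lambda_{0,nov}^{\Q}),\{\frak m_{k,\beta}^{\frak m,I,J}\})$. Using a one-parameter family of perturbations on $\MM_{k+1}^{\text{\rm main}}(\{J_\rho\}:\beta;\operatorname{top}(\rho);P_1,\dots,P_k)$ interpolating between the single-valued normally conical section and the multisection --- here we again invoke the relative existence result (Theorem \ref{reltechnicalresult}) on the $\Z$-side and its multisection analogue Theorem A1.23 \cite{fooo-book} on the $\Q$-side, glued along the family parameter --- one builds a filtered $A_{n(I),K(I)}$ homomorphism $\frak f^I: (S(L;\Lambda_{0,nov}^{\Z}),\{\frak m_{k,\beta}^{I,J}\})\otimes\Q \to (S(L;\Lambda_{0,nov}^{\Q}),\{\frak m_{k,\beta}^{\frak m,I,J}\})$ with $\frak f^I_1 \equiv \mathrm{id}$ modulo $\Lambda_{0,nov}^{+}$, exactly as in the construction of $\mathfrak f_k$ in Subsection \ref{subsec:Ankhom}. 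By Theorem 4.2.45 \cite{fooo-book} this is a homotopy equivalence of filtered $A_{n(I),K(I)}$ algebras. Third, one checks the compatibility of these $\frak f^I$ under $I<I'$ with the enhancement homomorphisms of Item (2)/Item (3) of Section \ref{Zcoefficient} and the corresponding ones in \cite{fooo-book} Subsection 7.2.9 --- this is a diagram-chase of the same type as Item (6) of Section \ref{Zcoefficient}, using \cite{fooo-book} Lemma 7.2.129 --- so that the $\frak f^I$ pass to the limit and give a filtered $A_\infty$ homotopy equivalence between $(S(L;\Lambda_{0,nov}^{\Z}),\{\frak m_k^{\Z}\})\otimes\Q$ and the filtered $A_\infty$ algebra that \cite{fooo-book} assembles out of the $A_{n(I),K(I)}$ pieces, i.e. $(S(L;\Lambda_{0,nov}^{\Q}),\{\frak m_k^{\Q}\})$. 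Finally, since homotopy-equivalent filtered $A_\infty$ algebras induce homotopy-equivalent canonical models on cohomology (Lemma \ref{canolem} and \cite{fooo-book} Theorem 5.4.2), the statement of Theorem \ref{theoremA} (4) for $\Q$-coefficients follows; Item (4) of Theorem \ref{theoremF} is handled identically with bimodules in place of algebras.

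I expect the main obstacle to be not the formal homological-algebra bookkeeping (which is literally the machinery of \cite{fooo-book} Sections 7.2.9--7.2.13, designed to work over any coefficient ring) but rather the \emph{compatibility of the two perturbation schemes on the moduli spaces with corners}, in particular making a normally conical single-valued section and a transversal multisection agree, after $\otimes\Q$, as the two ends of a family perturbation in a way that is simultaneously compatible with all the fiber-product boundary strata \eqref{boundary1}, \eqref{boundary3}, \eqref{boundary4}, \eqref{boundary5}. The normally conical structure of Definition \ref{normalcondef} is rigid near the $\Gamma\ne\{1\}$ locus, while a generic multisection is not constrained there; one must use that on those strata the virtual dimension drops by $\ge 2$ (Proposition \ref{dimprop}, Lemma \ref{dimdropf}) so that they contribute nothing to the relevant chains on either side, and then interpolate only on the complement, extending the interpolation conically by Theorem \ref{reltechnicalresult}. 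Managing this inductively over $(\beta,k)$ and over the codimension-$k$ corners, while keeping the obstruction-bundle identifications of Remark \ref{impreciseobst} straight, is the technically delicate part; everything downstream is a routine application of the cited lemmas.
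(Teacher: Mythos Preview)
Your overall strategy matches the paper's: use the time-allocation moduli space $\MM_{k+1}^{\text{main}}(\{J\}_{\rho}:\beta;\operatorname{top}(\rho);P_1,\dots,P_k)$ (with constant $J$) to interpolate between the single-valued normally conical perturbation $s^{\Z}$ at $\rho=0$ and the multisection $s^{\Q}$ at $\rho=1$, and read off an $A_{n,K}$ homotopy equivalence via (\ref{defnf}). The downstream formal bookkeeping you outline (passing to $A_\infty$ via Items (5)--(7) of Section \ref{Zcoefficient}) is correct and is what the paper leaves implicit.

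Where your proposal is less precise than the paper is exactly at the point you flag as the obstacle. You suggest to ``interpolate only on the complement'' of the $\Gamma\ne\{1\}$ locus and extend conically. But the interpolating object must be a \emph{multisection} (its $\rho=1$ boundary is one), so you cannot invoke Theorem \ref{reltechnicalresult} for it, and a generic multisection is simply not normally conical near the singular strata. The paper resolves this by introducing a common class, \emph{weakly transversal and weakly normally conical multisections}, defined by replacing (\ref{35.51}) with
\[
s(p)=s^{\Gamma}(\pi_\Gamma(p))+\sum_{\Gamma'\subsetneq\Gamma}\Bigl((1-\chi)\,s^{\Gamma'}(\pi_\Gamma(p))+\chi\,s^{\Gamma'}(r_\Gamma(d)(p))\Bigr),\qquad \chi=\exp\bigl(d^{-2}-\rho_\Gamma(p)^{-1}\bigr),
\]
which agrees with (\ref{35.51}) when $s$ is single-valued (then $s^{\Gamma'}(\pi_\Gamma(p))=0$) but also accommodates multisections. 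One checks $s^{\Q}$ can be chosen in this class (take $s^{\Gamma'}$ locally constant in the normal direction), and then the interpolating multisection is built inductively over strata with the extra stipulations that (i) on components with time allocation $0$ the $E^{\perp}$-component vanishes, (ii) elsewhere it is transversal, and (iii) near the time-allocation-$0$ stratum it is normally conical in that stratum's normal direction. These three conditions are precisely what forces compatibility with $s^{\Z}$ at $\rho=0$ while remaining a multisection compatible with $s^{\Q}$ at $\rho=1$, and they guarantee the zero set still triangulates with the correct dimension count. Your dimension-drop observation (Proposition \ref{dimprop}) is used here, but by itself it does not produce the interpolating section; the weakened conical condition is the missing device.
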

\begin{proof}
We consider the moduli space
\begin{equation}\label{topmoduli}
\MM_{k+1}^{\text{main}}
(\{J\}_{\rho}:\beta;\operatorname{top}(\rho);
P_1,\dots,P_k)
\end{equation}
defined in Section \ref{AinftyconstrZ2}.
Here we fix $J \in \mathcal J_{(M,\omega)}^{c_1 > 0}$ and use
the constant family $\rho \mapsto J$ of almost complex structures.
We use appropriate perturbation of this moduli space to define
the required homotopy equivalence.
\par
Note that we took a system of single valued normally conical sections
$s^{\Z}$ of the
moduli space $\mathcal M_{k+1}^{\text{main}}(\beta;P_1,\dots,P_k)$ to define $\frak m_k^{\Z}$.
We also took (in \cite{fooo-book1}) a transversal multisection $s^{\Q}$ of the same moduli space
to define $\frak m_k^{\Q}$.
We will find a perturbation of (\ref{topmoduli}) which interpolates these two perturbations.
\begin{defn}
A strongly piecewise smooth multisection $s$ on a space with Kuranishi structure $X$ is
said to be a {\it weakly transversal and weakly normally conical} multisection, if
the following condition is satisfied.
We decompose
$$
s = \bigoplus_i s^{\Gamma_i} \oplus s^{\Gamma} \oplus s^{\perp}
$$
according to the decomposition (\ref{35.41}) (\ref{35.41-2}).
\begin{enumerate}
\item
Each of $s^{\Gamma}$ is of general position to $0$ on $X^{\cong}(\Gamma)$.
\item
Instead of (\ref{35.51}) the following equality is satisfied.
\begin{equation}\label{conicprime}
\aligned
s(p) = &s^{\Gamma}(\pi_{\Gamma}(p)) \\
&+
\sum_{\Gamma'\subsetneq \Gamma}
\left(\left(1 - \exp\left(\frac{1}{d^2} - \frac{1}{\rho_{\Gamma}(p)}\right)\right)s^{\Gamma'}(\pi_{\Gamma}(p))
\right.
\\
&\qquad\qquad\left.
+ \exp\left(\frac{1}{d^2} - \frac{1}{\rho_{\Gamma}(p)}\right) s^{\Gamma'}(r_{\Gamma}(d)(p))\right).
\endaligned
\end{equation}
(Note for each branch of $s^{\Gamma'}(\pi_{\Gamma}(p))$ there is a branch of $ s^{\Gamma'}(r_{\Gamma}(d)(p))$.
We take the sum branch-wise in (\ref{conicprime}).)
\end{enumerate}
\end{defn}
We note that if $s$ is a single valued section then $s^{\Gamma'}(\pi_{\Gamma}(p))$
is automatically $0$.
(Note $s^{\Gamma'}$ is a
component of $s^{\perp}$.) So (\ref{conicprime}) coincides with (\ref{35.51}) in that case.
When $s$ is a multisection $s^{\Gamma'}$ ($\Gamma' \subsetneq \Gamma$) may not be $0$ on
$X^{\cong}(\Gamma)$.
\par
We also note that we may choose our multisection $s^{\Q}$
so that it is weakly transversal and weakly normally conical.
In fact, we may take $(s^\Q)^{\Gamma'}$ to be locally constant
in the normal direction. Namely
$$
(s^\Q)^{\Gamma'}(\pi_{\Gamma}(p))
= (s^\Q)^{\Gamma'}(r_{\Gamma}(d)(p)).
$$
Now in the same way as in Section \ref{AinftyconstrZ2} we can
construct a system of weakly transversal and weakly normally conical multisections $s$
on (\ref{topmoduli}) such that
it is compatible with $s^{\Z}$ on the component of time allocation $0$
and with $s^{\Q}$ on the component of time allocation $1$.
\par
We may choose $s$ so that $s^{-1}(0)$ has a triangulation,  in the following
way.
\begin{enumerate}
\item
As for the component of the obstruction bundle coming from the
component with time allocation $0$, we assume $s^{\perp} = 0$.
\item
For other components, we assume $s$ is transversal to zero.
\item
In the tubular neighborhood of the stratum of the moduli  space defined by the
equation that time allocation $= 0$, the section $s^{\perp}$
is normally conical.
\end{enumerate}
We explain Item (1) above more precisely.
Let $p = (\Sigma,\vec z,u,\{\rho_a\})$ represent an element of
$\MM_{k+1}^{\text{main}}
(\{J\}_{\rho}:\beta;\operatorname{top}(\rho))$,
where $(\Sigma,\vec z)$ is a bordered marked Riemann surface,
$u : (\Sigma,\partial \Sigma) \to (M,L)$ and $\{\rho_a\}$ is a time allocation.
The fiber $E_p$ of the obstruction bundle is a
finite dimensional subspace of $C^{\infty}(\Sigma;u^*TM\otimes \Lambda^{0,1})$,
the space of the smooth sections of the vector bundle $u^*TM\otimes \Lambda^{0,1}$.
(See \cite{fooo-book2} proof of Proposition 7.1.12.)
The support of elements of $E_p$ is disjoint from the singular points.
And $E_p$ is decomposed to the direct sum
$\bigoplus_a (E_p)_a$, where $\Sigma = \cup_{a}\Sigma_a$ is a decomposition
to the components and elements of $(E_p)_a$ are supported on $\Sigma_a$.
The consistency with $s^{\Z}$ requires us to set the component of $s^{\perp}$
in $(E_p)_a$ with $\rho_a=0$ is zero. This is Item (1) above.
\par
We can construct such a multisection by the inducition of the stratum in the
same way as in Section \ref{section35.3}.
It is easy to see that  $s^{-1}(0)$
satisfies the conclusion of Theorem \ref{maintechnicalresult}.
Therefore, we can use it in the same way as (\ref{defnf}) to define the
required homotopy equivalence.
The proof of Lemma \ref{tensorQheq} is complete.
\end{proof}
The proof of Theorem \ref{theoremA} is now complete.
\qed
\par\smallskip
The proof of Theorem \ref{theoremF} over $\Z$ is again similar and parallel to
\cite{fooo-book1}, \cite{fooo-book2}. So we omit it.

\section{How the results of \cite{fooo-book1}, \cite{fooo-book2}
are generalized to $\Z$ or $\Z_2$ coefficients}
\label{whichandhow}

\subsection{Statements}
\label{state}

Throughout this section we always assume $(M,\omega)$ is a
spherically positive symplectic manifold.
We assume its Lagrangian submanifold $L$ is compact, oriented and
relatively spin when the ground ring $R$ is a Dedekind domain (for example $\Z$) or
is a finite field
of odd characteristic (for example $\Z_p=\Z/p\Z$ ($p\ne 2$)). In case when $R$ is a finite field
of even  characteristic (for example $\Z_2$)
we only assume $L$ to be a
compact Lagrangian submanifold.
\par
The various structures we mention in this section depend on the connected
component of $\JJ_{(M,\omega)}^{c_1 > 0}$.
The various well-definedness or functoriality statement
should be understood in the same way as in Theorem \ref{theoremA}
(2).
\par\medskip
The following is a version of Theorem B \cite{fooo-book1}.
Theorem \ref{theoremB} follows from Theorem \ref{theoremA} by a
purely algebraic argument.
\begin{theorem}\label{theoremB}
We can associate a set $\mathcal M_{\text{\rm
weak}}(L;R)$ and a map
$
\frak{PO} : \mathcal M_{\text{\rm weak}}(L;R) \to \Lambda^{+,R}_{0,{\rm nov}},
$
which have the following properties
\begin{enumerate}
\item
There is a Floer cohomology
$HF((L,b_1),(L,b_0);\Lambda^R_{0,{\rm nov}})$ parameterized by
$$
(b_1,b_0)
\in \mathcal M_{\text{\rm weak}}(L;R) \times_{\frak{PO}}
\mathcal M_{\text{\rm weak}}(L;R).
$$
Here $\mathcal M_{\text{\rm weak}}(L;R) \times_{\frak{PO}}
\mathcal M_{\text{\rm weak}}(L;R)$ is the set of pairs
$(b_1,b_0) \in \mathcal M_{\text{\rm weak}}(L;R) \times
\mathcal M_{\text{\rm weak}}(L;R)$ such that
$\frak{PO}(b_0) = \frak{PO}(b_1)$.
\item
There exists a product structure
$$\aligned
\frak m_2 :
HF((L,b_2),(L,b_1);\Lambda^R_{0,{\rm nov}}) &\otimes HF((L,b_1),(L,b_0);\Lambda^R_{0,{\rm nov}}) \\
&\to HF((L,b_2),(L,b_0);\Lambda^R_{0,{\rm nov}})
\endaligned$$
if $(b_1,b_0),  (b_2,b_1)
\in \mathcal M_{\text{\rm weak}}(L;R) \times_{\frak{PO}} \mathcal M_{\text{\rm
weak}}(L;R)$. The product $\frak m_2$ is associative. In particular, $HF((L,b),(L,b);\Lambda^R_{0,{\rm nov}})$ has the ring structure for any $b \in \mathcal M_{\text{\rm weak}}(L;R)$.
\item
If $\psi : (M,L) \to (M',L')$ is a
symplectic diffeomorphism, then
it induces a bijection
$\psi_* : \mathcal M_{\text{\rm weak}}(L;R) \to \mathcal M_{\text{\rm weak}}(L';R)$ such that
$
\frak{PO}\circ \psi_*  =  \frak{PO}.
$
The map $\psi_*$ depends only on the isotopy class of symplectic diffeomorphism $\psi$.
\item
In the situation of {\rm (3)}, we have an isomorphism
$$
\psi_*  : HF((L,b_1),(L,b_0);\Lambda^R_{0,{\rm nov}})
\cong HF((L',\psi_*(b_1)),(L',\psi_*(b_0));\Lambda^R_{0,{\rm nov}}),
$$
and $\psi_*$ commutes with $\frak m_2$.
\item
The isomorphism
$\psi_*$ in {\rm (4)} depends only on the isotopy class of symplectic diffeomorphism $\psi : (M,L) \to
(M',L')$. Moreover $(\psi\circ \psi')_* = \psi_* \circ \psi'_*$.
\end{enumerate}
\end{theorem}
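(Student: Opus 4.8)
The plan is to deduce Theorem \ref{theoremB} from Theorem \ref{theoremA} by the purely algebraic machinery of (weak) bounding cochains and Floer cohomology of filtered $A_\infty$ algebras, exactly as in Chapters~3 and 4 of \cite{fooo-book}, checking only that no step of that machinery uses division, so that it remains valid over each coefficient ring $R$ allowed in Theorem \ref{theoremB}. Indeed the geometric input specific to the present paper — namely the unital filtered $A_\infty$ algebra, its unit, and its functoriality under symplectic diffeomorphisms — is already supplied over these $R$ by Theorem \ref{theoremA}.

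First I would recall from Theorem \ref{theoremA}(1),(3) the unital filtered $A_\infty$ algebra $(H(L;\Lambda_{0,nov}^R),\{\mathfrak m_k^J\})$ attached to the connected component of $J$, with unit $\mathbf e = PD([L])$. Following \cite{fooo-book} Subsection~3.6.3, define a \emph{weak bounding cochain} to be an element $b$ of (the odd-degree part of) $H(L;\Lambda_{0,nov}^{+,R})$ satisfying $\sum_{k\ge 0}\mathfrak m_k^J(b,\dots,b) = \mathfrak{PO}(b)\,\mathbf e$ for some $\mathfrak{PO}(b)\in\Lambda_{0,nov}^{+,R}$, the sum converging by the gapped energy filtration; set $\mathcal M_{\text{weak}}(L;R)$ to be the set of such $b$ modulo the gauge equivalence of \cite{fooo-book} Definition~4.3.1, and observe that $\mathfrak{PO}$ descends to this quotient. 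For a pair $(b_1,b_0)$ with $\mathfrak{PO}(b_1)=\mathfrak{PO}(b_0)$ put $\delta_{b_1,b_0}(x)=\sum_{k,l\ge 0}\mathfrak m_{k+l+1}^J(b_1^{\otimes k},x,b_0^{\otimes l})$; the $A_\infty$ relations together with the two weak Maurer--Cartan equations and the equality of potentials force $\delta_{b_1,b_0}\circ\delta_{b_1,b_0}=0$ (the two $\mathfrak{PO}$-contributions cancel precisely because $\mathbf e$ is a unit), and one sets $HF((L,b_1),(L,b_0);\Lambda_{0,nov}^R)$ to be its cohomology. The deformed product $\mathfrak m_2^{b_2,b_1,b_0}(x,y)=\sum_{k,l,m\ge 0}\mathfrak m_{k+l+m+2}^J(b_2^{\otimes k},x,b_1^{\otimes l},y,b_0^{\otimes m})$ is a chain map with respect to the $\delta$'s, descends to $HF$, is associative because the associated $\mathfrak m_3^J$-term supplies the nullhomotopy, and the class of $\mathbf e$ is its ring unit on $HF((L,b),(L,b);\Lambda_{0,nov}^R)$. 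This gives (1) and (2). That all of this depends only on the connected component of $J$ follows from Theorem \ref{theoremA}(1): two choices give a homotopy equivalence of unital filtered $A_\infty$ algebras, which (again by the purely algebraic results of \cite{fooo-book}, Theorem~4.3.14 and Subsection~3.7.6) induces a bijection on $\mathcal M_{\text{weak}}$ intertwining $\mathfrak{PO}$ and isomorphisms on $HF$ compatible with $\mathfrak m_2$.

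For (3)--(5) I would feed the $A_\infty$ isomorphism $\psi_*$ of Theorem \ref{theoremA}(2) — unital, with homotopy class depending only on the isotopy class of $\psi$, and satisfying $(\psi\circ\psi')_*=\psi_*\circ\psi'_*$ — into the same functoriality: $\psi_*$ carries weak bounding cochains to weak bounding cochains (unitality makes it send the potential equation to the potential equation), hence descends to a bijection $\psi_*:\mathcal M_{\text{weak}}(L;R)\to\mathcal M_{\text{weak}}(L';R)$ with $\mathfrak{PO}\circ\psi_*=\mathfrak{PO}$, and it induces isomorphisms $\psi_*:HF((L,b_1),(L,b_0);\Lambda_{0,nov}^R)\cong HF((L',\psi_*b_1),(L',\psi_*b_0);\Lambda_{0,nov}^R)$ commuting with $\mathfrak m_2$; the dependence only on the isotopy class and the composition law $(\psi\circ\psi')_*=\psi_*\circ\psi'_*$ are inherited verbatim from Theorem \ref{theoremA}(2). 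The one point that genuinely has to be checked — and where I expect the (light) ``work'' to lie — is that nowhere in \cite{fooo-book} Chapters~3--4 is an integer inverted: the descent of $\mathfrak{PO}$, the identity $\delta_{b_1,b_0}^2=0$, the theory of gauge equivalence, and the invariance under homotopy equivalence are all stated and proved over $\Lambda_{0,nov}^R$ for arbitrary commutative $R$. Granting this, the argument applies over a Dedekind domain (e.g.\ $\Z$) or a finite field of odd characteristic with $L$ oriented and relatively spin, and over $\Z_2$ with $L$ merely a compact Lagrangian, and Theorem \ref{theoremB} follows.
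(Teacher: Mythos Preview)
Your proposal is correct and matches the paper's approach exactly: the paper states only that ``Theorem~\ref{theoremB} follows from Theorem~\ref{theoremA} by a purely algebraic argument,'' and you have spelled out precisely that argument --- the weak Maurer--Cartan theory, the deformed differential and product, and the functoriality under unital $A_\infty$ homotopy equivalence from \cite{fooo-book} Chapters~3--4 --- together with the observation that none of it requires inverting integers. The only mild caveat is that Theorem~\ref{theoremA} literally provides the $A_\infty$ structure on $H(L;\Lambda_{0,nov}^R)$ only for $R=\Z_2$ or (under the torsion-free hypothesis) $R=\Z$, so for the remaining coefficient rings in Section~\ref{whichandhow} one should invoke the chain-level construction of Sections~\ref{AinftyconstrZ2}--\ref{Zcoefficient} (or one of the finite models of Remark~\ref{freecomplex}); this changes nothing in your algebraic argument.
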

\begin{rem}\begin{enumerate}
\item
We do not include bulk deformations in Theorem \ref{theoremB}.
Namely we consider $\mathcal M_{{\rm weak}}(L;R)$ and not
$\mathcal M_{{\rm weak},\text{\rm def}}(L;R)$.
Including bulk deformations is a bit difficult to work out
in our case for coefficients in a Dedekind domain or a finite field.
For example, we used homotopy theory of filtered $L_{\infty}$ algebra
in \cite{fooo-book2} Section 7.4, for the algebraic formulation of
bulk deformations and of the operator $\frak q$.
Homotopy theory of filtered $L_{\infty}$ algebra
is hard to study over torsion coefficients.
However, it seems very likely that we can go around this problem and
define $\mathcal M_{{\rm weak,def}}(L;R)$ and Floer cohomology parametrized
by it over a Dedekind domain or a finite field coefficients.
We postpone it to future research.
\item
The Maurer-Cartan moduli space $\mathcal M_{\text{\rm weak}}(L;R)$
in Theorem \ref{theoremB}
is one over $\Lambda_{0,{\rm nov}}^{+,R}$.
Namely it is the set of the gauge equivalence classes of the
chains $b\in S(L;\Lambda_{0,{\rm nov}}^{+,R})$ satisfying the equation
\begin{equation}\label{MCequation}
\sum_{k=0}^{\infty} \frak m_k(b^k) \equiv 0  \mod \Lambda_{0,{\rm nov}}^{+,R}\text{\bf e}
\end{equation}
where $\text{\bf e}$ is the unit. (See \cite{fooo-book1} Section 4.3.)
In \cite{fooo:toric1,fooo:bulk,fukaya:cyc} we enhanced it to one over $\Lambda_{0,{\rm nov}}^{R}$
coefficients in case
$R=\R$ or $R=\C$. We will discuss this enhancement in the case when $R$ is a Dedekind domain or a finite field
in Subsection \ref{MClambda0}.
\end{enumerate}
\end{rem}
The next result is a version of Theorem C \cite{fooo-book1}.
The proof is the same as that of Theorem C \cite{fooo-book1} by using Theorem \ref{theoremA}.
\begin{theorem}\label{theoremC}
There exists a series of positive integers $m_k < \dim L/2$ and classes
$$
[o^{2m_k}_k(L;\operatorname{weak})] \in
H^{2m_k}(L;R)
$$
$k=1,2,\dots$, such that,
if $[o^{2m_k}_k(L;\operatorname{weak})]$ are all zero, then
$\mathcal M_{\text{\rm weak}}(L;R)$ is nonempty.
The number $2-2m_k$ is a sum of the Maslov indices of a finite collection of
the homotopy classes in $\pi_2(M,L)$ realized by pseudo-holomorphic discs
$($with respect to a given almost complex structure on $M$$)$.
\end{theorem}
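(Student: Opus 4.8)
The plan is to deduce Theorem \ref{theoremC} from Theorem \ref{theoremA} by the same obstruction-theoretic argument as in the proof of Theorem C of \cite{fooo-book}: once a filtered $A_\infty$ algebra over $\Lambda_{0,nov}^R$ is in hand, solving the weak Maurer--Cartan equation is a matter of homological algebra and does not see the coefficient ring. So first I would fix a finitely generated free model $(C,\{\mathfrak m_k\})$ of the filtered $A_\infty$ algebra of Theorem \ref{theoremA} --- for instance the Morse or cellular model of Remark \ref{freecomplex}, or, when $H(L;R)$ is projective, the canonical model of Lemma \ref{canolem} on $H^*(L;R)\otimes\Lambda_{0,nov}^R$ (one may equally work on the full singular chain complex) --- and attempt to solve
\[
\sum_{k=0}^\infty \mathfrak m_k(b^{\otimes k})\ \equiv\ 0 \mod \Lambda_{0,nov}^{+,R}\,\mathbf e,\qquad b\in C\hat\otimes\Lambda_{0,nov}^{+,R},\ \ \deg b=1,
\]
by successive approximation along the energy filtration. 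By Gromov compactness the energies $\omega(\beta)$ of holomorphic discs form a discrete set $0<\lambda_1<\lambda_2<\cdots$, and I would build $b=\sum_i b_i$ with $b_i$ in filtration level $\lambda_i$, inductively in $i$, so that $b^{(i)}=\sum_{j\le i}b_j$ solves the equation modulo $F^{\lambda_{i+1}}$.

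For the inductive step, suppose $b^{(i-1)}$ satisfies $\sum_k\mathfrak m_k((b^{(i-1)})^{\otimes k})\equiv \mathfrak{PO}^{(i-1)}\mathbf e \pmod{F^{\lambda_i}}$. Then the error at level $\lambda_i$ is a single cochain, and the lower-order $A_\infty$ relations force it to be $\mathfrak m_1$-closed, hence a cocycle $o$. Modifying $b^{(i-1)}$ by a term $b_i\in C\cdot T^{\lambda_i}$ cures the error up to a multiple of $\mathbf e$ exactly when $[o]$ is a coboundary in $H^*(L;R)$; the component of $[o]$ in $H^0(L;R)$ is in any case absorbed into $\mathfrak{PO}$. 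To pin down the degree of the surviving obstruction I would use the dimension formula $\dim\MM_{k+1}^{\text{main}}(\beta)=n+\mu_L(\beta)+k-2$ and the fact that $\mathfrak m_{k,\beta}$ is defined by $ev_{0*}$ of an honest $R$-valued virtual chain --- available precisely because spherical positivity and Proposition \ref{dimprop} let Theorem \ref{maintechnicalresult} produce such chains. Decomposing the level-$\lambda_i$ error according to its $e$-exponent $\mu_i$, which unwinds to $\mu_i=\tfrac12\sum_\ell\mu_L(\beta_{(\ell)})$ over the disc classes $\beta_{(\ell)}$ entering the relevant configurations, one finds $\deg o=2-2\mu_i$ (so that $o\,T^{\lambda_i}e^{\mu_i}$ has the same degree, namely $2$, as $\mathfrak{PO}\,\mathbf e$). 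Writing $2m_k=2-2\mu_i$ for the pieces that are not absorbed into the unit gives classes $[o^{2m_k}_k(L;\mathrm{weak})]\in H^{2m_k}(L;R)$ with $2-2m_k=\sum_\ell\mu_L(\beta_{(\ell)})$ a sum of Maslov indices of homotopy classes realized by pseudo-holomorphic discs; positivity of $m_k$ is the statement that degree-$0$ obstructions are absorbed, and the bound $2m_k<\dim L$ follows from the dimension formula for $\MM_1^{\text{main}}(\beta)$ exactly as in \cite{fooo-book}. Hence, if every $[o^{2m_k}_k(L;\mathrm{weak})]$ vanishes, the induction runs for all $i$, the $T$-adic limit $b=\lim_i b^{(i)}$ is a well-defined element of $C\hat\otimes\Lambda_{0,nov}^{+,R}$ solving the weak Maurer--Cartan equation, and its gauge class is an element of $\MM_{\mathrm{weak}}(L;R)$, which is therefore nonempty.

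The main point that must be imitated carefully from \cite{fooo-book}, rather than handled by hand, is the bookkeeping showing that the classes $[o^{2m_k}_k(L;\mathrm{weak})]$ can be organized so as to be independent of the intermediate choices of the $b_j$ (so that the hypothesis ``all $[o_k]$ vanish'' is meaningful), together with the precise identification of each degree $2m_k$ with $2$ minus a sum of Maslov indices of disc classes carried by pseudo-holomorphic discs. Both are statements about the energy-graded pieces of the filtered $A_\infty$ algebra and are insensitive to the ground ring $R$, so the proof is the same as that of Theorem C of \cite{fooo-book}; the only new input is Theorem \ref{theoremA}, whose role is to furnish the $A_\infty$ operations over $R$, all subsequent reasoning being purely algebraic.
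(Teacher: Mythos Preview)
Your proposal is correct and matches the paper's approach exactly: the paper simply states that the proof is the same as that of Theorem C of \cite{fooo-book}, the only new input being Theorem \ref{theoremA}, and your sketch faithfully reproduces the obstruction-theoretic induction along the energy filtration that this entails.
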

Next, we consider analogs of Theorem D and Theorem E \cite{fooo-book1} on the spectral sequence. Because of convergence issue of the spectral sequence over $\Lambda_{0,{\rm nov}}^R$ we consider the following assumption.

\begin{assumption}\label{assumpRL}
We assume one of the following
conditions.
\begin{enumerate}
\item $R$ is a finite field of characteristic $2$.
\item $R$ is a finite field of odd  characteristic and $L$ is relatively spin.
\item $R$ is a Dedekind domain, $L$ is
relatively spin and rational in the sense of
\cite{fooo-book1} Definition 6.2.1.
In this case, Theorem \ref{theoremD} only applies to $b_1,b_0$
of the form $\sum T^{\lambda_i}e^{\mu_i/2}b_i$
such that the subgroup of $\R$
generated by the set $\{\lambda_i\} \cup \{\beta(\omega) \mid \beta \in \pi_2(M,L)\}$
is isomorphic to $\Z$.
\end{enumerate}
\end{assumption}
We denote by $(\Lambda_{0,{\rm nov}}^{R})^{(p)}$
the degree $p$ part of $\Lambda_{0,{\rm nov}}^{R}$,
where we recall
$\deg(aT^{\lambda}e^{\mu})=2\mu$ for
$aT^{\lambda}e^{\mu} \in \Lambda_{{\rm nov}}^{R}$.
\begin{theorem}\label{theoremD}
Under Assumption \ref{assumpRL}
there exists a spectral sequence
for each $(b_1,b_0)
\in \mathcal M_{\text{\rm weak}}(L;R) \times_{\frak{PO}}
\mathcal M_{\text{\rm weak}}(L;R)$
with the following properties:
\begin{enumerate}
\item
$E_2^{p,q} = \bigoplus_kH^k (L; R) \otimes (T^{q\lambda}\Lambda_{0,{\rm nov}}^R/T^{(q+1)\lambda}
\Lambda^R_{0,{\rm nov}})^{(p-k)}$.
Here $\lambda>0.$
\item
There exists a filtration
$\frak F^*HF((L,b_1),(L,b_0);\Lambda^R_{0,{\rm nov}})$
on the Floer cohomology
$HF((L,b_1),(L,b_0);\Lambda^R_{0,{\rm nov}})$
such that
$$
E_{\infty}^{p,q}
\cong \frac{
\frak F^qHF^p((L,b_1),(L,b_0);\Lambda^R_{0,{\rm nov}})}
{\frak F^{q+1}HF^p((L,b_1),(L,b_0);\Lambda^R_{0,{\rm nov}})}.
$$
\item
Consider the subgroup
$K_r \subset E_r =\bigoplus _{p,q}E_{r}^{p,q}$ defined by
$$
\aligned
K_2^{p,q} & = \bigoplus_k PD(\text {\rm Ker} (H_{n-k} (L;R)
\rightarrow H_{n-k}
(M;R)))\\
&\qquad\qquad\qquad\quad \otimes (T^{q\lambda}\Lambda_{0,{\rm nov}}^R/T^{(q+1)\lambda}\Lambda_{0,{\rm nov}}^R)^{(p-k)},\\
K_{r+1} & = {K_r \cap \text {\rm Ker~} \delta_r \over K_r \cap
\text {\rm Im~} \delta_r}
\subset E_{r+1}. \endaligned
$$
Then we have $\text {\rm Im~} \delta_r \subseteq K_r$ for every $r$,
under the additional assumption that $b_0=b_1$.
\par
In particular, if
the homomorphism
$i_{\ast} : H_{\ast}(L;R)
\to H_{\ast}(M;R)$
induced by the inclusion is injective, the spectral sequence collapses at the
$E_2$ level.
\item
The spectral sequence is
compatible with the ring structure in Theorem \ref{theoremB} {\rm (2)}.
In other words, we have the following.
Each of $E_r$ has a ring structure $\frak m_2$ which
satisfies :
$$
\delta_r(\frak m_2(x,y)) = - \frak m_2(\delta_r(x),y) + (-1)^{\deg x} \frak m_2(x,\delta_r(y)).
$$
The filtration $\frak F$ is compatible with the ring structure in
Theorem \ref{theoremB} {\rm (2)}.
The isomorphisms in Theorem \ref{theoremD} {\rm (1)},{\rm (2)} are ring isomorphisms.
\end{enumerate}
\end{theorem}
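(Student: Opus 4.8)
The plan is to obtain Theorem~\ref{theoremD} as the spectral sequence of a filtered complex, reducing it to the purely homological--algebraic machinery of Chapter~6 of \cite{fooo-book} applied to the filtered $A_\infty$ algebra of Theorem~\ref{theoremA} and the Floer cohomology of Theorem~\ref{theoremB}, and isolating the two places where coefficients in $R$ rather than $\Q$ force a genuinely new argument. First I would pass to the canonical model: by Lemma~\ref{canolem} together with Example~\ref{Examplefree} --- this is exactly where the finite-field hypothesis in Assumption~\ref{assumpRL}(1),(2) or the torsion-freeness of $H(L;R)$ is needed --- the filtered $A_\infty$ structure on $S(L;\Lambda_{0,nov}^R)$ induces one on $H(L;R)\otimes\Lambda_{0,nov}^R$ with $\frak m_{1,0}=0$. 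For $(b_1,b_0)$ with $\frak{PO}(b_0)=\frak{PO}(b_1)$ the deformed operator $\delta=\frak m_1^{b_1,b_0}$ satisfies $\delta\circ\delta=0$ (this is the algebra underlying Theorem~\ref{theoremB}~(1)), and the Novikov filtration $F^\lambda\Lambda_{0,nov}^R$ induces a filtration on the complex $(H(L;R)\otimes\Lambda_{0,nov}^R,\delta)$. Since $\delta$ strictly increases filtration (as $\frak m_{1,0}=0$), the associated graded differential vanishes and the spectral sequence of this filtered complex has the $E_2$-page asserted in (1), after fixing the real number $\lambda$ appropriately and reindexing as in \cite{fooo-book} Section~6.1.

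Next, the convergence claim (2) is the first point where Assumption~\ref{assumpRL} is genuinely used. When $R$ is a finite field the complex $H(L;R)\otimes\Lambda_{0,nov}^R$ is finitely generated and the filtration is complete and Hausdorff, so the spectral sequence converges to an induced filtration $\frak F^*HF$; when $R$ is a Dedekind domain I would impose rationality in the sense of \cite{fooo-book} Definition~6.2.1 so that the energies of holomorphic discs, hence the jumps of the filtration, lie in a discrete (so $\cong\Z$) subgroup of $\R$, and restrict to the $b_i$ of the stated special form so that $\delta$ preserves the resulting $\Z$-grading by powers of $T^\lambda$; completeness then holds again. Part (4) is then formal: by Theorem~\ref{theoremB}~(2), $\frak m_2$ is a chain map for the operators $\frak m_1^{b_i,b_j}$ and strictly filtration-preserving, so it descends to each page $E_r$, obeys the Leibniz rule displayed in (4), and is compatible with the filtration $\frak F$ on $HF$.

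The substance is in (3): that $\operatorname{Im}\delta_r\subseteq K_r$ when $b_0=b_1$. The algebraic reduction, identical to \cite{fooo-book} Chapter~6, expresses the leading differential as (up to sign and lower-order corrections) the $\frak m_2$-product with a class of the form $PD\big(ev_{0*}[\MM_1^{\text{\rm main}}(\beta)]\big)$ for a minimal-area class $\beta$, and an inductive argument then propagates the containment to all $\delta_r$. The geometric input I would reprove over $R$ is that this cycle in $L$ maps to zero in $H_*(M;R)$: the universal family of discs over $\MM_1^{\text{\rm main}}(\beta)$ sweeps out a chain in $M$ bounding $ev_{0*}[\MM_1^{\text{\rm main}}(\beta)]$, and this bounding chain is now constructed using the single-valued normally conical perturbations of Theorem~\ref{maintechnicalresult} and the dimension estimate of Proposition~\ref{dimprop} (with orientations supplied by the relative spin structure when $R$ is a Dedekind domain), so that all the chains involved are honest $R$-chains. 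The collapse statement at the end of (3) is then immediate, since $i_*$ injective forces $K_2=0$ and hence $K_r=0$ for all $r\ge 2$. The main obstacle I anticipate is not any isolated step but the convergence of the spectral sequence over a Dedekind domain: it genuinely requires the rationality restriction and the auxiliary $\Z$-grading, and one must verify that the geometric ingredient of (3) survives that restriction --- which is precisely why Assumption~\ref{assumpRL} is formulated as it is.
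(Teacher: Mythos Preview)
Your treatment of parts (1), (2) and (4) is essentially what the paper does: reduce to the canonical model, use the Novikov filtration, and invoke the algebraic machinery of \cite{fooo-book} Chapter~6, with Assumption~\ref{assumpRL} entering exactly at the convergence step (field case via Subsections~6.3.1--6.3.2, Dedekind domain case via the rationality argument of Section~6.2). That part is fine.

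The gap is in your argument for (3). You describe the algebraic reduction as ``identical to \cite{fooo-book} Chapter~6'' and summarise it as: the leading differential is $\frak m_2$ with the class $PD(ev_{0*}[\MM_1^{\text{main}}(\beta)])$, which bounds in $M$ because the universal family of discs sweeps out a bounding chain; then an unspecified induction handles higher $\delta_r$. This is not how either \cite{fooo-book} or the present paper proves (3). In \cite{fooo-book} Subsection~6.4.2 the argument goes through the operator $\frak p$ and the \emph{cyclic} bar complex: one produces a filtered chain map $\frak p_b^+:(C,\frak m_1^b)\to (S(M;\Lambda_{0,nov}),\partial)$ which reduces to the Gysin map $i_!$ modulo $\Lambda^+_{0,nov}$. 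It is the existence of this single filtered chain map that forces $\operatorname{Im}\delta_r\subseteq K_r$ for \emph{all} $r$ simultaneously, via the induced map of spectral sequences; there is no separate induction on $r$ of the kind you sketch. Your ``bounding by the swept-out disc family'' is morally the $k=1$ piece of such an operator, but by itself it only controls the lowest differential, not the higher pages.

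The paper's point is precisely that this cyclic-cohomology argument does \emph{not} transport to $R$ a finite field or a Dedekind domain, because cyclic homology is ill-behaved over torsion coefficients. The paper therefore replaces it in Subsection~\ref{Hochschild homology} by a Hochschild-homology version: one defines $\frak p'$ on $CH(S(L;\Lambda_{0,nov}^R)[1])$ using the moduli spaces $\mathcal M_{k;1}(\beta)$ with one interior marked point, perturbed by the single-valued normally conical sections of Theorem~\ref{maintechnicalresult}, and shows (Lemma~\ref{pbprop}) that the twisted map $\frak p^{\prime +}_b$ is a filtered chain map congruent to $i_!$ modulo $\Lambda^+_{0,nov}$. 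This is the substitute for \cite{fooo-book} Lemma~6.4.5 over $R$, and once you have it the rest of (3) follows exactly as in \cite{fooo-book}. Your proposal misses this replacement entirely; to repair it, you should construct the Hochschild operator $\frak p'$ over $R$ and prove the analogue of Lemma~6.4.5, rather than attempt a direct induction on $r$.
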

By using Theorem \ref{theoremA},
the proof of Theorem \ref{theoremD} is the same as that of Theorem D \cite{fooo-book1}, except the following two points.
Assumption \ref{assumpRL} is used in a way described in (1) below.
\begin{enumerate}
\item
During the construction of the spectral sequence, we used the algebraic material
given in \cite{fooo-book1} Subsections 6.3.1 and 6.3.2. We assumed that the ground ring $R$ is a
field there.
(The authors do not know how to prove the convergence of the spectral sequence
in the case $R=\Z$.)
In the case $L, b_0, b_1$ satisfy Assumption \ref{assumpRL} (2), we can use the argument of \cite{fooo-book1} Section 6.2 and
can construct the spectral sequence and prove its convergence in the case when $R$ is a Dedekind domain also.
\item
In \cite{fooo-book1} we used the operator $\frak p$ and the cyclic cohomology of $L$ to prove
a statement (3) of degeneration of the spectral sequence over $\Q$ (\cite{fooo-book1} Subsection 6.4.2).
It seems hard to study cyclic cohomology over torsion coefficients.
However in Subsection \ref{Hochschild homology} we will use the Hochschild homology instead, and introduce an analogous operator $\frak p'$ to prove (3) over $R$.
\end{enumerate}

The following non-vanishing theorem is an analog of Theorem E \cite{fooo-book1} and
the proof is the same except that of
$PD[pt] \notin \operatorname{Im}(\delta_r)$
in (1), where we used the operator $\frak p$
for the case over $\Q$ in \cite{fooo-book1} Subsection 6.4.3.
We will also use the analogous operator
$\frak p'$ over $R$.
See Subsection \ref{Hochschild homology} for this point.
\begin{theorem}\label{theoremE}
Let us consider the situation of Theorem
\ref{theoremD}.
We assume
$b_0=b_1$. Then, there exists a
cohomology class $PD[L]' \in
H^n(L;\Lambda_{0,{\rm nov}}^R)$ with $PD[L]' \equiv PD[L] \mod \Lambda^{+,R}_{0,{\rm nov}}$ which has the following
properties.
\begin{enumerate}
\item
For each $r$ we have $\delta_r(PD[L]') = 0$ and
$PD[pt] \notin \operatorname{Im}(\delta_r)$, where $\delta_r$
is the differential of the spectral sequence in Theorem \ref{theoremD}.
\item
If the Maslov index of all the pseudo-holomorphic
discs bounding $L$ are non-positive, then
$\delta_r(PD[pt]) = 0$ and
$PD[L]' \notin \operatorname{Im}(\delta_r)$.
\end{enumerate}
\par
The same conclusion holds for $\Lambda_{{\rm nov}}^R$ coefficients.
\end{theorem}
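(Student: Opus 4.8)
The plan is to reproduce the proof of Theorem E of \cite{fooo-book} (Subsections 6.4.2--6.4.3) line by line, with two substitutions dictated by working over $R$: the multisections used there are replaced by the single-valued normally conical sections produced by Theorem \ref{maintechnicalresult} (legitimate because Proposition \ref{dimprop}, and its analogue for moduli spaces with one interior marked point, makes the locus with nontrivial isotropy group codimension $\ge 2$, so a virtual fundamental chain over $R$ exists), and the operator $\mathfrak p$ of \cite{fooo-book} is replaced by the operator $\mathfrak p'$ built from Hochschild homology in Subsection \ref{Hochschild homology}. Only the step ``$PD[pt]\notin\operatorname{Im}(\delta_r)$'' uses $\mathfrak p'$; everything else is formal.

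\emph{Construction of $PD[L]'$ and $\delta_r(PD[L]')=0$.} Starting from the unital filtered $A_\infty$ algebra of Theorem \ref{theoremA} deformed by the bounding cochain $b$ (Theorem \ref{theoremB}), one has the Floer complex $(H^*(L;\Lambda_{0,nov}^R),\mathfrak m_1^b)$, and by transferring the strict unit $PD[L]$ along the homotopy equivalence to the canonical model (Lemma \ref{canolem}) one obtains a class $PD[L]'$ with $PD[L]'\equiv PD[L]\bmod\Lambda_{0,nov}^{+,R}$ and $\mathfrak m_1^b(PD[L]')=0$, since it represents the unit. Because each differential $\delta_r$ of the spectral sequence of Theorem \ref{theoremD} is induced by $\mathfrak m_1^b$ on the associated graded, $\delta_r(PD[L]')=0$ for every $r$. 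This gives the first half of part (1).

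\emph{The main obstacle: $PD[pt]\notin\operatorname{Im}(\delta_r)$.} Over $\Q$, \cite{fooo-book} Subsection 6.4.3 uses that $\mathfrak p$ is (i) a chain map from $(C(L),\mathfrak m_1^b)$ to $H^*(M;\Lambda_{0,nov}^R)$ with the trivial differential (the target has no disc contributions since $M$ is closed), (ii) compatible with the energy filtration, hence descends to the spectral sequences, and (iii) satisfies $\mathfrak p(PD[pt])\equiv PD[pt_M]$ modulo higher filtration, because of the constant disc through a point of $L$. Since $PD[pt_M]$ is a nonzero element of $H^{2n}(M;R)=R$ and the target differential is zero, a relation $PD[pt]=\delta_r(x)$ forces $PD[pt_M]\equiv\mathfrak p(\mathfrak m_1^b x)\equiv0$ at the relevant filtration level, a contradiction. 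Over a Dedekind domain or a finite field the only difficulty is the \emph{definition} of such a map, since the construction of $\mathfrak p$ in \cite{fooo-book} passes through the operator $\mathfrak q$ and the homotopy theory of filtered $L_\infty$ algebras, which behave badly over torsion coefficients; this is exactly what Subsection \ref{Hochschild homology} supplies, producing $\mathfrak p'$ with properties (i)--(iii) directly via Theorem \ref{maintechnicalresult} and the dimension count. I expect verifying these properties of $\mathfrak p'$ over torsion coefficients --- compatibility with $\mathfrak m_1^b$ and with the filtration, and the boundary/degeneration description of the relevant moduli spaces --- to be the hardest point, and I will simply invoke Subsection \ref{Hochschild homology} here.

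\emph{Part (2) and conclusion.} Assume every pseudo-holomorphic disc bounding $L$ has Maslov index $\le 0$. For $\beta\ne0$ the operator $\mathfrak m_{1,\beta}$ raises the cohomological degree by $1-\mu_L(\beta)\ge1$, so on the top class $PD[pt]\in H^n(L)$ it lands above degree $n$ and vanishes, and $\mathfrak m_{1,0}(PD[pt])=\pm\partial[pt]=0$ in cohomology; hence $\mathfrak m_1^b(PD[pt])=0$ at the chain level and $\delta_r(PD[pt])=0$ for all $r$. Finally $PD[L]'\notin\operatorname{Im}(\delta_r)$ follows from the Poincar\'e duality pairing $\langle\cdot,\cdot\rangle$ on the Floer complex, which is defined over $R$ and is compatible with $\delta_r$ by the orientation conventions of Chapter 8 \cite{fooo-book}: one has $\langle PD[L]',PD[pt]\rangle\equiv\langle PD[L],PD[pt]\rangle\ne0\bmod\Lambda_{0,nov}^{+,R}$, whereas $PD[L]'=\delta_r(y)$ would give $\langle PD[L]',PD[pt]\rangle=\pm\langle y,\delta_r(PD[pt])\rangle=0$. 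The statements over $\Lambda_{nov}^R$ follow by inverting $T$, which completes the proof.
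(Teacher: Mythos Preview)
Your proposal is correct and follows essentially the same approach as the paper: the paper likewise defers every step to the proof of Theorem~E in \cite{fooo-book}, singling out only the claim $PD[pt]\notin\operatorname{Im}(\delta_r)$ as requiring the new operator $\mathfrak p'$ of Subsection~\ref{Hochschild homology}, and then invokes Lemma~\ref{pbprop} (which packages exactly your properties (i)--(iii), with (iii) phrased via the Gysin map $i_!$) to run the argument of \cite{fooo-book} Subsection~6.4.3 verbatim. One small correction: the reason the original $\mathfrak p$ is problematic over torsion coefficients is that its domain is the \emph{cyclic} bar complex (see the opening of Subsection~\ref{Hochschild homology}), not that it passes through $\mathfrak q$ or $L_\infty$ homotopy theory; the latter obstruction is what blocks bulk deformations, a separate issue.
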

The next theorem is an analog of Theorem G \cite{fooo-book1} and
follows from Theorem \ref{theoremF}.
\begin{theorem}\label{theoremG}
Let $(L_1,L_0)$ be a pair of Lagrangian submanifolds of $M$
of clean intersection.
Then, for each $(b_1,b_0) \in \mathcal M_{\text{\rm weak}}(L_1;R)
\times_{\frak{PO}}
\mathcal M_{\text{\rm weak}}(L_0;R)$,  we can associate a Floer cohomology
$HF((L_1,b_1),(L_0,b_0);\Lambda_{0,{\rm nov}}^R)$
with the following properties. We put
$$
HF((L_1,b_1),(L_0,b_0);\Lambda_{0,{\rm nov}}^R) \otimes_{\Lambda_{0,{\rm nov}}^R}\Lambda_{{\rm nov}}^R
= HF((L_1,b_1),(L_0,b_0);\Lambda_{{\rm nov}}^R).
$$
\begin{enumerate}
\item If $L_0 = L_1 = L$, then
$HF((L_1,b_1),(L_0,b_0);\Lambda^R_{0,{\rm nov}})$ coincides with
the one in Theorem \ref{theoremB}.
\item If $R$ is a field, then we have
$$
\text{\rm rank}_{\Lambda^R_{{\rm nov}}} HF((L_1,b_1),(L_0,b_0);\Lambda^R_{{\rm nov}})
\le \sum_{h,k} \text{\rm rank}_{R}H^{k}(R_h;\Theta_{R_h}^-),
$$
where each $R_h$ is a connected component of $L_0 \cap L_1$ and
$\Theta_{R_h}^-$ is a local system on it.
\par
\item If $\psi : M \to M'$ is a
symplectic diffeomorphism with $\psi(L_i) = L'_i$, $(i=0,1)$,
then we have a canonical isomorphism
$$
\psi_* : HF((L_1,b_1),(L_0,b_0);\Lambda^R_{0,{\rm nov}})
\cong HF((L'_1,\psi_*b_1),(L'_0,\psi_*b_0);\Lambda^R_{0,{\rm nov}})
$$
where $\psi_* : \mathcal M_{\text{\rm weak}}(L_i;R) \to \mathcal M_{\text{\rm weak}}(L'_i;R)$
is as in Theorem \ref{theoremB}. The isomorphism $\psi_*$ depends only
on the isotopy class of symplectic diffeomorphism $\psi$ with $\psi(L_i) = L'_i$.
We also have $(\psi\circ \psi')_* = \psi_*\circ \psi'_*$.
\item If $\psi^s_i : M \to M$ $($$i=0,1$, $s \in
[0,1])$ are Hamiltonian isotopies with $\psi_i^0 = identity$ and
$\psi_i^1(L_i) = L'_i$, then it induces an isomorphism
$$
(\psi^s_1,\psi^s_0)_* : HF((L_1,b_1),(L_0,b_0);\Lambda^R_{{\rm nov}})
\cong HF((L'_1,\psi^1_{1*}b_1),(L'_0,\psi^1_{0*}b_0);\Lambda^R_{{\rm nov}}).
$$
The isomorphism $(\psi^s_1,\psi^s_0)_*$ depends only the isotopy class of the
Hamiltonian isotopies $\psi^s_i : M \to M$ $(i=0,1$, $s \in [0,1])$
with $\psi_i^0 = identity$ and $\psi_i^1(L_i) = L'_i$.
The isomorphism $(\psi^s_1,\psi^s_0)_*$ is functorial with respect to the composition of
the Hamiltonian isotopies.
\item
$HF((L_1,b_1),(L_0,b_0);\Lambda^R_{0,{\rm nov}})$ is a
bimodule over the ring pair
$$
\left(HF((L_1,b_1),(L_1,b_1);\Lambda^R_{0,{\rm nov}}),
HF((L_0,b_0),(L_0,b_0);\Lambda^R_{0,{\rm nov}})\right).
$$
The isomorphisms {\rm (3), (4)} are bimodule isomorphisms.
\end{enumerate}
\end{theorem}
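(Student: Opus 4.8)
The plan is to obtain Theorem~\ref{theoremG} from Theorem~\ref{theoremF} by repeating the algebraic and geometric constructions that derive Theorem~G of \cite{fooo-book} from Theorem~F of \cite{fooo-book}, with the single modification that every transversality argument for a moduli space of clean-intersection type is carried out with a single-valued normally conical section supplied by Theorems~\ref{maintechnicalresult} and~\ref{reltechnicalresult} rather than with a multisection. First I would record that Theorem~\ref{theoremF} endows $C(L_1,L_0;\Lambda_{0,nov}^R)$ with a unital filtered $A_\infty$ bimodule structure $\{\frak n_{k_1,k_0}\}$ over the pair of filtered $A_\infty$ algebras of Theorem~\ref{theoremA}. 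For $b_1\in\mathcal M_{\text{\rm weak}}(L_1;R)$ and $b_0\in\mathcal M_{\text{\rm weak}}(L_0;R)$ one forms the deformed operator $\frak d_{b_1,b_0}(x)=\sum_{k_1,k_0}\frak n_{k_1,k_0}(b_1^{k_1};x;b_0^{k_0})$; the bimodule relations together with the weak Maurer--Cartan equations for $b_1,b_0$ give $\frak d_{b_1,b_0}\circ\frak d_{b_1,b_0}=(\frak{PO}(b_1)-\frak{PO}(b_0))\cdot(\text{id through the unit})$, so that $\frak d_{b_1,b_0}^2=0$ precisely when $(b_1,b_0)\in\mathcal M_{\text{\rm weak}}(L_1;R)\times_{\frak{PO}}\mathcal M_{\text{\rm weak}}(L_0;R)$. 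I would then define $HF((L_1,b_1),(L_0,b_0);\Lambda_{0,nov}^R)$ as the cohomology of $(C(L_1,L_0;\Lambda_{0,nov}^R),\frak d_{b_1,b_0})$ and the $\Lambda_{nov}^R$-version by the stated base change; this transports \cite{fooo-book} Sections~3.7 and~6.1 verbatim.

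For item~(1) I would verify that when $L_1=L_0=L$ the only component of $L_1\cap L_0$ is the diagonal $R=L$, so $C(L,L;\Lambda_{0,nov}^R)=H(L;\Lambda_{0,nov}^R)$, and that $\frak d_{b,b}$ agrees with the Floer differential $\frak m_1^{b}$ of the $(b,b)$-deformed $A_\infty$ algebra, because the relevant strip moduli spaces degenerate to the disc moduli spaces of Theorem~\ref{theoremA}; this is \cite{fooo-book} Subsection~3.7.6 and needs no new input. For item~(5) the bimodule compositions $\frak n_{2,0}$ and $\frak n_{0,2}$ descend to cohomology and make $HF$ a bimodule over the ring pair of Theorem~\ref{theoremB}(2), with associativity and compatibility coming from the higher bimodule relations, exactly as in \cite{fooo-book}. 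For item~(3) a symplectic diffeomorphism $\psi$ identifies the moduli spaces of holomorphic strips on $(L_1,L_0)$ with those on $(L_1',L_0')$, hence the deformed complexes, and naturality of the whole construction gives $(\psi\circ\psi')_*=\psi_*\circ\psi'_*$. For item~(4) I would use the moduli spaces of holomorphic strips with $s$-dependent Lagrangian boundary conditions attached to the Hamiltonian isotopies to produce a filtered $A_\infty$ bimodule homotopy equivalence; it is only an equivalence over $\Lambda_{nov}^R$ because the energy along such a continuation can be negative. At each of these steps the use of single-valued sections in place of multisections is legitimate because each moduli space in question carries a Kuranishi structure with tangent bundle (and is oriented via the relative spin data when $R$ is a Dedekind domain or has odd characteristic), and because the analog of Proposition~\ref{dimprop} --- the codimension $\ge 2$ estimate for strata with nontrivial isotropy --- holds by the same reduced-model argument of Section~\ref{dimensionsec}, since an interior sphere bubble still costs Chern number $\ge 1$ under spherical positivity. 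This last point, namely re-running Section~\ref{dimensionsec} for the clean-intersection moduli spaces, is the one geometric input that must genuinely be checked, and it is where I would concentrate the care.

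Item~(2), the rank inequality, is where I expect the remaining work. The idea is the energy (Novikov) filtration spectral sequence for $(C(L_1,L_0;\Lambda_{0,nov}^R),\frak d_{b_1,b_0})$: the leading ($T^0$) term of $\frak d_{b_1,b_0}$ on the classical generators is the Bott--Morse differential on the $R_h$, so after passing to $R$-cohomology of each $R_h$ one gets $E_1\cong\bigoplus_h H^{\ast}(R_h;\Theta_{R_h}^-)\otimes\Lambda_{0,nov}^R$ modulo higher filtration, whence $\operatorname{rank}_{\Lambda_{nov}^R}HF\le\sum_{h,k}\operatorname{rank}_R H^k(R_h;\Theta_{R_h}^-)$. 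The delicate point --- the same one flagged for Theorem~\ref{theoremD} in Subsection~\ref{state} --- is boundedness and convergence of this spectral sequence over $\Lambda_{0,nov}^R$, which can fail over a general Dedekind domain; when $R$ is a field, the completeness of $\Lambda_{0,nov}^R$ and the algebra of \cite{fooo-book} Subsections~6.3.1--6.3.2 apply unchanged, so I would prove~(2) only under the hypothesis that $R$ is a field, while~(1),(3),(4),(5) are established for all $R$ permitted in Subsection~\ref{state}. The remaining routine checks --- unitality, the identities $(\psi\circ\psi')_*=\psi_*\circ\psi'_*$ and the functoriality of the Hamiltonian-isotopy isomorphisms, and independence of all auxiliary choices up to the stated equivalences --- are identical to \cite{fooo-book}, and I would only point to the substitution of single-valued normally conical sections for multisections, already carried out in Sections~\ref{AinftyconstrZ2} and~\ref{Zcoefficient}.
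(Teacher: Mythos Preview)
Your proposal is correct and follows the same approach as the paper: the paper's entire treatment of Theorem~\ref{theoremG} is the single sentence ``The next theorem is an analog of Theorem~G \cite{fooo-book} and follows from Theorem~\ref{theoremF},'' so deriving the Floer cohomology and all five properties from the filtered $A_\infty$ bimodule structure of Theorem~\ref{theoremF} via the standard algebraic constructions of \cite{fooo-book} (deformed differential, continuation maps, energy filtration spectral sequence) is exactly what is intended. Your elaboration is considerably more detailed than the paper's, but the route is identical.
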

We state Theorem \ref{theoremG} only in the case when $(L_1, L_0)$ has clean intersection.
In the case when $R$ is a finite field, we can use a version of Theorem 6.1.25 \cite{fooo-book1}
in the same way as in \cite{fooo-book1} Subsection 6.5.4 to remove this assumption
over $\Lambda_{0,{\rm nov}}^R$ coefficients.
\par\medskip
\cite{fooo-book1} Theorem I can be generalized as follows.
\begin{theorem}\label{theoremI}
Under Assumption \ref{assumpRL} (1) or (2),
we assume that
$\mathcal M_{\text{\rm weak}}(L;R)$ is non-empty.
Denote $A = \sum_* \operatorname{rank }H_*(L;R)$, $B=
\sum_* \operatorname{rank}\operatorname{Ker}(H_*(L;R)
\to H_*(M;R))$. Then we have
$$
\#(L\cap \phi(L)) \geq A - 2B
$$
for any Hamiltonian diffeomorphism $\phi:M\to M$ such that $L
\pitchfork \phi(L)$.
\end{theorem}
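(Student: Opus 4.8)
The plan is to run the standard Floer-homological Arnold inequality argument of Theorem I in \cite{fooo-book}, but now over a finite field $R$, feeding it the Floer cohomology, invariance statements, rank estimate, and spectral sequence that have been set up above (Theorems \ref{theoremB}, \ref{theoremG}, \ref{theoremD}) using single-valued normally conical perturbations. Concretely, I would sandwich $\#(L\cap\phi(L))$ between an upper bound for $\operatorname{rank}_{\Lambda_{nov}^R}HF((L,b),(L,b);\Lambda_{nov}^R)$ coming from transversality of $L$ and $\phi(L)$, and a lower bound for the same rank coming from the spectral sequence.

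First I would use the hypothesis that $\mathcal M_{\text{\rm weak}}(L;R)$ is non-empty to fix a bounding cochain $b\in\mathcal M_{\text{\rm weak}}(L;R)$, so that by Theorem \ref{theoremB} the ring $HF((L,b),(L,b);\Lambda_{nov}^R)$ is defined. For the upper bound, let $\psi^s$ ($s\in[0,1]$) be a Hamiltonian isotopy with $\psi^0=\operatorname{id}$ and $\psi^1=\phi$. By Theorem \ref{theoremB} (3) the symplectic diffeomorphism $\phi$ carries $b$ to $\phi_*b\in\mathcal M_{\text{\rm weak}}(\phi(L);R)$ with $\mathfrak{PO}(\phi_*b)=\mathfrak{PO}(b)$, so $(\phi_*b,b)\in\mathcal M_{\text{\rm weak}}(\phi(L);R)\times_{\mathfrak{PO}}\mathcal M_{\text{\rm weak}}(L;R)$, and since $L\pitchfork\phi(L)$ is in particular a clean intersection, Theorem \ref{theoremG} applies to the pair $(\phi(L),L)$. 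Applying Theorem \ref{theoremG} (4) to the isotopies $(\psi^s,\operatorname{id})$ gives
$$
HF((L,b),(L,b);\Lambda_{nov}^R)\cong HF((\phi(L),\phi_*b),(L,b);\Lambda_{nov}^R),
$$
and Theorem \ref{theoremG} (2), together with the fact that each connected component $R_h$ of $L\cap\phi(L)$ is a point (so $\sum_{h,k}\operatorname{rank}_R H^k(R_h;\Theta_{R_h}^-)=\#(L\cap\phi(L))$), yields
$$
\operatorname{rank}_{\Lambda_{nov}^R}HF((L,b),(L,b);\Lambda_{nov}^R)\le\#(L\cap\phi(L)).
$$

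For the lower bound I would invoke the spectral sequence of Theorem \ref{theoremD} with $b_0=b_1=b$, which is available precisely because Assumption \ref{assumpRL} (1) or (2) is in force. After $\otimes_{\Lambda_{0,nov}^R}\Lambda_{nov}^R$ its $E_2$-term is, as a $\Lambda_{nov}^R$-module, free of total rank $A=\sum_*\operatorname{rank}_R H_*(L;R)$; by Theorem \ref{theoremD} (3) (valid since $b_0=b_1$) the differentials satisfy $\operatorname{Im}\delta_r\subseteq K_r$ for every $r$, where $K_2$ has total rank $B=\sum_*\operatorname{rank}_R\operatorname{Ker}(H_*(L;R)\to H_*(M;R))$. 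Since $K_{r+1}=(K_r\cap\operatorname{Ker}\delta_r)/(K_r\cap\operatorname{Im}\delta_r)$ and $\operatorname{Im}\delta_r\subseteq K_r$, one gets $\operatorname{rank}K_{r+1}\le\operatorname{rank}K_r-\operatorname{rank}(\operatorname{Im}\delta_r)$, hence $\sum_r\operatorname{rank}(\operatorname{Im}\delta_r)\le\operatorname{rank}K_2=B$ and therefore $\operatorname{rank}E_\infty\ge A-2B$. As in \cite{fooo-book}, $E_\infty$ is the associated graded of the finite filtration $\mathfrak F^*HF$ of Theorem \ref{theoremD} (2), so $\operatorname{rank}_{\Lambda_{nov}^R}HF((L,b),(L,b);\Lambda_{nov}^R)=\operatorname{rank}E_\infty\ge A-2B$. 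Combining with the previous paragraph gives $\#(L\cap\phi(L))\ge A-2B$.

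The routine parts — signs, the precise degree bookkeeping on the $E_2$-page, and the identification of $E_\infty$ with the graded module of $\mathfrak F^*HF$ — are exactly as in \cite{fooo-book} and I would not reproduce them. The only place where genuine care is needed is the convergence of the spectral sequence over $\Lambda_{nov}^R$ and the resulting equality $\operatorname{rank}_{\Lambda_{nov}^R}HF=\operatorname{rank}E_\infty$; this is what Assumption \ref{assumpRL} is designed to secure (cf. the discussion following Theorem \ref{theoremD}), and beyond that the expected main obstacle is purely one of bookkeeping: one must make sure that every transversality input used in \cite{fooo-book} to construct $HF$, the product, and the spectral sequence has been replaced by its single-valued normally conical counterpart from Sections \ref{technicalresult}--\ref{AinftyconstrZ2}, which is exactly what was already arranged in establishing Theorems \ref{theoremB}, \ref{theoremD} and \ref{theoremG}.
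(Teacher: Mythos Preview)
Your proposal is correct and follows exactly the same route the paper indicates: the paper simply says the proof is the same as that of Theorem I in \cite{fooo-book} using Theorems \ref{theoremD} and \ref{theoremG}, and you have spelled out precisely that argument (upper bound via Theorem \ref{theoremG} (2),(4), lower bound via the spectral sequence of Theorem \ref{theoremD} (2),(3)). There is nothing to add.
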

The proof is the same as that of \cite{fooo-book1} Theorem I
using Theorems \ref{theoremD}
and \ref{theoremG}.
\par\smallskip
Theorem J \cite{fooo-book1} is generalized as follows.
We consider the case when Assumption \ref{assumpRL} (1) or (2) is satisfied.
Let $\phi$ be a Hamiltonian diffeomorphism.
We assume $\phi(L_1)$ is transverse to $L_0$ and
$b_i \in \mathcal M_{\text{\rm weak}}(L_i;R)$
with $\frak{PO}(b_0) = \frak{PO}(b_1)$.
Since the algebraic argument in Subsections 6.3.1 and 6.3.2 \cite{fooo-book1} works over an arbitrary field $R$, we can prove
\begin{equation}\label{eq:HF0nov}
 HF((L_1,b_1),(L_0,b_0);\Lambda^R_{0,{\rm nov}})
 \cong
 (\Lambda_{0,\text{\rm nov}}^R)^{\oplus a} \oplus
\bigoplus_{i=1}^b (\Lambda^R_{0,\text{\rm nov}}/T^{\lambda_i}\Lambda^R_{0,\text{\rm nov}})
\end{equation}
in the same way as in \cite{fooo-book1} Theorem 6.1.18 (and also Theorem 6.1.20).
We call $a$ the {\it Betti} number and $\lambda_i$ {\it torsion exponents}.
\begin{theorem}\label{thm:correctedthmJ}
Under Assumption \ref{assumpRL} (1) or (2),
we denote
$$
b(\Vert\phi\Vert) = \#\{ i \mid\lambda_i \ge \Vert\phi\Vert\},
$$
where $\lambda_i$ are the torsion exponents as in \eqref{eq:HF0nov} and
$\Vert\phi\Vert$ is the Hofer norm.
Then we have
\begin{equation}
\label{eq:mainformula} \# (\phi(L_1)\cap L_0) \ge a + 2b(\Vert\phi\Vert).
\end{equation}
\end{theorem}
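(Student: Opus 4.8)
\textbf{Proof plan for Theorem \ref{thm:correctedthmJ}.}
The strategy is to combine the structure theorem \eqref{eq:HF0nov} for the Floer cohomology over $\Lambda^R_{0,nov}$ with the standard Hamiltonian invariance of Floer cohomology and a quantitative control of the invariance isomorphism by the Hofer norm $\Vert\phi\Vert$. First I would recall, as in \cite{fooo-book} Theorem J and its corrected version, that for a Hamiltonian isotopy $\psi^s$ generated by $H$ with $\psi^1 = \phi$, Theorem \ref{theoremG} (4) gives a chain homotopy equivalence between the Floer complexes of $(L_1,L_0)$ and of $(\phi(L_1),L_0)$, and moreover the chain level map shifts the energy filtration by at most $\Vert\phi\Vert$ (this is the usual estimate coming from the a priori energy estimate for the continuation/isotopy moduli spaces; one takes the infimum over Hamiltonians generating $\phi$ to get $\Vert\phi\Vert$). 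This is exactly the point where the Hofer norm enters.

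Next I would set up the bookkeeping. Write the Floer cohomology $HF((L_1,b_1),(L_0,b_0);\Lambda^R_{0,nov})$ in the normal form \eqref{eq:HF0nov} with Betti number $a$ and torsion exponents $\lambda_1,\dots,\lambda_b$. The number $\#(\phi(L_1)\cap L_0)$, when $\phi(L_1)\pitchfork L_0$, equals the total rank over $R$ of the Floer complex computing $HF((\phi(L_1),\phi_*b_1),(L_0,b_0);\Lambda^R_{0,nov})$ after base change to $R$, i.e.\ $\dim_R(CF\otimes_{\Lambda^R_{0,nov}} R)$; each transverse intersection point contributes one generator. The point is then a purely algebraic lemma about finitely generated filtered complexes over $\Lambda^R_{0,nov}$: if a complex with homology of normal form \eqref{eq:HF0nov} is filtered-chain-homotopy-equivalent, with filtration shift $\le c$, to a complex whose differential is defined over $R$ (so that its total rank over $R$ counts generators), then that rank is at least $a + 2\,\#\{i : \lambda_i \ge c\}$. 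Here a free summand $\Lambda^R_{0,nov}$ in homology forces one generator, while a torsion summand $\Lambda^R_{0,nov}/T^{\lambda_i}\Lambda^R_{0,nov}$ forces two generators provided $\lambda_i \ge c$, because a shorter-than-$c$ torsion block could be "killed" by the filtration shift but a block of length $\ge c$ cannot. Applying this with $c = \Vert\phi\Vert$ yields \eqref{eq:mainformula}.

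Concretely, the algebraic lemma is proved by passing to the $\Lambda^R_{0,nov}$-module structure of the mapping cone and using the classification of finitely generated modules: over $\Lambda^R_{0,nov}$ (which, for $R$ a field, is a valuation ring with value semigroup a subgroup of $\R$) every finitely generated module splits as free part plus torsion parts $\Lambda^R_{0,nov}/T^{\mu}$, and a filtered homotopy equivalence with shift $\le c$ induces an isomorphism on the associated graded away from length $< c$ contributions. This is essentially the content of \cite{fooo-book} Subsections 6.3.1–6.3.2, which the excerpt already tells us works over an arbitrary field $R$; under Assumption \ref{assumpRL} (1) or (2) we are in exactly that situation, and convergence of the relevant spectral sequence / finiteness of the complex is guaranteed.

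The main obstacle I anticipate is \emph{not} the algebra, which parallels \cite{fooo-book} verbatim, but making the filtration-shift estimate rigorous in the present virtual-perturbation framework: one must check that the continuation maps constructed via the single-valued normally conical perturbations of Theorem \ref{reltechnicalresult} (rather than the multisections of \cite{fooo-book}) still satisfy the same a priori energy inequality, so that the shift by $\Vert\phi\Vert$ survives. Since the energy estimate is a geometric property of pseudo-holomorphic strips and is insensitive to the choice of abstract perturbation (it only uses that the perturbed solutions lie $C^0$-close to genuine ones, which Theorem \ref{maintechnicalresult} guarantees), this goes through, but it is the step that requires the most care. Everything else is a transcription of the proof of Theorem J in \cite{fooo-book}, and the proof can therefore be kept short by citing it.
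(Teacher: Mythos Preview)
Your approach is essentially the same as the paper's: the paper gives no independent argument and simply says the proof is identical to that of Theorem~J in \cite{bidisk}, which is exactly the filtration-shift-plus-structure-theorem argument you outline. One caveat worth flagging: the paper explicitly notes that the original proof of Theorem~J in \cite{fooo-book} contained an \emph{error in the energy estimate} itself, corrected only in \cite{bidisk}; so the delicate point is not, as you suggest, whether the estimate survives the passage from multisections to single-valued normally conical sections, but rather that the estimate as written in \cite{fooo-book} was wrong already over $\Q$ and you must use the \cite{bidisk} version throughout.
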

The original proof of Theorem J \cite{fooo-book1}
contains an error related to an energy estimate.
See \cite{bidisk}.
We now have corrected it in \cite{bidisk}.
Then the proof of Theorem \ref{thm:correctedthmJ}
is the same as that of
Theorem J given in \cite{bidisk}.
Theorem 6.1.25 \cite{fooo-book1}
can be generalized in the same way.

\subsection{Proof of Theorems \ref{theoremKs}{} and \ref{theoremL}}
\label{ProofKSL}

\begin{proof}[Proof of Theorem \ref{theoremKs}]
We prove Theorem \ref{theoremKs} by contradiction.
Suppose that the Maslov class vanishes.
Theorem \ref{theoremC} and the assumption
imply that all the obstruction classes
$[o_k^{2m_k}(L;\text{\rm weak})]$ are in $H^2(L;\Z_2)$.
Therefore by assumption that $H^2(L;\Z_2)=0$,  we have
$b$ such that the Floer cohomology $H((L,b),(L,b);\Lambda_{0,{\rm nov}}^{\Z_2})$
is defined.
\par
By Theorem \ref{theoremE} (1) we have $\delta_r(PD[L]') = 0$.
Moreover by Theorem \ref{theoremE} (2)
$PD[L]' \notin \text{\rm Im}\delta_r$.
Thus we have
$
H((L,b),(L,b);\Lambda_{{\rm nov}}^{\Z_2}) \ne 0.
$
Then Theorem \ref{theoremG} (1),(4) imply that
$$
H((L,b),(\psi(L),\psi_*b);\Lambda_{{\rm nov}}^{\Z_2}) \ne 0
$$
for any Hamiltonian diffeomorphism $\psi$.
Theorem \ref{theoremG} (2) now implies
$$
L \cap \psi(L) \ne \emptyset,
$$
which leads to a contradiction.
\end{proof}
Theorem \ref{theoremL} follows easily from Theorem \ref{theoremKs}.

\subsection{Maurer-Cartan moduli space over $\Lambda_{0,{\rm nov}}^R$}
\label{MClambda0}

We considered the filtered $A_{\infty}$ algebra
$(S(L;\Lambda_{0,{\rm nov}}^R),\{\frak m_k\})$
in Sections \ref{AinftyconstrZ2} and \ref{Zcoefficient}.
For
$
b \in S^1(L;\Lambda_{0,{\rm nov}}^{+,R})
$
we define a deformation of filtered $A_{\infty}$ structure by
\begin{equation}\label{defAinfty}
\frak m_k^b(x_1,\dots,x_k)
=
\sum_{\ell_0=0}^{\infty}\cdots
\sum_{\ell_k=0}^{\infty}
\frak m_{k+\sum\ell_i} (b^{\ell_0},x_1,b^{\ell_1},\dots,x_k,b^{\ell_k}).
\end{equation}
Using the fact that $b\equiv 0 \mod \Lambda_{0,{\rm nov}}^{+,R}$
we can prove that the right hand side of (\ref{defAinfty}) converges in $T$ adic topology.
The Maurer-Cartan equation (\ref{MCequation})
becomes $\frak m_0^b \equiv 0 \mod \Lambda_{0,{\rm nov}}^{+,R}\text{\bf e}$.
\par
In the case $R=\R$ (resp. $\C$), this story is generalized in
\cite{fooo:toric1,fooo:bulk,fukaya:cyc} to the case when
$
b \in S^1(L;\Lambda_{0,{\rm nov}}^{\R})
$ (resp. $b \in S^1(L;\Lambda_{0,{\rm nov}}^{\C})$)
as follows. We recall the case $R=\R$ only, because the case $R=\C$ is similar.
We put
$$
b = \overline b_1 + b_+
$$
where
$$
\overline b_1 \in S^1(L;\R),
\quad
b_+ \in S^1(L;\Lambda_{0,{\rm nov}}^{(0) +,\R}) \oplus
\bigoplus_{k\ge 1} S^{1+2k}(L;\Lambda_{0,{\rm nov}}^{(0) \R})e^{-k}.
$$
Here $\Lambda_{0,{\rm nov}}^{(0) R}$ is the degree $0$ part of
$\Lambda_{0,{\rm nov}}^{R}$, namely, the part which does not contain the indeterminate $e$.
We assume $\partial \overline b_1 = 0$, where
$\partial$ is the usual boundary operator.
(This will follow from the Maurer-Cartan equation.)
We decompose $\frak m_k$ as
$$
\frak m_k = \sum_{\beta \in \Pi_2(M,L)}
T^{\omega(\beta)}e^{\mu(\beta)/2} \frak m_{k,\beta}.
$$
In the situation of \cite{fooo:toric1,fooo:bulk,fukaya:cyc} we take the moduli space
$\mathcal M_{k+1}(\beta)$ and its perturbation so that it is
compatible with the forgetful map of the $1$st,\dots,$k$-th marked points
$\mathcal M_{k+1}(\beta) \to \mathcal M_{1}(\beta)$.
(See \cite{fukaya:cyc} Section 5 for the precise
description of the compatibility.)
We used this fact to show the following formula
for $\overline{b}_1 \in S^1(L;\R)$:
\begin{equation}\label{foget1-k}
\sum_{\ell_0+\dots+\ell_k=\ell}
\frak m_{k+\ell,\beta} (\overline b_1^{\ell_0},x_1,\dots,x_k,\overline b_1^{\ell_k})
=
\frac{(\overline b_1(\partial \beta))^{\ell}}{\ell !}
\frak m_{k,\beta}(x_1,\dots,x_k).
\end{equation}
(See \cite{fooo:bulk} Lemmas 7.2, 9.2 and \cite{fukaya:cyc} Lemma 13.1.)
Using (\ref{foget1-k}) we define
for $b\in S^1(L;\Lambda_{0,{\rm nov}}^{\R})$
\begin{equation}\label{defAinfty2}
\frak m_k^b(x_1,\dots,x_k)
=
\sum_{\beta}
\sum_{\ell_0,\dots ,\ell_k=0}^{\infty}
T^{\omega(\beta)}e^{\mu(\beta)/2}\exp(\overline b_1(\partial\beta))
\frak m_{k+\sum\ell_i,\beta}
(b_+^{\ell_0},x_1,\dots,x_k,b_+^{\ell_k})
\end{equation}
by modifying the definition
\eqref{defAinfty} for the case
$b \in S^1(L;\Lambda_{0,{\rm nov}}^{+,\R})$.
See (11.4) in \cite{fooo:bulk} for toric cases.
Using Gromov's compactness we can show that the right hand side
converges in $T$ adic topology. (See \cite{fooo:bulk} Section 9 and \cite{fukaya:cyc} Lemma 13.3.)
This definition is closely related to the idea of \cite{cho07}
to use nonunitary flat bundles on $L$. Namely, in this case we take
a flat line bundle with monodromy
$$
\gamma \mapsto \exp(\overline b_1(\gamma)).
$$
\par\medskip
In order to generalize this story to the case when $R \ne \R, \C$ but $R$ is a Dedekind domain  or a finite field, there are two points we need to take care of.
\par
Firstly the compatibility of the
perturbation to the forgetful map $\mathcal M_{k+1}(\beta) \to \mathcal M_{1}(\beta)$
is hard to prove when we use single valued perturbations.
(The proof of \cite{fukaya:cyc} does not work.)
So instead of proving (\ref{foget1-k}) we use the right hand side as the
definition.
\par
Secondly there is a denominator $\ell!$ in (\ref{foget1-k})
which may not be invertible in $R$.
So instead of requiring $\overline b_1 \in R$ we require
$\exp(\overline b_1(\partial \beta)) \in R$. (Compare \cite{fooo:bulk} Remark 11.5.)
\par
After these explanations we state our results.
We conside a pair $(\rho,b_+)$ where
$$
\rho \in {\rm Hom} (\pi_1(L), R^*),
\quad
b_+ \in S^1(L;\Lambda_{0,{\rm nov}}^{(0) +,R}) \oplus
\bigoplus_{k\ge 1} S^{1+2k}(L;\Lambda_{0,{\rm nov}}^{(0) R})e^{-k}.
$$
Here $R^*$ is the group of units of the ring $R$.
We put
\begin{equation}\label{MC0def}
\widehat{\mathcal M}_{\text{weak}}
(L;\Lambda_{0,{\rm nov}}^R)
=
\{(\rho,b_+)
\mid
\sum_{\beta,k} T^{\omega(\beta)}e^{\mu(\beta)/2}\rho(\partial\beta) \frak m_{k,\beta}(b_+^k) \equiv 0 \mod \Lambda_{0,{\rm nov}}^{+,R}\text{\bf e}
\}.
\end{equation}
The infinite sum appearing in (\ref{MC0def})
converges in $T$ adic topology.
\begin{rem}
We may use a flat vector bundle in place of a flat line bundle $\rho$.
\end{rem}
\par
We say $(\rho,b_+)$ is {\it gauge equivalent} to $(\rho',b'_+)$
if $\rho$ is conjugate to $\rho'$ as representations of $\pi_1(L)$ to $R^{\ast}$ and $b_+$ is gauge equivalent to $b'_+$
as Maurer-Cartan elements of the filtered $A_{\infty}$ structure $\frak m_k^{\rho}$
in the sense of \cite{fooo-book1} Definition 4.3.19,
where
$$
\frak m_k^{\rho} = \sum_{\beta} T^{\omega(\beta)}e^{\mu(\beta)/2}\rho(\partial\beta) \frak m_{k,\beta}.
$$
We denote the set of gauge equivalence classes of
$\widehat{\mathcal M}_{\text{weak}}
(L;\Lambda_{0,{\rm nov}}^R)$ by
${\mathcal M}_{\text{weak}}
(L;\Lambda_{0,{\rm nov}}^R)$.
\begin{defn}\label{def:deformLambda0}
For $(\rho,b_+) \in {\mathcal M}_{\text{weak}}
(L;\Lambda_{0,{\rm nov}}^R)$
we define a deformed filtered $A_{\infty}$ structure $\frak m_{k}^{(\rho,b_+)}$
by
$$
\frak m_k^{(\rho,b_+)}
(x_1,\dots,x_k) = \sum_{\beta}\sum_{\ell_0,\dots ,\ell_k=0}^{\infty}
 T^{\omega(\beta)}e^{\mu(\beta)/2} \rho(\partial\beta) \frak m_{k+\sum\ell_i,\beta}
 (b_+^{\ell_0},x_1,\dots,x_k,b_+^{\ell_k}).
$$
\end{defn}
We use it in the same way to show the following:
\begin{theorem}
Theorems \ref{theoremB}, \ref{theoremD}, \ref{theoremE}, \ref{theoremG}
hold with $\mathcal M_{\text{\rm weak}}(L;R)$
replaced by ${\mathcal M}_{\text{\rm weak}}
(L;\Lambda_{0,{\rm nov}}^R)$.
\end{theorem}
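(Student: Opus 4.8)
The plan is to reproduce, over a Dedekind domain or a finite field $R$, the construction of the Maurer--Cartan moduli space enhanced by flat line bundles and bulk-type chains carried out over $\R$ and $\C$ in \cite{fooo:toric1,fooo:bulk,fukaya:cyc}, implementing the two modifications already flagged above. First, since in the present paper we use single valued normally conical sections rather than multisections, we cannot arrange the Kuranishi perturbations of $\mathcal M_{k+1}(\beta)$ to be compatible with the forgetful maps $\mathcal M_{k+1}(\beta)\to\mathcal M_1(\beta)$, so the identity (\ref{foget1-k}) is no longer available as a theorem; accordingly I would simply \emph{define} the deformed operations by the formula of Definition \ref{def:deformLambda0}. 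Second, since the denominator $\ell!$ appearing in (\ref{foget1-k}) need not be invertible in $R$, I would require $\rho\in\mathrm{Hom}(\pi_1(L),R^{\ast})$ rather than $\overline b_1\in R$, so that the scalar weight attached to a class $\beta$ is the unit $\rho(\partial\beta)$ directly.

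The first real step is to verify that for each $(\rho,b_+)$ the system $\{\mathfrak m_k^{(\rho,b_+)}\}$ of Definition \ref{def:deformLambda0} is a filtered $A_\infty$ structure, and that the sums converge $T$-adically. The key point is that $\rho(\partial(\beta_1+\beta_2))=\rho(\partial\beta_1)\,\rho(\partial\beta_2)$, i.e. the $\rho$-weight is multiplicative in $\beta$, just like $T^{\omega(\beta)}$ and $e^{\mu(\beta)/2}$; since the $A_\infty$ relations for $\{\mathfrak m_{k,\beta}\}$ are organized by the splitting $\beta=\beta_1+\beta_2$, the $\rho$-twisted operations $\mathfrak m_k^{\rho}=\sum_\beta T^{\omega(\beta)}e^{\mu(\beta)/2}\rho(\partial\beta)\,\mathfrak m_{k,\beta}$ again satisfy the $A_\infty$ relations. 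Then $\mathfrak m_k^{(\rho,b_+)}$ is the deformation of $\mathfrak m_k^{\rho}$ obtained by inserting $b_+$ at the extra interior positions, exactly as in (\ref{defAinfty}); the components of $b_+$ lie in $\Lambda_{0,nov}^{(0)R}$ with the $e^{-k}$ bookkeeping on the higher degree pieces, so convergence in the $T$-adic topology follows from Gromov's compactness as in \cite{fooo:bulk} Section~9 and \cite{fukaya:cyc} Lemma~13.3 (the $\rho$-weights have $T$-adic valuation $0$ and do not affect the estimate), and the deformed operations satisfy the $A_\infty$ relations. The defining condition of $\widehat{\mathcal M}_{\text{\rm weak}}(L;\Lambda_{0,nov}^R)$ in (\ref{MC0def}) is then precisely weak unobstructedness $\mathfrak m_0^{(\rho,b_+)}\equiv 0 \bmod \Lambda_{0,nov}^{+,R}\mathbf e$, and $\mathfrak{PO}(\rho,b_+)$ is defined by $\mathfrak m_0^{(\rho,b_+)}=\mathfrak{PO}(\rho,b_+)\,\mathbf e$. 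Gauge equivalence is as described before the statement (conjugation of $\rho$ together with $A_\infty$-gauge equivalence of $b_+$ for $\mathfrak m_k^{\rho}$ in the sense of \cite{fooo-book} Definition~4.3.19); that this is an equivalence relation, and that gauge equivalent data yield isomorphic deformed $A_\infty$ algebras and hence isomorphic Floer cohomology, is the homological algebra of filtered $A_\infty$ algebras of \cite{fooo-book} Chapter~4, which is valid over an arbitrary commutative ring.

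With these formal properties in place, Theorems \ref{theoremB}, \ref{theoremD}, \ref{theoremE}, \ref{theoremG} follow verbatim with $\mathcal M_{\text{\rm weak}}(L;R)$ replaced by $\mathcal M_{\text{\rm weak}}(L;\Lambda_{0,nov}^R)$, because each of them was proved using only: (i) that a point of the deformation moduli space produces a weakly unobstructed filtered $A_\infty$ (or $A_\infty$ bimodule) structure with scalar obstruction $\mathfrak{PO}$; (ii) the energy filtration, which the $(\rho,b_+)$-twist leaves intact; (iii) functoriality of the whole package under symplectic diffeomorphisms, where now $\psi_\ast$ also pushes $\rho\in\mathrm{Hom}(\pi_1(L),R^{\ast})$ forward along $\psi_\ast:\pi_1(L)\to\pi_1(L')$; and (iv) for the spectral sequence statements, the algebraic machinery of \cite{fooo-book} Subsections~6.3.1--6.3.2 over a field, supplemented under Assumption~\ref{assumpRL} by the rationality argument of \cite{fooo-book} Section~6.2 over a Dedekind domain, together with the Hochschild-homology operator $\mathfrak p'$ of Subsection~\ref{Hochschild homology} in place of the cyclic-cohomology operator $\mathfrak p$ used over $\mathbb Q$. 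I would then carry out, one at a time, the definition of $HF((L,b_1),(L,b_0);\Lambda^R_{0,nov})$, the product and unit of Theorem~\ref{theoremB}, the spectral sequence and its degeneration and non-vanishing (Theorems~\ref{theoremD}, \ref{theoremE}), and the bimodule structure and Hamiltonian invariance of Theorem~\ref{theoremG}, in each case reducing to the corresponding argument over $R$ already given in this section and inserting the $(\rho,b_+)$-twist.

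The main obstacle is the verification described in the second paragraph: because forgetful-compatible perturbations are unavailable, $\mathfrak m_k^{(\rho,b_+)}$ is declared by a formula rather than produced geometrically, so one must check by hand that it is $A_\infty$, that the deformed bimodule operations used in Theorem~\ref{theoremG} are compatible with these $\mathfrak m_k^{(\rho,b_+)}$, and that the $T$-adic sums in (\ref{MC0def}) and in Definition~\ref{def:deformLambda0} converge; the $e^{-k}$ bookkeeping on the higher-degree components of $b_+$ is exactly what makes the convergence work and has to be tracked carefully. A secondary point needing care, rather than new ideas, is the convergence of the spectral sequence over a Dedekind domain, which is the reason Assumption~\ref{assumpRL} is imposed in Theorems~\ref{theoremD} and \ref{theoremE}.
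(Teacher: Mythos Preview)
Your proposal is correct and follows exactly the approach the paper takes: the paper's entire proof is the single sentence ``We use it in the same way to show the following,'' referring to Definition~\ref{def:deformLambda0}, so you have simply spelled out in detail what the paper leaves implicit. The key points you identify --- taking the right hand side of (\ref{foget1-k}) as a definition, replacing $\overline b_1\in R$ by $\rho\in\mathrm{Hom}(\pi_1(L),R^\ast)$, and noting that multiplicativity of $\rho(\partial\beta)$ in $\beta$ makes $\{\mathfrak m_k^\rho\}$ a filtered $A_\infty$ structure --- are precisely the two modifications the paper flags in the paragraphs preceding (\ref{MC0def}).
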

\begin{rem}
We may take the pair $(\tilde\rho,b_{\text{high}})$
where
$$
\tilde\rho : \pi_1(L) \to
\{
y \in \Lambda_{0,{\rm nov}}^R
\mid
y \equiv \overline y \mod \Lambda_{0,{\rm nov}}^{+,R},
~\overline y \in R^*
\}
$$
and
$$
b_{\text{high}}
\in
\bigoplus_{k\ge 1} S^{1+2k}(L;\Lambda_{0,{\rm nov}}^{(0) R})e^{-k}
$$
in place of $(\rho,b_+)$.
\end{rem}
In \cite{fooo:toric1} we discussed the case of $T^n \cong L \subset X$ where
$X$ is a toric manifold and $T^n$ is an orbit of the $T^n$ action.
There we take
$
b = \sum x_i \text{\bf e}_i, ~ x_i \in \Lambda_{0,{\rm nov}}^{\C}.
$
We then change the coordinate from $x_i$ to $y_i = \exp(x_i)$.
This corresponds to take $\tilde\rho$ as above.
Thus in case $y_i$ is congruent to an element of $R^*$
modulo $\Lambda_{0,{\rm nov}}^{+,R}$ we can apply the argument of this section.
In the toric case, there are many examples where the leading order term
of $y_i$ is in a number field (a finite extension of $\Q$) or its integer ring.
(In particular, $y_i$ is  a unit of its appropriate localization.)
For example, in the case $X = \C P^n$ nonvanishing Floer cohomology
appears when $y_i$ is the $(n+1)$-th root of unity.
Thus we have many examples for which
it seems interesting to study
Floer cohomology over $\Lambda_{0,{\rm nov}}^R$ with
a Dedekind domain $R$.

\subsection{Hochschild homology, operator $\frak p'$ and
degeneration of the spectral sequence}
\label{Hochschild homology}

In this subsection we prove Theorem \ref{theoremD} (3) and the claim $PD[pt] \notin \operatorname{Im}(\delta_r)$ in
Theorem \ref{theoremE} (1).
In their proofs over $\Q$ coefficients
given in \cite{fooo-book1},
we used the operator $\frak p$ whose domain is the cyclic homology of $L$. See Subsections 3.8.1 and 6.4.2 in \cite{fooo-book1}.
Since it is rather difficult to study cyclic homology over torsion coefficients,
we use the Hochschild homology instead in this subsection.
\par
Let $(C,\{\frak m_k\})$ be a filtered $A_{\infty}$ algebra.
We put
\begin{equation}
CH(C[1]) = \widehat{\bigoplus}_{k=0}^{\infty} B_{k+1}(C[1]).
\end{equation}
We define the Hochschild differential $\delta^H$ on it by
\begin{equation}\label{hochboundary}
\aligned
&\delta^H(x_0\otimes \dots \otimes x_k) \\
=
&\sum_{0<i\le j\le k}
(-1)^{*_{1;i}}x_0 \otimes \dots \otimes x_{i-1} \otimes\frak m_{j-i}(x_i,\dots,x_{j-1})
\otimes x_j \otimes \dots \otimes x_k \\
& + \sum_{i=0}^k (-1)^{*_{1;i+1}}x_0 \otimes \dots \otimes
x_i \otimes \frak m_{k-i}(x_{i+1},\dots ,x_k)\\
&+\sum_{0\le i < j < k}(-1)^{*_{2;i,j}}
\frak m_{k+i-j+1}(x_{j+1},\dots,x_k,x_0,\dots,x_i) \otimes x_{i+1}\otimes \dots \otimes x_{j} \\
&+ \sum_{i=0}^k \frak m_{i+1}(x_0,\dots,x_i)\otimes x_{i+1} \otimes \dots \otimes x_k,
\endaligned\end{equation}
where
$*_{1;i} = \deg x_0+\dots + \deg x_{i-1} + i$,
$*_{2;i,j} = (\deg x_0 + \dots + \deg x_{j}+j+1)(\deg x_{j+1}+\dots + \deg x_k+k-j)$.
(Note the sum of terms containing $\frak m_0$ is
$
\pm x_0 \otimes \frak m_0(1) \otimes x_1 \otimes \dots \otimes x_k
\pm \dots \pm x_0 \otimes \dots \otimes x_k \otimes \frak m_0(1)$, each term of which comes  from the first and second lines of \eqref{hochboundary}.
)
It is straightfoward to check
$$
\delta^H \circ \delta^H  = 0.
$$
We thus obtain a chain complex
$(CH(C[1]),\delta^H)$.
\par
We next define a chain map
$$
\frak p' : (CH(S(L;\Lambda_{0,{\rm nov}}^R)[1]),\delta^H)
\to S(M;\Lambda_{0,{\rm nov}}^R).
$$
We use the moduli space $\mathcal M_{k;1}(\beta)$ which we defined in Section
\ref{moduli}. (It is the moduli space of pseudo-holomorphic discs of homology class $\beta$
with $k$ boundary and $1$ interior marked points.)
We have evaluation maps
$$
(ev,ev^+) =
(ev_1,\dots,ev_k,ev^+): \mathcal M_{k;1}(\beta) \to L^{k} \times M.
$$
The boundary of the moduli space is described as follows.
\begin{equation}\label{Mk1boundary}
\aligned
\partial \mathcal M_{k;1}(\beta)
=
&\bigcup_{\beta_1+\beta_2+\beta}\bigcup_{k_1+k_2=k}\bigcup_{i=1,\dots,k_2}
\mathcal M_{k_1+1}(\beta_1) {}_{ev_0}\times_{ev_i}\mathcal M_{k_2+1;1}(\beta_2) \\
&\cup \bigcup_{\beta_1+\beta_2+\beta}\bigcup_{k_1+k_2=k}\bigcup_{i=1,\dots,k_2}
\mathcal M_{k_1+1;1}(\beta_1) {}_{ev_0}\times_{ev_i}\mathcal M_{k_2+1}(\beta_2).
\endaligned
\end{equation}
Let $P_1,\dots,P_k$ be smooth singular chains on $L$.
We put
$$
\mathcal M_{k;1}(\beta;P_1,\dots,P_k)
= \mathcal M_{k;1}(\beta) {}_{ev}\times (P_1 \times \dots \times P_k).
$$
In Sections \ref{AinftyconstrZ2} and \ref{Zcoefficient} we
took a global section of
$$
\mathcal M_{k+1}(\beta;P_1,\dots,P_k)
= \mathcal M_{k+1}(\beta) {}_{(ev_1,\dots,ev_k)}\times (P_1 \times \dots \times P_k),
$$
which is normally conical.
Then we took a triangulation of its zero set
so that the virtual fundamental chain is defined.
(We use our assumption that $(M,\omega)$ is spherically positivity here.)
Now we have:
\begin{lem}\label{1pointsection}
We can take a global section of
$\mathcal M_{k;1}(\beta;P_1,\dots,P_k)$ which satisfies the
conclusion of Theorem \ref{maintechnicalresult} and is
compatible with the
description $(\ref{Mk1boundary})$ of its boundary.
\end{lem}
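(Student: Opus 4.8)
The plan is to construct the global section of $\mathcal M_{k;1}(\beta;P_1,\dots,P_k)$ by the same double induction used to prove Theorem \ref{maintechnicalresult}, namely an induction on the pair $(\beta,k)$ ordered by energy and then by the number of boundary marked points, combined with the construction of normally conical single-valued sections on each Kuranishi chart via Theorem \ref{maintechnicalresult} and its relative version Theorem \ref{reltechnicalresult}. The only new ingredient compared with Propositions \ref{mkconstruct} and \ref{fkconstruct} is the presence of one interior marked point, which enlarges the target of the evaluation map to $L^k \times M$ but does not affect the structure of the argument: the moduli space $\mathcal M_{k;1}(\beta;P_1,\dots,P_k)$ still carries a Kuranishi structure with corners and with tangent bundle (orientable, when $L$ is relatively spin), its virtual dimension being $n + \mu_L(\beta) + 2 + k - 2 - \sum \deg P_i$ by the same index computation underlying (\ref{dimformula2}).

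First I would check the compatibility condition at the boundary: the two families of boundary strata in (\ref{Mk1boundary}) are each a fiber product of a moduli space of the form $\mathcal M_{k_1+1}(\beta_1;\cdots)$ (for which a normally conical global section has already been fixed when constructing the $\mathfrak m_{k,\beta}$) with a moduli space of the form $\mathcal M_{k_2+1;1}(\beta_2;\cdots)$ (which by the inductive hypothesis on $(\beta,k)$ has already been handled), or the same with the roles of the factors interchanged. One verifies, exactly as in Lemma 7.2.55 \cite{fooo-book}, that these prescriptions are mutually consistent along the codimension $\ge 2$ corners, and that the resulting sections are normally conical there. Then I would invoke Proposition \ref{dimprop}'s analogue for $\mathcal M_{k;1}(\beta;P_1,\dots,P_k)$: spherical positivity gives, for every nontrivial isotropy group $\Gamma$ and every combinatorial type, the same estimate
$$
\dim \mathcal M_{k;1}(\beta;P_1,\dots,P_k) - d(\mathcal M_{k;1}(\beta;P_1,\dots,P_k);\Gamma;i) \ge 2,
$$
whose proof is word-for-word the proof of Proposition \ref{dimprop} (the extra interior marked point enters both the moduli space and its marked reduced model in the same way, and the reduction argument of Lemmas \ref{35.72}--\ref{35.75} is unaffected). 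This guarantees that, after applying Theorem \ref{reltechnicalresult} to extend the boundary data over the whole space, the locus where a nontrivial $\Gamma$ acts has codimension $\ge 2$ in the zero set, so the triangulated zero set defines a virtual fundamental chain over $\Z$ (or $\Z_2$), which in turn gives the chain $\frak p'$ via pushforward under $(ev, ev^+)$.

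The main obstacle I anticipate is organizing the two-parameter induction so that the boundary prescription coming from (\ref{Mk1boundary}) is genuinely compatible at the corners — in particular ensuring that the global sections already chosen on $\mathcal M_{k_1+1}(\beta_1;\cdots)$ (used to define $\mathfrak m_{k,\beta}$) and those being constructed on the one-interior-marked-point spaces agree on the overlaps of the two types of strata in (\ref{Mk1boundary}), including the subtle matching of the obstruction bundles (the same imprecision noted in Remark \ref{impreciseobst}, to be made precise as in Proposition 7.2.35 \cite{fooo-book}). Once this bookkeeping is set up, the actual extension over each Kuranishi chart is routine given Theorem \ref{reltechnicalresult}, and the fact that $\frak p'$ is a chain map with respect to $\delta^H$ then follows by reading off the codimension-one boundary contributions, exactly as $\frak p$ was shown to be a chain map over $\Q$ in Subsection 3.8.1 \cite{fooo-book}. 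I would therefore present the proof as: (i) set up the order on $(\beta,k)$; (ii) specify the boundary sections from the two strata of (\ref{Mk1boundary}) and check corner consistency; (iii) apply the dimension estimate to control the isotropy locus; (iv) apply Theorem \ref{reltechnicalresult} to extend, obtaining a triangulated zero set of the right dimension; (v) conclude that $(ev_0)$ — here $(ev,ev^+)$ — pushforward is defined over $R$ and yields the asserted section, hence $\frak p'$.
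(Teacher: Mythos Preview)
Your approach is correct and is essentially the one the paper takes: induct on $(\beta,k)$, prescribe the section on the boundary using the already-chosen sections for the $\frak m$-moduli and the inductive hypothesis for the one-interior-marked-point moduli, check codimension $\ge 2$ consistency, and extend by Theorem \ref{reltechnicalresult}. The dimension estimate you mention and the pushforward under $ev^+$ are not part of this lemma itself; the paper separates those out and uses them only afterwards to define $\frak p'$ and prove it is a chain map.

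There is, however, one point the paper makes explicit that you gloss over with ``the same with the roles of the factors interchanged,'' and it is the genuinely new feature here. In the second family of boundary strata
$$
\mathcal M_{k_1+1;1}(\beta_1)\, {}_{ev_0}\!\times_{ev_i}\mathcal M_{k_2+1}(\beta_2),
$$
the factor $\mathcal M_{k_2+1}(\beta_2)$ is glued along its $i$-th marked point, not its $0$-th. But the sections $\mathfrak s'_{\beta_2,\cdots}$ fixed in Proposition \ref{mkconstruct} treat the $0$-th marked point as the output. To use them, the paper \emph{relabels} the boundary marked points of this factor cyclically so that the node becomes the $0$-th point; after relabeling, the chain inputs on this factor are $P_{i+k_1},\dots,P_k,P_1,\dots,P_{i-1}$ in that cyclic order, and one then takes the resulting virtual chain $P''_j$ as the new input at the node for the $\mathcal M_{k_1+1;1}(\beta_1)$ factor. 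This cyclic relabeling is precisely what produces the ``wrap-around'' terms (the third sum in (\ref{hochboundary})) of the Hochschild differential $\delta^H$, and is the reason $\frak p'$ lands on $CH(C[1])$ rather than on the ordinary bar complex. Without spelling this out, your step (ii) is incomplete: you cannot literally reuse the $\mathfrak m$-sections on the second stratum until you have made this identification.
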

We clarify the meaning of compatibility with (\ref{Mk1boundary}) during the proof.
\begin{proof}
Once we clarify the meaning of the compatibility with (\ref{Mk1boundary}),
the lemma follows from (relative version of) Theorem \ref{maintechnicalresult}, by
an induction on $\omega(\beta)$ and $k$.
(We define triangulation of its zero set at the same time by induction.)
\par
We first study the first term of the right hand side of (\ref{Mk1boundary}).
We consider
$$
\mathcal M_{k_1+1}(\beta_1;P_i,\dots,P_{i+k_1-1}).
$$
It corresponds to the first factor of  the first term of the right hand side of (\ref{Mk1boundary}).
We fixed a global section of it already.
A triangulation of its zero set is also taken already.
So using $ev_0$ we have smooth singular chains
$P'_1,\dots,P'_a$ of $L$ such that
$$
P'_1 + \dots + P'_a = \frak m_{k_1,\beta_1}(P_i,\dots,P_{i+k_1-1}).
$$
(In case $R$ is not characteristic 2 we need an appropriate sign.)
We next consider
\begin{equation}\label{92}
\mathcal M_{k_2+1;1}(\beta_2;P_1,\dots,P_{i-1},P'_j,P_{i+k_1},\dots,P_k).
\end{equation}
The sum of these spaces over $j=1,\dots,a$ corresponds to the first term
of the right hand side of (\ref{Mk1boundary}).
By induction hypothesis a global section of (\ref{92}) and the triangulation
of its zero set are already taken.
\par
We next study the second term
of the right hand side of (\ref{Mk1boundary}).
We consider
\begin{equation}\label{93}
\mathcal M_{k_2+1}(\beta_2) {}_{ev_1,\dots,ev_{i-1},ev_{i+1},\dots,ev_{k_2}} \times (P_1 \times \dots \times P_{i-1} \times P_{i+k_1} \times \dots
 \times P_k).
\end{equation}
This corresponds to the second factor of the second term
of the right hand side of (\ref{Mk1boundary}).
We rename the boundary marked points such that the $i$-th boundary marked point
becomes the $0$-th marked
point and that
the cyclic order of the boundary marked points is preserved. Then (\ref{93}) is identified with
$$
\mathcal M_{k_2+1}(\beta_2;P_{i+k_1},
\dots, P_k,P_1, \dots, P_{i-1}).
$$
In Sections \ref{AinftyconstrZ2} and \ref{Zcoefficient} we took a
global section of this moduli space and a triangulation of its zero set.
Together with $ev_0$, which is the evaluation map
at the renamed $0$-th marked point on the $\beta_2$-disc, it defines smooth singular chains $P''_1,\dots,P''_{a'}$ of $L$.
We then consider
\begin{equation}\label{94}
\mathcal M_{k_1+1,1}(\beta_1;P''_j,P_i,\dots,P_{i+k_1-1}).
\end{equation}
By induction hypothesis a global section of (\ref{94}) and a triangulation
of its zero set are already taken.
Note (\ref{94}) corresponds to the second term
of the right hand side of (\ref{Mk1boundary}).
\par
Thus we have found the way how to define the global section
of the boundary of
$\mathcal M_{k;1}(\beta;P_1,\dots,P_k)$.
The compatibility at the codimension $\ge 2$ corner
can be checked easily.
The proof of Lemma \ref{1pointsection} is now complete.
\end{proof}
\begin{defn} We put
$$
\frak p'_{k;\beta}
(P_1,\dots,P_k)
= (\mathcal M_{k;1}(\beta;P_1,\dots,P_k),ev_0)
\in S(M;R).
$$
Here $(S(M;R),\delta_M)$ is the smooth singular chain complex with $R$ coefficients.
We also denote by $\delta_M$ its extension over
$\Lambda_{0,{\rm nov}}^R$ coefficients.
We define
$$
\frak p'_{k} = \sum_{\beta} T^{\omega(\beta)}
e^{\mu(\beta)/2}\frak p'_{k;\beta} :
B_{k}(S(L;\Lambda_{0,{\rm nov}}^R)[1]) \to S(M;\Lambda_{0,{\rm nov}}^R)
$$
and
$$
\frak p' = \sum_k \frak p'_{k}
:
CH(S(L;\Lambda_{0,{\rm nov}}^R)[1]) \to S(M;\Lambda_{0,{\rm nov}}^R).
$$
\end{defn}
Let $i_{!} : S^k(L;R) \to S^{k+n}(M;R)$ be the Gysin homomorphism. Then we show the following properties which are analogs of (3.8.10.1) and (3.8.10.2) of \cite{fooo-book1}.
\begin{lem}
\begin{eqnarray}
&\frak p_1^{\prime} \equiv i_{!} \mod \Lambda_{0,{\rm nov}}^{+,R}, \label{p1mod}\\
&\delta_M \circ \frak p' + \frak p' \circ \delta^H = 0.\label{p'delta}
\end{eqnarray}
\end{lem}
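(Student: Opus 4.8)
The plan is to establish the two formulas \eqref{p1mod} and \eqref{p'delta} exactly as the analogous statements (3.8.10.1), (3.8.10.2) of \cite{fooo-book} were established, the only change being that the multisections there are replaced by the normally conical single-valued sections furnished by Lemma \ref{1pointsection}. For \eqref{p1mod}, I would observe that modulo $\Lambda_{0,nov}^{+,R}$ only the class $\beta=0$ contributes to $\frak p'_1$, and $\mathcal M_{1;1}(0;P)$ is the moduli space of constant discs with one boundary and one interior marked point. This space is canonically identified with $P$ itself (the interior marked point is forced to lie over the same point of $L$), and the evaluation map $ev^+ : \mathcal M_{1;1}(0;P)\to M$ is just the inclusion $L\hookrightarrow M$ composed with $P$. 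Comparing virtual fundamental chains, $\frak p'_{1;0}(P)$ is then, up to sign, the chain-level Gysin image $i_!(P)$; this is precisely the computation in \cite{fooo-book} Subsection 3.8.1 carried out for the constant disc component, and it goes through verbatim since no transversality subtlety enters for the trivial class.

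For \eqref{p'delta}, the strategy is the standard ``boundary of the moduli space computes $\delta^H$'' argument. One applies Stokes' theorem (in the form of \cite{fooo-book} Lemma A1.43, valid for virtual fundamental chains of spaces with Kuranishi structure and corners) to $\mathcal M_{k;1}(\beta;P_1,\dots,P_k)$: the left-hand side $\delta_M\bigl(\frak p'_{k;\beta}(P_1,\dots,P_k)\bigr)$ plus the internal-face terms $\sum \pm\frak p'_{k;\beta}(P_1,\dots,\partial P_i,\dots,P_k)$ equals the contribution of the genuine boundary $\partial\mathcal M_{k;1}(\beta)$. By the description \eqref{Mk1boundary} and the compatibility of the chosen sections established in the proof of Lemma \ref{1pointsection}, the first family of boundary strata contributes the terms of $\delta^H$ in which an inner $\frak m_{j-i}$ is applied (the $\beta_1$-disc carrying no interior marked point feeds a chain $\frak m_{k_1,\beta_1}(\cdots)$ into a $\beta_2$-disc carrying the interior point), i.e.\ the first two lines of \eqref{hochboundary}; the second family, after the cyclic relabelling of marked points described in the proof of Lemma \ref{1pointsection}, contributes the terms of $\delta^H$ in which the interior-point disc sits ``inside'' and an outer $\frak m_{k+i-j+1}$ wraps cyclically around, i.e.\ the last two lines of \eqref{hochboundary}. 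Summing over $\beta$ with the weights $T^{\omega(\beta)}e^{\mu(\beta)/2}$ and matching the signs $*_{1;i}$, $*_{2;i,j}$ against the orientation conventions of \cite{fooo-book} Section 8.5 then yields $\delta_M\circ\frak p' + \frak p'\circ\delta^H = 0$.

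The main obstacle is bookkeeping rather than conceptual: one must verify that the sign $*_{2;i,j}$ arising from the cyclic relabelling of the boundary marked points in \eqref{94} is exactly the sign produced by the orientation of the fibre product $\mathcal M_{k_1+1;1}(\beta_1)\,{}_{ev_0}\!\times_{ev_i}\mathcal M_{k_2+1}(\beta_2)$ relative to $\partial\mathcal M_{k;1}(\beta)$, and likewise for $*_{1;i}$ on the first family of strata. Over $\Z_2$ this is vacuous; over $\Z$ it is where the relative spin structure and the sign rules of \cite{fooo-book} Chapter 8 are used, and the verification is identical to the one done there for the operator $\frak q$ and for the cyclic-homology operator $\frak p$, so I would simply cite \cite{fooo-book} Subsections 3.8.1, 8.10.x for the details. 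A secondary point to be checked is that the chains $P'_j$ and $P''_j$ produced by triangulating the zero sets in the proof of Lemma \ref{1pointsection} really do realise $\frak m_{k_1,\beta_1}(P_i,\dots)$ and the cyclically relabelled $\frak m_{k_2,\beta_2}(\cdots)$ as smooth singular chains with the correct signs; this is ensured by Proposition \ref{mkconstruct} (iii),(iv) together with Lemma \ref{dimensionrest}, which guarantees that the isotropy-$\Gamma$ strata for $\Gamma\neq\{1\}$ are of codimension $\geq 2$ and hence do not affect the chain-level identities. Once these sign and compatibility checks are in place, \eqref{p1mod} and \eqref{p'delta} follow, completing the proof that $\frak p'$ is a chain map extending the Gysin map modulo $\Lambda_{0,nov}^{+,R}$.
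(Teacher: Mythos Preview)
Your proposal is correct and follows essentially the same approach as the paper's proof: identify $\frak p'_{1,0}$ with the Gysin map for \eqref{p1mod}, and for \eqref{p'delta} match the two families of boundary strata in \eqref{Mk1boundary} (via the compatible sections of Lemma \ref{1pointsection}) against the groups of terms in the Hochschild differential \eqref{hochboundary}. Your version is simply more explicit about the sign bookkeeping and the role of Proposition \ref{mkconstruct} and Lemma \ref{dimensionrest}, which the paper leaves implicit.
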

\begin{proof}
\eqref{p1mod} is nothing but (3.8.10.1) of \cite{fooo-book1}. Note that $\frak {p}_{1,0}^{\prime}= \frak {p}_{1,0}=i_{!}$.
\eqref{p'delta} is a consequence of Lemma \ref{1pointsection}.
In fact, the first term of the right hand side of (\ref{Mk1boundary})
corresponds to the first and the second terms of (\ref{hochboundary})
and the second term of the right hand side of (\ref{Mk1boundary})
corresponds to the third and the fourth terms of (\ref{hochboundary}).
\end{proof}
We can include the homotopy unit to the story of $\frak p'$ as follows.
Let $(C,\{\frak m_k\})$ be a filtered $A_{\infty}$ algebra.
We put
$C^+ = C \oplus \Lambda_{0,{\rm nov}}^R\text{\bf e}^+ \oplus \Lambda_{0,{\rm nov}}^R{\bf f}$.
If $C$ has a homotopy unit, we can extend the filtered $A_{\infty}$ structure
of $C$ to ones on $C^+$ such that $\text{\bf e}^+$ is the exact unit.
(See \cite{fooo-book1} Section 3.4 for the precise definition.)
Our $A_{\infty}$ algebra $S(L;\Lambda_{0,{\rm nov}}^R)$ has a homotopy unit.
(The proof of this fact is the same as in \cite{fooo-book2} Section 7.3 using (relative version of) Theorem \ref{maintechnicalresult}.)
Now we can use the construction of \cite{fooo-book2} Subsection 7.4.1 to
extend $\frak p'$ to
$$
\frak p^{\prime +} :
CH(S(L;\Lambda_{0,{\rm nov}}^R)^+[1]) \to S(M;\Lambda_{0,{\rm nov}}^R)
$$
that satisfies
\begin{equation}\label{95-1}
\delta_M \circ \frak p^{\prime +} + \frak p^{\prime +} \circ \delta^H = 0.
\end{equation}
Moreover it satisfies
\begin{equation}\label{punital}
\frak p_k^{\prime +}(\dots,\text{\bf e}^+,\dots) = 0
\end{equation}
for $k\ne 1$.
We also have
$
\frak p_1^{\prime +}(\text{\bf e}^+) = PD[L].
$
\begin{rem}
The $R=\Q$ version of this statement is (3.8.10.4), (3.8.10.5) \cite{fooo-book1}.
We note that (3.8.10.6) \cite{fooo-book1} states
$\frak p_2^{+}(x\text{\bf e}^+) = x$. Actually this is an error.
The correct statement is
$\frak p_2^{+}(x\text{\bf e}^+) = 0$.
\end{rem}
\par
Now we use this operator to prove Theorem \ref{theoremD} (2) and
$PD[pt] \notin \operatorname{Im}(\delta_r)$ in
Theorem \ref{theoremE} (1).
Let $b \in \mathcal M_{\text{weak}}(L;R)$.
Namely $b \in S(L;\Lambda_{0,{\rm nov}}^R)^+$ is degree $1$
such that $b \equiv 0 \mod  \Lambda_{0,{\rm nov}}^{+,R}$ and $b$ satisfies
\begin{equation}\label{bweakbdd}
\sum_k\frak m_k(b^k) \equiv 0 \mod \Lambda_{0,{\rm nov}}^{+,R}\text{\bf e}^+.
\end{equation}
We define
$\frak p^{\prime +}_b :
S(L;\Lambda_{0,{\rm nov}}^R)^+[1] \to S(M;\Lambda_{0,{\rm nov}}^R)$
by
\begin{equation}
\frak p^{\prime +}_b(x)
= \frak p^{\prime +}(x e^b).
\end{equation}
\begin{lem}\label{pbprop}
\begin{eqnarray}
&\frak p^{\prime +}_b \equiv i_{!} \mod \Lambda_{0,{\rm nov}}^{+,R}, \label{98}\\
&\frak p^{\prime +}_b \circ \frak m_1^b + \delta_M \circ \frak p^{\prime +}_b = 0.\label{99}
\end{eqnarray}
\end{lem}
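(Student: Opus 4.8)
\textbf{Proof proposal for Lemma \ref{pbprop}.}

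The plan is to deduce both formulas from the already-established properties of the unbroken operator $\frak p^{\prime +}$, namely \eqref{95-1}, \eqref{punital}, and the normalization $\frak p_1^{\prime +}(\text{\bf e}^+) = PD[L]$, together with the Maurer--Cartan equation \eqref{bweakbdd}. First I would record the algebraic fact that exponentiating a degree $1$ element $b$ with $b \equiv 0 \bmod \Lambda_{0,nov}^{+,R}$ makes sense in the completed bar complex: $e^b = \sum_{\ell=0}^\infty b^{\otimes \ell} \in CH(S(L;\Lambda_{0,nov}^R)^+[1])$ converges in the $T$-adic topology since each $b^{\otimes \ell}$ lies in filtration level going to infinity. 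Then $\frak p^{\prime +}_b(x) := \frak p^{\prime +}(x e^b) = \sum_\ell \frak p^{\prime +}_{1+\ell}(x,b,\dots,b)$ is well defined. For \eqref{98}, I would observe that modulo $\Lambda_{0,nov}^{+,R}$ only the $\beta = 0$, $\ell = 0$ term survives (all terms with $\ell \ge 1$ carry a factor of $b \equiv 0$, and all terms with $\beta \ne 0$ carry $T^{\omega(\beta)}$ with $\omega(\beta) > 0$), so $\frak p^{\prime +}_b \equiv \frak p^{\prime +}_1 \equiv i_{!} \bmod \Lambda_{0,nov}^{+,R}$ by \eqref{p1mod}. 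This is the easy half.

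For \eqref{99}, the strategy is the standard bar-complex computation: apply \eqref{95-1} to the element $x e^b$ and sort the terms of $\delta^H(x e^b)$ according to where the Hochschild differential acts. The terms of $\delta^H$ in \eqref{hochboundary} that insert an $\frak m_j$ entirely among consecutive $b$'s, or among $b$'s together with $x$, reassemble: those acting on a block of $b$'s produce, after summing over $\ell$, the factor $\sum_k \frak m_k(b^k)$, which by the Maurer--Cartan equation \eqref{bweakbdd} equals $\frak m_0^b(1) \in \Lambda_{0,nov}^{+,R}\text{\bf e}^+$; since $\text{\bf e}^+$ is the strict unit and $\frak p^{\prime +}_k(\dots,\text{\bf e}^+,\dots) = 0$ for $k \ne 1$ by \eqref{punital}, these contributions vanish (the $k=1$ contribution $\frak p_1^{\prime +}(\text{\bf e}^+) = PD[L]$ is killed because it is multiplied by a coefficient in $\Lambda_{0,nov}^{+,R}$ and one checks it cancels against the corresponding piece of $\partial \circ \frak p^{\prime +}_b$, or more cleanly one works in the reduced complex where the unit output is quotiented out). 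The terms that insert an $\frak m_j$ with $x$ strictly inside the inputs, reading cyclically, are exactly what assembles into $\frak p^{\prime +}_b \circ \frak m_1^b$ after resumming the $b$'s on either side of $x$. Finally the term of $\delta^H$ producing the outermost $\frak m_{i+1}(x_0,\dots,x_i)\otimes\cdots$, combined with $\delta_M \circ \frak p^{\prime +}$, gives $\partial \circ \frak p^{\prime +}_b$ up to sign.

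The main obstacle I expect is bookkeeping of signs and of the cyclic re-indexing: \eqref{hochboundary} has four groups of terms with signs $*_{1;i}$, $*_{2;i,j}$, and the cyclic rotation in the third group must be matched against the definition of $\frak m_1^b$ and against the geometric boundary description. Concretely, one needs to verify that when the $\frak p^{\prime +}$ relation is pulled back along $x \mapsto x e^b$, the signs produced by passing $\delta^H$ across the $e^b$ tail agree with those in the formula for $\frak m_k^b$ in \eqref{defAinfty} (here with $b$ in $C^+$, not merely $C$). In the torsion-coefficient setting there is no shortcut via rational denominators, but there is also nothing to check about convergence beyond the $T$-adic estimate already noted, so the argument is purely formal once the signs are pinned down; for the $\Z_2$ case signs are irrelevant and \eqref{99} is immediate from the term-matching above. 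I would therefore first write the $\Z_2$ (or general field of characteristic $2$) case in full, then indicate that the signs in the Dedekind-domain case are identical to those verified in \cite{fooo-book} Subsection 7.4.1 and Section 8.
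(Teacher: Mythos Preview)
Your approach is the same as the paper's: apply \eqref{95-1} to $xe^b$ and sort the terms of $\delta^H(xe^b)$. The paper does this in one line by writing the Hochschild differential in bar--coalgebra notation as
\[
\delta^H(xe^b) \;=\; \frak m(e^bxe^b)\,e^b \;+\; x\,e^b\,\frak m(e^b)\,e^b,
\]
so that \eqref{95-1} becomes $\delta_M(\frak p^{\prime+}(xe^b)) + \frak p^{\prime+}\bigl(\frak m(e^bxe^b)e^b\bigr) + \frak p^{\prime+}\bigl(xe^b\frak m(e^b)e^b\bigr) = 0$; the three terms are $\partial\circ\frak p^{\prime+}_b(x)$, $\frak p^{\prime+}_b\circ\frak m_1^b(x)$, and zero (by \eqref{bweakbdd} and \eqref{punital}) respectively.

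Two points in your bookkeeping need correction. First, your worry about a ``$k=1$ contribution $\frak p_1^{\prime+}(\text{\bf e}^+)$'' is misplaced: in every term of $\frak p^{\prime+}\bigl(xe^b\frak m(e^b)e^b\bigr)$ the element $x$ sits as an input alongside the $\text{\bf e}^+$ produced by the Maurer--Cartan equation, so there are always at least two inputs and \eqref{punital} kills the term outright. No cancellation against $\partial$ and no passage to a reduced complex is needed. Second, the fourth line of \eqref{hochboundary} (the terms $\frak m_{i+1}(x_0,\dots,x_i)\otimes\cdots$) contains $x_0 = x$ among the $\frak m$--inputs and therefore contributes to $\frak m(e^bxe^b)e^b$, i.e.\ to $\frak p^{\prime+}_b\circ\frak m_1^b$, not to $\partial$; the term $\partial\circ\frak p^{\prime+}_b$ arises solely from $\delta_M\circ\frak p^{\prime+}$ in \eqref{95-1}. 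With these two fixes your argument matches the paper's, and no further sign analysis beyond what is already encoded in \eqref{95-1} is required.
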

\begin{proof} Since $b\equiv 0 \mod \Lambda_{0,{\rm nov}}^{+,R}$,
(\ref{98}) follows from \eqref{p1mod}.
To prove (\ref{99}), we first note that (\ref{95-1}) implies
$$
\delta_M (\frak p^{\prime +} (xe^b))
+
\frak p^{\prime +} (\frak m(e^bxe^b) e^b)
+
\frak p^{\prime +} (xe^b\frak m(e^b) e^b) = 0.
$$
The first term is $\delta_M \circ \frak p^{\prime +}_b$ by definition.
The second term is
$\frak p^{\prime +}_b \circ \frak m_1^b$ by definition.
The third term is zero by (\ref{punital}) and (\ref{bweakbdd}).
Hence the lemma.
\end{proof}
The conclusion of Lemma \ref{pbprop} is the same as \cite{fooo-book1}
Lemma 6.4.5 (for the case $\frak b=0$).
Therefore we can prove Theorem \ref{theoremD} (3) and the claim $PD[pt] \notin \operatorname{Im}(\delta_r)$
in the same
way as in \cite{fooo-book1} Subsection 6.4.2 and
Subsection 6.4.3 respectively.
\qed
\begin{rem}
We do not know how to generalize the formula \cite{fooo-book1} (3.8.10.3)
$$
\frak p_1\circ \frak m_0(1) + \delta_M \circ \frak p_0(1)
+ GW_{0,1}(M)(L) = 0
$$
to our situation.
So we do not know how to generalize \cite{fooo-book1}
Theorem 3.8.11 to $\Z$ or $\Z_p$ coefficients.
In fact, the proof of \cite{fooo-book1} (3.8.10.3)
uses the moduli space $\mathcal M_{0;1}(\beta)$
of pseudo-holomorphic discs without boundary marked point.
An element of this moduli space may have a nontrivial
automorphism even in case it does not have a sphere bubble.
So the argument of Section \ref{dimensionsec} does not apply.
\end{rem}

\bigskip

\noindent Kenji Fukaya\\
Simons Center for Geometry and Physics, State University of New York, Stony Brook, NY 11794-3636, USA \&  Department of Mathematics, Kyoto
University, Kyoto, Japan\\
E-mail: fukaya@math.kyoto-u.ac.jp\\

\medskip

\noindent Yong-Geun Oh\\
Center for Geometry and Physics, Institute for
Basic Sciences (IBS), Pohang, Gyungbuk 790-784, Korea, \\
Department of Mathematics, POSTECH, Pohang, Korea, \& \\
Department of Mathematics, University of Wisconsin,
Madison WI 53705, USA.\\
E-mail: oh@math.wisc.edu\\
\medskip

\noindent Hiroshi Ohta\\
Graduate School of Mathematics, Nagoya University, Nagoya, 464-8602, Japan \& 
Korea Institute for Advanced
Study, Seoul, Korea\\
E-mail: ohta@math.nagoya-u.ac.jp\\

\medskip

\noindent Kaoru Ono\\
Research Institute for Mathematical Sciences, Kyoto University,
Kyoto, 606-8502, Japan \&   Korea Institute for Advanced Study, Seoul, Korea\\
E-mail: ono@kurims.kyoto-u.ac.jp

\end{document}